\renewcommand{\tocsection}[3]{%
  \indentlabel{\@ifnotempty{#2}{\bfseries\ignorespaces#1 #2\quad}}\bfseries#3}
\newtheorem{theorem}{Theorem}[section]
\newtheorem{lemma}[theorem]{Lemma}
\newtheorem{remark}[theorem]{Remark}
\theoremstyle{definition}
\theoremstyle{remark}
\numberwithin{equation}{section}
\newcommand\bes{\begin{eqnarray}}
\newcommand\ees{\end{eqnarray}}
\newcommand\bess{\begin{eqnarray*}}
	\newcommand\eess{\end{eqnarray*}}
\newcommand{\lf}{\left}
\newcommand{\rr}{\right}
\newcommand{\R}{{\mathbb R}}
\newcommand{\dd}{\displaystyle}
\newcommand{\td}{\tilde}
\newcommand{\wtd}{\widetilde}
\newcommand\yy{\infty}
\newcommand{\ol}{\overline}
\newcommand{\rd}{{\rm d}}
\newcommand{\prt}{\partial_t}
\newcommand{\ggs}{{\succ\!\!\succ}}
\newcommand{\llp}{{\prec\!\!\prec}}
\begin{document}
 
\date{\today}
\title[Semi-wave, traveling wave and spreading speed ]{Semi-wave, traveling wave and spreading speed for monostable cooperative systems with nonlocal \\ diffusion and free boundaries$^\$$}
\author[Y. Du and W. Ni]{Yihong Du$^\dag$ and Wenjie Ni$^\dag$}

\thanks{$^\dag$School of Science and Technology, University of New England, Armidale, NSW 2351, Australia}
	\thanks{$^\$$This research was supported by the Australian Research Council.}

\begin{abstract} We consider a class of cooperative reaction-diffusion systems with free boundaries in one space dimension, where the diffusion terms
are nonlocal, given by  integral operators involving suitable kernel functions, and they are allowed not to appear in some of the  equations in the system. The problem is monostable in nature, resembling the well known Fisher-KPP equation. Such a system covers various models arising from mathematical biology, with the Fisher-KPP equation as the simplest special case, where a ``spreading-vanishing" dichotomy is known to govern the long time dynamical behaviour. The question of spreading speed is  widely open for such systems except for the scalar case. In this paper, we  develop a systematic approach to determine the spreading profile of the system, and obtain threshold conditions on the kernel functions which decide exactly when the spreading has finite speed,  or infinite speed (accelerated spreading).  This relies on a rather complete understanding of both the associated semi-waves and traveling waves. When the spreading speed is finite, we show that the speed is determined by a particular semi-wave, and obtain sharp estimates of the semi-wave profile and the spreading speed. For  kernel functions that behave like $|x|^{-\gamma}$ near infinity, we are able to obtain better estimates of the spreading speed for both the finite speed case, and the infinite speed case, which appear to be the first for this kind of free boundary problems, even for the special Fisher-KPP equation.

\bigskip

\noindent
{\bf Key words:} Free boundary,  nonlocal diffusion, spreading speed, semi-wave, traveling wave.

\noindent
{\bf MSC2010 subject classifications:} 35K20, 35R35, 35R09.
\end{abstract}
\maketitle
\tableofcontents
\section{Introduction}

\subsection{The equations and assumptions}
In this paper we determine the spreading speed for cooperative systems with nonlocal diffusion and free boundaries of the following form:
\begin{equation}\label{1.1}
\begin{cases}
\prt u_i=d_i \mathcal{L}_i[u_i](t,x)+f_i(u_1,u_2,\cdots,u_m), & t>0, \; x\in (g(t), h(t)),\ 1\leq i\leq m_0,\\
\prt u_i=f_i(u_1,u_2,\cdots,u_m), & t>0, \; x\in (g(t), h(t)),\ m_0<i\leq m,\\
u_i(t,g(t))= u_i(t,h(t))=0, & t >0,\; 1\leq i\leq m,\\
\dd g'(t)= -\sum_{i=1}^{m_0}\mu_i\!\! \int_{g(t)}^{h(t)}\!\!\int_{-\yy}^{g(t)}\!\!J_i(x-y)u_i(t,x)\rd y\rd x, & t>0,\\[3mm]
\dd h'(t)= \sum_{i=1}^{m_0}\mu_i \!\!\int_{g(t)}^{h(t)}\!\!\int_{h(t)}^{\yy}\!\!J_i(x-y)u_i(t,x)\rd y\rd x, & t >0,\\
u_i(0,x)=u_{i0}(x), &x\in [-h_0,h_0],\ 1\leq i\leq m,
\end{cases}
\end{equation}
where $1\leq m_0\leq m$, and for $i\in \{1, ..., m_0\}$, 
\begin{align*}
 \mathcal{L}_i[v](t,x):=\int_{g(t)}^{h(t)}J_i(x-y)v(t,y)\rd y-v(t,x),\\
  d_i>0 \mbox{ and } \mu_i\geq 0 \mbox{ are constants,  with  $\dd\sum_{i=1}^{m_0}\mu_i>0$.}
\end{align*}
  The initial functions satisfy
\begin{align}\label{1.2}
u_{i0}\in C([-h_0,h_0]),\ \ u_{i0}(-h_0)=u_{i0}(h_0)=0,\ u_{i0}(x)>0\  \mbox{ in } (-h_0,h_0),\; 1\leq i\leq m.
\end{align}
The kernel functions $J_i(x)$ $(i=1,\cdots,m_0)$ satisfy 
\begin{itemize}
	\item[{\rm $(\mathbf{J})$}:] $ J_i\in C(\R)\cap L^\infty(\R)$ is nonnegative, even, $J_i(0)>0$,   $\displaystyle\int_\R J_i(x) \rd x=1$
	for $1\leq i\leq m_0$.
\end{itemize}

\bigskip

In order to describe the assumptions on   the function  $F=(f_1,..., f_m)\in [C^1(\R_+^m)]^m$  with 
\[
\R_+^m:=\{x=(x_1,..., x_m)\in \R^m: x_i\geq 0 \mbox{ for } i=1,..., m\},
\]
 and for convenience of later discussions,  we introduce some notations about vectors in $\R^m$, and also recall the definition of irreducible matrices and some associated properties.
\bigskip

\noindent
{\bf Notations about vectors in $\R^m$:} 
\begin{itemize}
	\item[{\rm (i)}] For $x=(x_1,\cdots,x_m)\in \R^m$, we simply write $(x_1,\cdots,x_m)$ as $(x_i)$. For $x=(x_i)$, $y=(y_i)\in \R^m$, 
	\begin{align*}
	&x\succeq (\preceq)\ y\ \ \ \ \ {\rm means}\ \ \  x_i\geq (\leq )\ y_i\ {\rm for}\ 1\leq i\leq m,\\
	&x\succ (\prec)\ y\ \ \ \ \ {\rm means}\ \ \  x\succeq (\preceq)\  y \ {\rm but }\ x\not=y,\\
	& x\; \ggs (\llp)\ y\ \ \  {\rm means}\ \ \   x_i>(<) \ y_i\ {\rm for}\ 1\leq i\leq m.
	\end{align*}  
	\item[{\rm (ii)}] If $x\preceq y$, then $[x,y]:=\{z\in\R^m: x\preceq z\preceq y\}$.
	\item[{\rm (iii)}]   \underline{Hadamard product}:  For $x=(x_i)$, $y=(y_i)\in \R^m$, 
	\begin{align*}
	x\circ y=(x_iy_i)\in\R^m.
	\end{align*}
	\item[{\rm (iv)}] Any $x\in \R^m$ is  viewed as a row vector, namely a $1\times m$ matrix,  whose transpose is denoted by $x^T$.
\end{itemize}
\smallskip

\noindent
{\bf Irreducible matrix and principal eigenvalue:}

 An $m\times m$ matrix $A=(a_{ij})$,
with $m\geq 2$, is called \underline{reducible} if the index set $\{1,..., m\}$ can be split into the union of  two subsets $S$ and $S'$ with $r\geq 1$ and $m-r\geq 1$ elements, respectively,  such
that $a_{ij}=0$ for all $i\in S$ and $j\in S'$. $A$ is called \underline{irreducible} if it is not reducible. 
If $D$ is a diagonal $m\times m$ matrix, clearly $A+D$ is irreducible if and only if $A$ is irreducible. 

If $A$ is irreducible and all its off-diagonal elements are nonnegative, then for $\sigma>0$ large $A+\sigma{\bf I}_m$ is a nonnegative irreducible matrix, where ${\bf I}_m$ denotes the $m\times m$ identity matrix. By the Perron-Frobenius theorem, $A+\sigma{\bf I}_m$ has a largest eigenvalue $\td \lambda_1=\td\lambda_1(\sigma)$ which is the only eigenvalue that corresponds to a positive eigenvector $v_1\ggs {\bf 0}$: $(A+\sigma{\bf I}_m)v_1=\td\lambda_1 v_1$.
Hence $Av_1=\lambda_1 v_1$ with $\lambda_1=\td \lambda_1-\sigma$, which is the largest eigenvalue of $A$ and is independent of $\sigma$. We will call $\lambda_1$ the \underline{principal eigenvalue} of $A$. 

A $1\times 1$ matrix is irreducible if and only if its sole element is not 0.
\medskip

\noindent
{\bf Assumptions on $F$:}

\begin{itemize}
\item[$\mathbf{(f_1)}$] 
	    \begin{itemize}
		\item[{\rm (i)}]   $F(u)={\bf 0}$  has only two roots in $\R^m_+$: $\mathbf{0}=(0,0,\cdots,0)$ and $\mathbf{u}^*=(u_1^*,u_2^*,\cdots,u_m^*)\ggs \mathbf{0}$. 
         \item[{\rm (ii)}] $\partial_j f_i(u)\geq 0$ for $i\neq j$ and $u\in [\mathbf{0}, \mathbf{\hat u}]$, where either $\mathbf{\hat u}=\mathbf{\infty}$ meaning $[\mathbf{0}, \mathbf{\hat u}]=\R^m_+$, or $\mathbf{u^*}\llp \mathbf{\hat u}\in\R^m$; which implies that \eqref{1.1} is a  \underline{cooperative system} in $[{\bf 0},\mathbf{\hat u}]$. 
         \item[{\rm (iii)}] The matrix $\nabla F(\mathbf{0})$ is irreducible with principal  eigenvalue positive, where $\nabla F({\bf 0})=(a_{ij})_{m\times m}$ with $a_{ij}=\partial_jf_i({\bf 0})$.
         \item[{\rm (iv)}] If $m_0<m$ then $\partial_jf_i(u)>0$ for $1\leq j\leq m_0<i\leq m$ and $ u\in[{\bf 0}, \mathbf{u}^*]$.
	     \end{itemize}
\item[$\mathbf{(f_2)}$]  $F(ku)\geq kF( u)$ for any  $0\leq k\leq 1$ and $ u\in [{\bf 0},\mathbf{\hat u}]$. 
		
\item[$\mathbf{(f_3)}$]  The matrix $\nabla F(\mathbf{u}^*)$ is invertible,    $ [\mathbf{u}^*]\nabla F(\mathbf{u}^*) \preceq \mathbf{0}$ and for each
$i\in\{1,..., m\}$, either 
\begin{itemize}
\item[{\rm (i)}] $\dd \sum_{j=1}^m \partial_jf_i(\mathbf{u^*})u^*_j<0$, or
\item[{\rm (ii)}] $\dd \sum_{j=1}^m \partial_jf_i(\mathbf{u^*})u^*_j=0$ and $f_i(u)$ is linear in $[\mathbf{u^*}-\epsilon_0\mathbf{1},\mathbf{u^*}]$
for some small $\epsilon_0>0$, where $\mathbf{1}=(1,..., 1)\in\R^m$.
\end{itemize}
\item[$\mathbf{(f_4)}$] The set $[\mathbf{0}, \mathbf{\hat u}]$ is invariant for 
\begin{align}\label{2.3}
U_t=D\circ\int_\R\mathbf{J} (x-y)\circ U(t,y) {\rm d}y-D\circ U+F(U) \mbox{ for } t>0,\; x\in \R, 
\end{align}
and the equilibrium ${\bf u^*}$ attracts all the nontrivial solutions 
in $[\mathbf{0}, \mathbf{\hat u}]$;
namely,  $U(t,x)\in [\mathbf{0}, \mathbf{\hat u}]$ for all $t>0, x\in\R$ if $U(0,x)\in [\mathbf{0}, \mathbf{\hat u}]$ for all $x\in\R$, and  $\lim_{t\to\infty} U(t,\cdot)={\bf u^*}$ in $L^\infty_{loc}(\R)$ if additionally $U(0,x)\not\equiv \mathbf{0}$.
\end{itemize}
\smallskip

In \eqref{2.3} we have used the convention that $d_i=0$ and $J_i\equiv 0$ for $m_0<i\leq m$, and
\[
D=(d_i),\; {\mathbf J}(x)=(J_i(x)).
\]
This convention will be used throughout the paper. 
\smallskip

The above assumptions on $F$ indicate that the system is cooperative in $[\mathbf{0},\mathbf{\hat u}]$, and of monostable type, with ${\bf u^*}$ the unique stable equilibrium of \eqref{2.3}, which is the global attractor of all the nontrivial nonnegative solutions of \eqref{2.3} in $[\mathbf{0},\mathbf{\hat u}]$. 
\smallskip

Problems \eqref{1.1} and \eqref{2.3} arise frequently in population and epidemic models. For example, if $m_0=m=1$, then \eqref{1.1} reduces to the Fisher-KPP model with free boundary considered in \cite{cdJFA, dlz}. With $m_0=m=2$, \eqref{1.1} contains the West Nile virus model in \cite{dn} as a special case, and with $(m_0,m)=(1,2)$, it covers the epidemic model in \cite{ZZLD}. In these special cases, it is known that the long-time dynamical behaviour of the 
solution to \eqref{1.1} exhibits a spreading-vanishing dichotomy. When spreading happens, one important question is to determine  the spreading speed. For the Fisher-KPP model in \cite{cdJFA}, it was shown recently in \cite{dlz} that the spreading speed may be finite or infinite, depending on whether a threshold condition is satisfied by the kernel function. For the epidemic models in \cite{dn} and \cite{ZZLD}, the spreading speed has not been determined. 

Problem \eqref{2.3} and its various special cases have been extensively studied in the literature; see, for example, \cite{AC, A-V, BZ, BCV, BGHP, cc2004, CDM, FT, G, HKLL, HMMV, LSW, SZ, WLS, XLL, XLR, yagi} for a very small sample. 

The purpose of this paper is to provide a unified treatment of  free boundary problems of the form \eqref{1.1}. Similar to the special cases mentioned in the last paragraph, it can be shown that \eqref{1.1} with initial data satisfying \eqref{1.2} and $U(0,x)\in [\mathbf{0},\mathbf{\hat u}]$ has a unique positive solution $(U(t,x), g(t), h(t))$ defined for all $t>0$. We say \underline{spreading} happens if, as $t\to\infty$,
\[
\mbox{ $(g(t), h(t))\to (-\infty,\infty)$ and $U(t,\cdot)\to {\bf u^*}$ component-wise in $L^\infty_{loc}(\R)$,}
\]
and we say \underline{vanishing} happens if
\[
\mbox{ $(g(t), h(t))\to (g_\infty, h_\infty)$ is a finite interval, and $\max_{x\in [g(t), h(t)]}|U(t,x)|\to 0$.}
\]

\subsection{Main results}
We aim to determine the spreading speed  when spreading happens for \eqref{1.1}. We will show that the spreading speed is finite \underline{if and only if} the following additional condition is satisfied by the kernel functions:
\begin{itemize}
	\item[{\rm $(\mathbf{J_1})$}:] \ \ \ \  $\dd \int^\infty_{0}\!\! x J_i(x) \rd x<\yy$ for every $i\in\{1,..., m_0\} $ such that $\mu_i>0$.
\end{itemize}
If $\mathbf{(J_1)}$ is not satisfied, then the spreading speed is infinite, namely \underline{accelerated spreading} happens. Let us note that if for some $i\in \{1,..., m_0\}$, $\mu_i=0$, then no restriction on $J_i$ is imposed by $\mathbf{(J_1)}$.

The proof of these conclusions rely on a complete understanding of the associated semi-wave problem to \eqref{1.1}, which consists of the following two equations \eqref{2.1a} and \eqref{2.2a} with unknowns $(c, \Phi(x))$:

\begin{equation}\label{2.1a}
\begin{cases}
\dd D\circ\int_{-\yy}^{0}\mathbf{J} (x-y)\circ \Phi(y) {\rm d}y-D\circ\Phi+c\Phi'(x)+F(\Phi(x))=0 \mbox{ for } -\yy<x<0,\\
\Phi(-\yy)=\mathbf{u}^*,\ \ \Phi(0)=\mathbf{0},
\end{cases}
\end{equation}
and 
\begin{align}\label{2.2a}
c= \sum_{i=1}^{m_0}\mu_i\int_{-\yy}^{0}\int_{0}^{\yy}J_i(x-y)\phi_i(x) {\rm d}y{\rm d}x,
\end{align}
where $D=(d_i)$, $\mathbf{J}=(J_i)$, $\Phi=(\phi_i)$  and ``$\circ$'' is the Hadamard product.
 
If $(c,\Phi)$ solves \eqref{2.1a}, we say that $\Phi$ is a \underline{semi-wave solution} to \eqref{2.3} with speed $c$. This is not to be confused with the semi-wave to \eqref{1.1}, for which the extra equation \eqref{2.2a} should be satisfied, yielding a semi-wave solution of \eqref{2.3} with a desired speed $c=c_0$, which determines the spreading speed of \eqref{1.1}.

To fully understand the semi-wave solutions to \eqref{2.3}, we also need to examine  the associated traveling wave problem for \eqref{2.3}.
A differentiable function $\Psi$ is called a \underline{traveling wave solution} of \eqref{2.3} with speed $c$ if $\Psi$  satisfies
\begin{equation}\label{TW}
\begin{cases}
\displaystyle D\circ\int_{-\yy}^{\yy}\mathbf{J} (x-y)\circ \Psi(y) {\rm d}y-D\circ\Psi+c\Psi'(x)+F(\Psi(x))={\bf 0} \mbox{ for } 
x\in\R,\\
\Psi(-\yy)=\mathbf{u}^*,\ \ \Psi(\yy)=\mathbf{0}.
\end{cases}
\end{equation}

We are interested in semi-waves and traveling waves which are \underline{monotone and with positive speed}.
It turns out that for any fixed speed $c>0$, either a semi-wave or traveling wave exists, but not both.
 More precisely, the following dichotomy holds:

\begin{theorem}\label{prop2.1}
	Suppose $\mathbf{(J)}$ and  $\mathbf{(f_1)-(f_4)}$  hold. Then	
	 there exists $C_*\in (0,\yy]$ such that 
		  \eqref{2.1a} has   a monotone solution  if and only if  $0<c<C_*$,
			and \eqref{TW} has a monotone solution if and only if  $c\geq C_*$.
			\end{theorem}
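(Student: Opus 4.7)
The plan is to introduce the linearized dispersion relation of \eqref{2.3} at $\mathbf{0}$, use it to pin down $C_*$, and then prove the two halves of the dichotomy by complementary constructions. The ansatz $\Psi(\xi)=e^{-\lambda \xi}\eta$ inserted in the linearization of \eqref{TW} at $\mathbf{0}$ leads to the matrix
\[
M(c,\lambda) := \nabla F(\mathbf{0}) + \operatorname{diag}\bigl(d_i[\widehat J_i(\lambda)-1] - c\lambda\bigr)_{1\leq i\leq m},\qquad \widehat J_i(\lambda):=\int_{\R} J_i(y) e^{\lambda y}\,\rd y,
\]
with the convention $d_i=0$, $J_i\equiv 0$ for $i>m_0$. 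By $\mathbf{(f_1)}$(ii,iii), $M(c,\lambda)$ is cooperative and irreducible wherever it is defined, so it admits a principal eigenvalue $\lambda_1(c,\lambda)$. Standard monotonicity/convexity arguments give $\lambda_1(c,\cdot)$ convex in $\lambda$, strictly decreasing in $c$, and $\lambda_1(c,0)>0$; these together allow one to define
\[
C_* := \inf\bigl\{c>0:\ \lambda_1(c,\lambda)=0 \text{ for some }\lambda>0\bigr\}\in(0,\yy],
\]
as the candidate threshold speed, with $C_*=\yy$ exactly when no finite $c$ makes the characteristic equation have a positive real root (the heavy-tail case).

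For $c\geq C_*$ I would construct a monotone traveling wave of \eqref{TW} via super/sub-solutions and monotone iteration. When $c>C_*$, pick $\lambda>0$ with $\lambda_1(c,\lambda)<0$ and corresponding positive eigenvector $\eta\ggs\mathbf{0}$; then $\min\{\mathbf{u}^*, e^{-\lambda \xi}\eta\}$ is a super-solution, while $\max\{\mathbf{0}, e^{-\lambda \xi}\eta - M e^{-(\lambda+\delta)\xi}\eta'\}$ is a sub-solution after tuning $\delta,M$ by means of $\mathbf{(f_2)}$. Monotone iteration on expanding intervals $[-L,L]$, passage to the limit, and use of $\mathbf{(f_4)}$ to identify the end states at $\pm\yy$ as $\mathbf{u}^*$ and $\mathbf{0}$, yield $\Psi$. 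The critical case $c=C_*$ is obtained by letting $c_n\downarrow C_*$ and normalizing by translation so the limit does not collapse to a constant.

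For $0<c<C_*$ I would solve the truncated semi-wave problem on $[-L,0]$ with $\Phi_L(-L)=\mathbf{u}^*$ and $\Phi_L(0)=\mathbf{0}$ by monotone iteration, using $\mathbf{u}^*$ itself as a super-solution and a small exponential multiple as a positive sub-solution on $[-L,0]$ (the absence of a positive real root of $\lambda_1(c,\cdot)$ below $C_*$ is no obstruction here, since no decay at $+\yy$ is required on the half-line). Uniform $C^1$ estimates, obtained directly by solving for $c\Phi_L'(x)$ in \eqref{2.1a}, give compactness, so $\Phi_L\to\Phi$ along a subsequence as $L\to\yy$ produces a monotone solution of \eqref{2.1a} on $(-\yy,0)$. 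The boundary value $\Phi(-\yy)=\mathbf{u}^*$ then follows from $\mathbf{(f_4)}$ applied to the time-dependent solution of \eqref{2.3} generated from $\Phi$, extended suitably, as initial data.

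Mutual exclusivity is handled by two complementary arguments. A hypothetical monotone traveling wave at speed $c<C_*$ would, through its monotone decay at $+\yy$ combined with a tauberian-type analysis of the nonlocal linearized equation, force an exponential rate $\lambda>0$ satisfying $\lambda_1(c,\lambda)\leq 0$, contradicting the definition of $C_*$. Conversely, a monotone semi-wave $\Phi$ at speed $c\geq C_*$ can be slid against the traveling wave $\Psi$ at the same speed: the smallest translation $\tau^*$ with $\Phi(\cdot)\preceq \Psi(\cdot+\tau^*)$ on $(-\yy,0]$ must be finite and yield an interior touching, since $\Phi(0)=\mathbf{0}\prec\Psi(\tau^*)$ rules out contact at the endpoint; then the strong maximum principle for the cooperative linearized operator forces $\Phi\equiv\Psi(\cdot+\tau^*)$, contradicting $\Phi(0)=\mathbf{0}$ versus $\Psi\ggs\mathbf{0}$ on $\R$. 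I expect the main obstacle to be the semi-wave construction for $c$ just below $C_*$: spectral information becomes marginal there, so one must choose the exponential sub-solution with care and control $\Phi_L$ uniformly away from $\mathbf{0}$ on compacta while preserving monotonicity in the passage $L\to\yy$, a step that interacts delicately with the nonlocal diffusion structure and, when some $d_i=0$, with the degeneracy in the last $m-m_0$ equations, where $\mathbf{(f_1)}$(iv) is the key replacement for parabolic regularity.
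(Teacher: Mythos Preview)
Your route is genuinely different from the paper's, and the main differences are worth spelling out. The paper does \emph{not} define $C_*$ through the dispersion relation; instead it introduces a perturbed half-line problem (equation~\eqref{2.1}) with boundary value $\Phi(x)=\bm{\delta}=\epsilon\Theta$ for $x\geq 0$, solves it by monotone iteration from the constant $\bm{\delta}$, proves monotonicity in both $c$ and $\epsilon$, and then lets $\epsilon\to 0$. After a level-set normalization, the limit is either a semi-wave (if the normalized profiles stay put) or a traveling wave (if they escape to $-\infty$); monotonicity in $c$ then yields the threshold $C_*$ with no spectral input. Assumption $\mathbf{(f_4)}$ is used only to exclude the degenerate scenario ``traveling wave for every $c>0$'' by comparing the Cauchy problem with $\Psi^{c_n}(x-c_n t)$ as $c_n\downarrow 0$. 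Mutual exclusivity at a fixed $c$ is handled by a sliding argument essentially the same as yours.

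Your plan can likely be carried through under $\mathbf{(J_2)}$, but it has a real gap in the heavy-tailed regime. Your ``no traveling wave for $c<C_*$'' step asserts that any monotone wave has exponential decay at $+\infty$ via a tauberian argument; however, the standard Carr--Chmaj/Ikehara machinery requires at least $\int_\R |x|J_i(x)\,\rd x<\infty$ to get integrability of $\psi_i$ on $[0,\infty)$ (this is Lemma~\ref{lemma2.13} in the paper). When even the first moment is infinite you cannot run that argument directly, and the paper has to bypass it by a kernel-truncation trick (replace $J_i$ by $K_i(x)=\min\{1,M/|x|\}J_i(x)$, compare the perturbed problems, and show a traveling wave with kernel $\mathbf{J}$ forces one with kernel $\mathbf{K}$, which does have finite first moment). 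Without something equivalent, your exclusion of traveling waves when $C_*=\infty$ is incomplete.

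There is also a softer gap in your semi-wave construction. On the truncated interval $[-L,0]$ you need a nontrivial sub-solution that is uniform in $L$; an ``exponential multiple'' is not the natural object here because for $c<C_*$ the characteristic equation $\lambda_1(c,\lambda)=0$ has no root, so the linear ansatz does not produce a sub-solution and you give no alternative mechanism to keep $\Phi_L$ bounded away from $\mathbf{0}$ on compacta. This is exactly the difficulty the paper's perturbation $\Phi(x)=\bm{\delta}$ for $x\geq 0$ is designed to resolve: the constant $\bm{\delta}$ itself is the sub-solution, the iterates increase from it, and the limit $\bm{\delta}\to 0$ is controlled by the level-set normalization. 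The paper's device is both simpler and robust to all kernel tails, which is why it does not need to identify $C_*$ spectrally at this stage.
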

	Therefore a monotone traveling wave with some positive speed $c$ exists if and only if $C_*<\infty$. We will show that $C_*<\infty$ if and only if  the following condition is satisfied by the kernel functions:
	
	\begin{itemize}
	\item[{\rm $(\mathbf{J_2})$}:] \ \ \ \  $\dd 
	\int_0^\infty \!\! e^{\lambda x}J_i(x)dx <\yy\ \ \ {\rm for\ some }\ \lambda>0$ and every $i\in\{1,..., m_0\}$.
	\end{itemize}
	\smallskip
\noindent	

We have the following refinements of the conclusions in Theorem \ref{prop2.1}.

\begin{theorem}\label{prop2.3}
	Under  the conditions of Theorem  \ref{prop2.1}, the following hold:
	\begin{itemize}
		\item[{\rm (i)}] For $0<c<C_*$, \eqref{2.1a} has a unique monotone solution $\Phi^c=(\phi_i^c)$, and
		\begin{align*}
		\lim_{c\,\nearrow\, C_*} \Phi^c(x)={\bf 0} \ {\rm locally\ uniformly\ in}\ (-\yy,0].
		\end{align*}
		  \item[{\rm (ii)}] $C_*\not=\infty$ if and only if $\mathbf{(J_2)}$ holds.
		\item[{\rm (iii)}] The system \eqref{2.1a}-\eqref{2.2a} has a solution pair $(c,\Phi)$  with  $\Phi(x)$ monotone
		if and only if $\mathbf{(J_1)}$ holds. 
		And when $\mathbf{(J_1)}$ holds,  there exists a unique $c_0\in (0, C_*)$ such that $(c,\Phi)=(c_0,\Phi^{c_0})$ solves
		 \eqref{2.1a} and \eqref{2.2a}. 
	\end{itemize}
\end{theorem}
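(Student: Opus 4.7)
The plan is to prove (i), (ii), (iii) in order, exploiting the cooperative structure, the sliding method, and a characteristic-equation analysis at the linearization. For part (i), I would first prove uniqueness of the monotone semi-wave $\Phi^c$ at each $c\in(0,C_*)$ by sliding. Given two monotone solutions $\Phi^c_1,\Phi^c_2$ of \eqref{2.1a}, since $\Phi^c_j(-\infty)=\mathbf{u^*}$ and $\Phi^c_j(0)=\mathbf{0}$, for $\tau>0$ large the translate $\Phi^c_2(\cdot-\tau)$ dominates $\Phi^c_1$ componentwise on $(-\infty,0]$. Decrease $\tau$ to its infimum $\tau_0\ge 0$ of admissible shifts. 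If $\tau_0>0$, a first componentwise touching point $x_0<0$ exists; by the cooperative strong maximum principle (propagated across all components via the irreducibility in $(f_1)(iii)$ and the coupling in $(f_1)(iv)$ when $m_0<m$), $\Phi^c_2(\cdot-\tau_0)\equiv \Phi^c_1$ on $(-\infty,0]$, but then $\Phi^c_2(-\tau_0)=\Phi^c_1(0)=\mathbf{0}$ while $\Phi^c_2(-\tau_0)\ggs\mathbf{0}$, a contradiction. For the limit as $c\nearrow C_*$, argue by contradiction: if $\Phi^{c_n}$ does not converge to $\mathbf{0}$ locally uniformly along some $c_n\nearrow C_*$, monotonicity in $x$, the uniform bound by $\mathbf{u^*}$, and the equation yield a Helly-type subsequence converging to a nonzero monotone limit $\Phi^*$ satisfying \eqref{2.1a} at $c=C_*$ with $\Phi^*(0)=\mathbf{0}$; the strong maximum principle then gives $\Phi^*(-\infty)=\mathbf{u^*}$, producing a monotone semi-wave at $C_*$ and contradicting Theorem \ref{prop2.1}.

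For part (ii), linearize \eqref{TW} at $\Psi=\mathbf{0}$ and seek $\Psi(x)\sim e^{-\lambda x}v$ with $v\ggs\mathbf{0}$. This leads to the characteristic matrix
\begin{equation*}
M(c,\lambda):=\mathrm{diag}\bigl(d_i(\hat J_i(\lambda)-1)-c\lambda\bigr)+\nabla F(\mathbf{0}),\qquad \hat J_i(\lambda):=\int_{\R} J_i(z)e^{\lambda z}dz,
\end{equation*}
and traveling-wave existence corresponds to the principal eigenvalue $\Lambda(c,\lambda)$ of $M(c,\lambda)$ vanishing for some $\lambda>0$. Assuming $(J_2)$, $\hat J_i(\lambda)$ is finite and convex on a right-neighborhood of $0$, $\Lambda(c,0)>0$ by $(f_1)(iii)$, and for large $c$ the minimum of $\Lambda(c,\cdot)$ becomes $\le 0$; a monotone iteration with exponential super- and sub-solutions at this $c$ then delivers a monotone traveling wave, so $C_*<\infty$. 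Conversely, if $(J_2)$ fails for some $i\le m_0$, a Laplace-transform / Paley–Wiener argument on the tail of any putative monotone traveling wave forces $\hat J_i(\lambda_0)<\infty$ for some $\lambda_0>0$, a contradiction; hence no traveling wave exists for any $c>0$ and $C_*=\infty$.

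For part (iii), set
\begin{equation*}
\mathcal{G}(c):=\sum_{i=1}^{m_0}\mu_i\int_{-\infty}^{0}\phi_i^c(x)\Bigl(\int_{-x}^{\infty} J_i(z)dz\Bigr)dx,
\end{equation*}
which by Fubini equals the right-hand side of \eqref{2.2a} evaluated at $\Phi^c$. Since $\phi_i^c(x)\to u_i^*>0$ as $x\to-\infty$, a change of order of integration gives, for any $i$ with $\mu_i>0$, $\mathcal{G}(c)\ge C\int_0^\infty zJ_i(z)dz$, so $\mathcal{G}(c)=\infty$ when $(J_1)$ fails, precluding \eqref{2.2a}. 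Assuming $(J_1)$, $\mathcal{G}$ is finite, continuous in $c$ (by continuous dependence of $\Phi^c$ on $c$ and dominated convergence with dominant $u_i^*\int_{-x}^\infty J_i$), satisfies $\mathcal{G}(c)\to 0$ as $c\nearrow C_*$ (using part (i) pointwise and dominated convergence; when $C_*=\infty$, a comparison with an exponentially decaying supersolution for large $c$ plays the same role), while $\mathcal{G}(0^+)=\sum\mu_i u_i^*\int_0^\infty zJ_i(z)dz>0$ (the zero-speed limit of $\Phi^c$ being $\mathbf{u^*}$ on $(-\infty,0)$ by the strong maximum principle). The intermediate value theorem then produces $c_0\in(0,C_*)$ with $\mathcal{G}(c_0)=c_0$. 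Uniqueness follows from strict monotonicity of $\Phi^c$ in $c$—established by a sliding comparison between $\Phi^{c_1}$ and $\Phi^{c_2}$ for $c_1<c_2$—which renders $\mathcal{G}$ strictly decreasing, and hence makes $c\mapsto c-\mathcal{G}(c)$ strictly increasing with a unique zero.

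The hard part will be the sliding/comparison machinery underlying uniqueness of $\Phi^c$ and its strict monotonicity in $c$, particularly in the degenerate regime $m_0<m$ where some components carry no diffusion: the cooperative strong maximum principle must propagate from the diffusing components to the non-diffusing ones through $(f_1)(iv)$, and must be coupled with $(f_2)$ whenever rescaling is invoked. Once this is secured, the remaining limit statements reduce to dominated convergence and the intermediate value theorem, both handled routinely under $(J_1)$.
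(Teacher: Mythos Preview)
Your overall architecture for (ii) and (iii) is close to the paper's, but the sliding argument you propose for uniqueness and for strict monotonicity of $\Phi^c$ in $c$ has a genuine gap at the unbounded end. You claim that for $\tau>0$ large, $\Phi^c_2(\cdot-\tau)\succeq\Phi^c_1$ on $(-\infty,0]$, but both functions tend to $\mathbf{u^*}$ as $x\to-\infty$ at rates you have not controlled, so there is no reason a finite shift produces global ordering; the sliding can fail to start. The paper avoids translation altogether and uses a \emph{scaling} argument: set $p:=\inf\{\rho\ge 1:\rho\,\Phi^{(1)}\succeq\Phi^{(2)}\}$, which is finite because near $-\infty$ any $\rho>1$ works, and near $0$ the ratio $\phi^{(2)}_i/\phi^{(1)}_i$ is bounded thanks to the explicit slopes $(\phi_i^{(k)})'(0^-)$ (and, for $i>m_0$, the second derivatives). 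If $p>1$, the sublinearity condition $\mathbf{(f_2)}$ turns $p^{-1}\Phi^{(2)}$ into a subsolution and the strong maximum principle (Lemma~\ref{lemma2.7a}, with the ODE argument through $\mathbf{(f_1)}$(iv) for non-diffusing components) rules out interior touching, yielding a contradiction. The same scaling idea, not sliding, is what underlies the paper's strict monotonicity in $c$; there the ordering $\Phi^{c_1}\succeq\Phi^{c_2}$ for $c_1<c_2$ is obtained from the perturbed problem (Lemma~\ref{lemma2.3}) and passed to the limit, after which Lemma~\ref{lemma2.7a} upgrades it to strict. Your scheme omits $\mathbf{(f_2)}$ entirely, and without it uniqueness of monostable semi-waves is generally not available.

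Two smaller points. In (ii), the ``only if'' direction needs the traveling wave to decay exponentially before any Laplace/Paley--Wiener step; the paper gets this from the Carr--Chmaj estimate (Lemma~\ref{lemma2.13}), which itself requires $\int|x|J_i<\infty$. When $\mathbf{(J_2)}$ fails this moment condition may fail too, so the paper first replaces $J_i$ by truncated kernels $K_i$ that satisfy the first-moment condition but still violate $\mathbf{(J_2)}$, produces a traveling wave for the truncated problem via comparison with the perturbed construction, and only then runs the Laplace argument; your sketch skips this truncation and would stall without it. In (iii), you assert $\mathcal{G}(0^+)=\sum\mu_i u_i^*\int_0^\infty zJ_i$, which presumes $\Phi^c\to\mathbf{u^*}$ on $(-\infty,0)$ as $c\to 0^+$; this is not a consequence of the strong maximum principle and is not proved. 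The paper instead just uses monotonicity of $M(c)$ to note $c-M(c)\le c-M(C_*/2)<0$ for small $c$, which is enough for the intermediate value argument.
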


It is easily checked that $\mathbf{(J_2)}$ implies $\mathbf{(J_1)}$, but if $J_i(x)=\xi_i (1+|x|)^{-\eta_i}$ with $\xi_i>0$, $\eta_i>2$ for $1\leq i\leq m_0$, then $\mathbf{(J_1)}$ holds but $\mathbf{(J_2)}$ does not.
\smallskip

We are now able to state our first result on the spreading speed of \eqref{1.1}.

\begin{theorem}\label{theorem1.3}
	Suppose  the conditions in Theorem  \ref{prop2.1} are satisfied,
	   $(U,g,h)$ is a solution of \eqref{1.1} with $U(0,x)\in [\mathbf{0}, \mathbf{\hat u}]$, and spreading happens. Then the following conclusions hold for the spreading speed:
	  \begin{itemize}
	  	\item[{\rm (i)}] If  $\mathbf{(J_1)}$ is satisfied, then the spreading speed is finite, and is determined by
	  	 \begin{align*}
	  		-\lim_{t\to\yy}\frac{g(t)}{t}=\lim_{t\to\yy} \frac{h(t)}{t}=c_0 \mbox{ with $c_0$ given in Theorem \ref{prop2.3} (iii).}
	  		\end{align*}
	\item[{\rm (ii)}]  If $\mathbf{(J_1)}$ is not satisfied, then accelerated spreading happens, namely	
		  		  	\begin{align*}
	  		  	-\lim_{t\to\yy}\frac{g(t)}{t}=\lim_{t\to\yy} \frac{h(t)}{t}=\yy.
	  		  	\end{align*}
	  \end{itemize}
	 
\end{theorem}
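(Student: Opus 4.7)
The plan is to prove both parts by comparison with super- and subsolutions of \eqref{1.1} built from the semi-wave profiles supplied by Theorem \ref{prop2.3}, invoking a comparison principle for \eqref{1.1} of the type developed in \cite{cdJFA,dlz,dn}. Let $G(c):=\sum_{i=1}^{m_0}\mu_i\int_{-\infty}^0\int_0^\infty J_i(x-y)\phi_i^c(x)\,\rd y\,\rd x$ whenever $\Phi^c=(\phi_i^c)$ exists, so $G(c)$ is the right-hand side of \eqref{2.2a} evaluated at $\Phi^c$; Theorem \ref{prop2.3}(iii) reads $G(c_0)=c_0$ and identifies $c_0$ as the unique solution in $(0,C_*)$ under $(\mathbf{J_1})$.

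For the upper bound in (i), take the exact semi-wave $(c_0,\Phi^{c_0})$, choose $X_0\gg h_0$ so that $\Phi^{c_0}(x-X_0)\succeq U(0,x)$ on $[-h_0,h_0]$ (possible since $\Phi^{c_0}(-\infty)=\mathbf{u^*}$), and set $\bar h(t):=c_0t+X_0$, $\bar g(t):=-c_0t-X_0$, with $\bar U(t,x)$ the componentwise minimum of $\Phi^{c_0}(x-\bar h(t))$ and $\Phi^{c_0}(\bar g(t)-x)$ on $[\bar g(t),\bar h(t)]$. Each of the two profiles satisfies \eqref{2.1a}--\eqref{2.2a} with equality on its respective half, and because the componentwise minimum of supersolutions of a cooperative nonlocal system is again a supersolution, $(\bar U,\bar g,\bar h)$ is a supersolution of \eqref{1.1}; comparison yields $h(t)\le c_0t+X_0$ and $g(t)\ge-c_0t-X_0$, so $\limsup h(t)/t\le c_0$ and $\liminf g(t)/t\ge-c_0$. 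For the matching lower bound, fix $\epsilon\in(0,c_0)$ and use $\Phi^{c_0-\epsilon}$: the uniqueness of $c_0$ as the zero of $c\mapsto G(c)-c$, combined with $\Phi^c\to\mathbf{0}$ as $c\nearrow C_*$ from Theorem \ref{prop2.3}(i), forces $G(c_0-\epsilon)>c_0-\epsilon$, so one can choose $L>0$ with $\sum_{i=1}^{m_0}\mu_i\int_{-L}^0\int_0^\infty J_i(x-y)\phi_i^{c_0-\epsilon}(x)\,\rd y\,\rd x>c_0-\epsilon$. Since spreading happens, for $T$ large and $\eta\in(0,1)$ small one has $U(T,\cdot)\succeq(1-\eta)\Phi^{c_0-\epsilon}(\cdot-2L)$ on $[-2L,2L]\subset(g(T),h(T))$. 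Setting $\underline h(t):=(c_0-\epsilon)(t-T)+2L$, $\underline g(t):=-\underline h(t)$ and $\underline U(t,x):=(1-\eta)\Phi^{c_0-\epsilon}(x-\underline h(t))$ on $[\underline g(t),\underline h(t)]$, assumption $(\mathbf{f_2})$ (the sublinear inequality $F(kU)\succeq kF(U)$) makes $\underline U$ a subsolution of the integro-differential equations, while the choice of $L$ and (after further shrinking) $\eta$ makes $\underline h'(t)=c_0-\epsilon$ no greater than the truncated right-hand side of the free boundary equation scaled by $1-\eta$. Comparison gives $h(t)\ge\underline h(t)$, hence $\liminf h(t)/t\ge c_0-\epsilon$; letting $\epsilon\to0$ completes (i).

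For (ii), the failure of $(\mathbf{J_1})$ means $\int_0^\infty zJ_i(z)\,\rd z=\infty$ for some $i\in\{1,\ldots,m_0\}$ with $\mu_i>0$; since $\phi_i^c(x)\to u_i^*>0$ as $x\to-\infty$ and the inner integral $\int_0^\infty J_i(x-y)\,\rd y=\int_{|x|}^\infty J_i(z)\,\rd z$, Fubini forces $G(c)=+\infty$ for every $c>0$. Moreover $(\mathbf{J_2})$ also fails, so $C_*=\infty$ by Theorem \ref{prop2.3}(ii) and $\Phi^c$ exists for every $c>0$. For any fixed $c>0$, the subsolution construction of (i) applies verbatim with $c$ in place of $c_0-\epsilon$: the truncated version of the divergent integral can be made larger than $c$ by choosing $L$ large, and the rest of the argument proceeds unchanged to yield $\liminf h(t)/t\ge c$; arbitrariness of $c$ gives $h(t)/t\to\infty$ and, symmetrically, $g(t)/t\to-\infty$. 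The main obstacle throughout is the rigorous verification of the integro-differential subsolution/supersolution inequalities for the moving-front profiles, in particular controlling the loss of the convolution tail outside the truncation interval uniformly in $t$ and absorbing it into the strict inequality $G(c)>c$ (or $G(c)=\infty$) via the parameters $L,\eta$ and the structural assumption $(\mathbf{f_2})$; this is the nonlocal, systems-level analogue of the scalar construction in \cite{dlz}.
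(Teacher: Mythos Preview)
Your overall strategy is right, but both barrier constructions have gaps that the comparison principle will not forgive.

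\textbf{Upper bound in (i).} You assert one can pick $X_0$ so that $\Phi^{c_0}(x-X_0)\succeq U(0,x)$ on $[-h_0,h_0]$. But $\Phi^{c_0}\preceq\mathbf{u^*}$ everywhere, whereas the hypothesis only gives $U(0,\cdot)\in[\mathbf{0},\hat{\mathbf{u}}]$ with $\hat{\mathbf{u}}\ggs\mathbf{u^*}$ allowed. So in general no such $X_0$ exists. The paper handles this by first using $(\mathbf{f_4})$ and an ODE comparison to find $t_0$ with $U(t+t_0,\cdot)\preceq(1+\epsilon/2)\mathbf{u^*}$, and then takes $\bar U(t,x)=(1+\epsilon)\Phi^{c_0}(x-\bar h(t))$ with $\bar h(t)=(c_0+\delta)t+K$ (Lemma~\ref{lemma3.3}). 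The $(1+\epsilon)$ scaling is essential for the initial ordering, and $(\mathbf{f_2})$ is what makes the scaled profile a supersolution. The paper also avoids your min-of-two-profiles device by invoking the one-sided comparison of Lemma~\ref{lemma3.2}(i), which lets the left boundary simply equal the true $g(t)$.

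\textbf{Lower bound in (i), and all of (ii).} This is the more serious problem. Your subsolution $\underline U(t,x)=(1-\eta)\Phi^{c_0-\epsilon}(x-\underline h(t))$ on $[\underline g(t),\underline h(t)]$ with $\underline g(t)=-\underline h(t)$ satisfies $\underline U(t,\underline h(t))=\mathbf{0}$ at the right boundary, but at the left boundary
\[
\underline U(t,\underline g(t))=(1-\eta)\Phi^{c_0-\epsilon}(-2\underline h(t))\to(1-\eta)\mathbf{u^*}\ggs\mathbf{0},
\]
so the required condition $\underline U(t,\underline g(t))\preceq\mathbf{0}$ in Lemma~\ref{lemma3.2}(ii) fails, and the comparison principle does not apply. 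You also cannot verify the differential inequality for $\underline g'(t)$ with this profile. The paper's fix (Lemma~\ref{lemma3.4}) is to use the symmetric two-front combination
\[
\underline U(t,x)=(1-\epsilon)\bigl[\Phi(x-\underline h(t))+\Phi(-x-\underline h(t))-\mathbf{u^*}\bigr],
\]
which evaluates to $(1-\epsilon)[\Phi(-2\underline h(t))-\mathbf{u^*}]\preceq\mathbf{0}$ at both endpoints. Verifying that this is a subsolution then requires a genuinely delicate estimate of the interaction term $F(\Phi(\cdot))+F(\Phi(\cdot))-F(\underline U)$ in three regimes (near each front and in the interior), invoking $(\mathbf{f_3})$ for the interior and Lipschitz bounds near the fronts; this is the real work in Step~2 of that lemma and is not addressed in your sketch.

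For part (ii), since your lower-bound machinery is broken, the argument ``applies verbatim with $c$ in place of $c_0-\epsilon$'' does not go through. The paper takes a different route entirely: it truncates the kernels themselves to $J_i^n(x)$ satisfying $(\mathbf{J_1})$ but not $(\mathbf{J_2})$, obtains the corresponding semi-wave speed $c^n$, shows $c^n\to\infty$ by a contradiction argument (Lemma~\ref{lem3.8}), and then compares \eqref{1.1} with the truncated-kernel problem (Lemmas~\ref{lem3.5}--\ref{lem3.7}), reusing the two-front subsolution for each $n$.
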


Under further conditions on $F$ and the kernel functions, the conclusions in Theorem \ref{theorem1.3} can be sharpened. For $\alpha>0$, we introduce the condition

\begin{itemize}
	\item[{\rm $(\mathbf{J^\alpha})$}:] \ \ \ \  $\dd 
	\int_0^\infty \!\! x^\alpha J_i(x)dx <\yy$\ \ \ {\rm for\  every }  $i\in\{1,..., m_0\}$.
	\end{itemize}
Let us note that $\mathbf{(J^1)}$ implies $\mathbf{(J_1)}$, but unless $\mu_i>0$ for every $i\in\{1,..., m_0\}$, $\mathbf{(J_1)}$ does not imply $\mathbf{(J^1)}$. On the other hand, if $\mathbf{(J_2)}$ holds, then $\mathbf{(J^\alpha)}$ is satisfied for all $\alpha>0$.

\begin{theorem}\label{theorem1.4}
In Theorem \ref{theorem1.3}, suppose additionally $\mathbf{(J^\alpha)}$ holds for some $\alpha\geq 2$, $F$ is $C^2$ and   
  $\mathbf{u}^*[\nabla F(\mathbf{u}^*)]^T  \llp \mathbf{0}$.  Then there exist positive constants $\theta$, $C$ and $t_0$ such that, for all $t>t_0$
  and  $ x\in[g(t), h(t)]$,
\[
| h(t)-c_0t|+| g(t)+c_0t|\leq C,
\]
\[\begin{cases}
U(t,x)\succeq [1-\epsilon(t)]\big[\Phi^{c_0}(x-c_0t+C)+\Phi^{c_0}(-x-c_0t+C)-\mathbf{u}^*\big] ,\\
U(t,x)\preceq [1+\epsilon(t)]\min\big\{\Phi^{c_0}(x-c_0t-C),\; \Phi^{c_0}(-x-c_0t-C)\big\},
\end{cases} 
\]
where $\epsilon(t):=(t+\theta)^{-\alpha}$, and $(c_0, \Phi^{c_0})$ is the unique pair solving 
		 \eqref{2.1a} and \eqref{2.2a} obtained in Theorem \ref{prop2.3} (iii), with $\Phi^{c_0}(x)$ extended by {\bf 0} for $x>0$.
\end{theorem}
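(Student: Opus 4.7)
\begin{refproof}[Proof proposal]
The plan is to establish both estimates simultaneously by constructing sharp upper and lower solutions to \eqref{1.1} built from the semi-wave profile $\Phi^{c_0}$, and then invoking the comparison principle for cooperative nonlocal free boundary systems. Theorem \ref{theorem1.3}(i) already supplies $h(t)/t\to c_0$ and $g(t)/t\to -c_0$, but not the $O(1)$ deviation; this bound will come out together with the pointwise profile estimate. A crucial preliminary is the algebraic tail decay of the semi-wave,
\[
|\Phi^{c_0}(x)-\mathbf{u}^*|\le M|x|^{-\alpha}\quad\text{as }x\to-\infty,
\]
proved by linearising \eqref{2.1a} around $\mathbf{u}^*$: the invertibility of $\nabla F(\mathbf{u}^*)$ together with $\mathbf{u}^*[\nabla F(\mathbf{u}^*)]^T\llp\mathbf{0}$ provides a strict dissipation margin which, in a weighted-norm fixed-point argument, forces $\Phi^{c_0}-\mathbf{u}^*$ to decay at the same rate as the integrated tail $\int_{|x|}^\infty J_i(r)\,dr=O(|x|^{-\alpha})$ guaranteed by $\mathbf{(J^\alpha)}$.

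For the upper bound, with $C$, $K$, $\theta$ positive constants to be chosen large, set
\[
\bar h(t):=c_0 t+C-K\epsilon(t),\qquad \bar g(t):=-c_0 t-C+K\epsilon(t),
\]
and, extending $\Phi^{c_0}$ by $\mathbf{0}$ on $(0,\infty)$,
\[
\bar U(t,x):=[1+\epsilon(t)]\min\bigl\{\Phi^{c_0}(x-\bar h(t)),\ \Phi^{c_0}(-x+\bar g(t))\bigr\}.
\]
Substituting into the $i$-th PDE $(i\le m_0)$ and using the identity satisfied by $\Phi^{c_0}$, the residual splits into four contributions: (a) the tail of $J_i$ generated by truncating $\mathcal{L}_i$ to $(\bar g(t),\bar h(t))$, of size $O(\epsilon(t))$ by $\mathbf{(J^\alpha)}$; (b) an $\epsilon'(t)=-\alpha\epsilon(t)/(t+\theta)$ contribution of size $o(\epsilon(t))$; (c) the $K\alpha(t+\theta)^{-\alpha-1}$ drift of $\bar h$, producing a negative multiple of the strictly negative $(\Phi^{c_0})'$; and (d) the nonlinear defect $F([1+\epsilon]\Phi^{c_0})-[1+\epsilon]F(\Phi^{c_0})$, which by $F\in C^2$ and $\mathbf{u}^*[\nabla F(\mathbf{u}^*)]^T\llp\mathbf{0}$ is strictly negative of order $\epsilon(t)\mathbf{u}^*$ wherever $\bar U$ is close to $\mathbf{u}^*$, absorbing (a) and (b) once $K$, $\theta$ are large enough. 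The components $i>m_0$ are handled analogously using $\mathbf{(f_1)}$(iv). The free-boundary inequality $\bar h'(t)\ge \sum\mu_i\iint J_i\,\bar u_i$ follows from \eqref{2.2a} plus an $O(\epsilon(t))$ remainder absorbed by the $K\alpha(t+\theta)^{-\alpha-1}$ speed excess built into $\bar h$.

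The lower solution is built with the sum-minus-$\mathbf{u}^*$ ansatz
\[
\underline U(t,x):=[1-\epsilon(t)]\bigl[\Phi^{c_0}(x-\underline h(t))+\Phi^{c_0}(-x+\underline g(t))-\mathbf{u}^*\bigr]_+
\]
on $[\underline g(t),\underline h(t)]$, with $\underline h(t):=c_0 t-C+K\epsilon(t)$ and $\underline g(t):=-c_0 t+C-K\epsilon(t)$; this vanishes at the moving fronts and equals $\mathbf{u}^*$ in the bulk. The subsolution verification mirrors the upper case, but now the two semi-wave pieces are simultaneously close to $\mathbf{u}^*$ on the bulk, and the second-order Taylor expansion of $F$ at $\mathbf{u}^*$ generates an interaction term bounded by the product $|\Phi^{c_0}(\cdot-\underline h(t))-\mathbf{u}^*|\cdot|\Phi^{c_0}(-\cdot+\underline g(t))-\mathbf{u}^*|$, which by the preliminary decay rate is $O(t^{-2\alpha})=o(\epsilon(t))$ since $\alpha\ge 2$. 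Choosing $t_0$ and $C$ so that $\underline U(t_0,\cdot)\preceq U(t_0,\cdot)\preceq \bar U(t_0,\cdot)$ and the fronts are ordered (achievable by Theorem \ref{theorem1.3}(i) after an intermediate crude refinement $|h(t)-c_0 t|=o(t)$ produced by a rougher version of the same construction), the comparison principle yields both stated inequalities simultaneously with $|h(t)-c_0 t|+|g(t)+c_0 t|\le C$.

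The main obstacle is the subsolution verification in the bulk, where the sum-minus-$\mathbf{u}^*$ construction does not naturally respect the nonlinearity and the quadratic interaction of the two semi-wave parts must be dominated by the linear stability margin $\mathbf{u}^*[\nabla F(\mathbf{u}^*)]^T\llp\mathbf{0}$. This is exactly where $\alpha\ge 2$ and $F\in C^2$ are both essential: the former makes the interaction $o(\epsilon(t))$, the latter provides the Taylor bound on $F(\Phi_1+\Phi_2-\mathbf{u}^*)-F(\Phi_1)-F(\Phi_2)+F(\mathbf{u}^*)$. A secondary difficulty is upgrading the qualitative convergence in Theorem \ref{theorem1.3}(i) to a quantitative rate sufficient to initialise the comparison at $t_0$; this is done with a coarser version of the same ansatz using a time-decaying correction slower than $\epsilon(t)$.
\end{refproof}
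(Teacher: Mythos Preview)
Your strategy—semi-wave based upper and lower solutions plus comparison—is the paper's strategy, and your lower ansatz is the right one. But the construction has a concrete gap and a misdiagnosis.

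\textbf{Free-boundary speed correction is of the wrong order.} With $\bar h(t)=c_0t+C-K\epsilon(t)$ you get $\bar h'(t)=c_0+K\alpha(t+\theta)^{-\alpha-1}$. The flux on the right of the free-boundary inequality is, after substituting \eqref{2.2a}, equal to $(1+\epsilon(t))c_0$ up to nonpositive corrections (the $\min$ gives no extra gain). So you need $\bar h'(t)\ge c_0+c_0\epsilon(t)$, and your excess $K\alpha(t+\theta)^{-\alpha-1}=o(\epsilon(t))$ cannot absorb $c_0\epsilon(t)$; the inequality fails for all large $t$ regardless of $K$. The same defect appears in your $\underline h$. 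The paper instead takes $\bar h(t)=c_0t+K_1+c_0\int_0^t\epsilon(\tau)\,d\tau$ (so $\bar h'=(1+\epsilon)c_0$ exactly) and, for the lower bound, $\underline h(t)=c_0t+K_1-K_2\int_0^t\epsilon-\text{(explicit kernel-tail integral)}$. Boundedness of $\bar h-c_0t$ and $c_0t-\underline h$ then follows \emph{a posteriori} from $\int_0^\infty\epsilon<\infty$ and the kernel-tail bounds, and this is precisely where $\alpha\ge 2$ enters: it makes $\int_0^{c_0t/2}x^2\hat J(x)\,dx$ and $t\int_{c_0t/2}^\infty x\hat J(x)\,dx$ bounded.

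\textbf{The interaction bound is overstated, and the role of $\alpha\ge2$ misidentified.} In the bulk the product $|\Phi(x-\underline h)-\mathbf{u}^*|\cdot|\Phi(-x-\underline h)-\mathbf{u}^*|$ is not $O(t^{-2\alpha})$: since $(x-\underline h)+(-x-\underline h)=-2\underline h$, only one argument is guaranteed to satisfy $|\cdot|\ge\underline h(t)$, so the product is $O(\epsilon_0\,t^{-\alpha})$ with $\epsilon_0$ the fixed small bulk-cutoff constant—a small \emph{multiple} of $\epsilon(t)$, not $o(\epsilon(t))$. This is still absorbed by the strictly positive margin $-\epsilon(t)\,\mathbf{u}^*[\nabla F(\mathbf{u}^*)]^T$ once $\epsilon_0$ is small, so the bulk estimate survives, but your stated reason (and your explanation of why $\alpha\ge2$ is needed) is incorrect. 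Near the boundary the product is $O(t^{-\alpha})$ with no small constant, and the paper handles that zone separately using the strict negativity of $\Phi'$ there and the $\delta'(t)=O(\epsilon(t))$ drift—another place where your too-small drift would fail.

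A minor point: the paper does not build a two-sided $\min$ upper solution. It runs a one-sided comparison with $(1+\epsilon)\Phi^{c_0}(x-\bar h(t))$ using the true $g(t)$ as left boundary (Lemma~\ref{lemma3.2}(i)), repeats by reflection, and takes the minimum of the two resulting \emph{bounds}. Your direct $\min$ ansatz forces you to verify the PDE across the nonsmooth interface, an avoidable complication.
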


\medskip

{Further estimates on $g(t)$ and $h(t)$ can be obtained if we narrow down more on the class of kernel functions $\{J_i: i=1,..., m_0\}$. We will write
\begin{align*}
\eta(t)\approx \xi(t) \ {\rm\  if\ }\ C_1\xi(t)\leq \eta(t)\leq C_2\xi(t)
\end{align*}
for some positive constants $C_1\leq C_2$ and all $t$ is the concerned range.

Our next two theorems are about kernel functions satisfying, for some $\gamma>0$,
\smallskip
\begin{itemize}
	\item[{\rm $(\mathbf{\hat J^\gamma})$}:] \ \ \ \  $\dd 
	 J_i(x) \approx |x|^{-\gamma} $\ \ \ {\rm for\   $|x|\gg 1$ and  $i\in\{1,..., m_0\}$.}
	\end{itemize}\smallskip
	
Note that for kernel functions satisfying $(\mathbf{\hat J^\gamma})$, condition $(\mathbf{J})$ is satisfied if and only if $\gamma>1$, and
$\mathbf{(J_1)}$ is satisfied if and only if $\gamma>2$.
The next result determines the orders of accelerated spreading when $\gamma\in (1,2]$.

\begin{theorem}\label{th1.5a}
	In Theorem  \ref{theorem1.3}, if additionally the kernel functions satisfy $(\mathbf{\hat J^\gamma})$ for some $\gamma\in (1, 2]$, 	
 then for $t\gg 1$,
	\[\begin{array}{rll}
	-g(t),\; h(t)&\approx\  t\ln t\   &{\rm if}\ {\gamma}=2,\\
	-g(t),\; h(t)&\approx \ t^{1/({\gamma}-1)}\ & {\rm if}\ {\gamma}\in (1,2).
	\end{array}
	\]
\end{theorem}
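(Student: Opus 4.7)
Since $\gamma\in(1,2]$ forces $(\mathbf{J_1})$ to fail, Theorem~\ref{theorem1.3}(ii) already gives $h(t)/t,\,-g(t)/t\to\yy$; the task is to pin down the sharp order. The strategy is to convert the free-boundary equations into an ODE comparison for $H(t):=h(t)-g(t)$, solve it, and then extract matching bounds for $h(t)$ and $-g(t)$ separately. Writing $K_i(s):=\int_s^\yy J_i(z)\,\rd z$, Fubini turns the free-boundary conditions into
\[
h'(t)=\sum_{i=1}^{m_0}\mu_i\int_{g(t)}^{h(t)}\! K_i(h(t)-x)\,u_i(t,x)\,\rd x,\qquad -g'(t)=\sum_{i=1}^{m_0}\mu_i\int_{g(t)}^{h(t)}\! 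K_i(x-g(t))\,u_i(t,x)\,\rd x.
\]
Condition $(\mathbf{\hat J^\gamma})$ with $\gamma\in(1,2]$ gives $K_i(s)\approx s^{1-\gamma}$ for $s\gg 1$ and $K_i$ bounded near $0$, hence
\[
I(L):=\int_0^L K_i(z)\,\rd z \ \approx\
\begin{cases}\ln L & (\gamma=2),\\ L^{2-\gamma} & (\gamma\in(1,2)),\end{cases}\qquad L\gg 1.
\]

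For the upper bound, the invariance of $[\mathbf{0},\mathbf{\hat u}]$ from $(f_4)$ yields a uniform bound $u_i(t,\cdot)\leq M$, so adding the two identities,
\[
H'(t)\ \leq\ C_1\sum_i\mu_i \int_0^{H(t)}\! K_i(z)\,\rd z\ \leq\ C_2\, I(H(t)).
\]
Standard ODE integration produces $H(t)\leq C_3\,t\ln t$ when $\gamma=2$ and $H(t)\leq C_3\,t^{1/(\gamma-1)}$ when $\gamma\in(1,2)$, and since $h(t),-g(t)\leq H(t)$ this gives the upper halves of both asymptotic claims.

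For the lower bound, the key ingredient is a uniform interior estimate: for each small $\epsilon>0$ there exist $R_\epsilon,T_\epsilon>0$ with
\[
u_i(t,x)\geq u_i^*-\epsilon\quad\text{for}\quad t\geq T_\epsilon,\ x\in[g(t)+R_\epsilon,h(t)-R_\epsilon],\ 1\leq i\leq m_0.
\]
One obtains this by sliding a compactly supported stationary lower solution --- whose existence follows from the monostable dynamics of \eqref{2.3} given in $(f_4)$ --- just inside the widening interval $(g(t),h(t))$ and invoking the cooperative comparison principle. Plugging this into the representation of $h'(t)$ and changing variables $z=h(t)-x$,
\[
h'(t)\ \geq\ c_1\int_{R_\epsilon}^{H(t)-R_\epsilon}\! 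K_i(z)\,\rd z\ \geq\ c_2\, I(H(t)),
\]
and likewise for $-g'(t)$. Combining with the previously obtained upper bound on $H(t)$ yields $I(H(t))\gtrsim \ln t$ (resp.\ $t^{(2-\gamma)/(\gamma-1)}$), and integrating $h'(t)$ and $-g'(t)$ separately furnishes the matching lower bounds $h(t),-g(t)\gtrsim t\ln t$ (resp.\ $t^{1/(\gamma-1)}$).

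The main obstacle is the uniform interior lower bound in the moving, shrinking-from-boundary interval $[g(t)+R_\epsilon,h(t)-R_\epsilon]$; the mere spreading property only supplies $L^\infty_{loc}$ convergence on $\R$, while here $R_\epsilon$ must be independent of $t$ even as the reference interval translates with the free boundary. The difficulty is compounded by two structural features of \eqref{1.1}: when $m_0<m$, some components carry no diffusion and the spreading must be propagated through the coupling condition $(f_1)$(iv); and the nonlocal diffusion lacks the pointwise regularization of the Laplacian, so one cannot invoke a classical strong maximum principle. The expected route is to construct the stationary lower solution on a fixed large interval as a solution of a nonlocal Dirichlet problem approaching $\mathbf{u}^*$ in its interior, then use the monotonicity and irreducibility built into $(f_1)$ to transfer this lower bound across all $m$ components uniformly in $t$.
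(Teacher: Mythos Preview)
Your upper-bound argument is correct and mirrors the paper's: the paper builds the supersolution $\ol U\equiv \bar u\,\mathbf 1$ on $[-\bar h(t),\bar h(t)]$ with $\bar h(t)=(Kt+\theta)^{1/(\gamma-1)}$ (respectively $(Kt+\theta)\ln(Kt+\theta)$ when $\gamma=2$) and invokes Lemma~\ref{lemma3.2}, which amounts to integrating the same differential inequality $\bar h'\gtrsim I(2\bar h)$ that you write down.

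The lower-bound argument, however, has a real gap at precisely the step you flag. The interior estimate $u_i(t,x)\geq u_i^*-\epsilon$ on $[g(t)+R_\epsilon,\,h(t)-R_\epsilon]$ with $R_\epsilon$ \emph{independent of $t$} is not available a priori, and the sliding-stationary-subsolution route you sketch is circular: to place a translate $W_L(\cdot-c)$ with $c$ near $h(t)$ you would need $U(t_c,\cdot)\succeq W_L(\cdot-c)$ at some earlier time $t_c$, but $W_L$ is already close to $\mathbf u^*$ in its interior, so this presupposes the very estimate you want. Nor can $R_\epsilon$ be relaxed to a proportional scale for free: when $\gamma=2$ one has $\int_{A}^{B}K_i(z)\,\rd z\approx\ln(B/A)$, so a boundary layer of width comparable to $H(t)$ would yield only $h'(t)=O(1)$, not the required $\ln H(t)$.

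The paper sidesteps this by constructing explicit subsolutions that \emph{build in} the boundary behaviour. For $\gamma\in(1,2)$ it takes the small tent $\underline U(t,x)=K_2\frac{\underline h(t)-|x|}{\underline h(t)}\,\Theta$ with $\underline h(t)=(K_1 t+\theta)^{1/(\gamma-1)}$ and $\Theta\ggs\mathbf 0$ the eigenvector of $\nabla F(\mathbf 0)$ from Lemma~\ref{lemma2.1a}; for $\gamma=2$ it uses the truncated tent $\underline U(t,x)=K_2\min\bigl\{1,\frac{\underline h(t)-|x|}{(t+\theta)^\beta}\bigr\}\Theta$ with $\beta\in(0,1)$ and $\underline h(t)=K_1(t+\theta)\ln(t+\theta)$. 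The key technical input is the near-reproduction inequality \eqref{7.9a} and its variant Lemma~\ref{lemma7.4}, which give $\int_{-\underline h}^{\underline h}J_i(x-y)\underline u_i(t,y)\,\rd y\geq(1-\epsilon)\underline u_i(t,x)$; then the linear instability $F(\underline U)\succeq\tfrac34\lambda_1\underline U$ (valid because $\underline U\preceq K_2\Theta$ is small) absorbs the diffusion loss and makes $\underline U$ a subsolution of the PDE. The free-boundary inequality is checked directly from the explicit shape: in the $\gamma=2$ case, the flat part $\{|x|\leq\underline h(t)-(t+\theta)^\beta\}$ alone contributes $\approx(1-\beta)\ln(t+\theta)$ to $\underline h'$, which is the full order since $\beta<1$. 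This simultaneous construction avoids the interior-estimate question entirely and handles the degenerate components $m_0<i\leq m$ automatically through the eigenvector $\Theta$.
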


 For kernel functions satisfying $(\mathbf{\hat J^\gamma})$, clearly $(\mathbf{J^\alpha})$ holds if and only if $\gamma>1+\alpha$. Therefore the case  $\gamma>3$ is already covered by Theorem \ref{theorem1.4}. The following theorem is concerned with the remaining case 
 $\gamma \in (2, 3]$, which  indicates that the result in Theorem \ref{theorem1.4} is sharp.

\begin{theorem}\label{th1.5b}
	In Theorem  \ref{theorem1.3}, suppose additionally the kernel functions satisfy $(\mathbf{\hat J^\gamma})$ for some $\gamma\in (2, 3]$, $F$ is $C^2$ and
	\begin{align}\label{1.7a}
F( v)-v[\nabla F( v)]^T \ggs \mathbf{0}\mbox{ \  for }\  \mathbf{0}\ \llp\; v\preceq \mathbf{u^*}.
	\end{align}
Then for $t\gg 1$,
	\[\begin{array}{rll}
	c_0t+g(t),\ c_0t-h(t)&\approx\  \ln t\ \ & {\rm if}\ {\gamma}=3,\\
	c_0t+g(t),\ c_0t-h(t)&\approx\ t^{3-{\gamma}}\ \ & {\rm if}\ {\gamma}\in (2,3).
	\end{array}
	\]
\end{theorem}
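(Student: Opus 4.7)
\medskip
\noindent\textbf{Overall strategy.} The plan is to establish matching two-sided bounds on $c_0 t - h(t)$ (with $c_0 t + g(t)$ handled symmetrically by an analogous argument) via the comparison principle for \eqref{1.1}. I would construct a supersolution $(\bar U, \bar g, \bar h)$ with $\bar h(t) = c_0 t - A\,\delta(t)$ and a subsolution $(\underline U, \underline g, \underline h)$ with $\underline h(t) = c_0 t - B\,\delta(t)$, where $\delta(t) = \ln t$ when $\gamma = 3$ and $\delta(t) = t^{3-\gamma}$ when $\gamma \in (2,3)$, and $0 < A < B$ are constants. Both profiles will be built from the semi-wave $\Phi^{c_0}$ from Theorem \ref{prop2.3}(iii), glued in a left–right symmetric fashion and modulated by a factor $1+\epsilon(t)$ that decays polynomially.

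\medskip
\noindent\textbf{Sharp back-end asymptotic of the semi-wave.} The key quantitative ingredient is
\[
\mathbf{u}^* - \Phi^{c_0}(x) \sim A_0\, |x|^{1-\gamma} \quad \text{as } x \to -\yy, \qquad A_0 \ggs \mathbf{0}.
\]
To derive this I would substitute $V(x) := \mathbf{u}^* - \Phi^{c_0}(x)$ into \eqref{2.1a} and linearise at $\mathbf{u}^*$. The boundary cut-off produces a forcing term
\[
D\circ\int_{-\yy}^0 \mathbf{J}(x-y)\circ\mathbf{u}^* \,dy - D\circ\mathbf{u}^* = -D\circ\mathbf{u}^*\circ (1 - K(x)),
\]
with $K_i(x) = \int_x^\yy J_i(z)\,dz$; under $(\mathbf{\hat J^\gamma})$ one has $1 - K_i(x) = \Theta(|x|^{1-\gamma})$ as $x\to -\yy$. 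In this slowly varying regime the convolution gives $\int_{-\yy}^0 \mathbf{J}(x-y)\circ V(y)\,dy = V(x) + $ higher-order corrections, so the linearised residual reduces to $-V[\nabla F(\mathbf{u}^*)]^T$. By $\mathbf{(f_3)}$ this matrix is invertible, and \eqref{1.7a} (a KPP-type concavity at $\mathbf{u}^*$) guarantees both the correct sign of $A_0$ and that the $C^2$ quadratic remainder is $O(|x|^{2(1-\gamma)})$, which is indeed subdominant when $\gamma > 2$.

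\medskip
\noindent\textbf{Rewriting the free-boundary ODE and verifying super/sub-solutions.} Using the change of variable $s = x - h(t)$ in the fifth equation of \eqref{1.1},
\[
h'(t) = \sum_{i=1}^{m_0}\mu_i \int_{g(t)-h(t)}^0 K_i(-s)\, u_i(t, s+h(t))\,ds,
\]
and the identity $c_0 = \sum_i \mu_i \int_{-\yy}^0 K_i(-s)\phi_i^{c_0}(s)\,ds$ from \eqref{2.2a}, substitution of $u_i(t,s+h(t))\approx \phi_i^{c_0}(s)$ gives
\[
h'(t) - c_0 \ \approx\ -\sum_{i=1}^{m_0}\mu_i \int_{-\yy}^{g(t)-h(t)} K_i(-s)\, u_i^*\,ds\ \approx\ -C\bigl(h(t)-g(t)\bigr)^{2-\gamma},
\]
since $K_i(-s) \sim c_i |s|^{1-\gamma}$ as $s\to -\yy$. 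Combining with $h(t)-g(t)\sim 2c_0 t$ from Theorem \ref{theorem1.3}(i), one formally gets $c_0 - h'(t) \sim C t^{2-\gamma}$, whose integral is precisely $\delta(t)$. To make this rigorous I would take
\[
\bar U(t,x) = (1+\epsilon(t))\bigl[\Phi^{c_0}(x-\bar h(t)) + \Phi^{c_0}(-x-\bar h(t)) - \mathbf{u}^*\bigr]_+,
\]
with $\bar g(t)= -\bar h(t)$, and check that (a) the PDE supersolution inequality holds---after cancellation with \eqref{2.1a}, the remainder is controlled by $\epsilon(t)$ plus a convolution tail of order $|x|^{1-\gamma}$; and (b) the free-boundary inequality $\bar h'(t) \geq \sum_i \mu_i \iint J_i \bar u_i$ reduces to $A\,\delta'(t) \leq C t^{2-\gamma}$, true for $A$ large. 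A mirror-image construction with $B > A$ produces the subsolution, and comparison yields $\underline h(t) \leq h(t) \leq \bar h(t)$.

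\medskip
\noindent\textbf{Main obstacle.} The hardest point is rigorously replacing $u_i(t,\cdot+h(t))$ by $\phi_i^{c_0}$ uniformly in $s\in[g(t)-h(t),\,0]$, because the critical tail integral $\int_{-\yy}^{g-h}K_i(-s)\,u_i\,ds$ is driven by the solution far from the right free boundary---local stability near $x=h(t)$ is insufficient. I would resolve this by a bootstrap: first use Theorem \ref{theorem1.3}(i) together with a moving-frame stability argument (in the spirit of the estimates in Theorem \ref{theorem1.4}) to get a preliminary bound $|c_0t - h(t)| = O(t^{3-\gamma+\eta})$ for small $\eta>0$; then feed this back into the supersolution computation above to close the exponent. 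A secondary difficulty is that for $m_0 < i \leq m$ the equations are purely reactive and lack the regularising convolution, so propagating the $|x|^{1-\gamma}$ back decay to every component requires exploiting the coupling in $\mathbf{(f_1)(iv)}$; I would do this by treating the reactive block as an algebraic slave of the diffusive components at leading order.
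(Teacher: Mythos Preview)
Your heuristic is right: the correction $c_0-h'(t)\sim C\,t^{2-\gamma}$ arises from the missing tail $\int_{-\infty}^{g(t)-h(t)}$ in the free-boundary integral, and its primitive gives the claimed $\delta(t)$. The paper exploits exactly this mechanism. However, your proposed constructions have real gaps.

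\textbf{The symmetric profile works only for the subsolution.} The form
$(1+\epsilon)[\Phi^{c_0}(x-\bar h)+\Phi^{c_0}(-x-\bar h)-\mathbf{u}^*]_+$
is \emph{smaller} than the one-sided profile $(1+\epsilon)\Phi^{c_0}(x-\bar h)$ (since each term is $\preceq\mathbf u^*$), and is the natural shape for a \emph{sub}solution---this is precisely how the paper obtains the bound $c_0t-h(t)\le C\delta(t)$ via Lemma~\ref{lemma4.3}. For the hard direction $c_0t-h(t)\ge\sigma\delta(t)$ you need a supersolution, and the paper uses the one-sided form
\[
\ol U(t,x)=(1+\epsilon(t))\,\Phi^{c_0}\bigl(x-\bar h(t)\bigr)\;+\;\rho(t,x),
\]
compared on $[g(t+t_0),\bar h(t)]$ (so the actual left boundary of $U$ is kept, which is what produces the tail term). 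A symmetric $\bar g=-\bar h$ supersolution does not fit the available comparison lemma and would lose the crucial tail cutoff at $g(t+t_0)$.

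\textbf{A boundary correction $\rho$ is indispensable, and you do not have it.} Near $x=\bar h(t)$ the profile $\Phi^{c_0}$ is small, so the gain from sublinearity, $(1+\epsilon)F(\Phi)-F((1+\epsilon)\Phi)$, degenerates to $o(\epsilon)$ and cannot absorb the negative contributions $\epsilon'(t)\Phi$ and $\delta'(t)\Phi'$. The paper repairs this with a bump $\rho(t,x)=K_4\,\xi(x-\bar h)\,\epsilon(t)\,V^*$ supported in $[\bar h-2\tilde\epsilon,\bar h]$, where $V^*$ is a positive eigenvector of $\nabla F(\mathbf 0)-\wtd D$ with eigenvalue $\tilde\lambda_1$; the sign of $\tilde\lambda_1$ forces a genuine case split (Lemmas~\ref{lemma5.2} and \ref{lemma5.3}), with the $\tilde\lambda_1\ge 0$ case requiring in addition a small phase shift $\lambda(t)$ in the argument of $\Phi$. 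Without such a correction the PDE supersolution inequality fails in a neighbourhood of the free boundary.

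\textbf{The sharp asymptotic $\mathbf{u}^*-\Phi^{c_0}(x)\sim A_0|x|^{1-\gamma}$ is neither proved nor needed.} Under $(\mathbf{\hat J^\gamma})$ condition $(\mathbf{J^\alpha})$ holds for every $\alpha<\gamma-1$, so Theorem~\ref{theorem1.6}(i) gives $\mathbf{u}^*-\Phi^{c_0}(x)\preceq C|x|^{-\omega_*}\mathbf{u}^*$ for any fixed $\omega_*\in(\gamma-2,\gamma-1)$; the paper uses only this upper bound (to control the size of $\epsilon(0)$ needed for ordering at $t=0$, and to manage some error terms in the subsolution). Establishing the two-sided sharp rate would be substantial extra work and buys nothing here.

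\textbf{Your ``main obstacle'' dissolves with the correct setup.} You worry about replacing $u_i(t,\cdot+h(t))$ by $\phi_i^{c_0}$ uniformly down to $s=g(t)-h(t)$, and propose a bootstrap. In the paper's argument one never manipulates $h'(t)$ directly; one computes the free-boundary integral for the \emph{constructed} $\ol U$, whose profile is known exactly, over $[g(t+t_0),\bar h(t)]$. All that is required of the true solution is the crude bound $|g(t)|\le(c_0+1)t+C_1$ from Theorem~\ref{theorem1.3}, which already pins the tail at order $(t+\theta)^{-(\gamma-2)}$. No iteration is needed.
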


 Note that 
 $\mathbf{(f_2)}$ implies
 \[F( v)-v [\nabla F( v)]^T\succeq \mathbf{0} \ \mbox{ for } v\in [\mathbf{0}, \mathbf{u^*}].
 \]
 Therefore \eqref{1.7a} is a strengthened version of
 $\mathbf{(f_2)}$. If we take $v=\mathbf{u^*}$ in \eqref{1.7a}, then it yields  $\mathbf{u}^*[\nabla F(\mathbf{u}^*)]^T  \llp \mathbf{0}$.
 When $m=1$, \eqref{1.7a} reduces to $F(v)> F'(v)v$ for $0<v\leq \hat u$, which is satisfied, for example, by $F(v)=av-bv^p$ with $a, b >0$ and $p>1$.} 

\medskip

The proofs of Theorems \ref{theorem1.4} and \ref{th1.5b} rely on some of the following estimates on the semi-wave solutions of \eqref{2.3}, which are of independent interests.

\begin{theorem}\label{theorem1.6} Suppose that $F$ satisfies $\mathbf{(f_1)-(f_4)}$ and the kernel functions satisfy $\mathbf{(J)}$,
and $\Phi(x)=(\phi_i(x))$  is a monotone solution of \eqref{2.1a} for some $c>0$. Then the following conclusions hold:
\begin{itemize}
\item[{\rm (i)}] If $\mathbf{(J^\alpha)}$ holds for some $\alpha>0$, then for every $i\in\{1,..., m\}$,
\[
\int_{-\infty}^{-1}\big[u_i^*-\phi_i(x)\big]|x|^{\alpha-1}dx<\infty,
\]
which implies, by the monotonicity of $\phi_i(x)$,
 \[
 0<u_i^*-\phi_i(x)\leq C|x|^{-\alpha}  \mbox{ for some $C>0$ and all $x<0$}.
 \]
\item[{\rm (ii)}] If  $\mathbf{(J^\alpha)}$ does not hold for some $\alpha>0$, then
\[
\sum_{i=1}^m\int_{-\infty}^{-1}\big[u_i^*-\phi_i(x)\big]|x|^{\alpha-1}dx=\infty.
\]
 \item[{\rm (iii)}] If $\mathbf{(J_2)}$ holds, then there exist positive constants $C$ and $\beta$ such that 
		\begin{align*}
		0<u_i^*-\phi_i(x)\leq Ce^{\beta x} \mbox{ for all } x< 0,\ i\in\{1,..., m\}.
		\end{align*}
 \end{itemize}
\end{theorem}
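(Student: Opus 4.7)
The plan is to analyze \eqref{2.1a} through the linearization at the stable state $\mathbf u^*$. Set $\Psi(x):=\mathbf u^*-\Phi(x)$, which is componentwise monotone increasing from $\mathbf 0$ at $x=-\infty$ to $\mathbf u^*$ at $x=0$; extend $\psi_i\equiv u_i^*$ on $[0,\infty)$ and write $\widetilde\psi_i$ for this extension. Substituting $\phi_i=u_i^*-\psi_i$ into \eqref{2.1a} shows, after the crucial cancellation of the ``boundary defect'' $d_iu_i^*\int_{-x}^{\infty}J_i(z)\,dz$ by the extension, that for $x<0$ and $1\le i\le m$ (with $d_i=0$ for $i>m_0$),
\[
-c\psi_i'(x)+d_i\bigl[\psi_i(x)-(J_i\ast\widetilde\psi_i)(x)\bigr]-\bigl(\nabla F(\mathbf u^*)\Psi(x)^T\bigr)_i=-Q_i(\Psi(x)),
\]
where $Q_i(\Psi)=O(|\Psi|^2)$ is the Taylor remainder of $F$ at $\mathbf u^*$. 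Fix a positive left Perron eigenvector $\mathbf v\ggs\mathbf 0$ of $\nabla F(\mathbf u^*)$ with eigenvalue $\lambda_1\le 0$ (by $(\mathbf f_3)$), so $\mathbf v\cdot\bigl(-\nabla F(\mathbf u^*)\Psi^T\bigr)=\delta\,\mathbf v\!\cdot\!\Psi$ with $\delta:=-\lambda_1\ge 0$, strict whenever alternative (i) of $(\mathbf f_3)$ applies.

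For Part (i), multiply the $i$-th equation by $v_iw(x)$, with $w(x):=(A-x)^{\alpha-1}$ for $A$ large, sum over $i$, and integrate over $(-R,0)$. Three contributions are assembled. The drift term, after integration by parts, produces boundary contributions and a positive interior term proportional to $\int_{-R}^{0}\mathbf v\!\cdot\!\Psi(x)(A-x)^{\alpha-2}\,dx$. The nonlocal term, by Fubini and evenness of $J_i$, reduces to an integral dominated by $\sum_id_iv_i\int_0^{\infty}u^{\alpha}J_i(u)\,du<\infty$ under $(\mathbf J^\alpha)$: the linear part of the increment $w(x-u)-w(x)$ integrates to zero against $J_i$ by symmetry, and the symmetric remainder is controlled by the $\alpha$-moment of $J_i$. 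The reaction term gives a positive contribution $\delta\int\mathbf v\!\cdot\!\Psi\,w\,dx$. Letting $R\to\infty$ and bootstrapping a preliminary algebraic decay of $\Psi$ to absorb the quadratic $Q$-term, we obtain
\[
\sum_{i=1}^{m}\int_{-\infty}^{0}\psi_i(x)(A-x)^{\alpha-1}\,dx<\infty.
\]
The pointwise bound $u_i^*-\phi_i(x)\le C|x|^{-\alpha}$ then follows from monotonicity: for $R>1$, $\psi_i(-2R)R^\alpha\le C\int_{-2R}^{-R}\psi_i(s)|s|^{\alpha-1}\,ds\to 0$ as $R\to\infty$.

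For Part (ii), run the weighted-integration argument without the $\widetilde\psi_i$ extension; the defect term $d_iu_i^*\int_{-x}^\infty J_i(z)\,dz$ now reappears in the identity and, by Fubini, its $w$-weighted integral equals a positive multiple of $\sum_id_iv_iu_i^*\int_0^{\infty}z^{\alpha}J_i(z)\,dz$. If $(\mathbf J^\alpha)$ fails for some $i\in\{1,\dots,m_0\}$ with $\mu_i>0$ while the weighted moment of $\Psi$ were finite, every other term in the identity would be finite, forcing the impossible ``$+\infty=$finite''. For Part (iii), under $(\mathbf J_2)$, $\hat J_i(\beta):=\int e^{\beta x}J_i(x)\,dx<\infty$ on an interval $[0,\beta_*)$. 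The matrix
\[
\mathcal M(\beta):=-c\beta I+D\bigl(I-\mathrm{diag}(\hat J_i(\beta))\bigr)+\nabla F(\mathbf u^*)
\]
is cooperative (after the standard shift), and a characteristic-root analysis yields $\beta_0\in(0,\beta_*)$ and a positive eigenvector $\mathbf w\ggs\mathbf 0$ so that $V(x):=Ke^{\beta_0 x}\mathbf w$ is a supersolution of the linearized equation on $(-\infty,0]$. A comparison argument using $(\mathbf f_2)$ and $\Psi(-\infty)=\mathbf 0$ yields $\Psi(x)\preceq V(x)$ for $x\le 0$ after choosing $K$ large, giving the claimed exponential bound.

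The main obstacle is Part (i): the weight $w(x)=(A-x)^{\alpha-1}$ must simultaneously avoid a singularity at $x=0$, generate exactly the $\alpha$-moment of $J_i$ through the nonlocal Taylor expansion, and dominate the quadratic remainder $Q(\Psi)$. For $\alpha\ge 2$ a direct absorption of $Q$ fails, so one runs a bootstrapping loop: start from a crude algebraic decay of $\Psi$ at some smaller exponent $\alpha_0<\alpha$ (available from the construction of the semi-wave, or from a lower-moment version of the same identity) and iterate upward in finitely many steps until the exponent $\alpha$ is reached. The borderline case of alternative (ii) in $(\mathbf f_3)$, where $\lambda_1(\nabla F(\mathbf u^*))=0$ so $\delta=0$, is handled by testing against $\mathbf v+\eta\mathbf 1$ for small $\eta>0$, using cooperative irreducibility to propagate a strict positive bound from the alternative-(i) components, and then letting $\eta\downarrow 0$.
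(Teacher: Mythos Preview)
Your overall strategy---linearize at $\mathbf u^*$, set $\Psi=\mathbf u^*-\Phi$, and test the resulting equation against a power or exponential weight---matches the paper's. But there are gaps, the most serious in Part~(ii).

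\textbf{Part (ii): the contradiction does not close.} You argue: assume the weighted moment $\sum_i\int_{-\infty}^{-1}\psi_i(x)|x|^{\alpha-1}\,dx<\infty$, then every term in the tested identity except the defect is finite, contradiction. The problem is the nonlocal ``commutator'' term
\[
\sum_i d_i v_i\int_{-M}^{0}\Bigl[\int_{-\infty}^{0}J_i(x-y)\psi_i(y)\,dy-\psi_i(x)\Bigr]|x|^{\alpha-1}\,dx .
\]
Boundedness of this quantity is exactly the content of the paper's Lemmas~4.1 and~4.4a, and both of those lemmas \emph{require} $(\mathbf J^\alpha)$. When $(\mathbf J^\alpha)$ fails you have no control on this term, so it could diverge and balance the defect, and your ``$+\infty=$ finite'' never materializes. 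The paper avoids this entirely by a one-sided inequality: it drops the nonnegative convolution part and the nonnegative drift term to get
\[
\Bigl(L+\sum_i d_i\Bigr)\int_{-M}^{-1}\hat\psi(x)|x|^{\alpha-1}\,dx\;\ge\;\sum_i d_iu_i^*\int_{-M}^{-1}\!\!\int_0^\infty |x|^{\alpha-1}J_i(x-y)\,dy\,dx\;\longrightarrow\;\infty,
\]
which forces divergence directly. (Also: the condition ``$\mu_i>0$'' you insert is irrelevant; $(\mathbf J^\alpha)$ concerns every $i\le m_0$.)

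\textbf{Part (i): the hard step is glossed over.} Your sentence ``the linear part integrates to zero by symmetry, the symmetric remainder is controlled by the $\alpha$-moment'' hides the real work. The paper proves this as a standalone technical lemma (Lemma~4.1 for $\alpha\ge 1$, Lemma~4.4a for $\alpha\in(0,1)$), splitting the Fubini integral into several pieces and using the elementary inequality $|(x+y)^\sigma-x^\sigma|\le\sum_k c_k|y|^k x^{\sigma-k}+c_{n+1}|y|^\sigma$; each piece then requires a moment of $J$ of order $\le\alpha$ and a moment of $\psi$ of order $\le\sigma-1$. This is why the bootstrap is needed: one first shows $\int\psi_i<\infty$ (from $(\mathbf J^1)$), then climbs from exponent $\gamma$ to $\min\{\gamma+1,\alpha-1\}$ at each step. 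Your sketch captures the idea but not the mechanism.

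\textbf{The $\lambda_1=0$ worry is misplaced.} Under $(\mathbf f_1)$--$(\mathbf f_4)$ the equilibrium $\mathbf u^*$ is asymptotically stable and $\nabla F(\mathbf u^*)$ is invertible; since the matrix is cooperative, its principal eigenvalue is real and strictly negative. So $\delta>0$ always, and the elaborate workaround in your last paragraph (testing against $\mathbf v+\eta\mathbf 1$, etc.) is unnecessary. Alternative (ii) of $(\mathbf f_3)$ is used elsewhere in the paper, not here.

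\textbf{Part (iii): different route, but incomplete.} The paper does not build a supersolution; it multiplies the equation by $e^{-\beta x}$, integrates over $(-M,0)$, and uses a delicate lemma (Lemma~4.7) showing
\[
\int_{-M}^0 e^{-\beta x}\!\!\int_{-\infty}^0 J_i(x-y)\psi_i(y)\,dy\,dx\;\le\;(1+\epsilon)\int_{-M}^0 e^{-\beta x}\psi_i(x)\,dx+C,
\]
then extracts $\tilde\psi(-M)e^{\beta M}\le C$. Your supersolution idea $V(x)=Ke^{\beta_0 x}\mathbf w$ is plausible, but the comparison principle you invoke is not the one available: the convolution in the $\Psi$-equation runs over $(-\infty,0]$ with $\tilde\psi_i=u_i^*$ imposed on $[0,\infty)$, while $(J_i\ast V)$ integrates $V$ over all of $\mathbb R$. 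Matching these requires either a half-line maximum principle you have not stated, or a careful choice of $V$ on $[0,\infty)$ together with a sliding argument---neither of which is supplied.
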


\medskip

\subsection{Examples} 
It is clear that all the results here can be applied to the Fisher-KPP nonlocal diffusion model in \cite{cdJFA, dlz}. When $m=m_0=1$, our Theorem
\ref{theorem1.3} here recovers the main result in \cite{dlz}, but Theorems \ref{theorem1.4}, \ref{th1.5a}, \ref{th1.5b} and \ref{theorem1.6} are new even in this special case.
\smallskip

 Let us now examine  the models in \cite{dn} and \cite{ZZLD}.
\smallskip

\underline{Example 1}. The West Nile virus model in \cite{dn} is given by
\begin{equation}\label{1}
\begin{cases}
\dd H_t=d_1 \mathcal{L}_1[H](t,x)+a_1(e_1-H)V-b_1 H, & x\in (g(t), h(t)),\ t>0,\\
\dd V_t=d_2 \mathcal{L}_2[V](t,x)+a_1(e_2-V)H-b_2 V , & x\in (g(t), h(t)),\ t>0,\\
H(t,x)= V(t,x)=0, & t >0, \; x\in\{g(t), h(t)\},\\
\dd g'(t)= -\mu \int_{g(t)}^{h(t)}\int_{-\yy}^{g(t)}J_1(x-y)V(t,x)\rd y\rd x, & t >0,\\[3mm]
\dd h'(t)= \mu \int_{g(t)}^{h(t)}\int_{h(t)}^{\yy}J_1(x-y)V(t,x)\rd y\rd x, & t >0,\\
-g(0)=h(0)=h_0,\ H(0,x)=u_1^{0}(x),\ V(0,x)=u_2^{0}(x), \ &x\in [-h_0,h_0].
\end{cases}
\end{equation}
where $a_i$, $e_i$ and $b_i$ ($i=1,2$) are positive constants satisfying $a_1a_2e_1e_2>b_1b_2$ (which is necessary for spreading to happen).
We thus have
\begin{equation*}\label{2}
F(u)=F_1(u):=\Big(a_1(e_1-u_1)u_2-b_1u_1, a_2(e_2-u_2)u_1-b_2u_2\Big),
\end{equation*}
\[ 
\mathbf{u^*}=\left(\frac{a_1a_2-e_1e_2-b_1b_2}{a_1a_2e_2+a_2b_1}, \frac{a_1a_2-e_1e_2-b_1b_2}{a_1a_2e_1+a_1b_2} \right).
\]
It is straightforward to check that  conditions $\mathbf{(f_1)-(f_3)}$ are satisfied by $F_1$ with $\mathbf{\hat u}=(e_1, e_2)$. Condition $\mathbf{(f_4)}$ was shown to hold in \cite{dn}. It is also easy to see that $F_1$ is $C^2$ and 
\[
F_1(u)-u[\nabla F_1(u)]^T=(a_1u_1u_2, a_2u_1u_2).
\]
 Therefore \eqref{1.7a} holds as well. Thus all our results apply to \eqref{1}.

\smallskip

\underline{Example 2}. The epidemic model in \cite{ZZLD} is given by
\begin{equation}\label{FB}
\begin{cases}
u_{t}=d\displaystyle\mathcal{L}_1[u]
-au+cv,&t>0,\ x\in(g(t),h(t)),\\
v_{t}=-bv+G(u),&t>0,\ x\in(g(t),h(t)),\\
u(t,x)=v(t,x)=0,&t>0,\ x=g(t) \text{\ or\ } x=h(t),\\
g'(t)=-\mu\displaystyle\int_{g(t)}^{h(t)}\displaystyle\int_{-\infty}^{g(t)}
J_1(x-y)u(t,x)dydx,&t>0,\\
h'(t)=\mu\displaystyle\int_{g(t)}^{h(t)}\displaystyle\int_{h(t)}^{+\infty}
J_1(x-y)u(t,x)dydx,&t>0,\\
-g(0)=h(0)=h_{0},\ u(0,x)=u_{0}(x),\ v(0,x)=v_{0}(x),&x\in[-h_{0},h_{0}],
\end{cases}
\end{equation}
where  $a,\ b,\ c,\ d,\ \mu$ and $h_0$ are positive constants, and the function $G$ is assumed to satisfy
\begin{enumerate}[leftmargin=2.8em]
\item[(i)]$G\in C^{1}([0,\infty)),\ G(0)=0,\ G'(z)>0$ for $ z\geq 0$;
\item[(ii)] $\left[\frac{G(z)}{z}\right]'<0$ for $z>0$ and $\lim\limits_{z\rightarrow +\infty}
    \frac{G(z)}{z}<\frac{ab}{c}$;
    \item[(iii)] $G'(0)>\frac{ab}c$ (necessary for spreading to happen).
\end{enumerate}
In this example,
\[
F(u)=F_2(u):=(-au_1+cu_2, G(u_1)-bu_2),\;\; \mathbf{u^*}=(K_1, K_2)
\]
where $(K_1, K_2)\ggs\mathbf{0}$ are uniquely determined by
\[
\frac{G(K_1)}{K_1}=\frac{ab}{c}, \;\; K_2=\frac{G(K_1)}{b}.
\]
One easily checks that $F_2$ satisfies $\mathbf{(f_1)-(f_3)}$ with $\mathbf{\hat u}=\infty$. In \cite{ZZLD}, it was proved that $\mathbf{(f_4)}$ also holds.  Clearly $F_2$ is $C^2$. However, $\mathbf{u}^* [\nabla F_2(\mathbf{u}^*)]^T\llp{\bf 0}$ does not hold.
Therefore all our results apply to \eqref{FB} except Theorems \ref{theorem1.4} and \ref{th1.5b}.

\subsection{Related problems and comments}

The random (local) diffusion version of various special cases or variations of \eqref{1.1}  have been studied extensively in recent years, starting from the work \cite{DL2010}. In these random diffusion free boundary problems  the spreading speed is always finite, and is determined by the associated semi-waves; see, for example, \cite{ABL, DL2010, DLou, DMZ, DWZ, KMY, LZ, WND, ZLN} for an incomplete sample of such results. In many situations, especially in population and epidemic models, it has been recognised that random diffusion is often not the best approximation of the spatial dispersal behaviour of the concerned species, and  in order to include factors such as long-distance dispersal, in
a large amount of literature, the random diffusion terms in the models are
replaced by nonlocal diffusion operators as in \eqref{2.3}. A striking difference of this type of nonlocal diffusion models to their random diffusion counterparts is that accelerated spreading may occur. For the scalar case of \eqref{2.3}, namely for the Fisher-KPP  equation with nonlocal diffusion, it follows from the theory in \cite{W} that accelerated spreading  occurs exactly when the kernel function does not satisfy $\mathbf{(J_2)}$ described above. On the other hand, when $\mathbf{(J_2)}$ is satisfied by the kernel function (\underline{thin-tailed} kernel)
then there exists some $c_*>0$ such that the associated traveling wave problem has a monotone traveling wave with speed $c$ if and only if $c\geq c_*$, and $c_*$ is the asymptotic spreading speed determined by the scalar nonlocal Fisher-KPP equation \eqref{2.3}; see, for example, \cite{LZh,T1, T2, TZ, W, yagi}. Related works on accelerated spreading can be found in \cite{AC, BGHP, CR, FF, FT, G, HR, ST, XLL, XLR } and the references therein. It is well known that the random diffusion version of \eqref{2.3} with compactly supported initial functions can only spread with finite speed, which is determined by the associated traveling waves \cite{LZh, W,  WSL, ZW}. 

The works \cite{cdJFA, CQW} appear to be the first to consider a nonlocal diffusion version of the  free boundary problem proposed in \cite{DL2010}, where the free boundary conditions of the form in \eqref{1.1} were first introduced (independently). In \cite{CQW} the authors considered the case that the reaction term is identically 0 and therefore completely different long-time dynamical behaviour was shown. 
Our results here (as well as in \cite{dlz}) indicate that $\mathbf{(J_1)}$ is the threshold condition on the kernel functions which decides whether accelerated spreading happens for \eqref{1.1}; in contrast, as mentioned above, for \eqref{2.3}, $\mathbf{(J_2)}$ is the corresponding threshold condition
(at least in the scalar Fisher-KPP case).

There are two fundamental differences between the free boundary model \eqref{1.1} and the corresponding model \eqref{2.3} where no free boundary appears. Firstly \eqref{1.1} provides the exact location of the spreading front, which is the free boundary, while the location of the front is not prescribed in \eqref{2.3}, and one usually uses  suitable level sets of the solution to describe the front behaviour. Secondly,
the long time dynamical behaviour of \eqref{1.1} is often governed by a spreading-vanishing dichotomy \cite{cdJFA, dn, ZZLD}, but \eqref{2.3} predicts successful spreading all the time. Let us also note that since $\mathbf{(J_2)}$ implies $\mathbf{(J_1)}$ but not the other way round, \eqref{2.3}  is more readily than \eqref{1.1} to give rise to accelerated spreading.

\subsection{Organisation of the paper}
The rest of the paper is organised as follows. In Section 2, we prove Theorems \ref{prop2.1} and  \ref{prop2.3}. Much of the arguments there are based on the perturbed semi-wave problem \eqref{2.1}, which in the limit yields either a semi-wave solution or a traveling wave solution. This limiting process is also used in several other places of the paper; for example, it plays a crucial role in our proof for accelerated spreading in Theorem \ref{theorem1.3}.

Section 3 is devoted to the proof of Theorem \ref{theorem1.3}, which relies on careful constructions of upper and lower solutions building from the semi-wave solution with the desired speed $c_0$, and on a limiting argument when such a semi-wave does not exist, leading to accelerated spreading.

Section 4 gives the proof of Theorems \ref{theorem1.4} and \ref{theorem1.6}. The proof of the former is built on the proof and conclusions of the latter, where subtle analysis is used to find out the relationship between the behaviour of the semi-wave solution and that of the kernel functions. The constructions of upper and lower solutions in the proof of Theorem \ref{theorem1.4} are much more subtle than that in Section 3.

 Sections 5 and 6  are devoted to the proof of Theorems \ref{th1.5a} and \ref{th1.5b} for kernel functions behaving like $|x|^{-\gamma}$ near infinity.
 In Section 5, we completely determine the growth orders of $c_0t-h(t)$ for $\gamma$ in the range $(2, 3]$, while in Section 6, we completely determine the accelerated spreading orders of $h(t)$ when $\gamma$ falls into the range $ (1, 2]$.  Note that when $\gamma>3$, the spreading behaviour is already covered by the more general results in Section 4.  
 
 Although Sections 4, 5 and 6 are based on completely new ideas and techniques,
 part of the strategy in the approach of Sections 2 and 3 is borrowed from \cite{dlz}, albeit the situation here is much more general and complex,  requiring solutions to new problems and use of noval methods.  For example, (a) while in the scalar case the traveling wave solutions of \eqref{2.3} are completely understood in \cite{yagi}, here the corresponding result requires a rather nontrivial proof, (b) the perturbation problem \eqref{2.1} is treated here by a much simpler method than  the one suggested by \cite{dlz}, which appears difficult to apply in the more general setting here, and (c) our proof of accelerated spreading in Subsection 3.3 uses a completely new approach.

\section{Semi-waves and traveling waves of \eqref{2.3}}

\subsection{Some preparatory results}

Since $F=(f_i)$ is $C^1$ over $\R^m_+$,
 for any vector $K=(k_i)\ggs \mathbf{0}$ there is a constant $L(K)>0$ such that   for $u,v\in [\mathbf{0}, K]$ with $u=(u_i)$ and $v=(v_i)$, 
\begin{align}\label{1.3a}
|f_i(u)-f_i(v)|\leq L(K) \sum_{i=1}^{m}|u_i-v_i|.
\end{align}
\begin{lemma}\label{lemma2.1a}
If $\mathbf{(f_1)}$ holds, then
 there exist $\lambda_1>0$, small $\epsilon>0$,  and  vectors  $\Theta=(\theta_i)\ggs \mathbf{0},\ \tilde\Theta=(\tilde\theta_i)\ggs \mathbf{0}$  such that   
	\begin{align}\label{2.2c}
	 \Theta\nabla F(\mathbf{0})^T=\lambda_1 \Theta,\ \  \tilde \Theta\nabla F(\mathbf{0})=\lambda_1\tilde \Theta,
	\end{align} 
	and 
	\begin{align}\label{2.3c}
	F(\epsilon\Theta)\ggs\mathbf{0},\; \
	\sum_{i=1}^m \td\theta_i f_i(X)\geq \sum_{i=1}^{m} b_i x_i \mbox{ for } X=(x_i)\in [\mathbf{0}, \epsilon\mathbf{1}],
	\end{align}
	where $\mathbf{1}=(1,\cdots,1)\in \R^m$ and
	 $b_i:=\frac{\lambda_1\td\theta_i}{2}>0$.
\end{lemma}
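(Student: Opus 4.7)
The plan is to extract the positive right and left eigenvectors of $A := \nabla F(\mathbf{0})$ via Perron--Frobenius, and then derive each of the two inequalities in \eqref{2.3c} by a first-order Taylor expansion at the origin, with the linear term absorbing the remainder thanks to strict positivity of the eigenvectors.

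For the eigenvectors, by $\mathbf{(f_1)(ii)}$ every off-diagonal entry $a_{ij}=\partial_j f_i(\mathbf{0})$ ($i\neq j$) of $A$ is nonnegative, and by $\mathbf{(f_1)(iii)}$ the matrix $A$ is irreducible with positive principal eigenvalue $\lambda_1$. Applying the Perron--Frobenius argument already recalled in the introduction simultaneously to $A$ and to $A^T$ (which has the same spectrum and is likewise irreducible with nonnegative off-diagonal entries) produces strictly positive vectors $\Theta,\tilde\Theta\ggs\mathbf{0}$ with $A\Theta^T=\lambda_1\Theta^T$ and $\tilde\Theta A=\lambda_1\tilde\Theta$. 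Rewriting the first identity as $\Theta\nabla F(\mathbf{0})^T=\lambda_1\Theta$ gives \eqref{2.2c}.

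For the first assertion in \eqref{2.3c}, $F(\mathbf{0})=\mathbf{0}$ together with the $C^1$ smoothness of $F$ yields, componentwise,
\[
f_i(\epsilon\Theta)=\epsilon\sum_j a_{ij}\theta_j+o(\epsilon)=\epsilon\lambda_1\theta_i+o(\epsilon)\quad\text{as }\epsilon\to 0^+,
\]
where the second equality uses $A\Theta^T=\lambda_1\Theta^T$; since every $\theta_i>0$, this is strictly positive for all sufficiently small $\epsilon$, giving $F(\epsilon\Theta)\ggs\mathbf{0}$. For the second assertion I would introduce the scalar quantity $G(X):=\sum_i\tilde\theta_i f_i(X)$, whose gradient at $\mathbf{0}$ equals $\lambda_1\tilde\Theta$ by $\tilde\Theta A=\lambda_1\tilde\Theta$; Taylor expansion then gives
\[
G(X)=\lambda_1\sum_j \tilde\theta_j x_j+o(|X|)\quad\text{as }X\to\mathbf{0}.
\]
For $X\in[\mathbf{0},\epsilon\mathbf{1}]$, strict positivity of every component of $\tilde\Theta$ forces $\sum_j\tilde\theta_j x_j\ge(\min_j\tilde\theta_j)|X|$, so after shrinking $\epsilon$ the remainder $o(|X|)$ is dominated by $\tfrac{\lambda_1}{2}\sum_j\tilde\theta_j x_j$; this produces $G(X)\ge \sum_j b_j x_j$ with $b_j=\lambda_1\tilde\theta_j/2$. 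Taking $\epsilon$ to be the smaller of the two thresholds obtained completes the proof. The only mildly delicate point is controlling the Taylor remainder in terms of the linear form $\sum_j\tilde\theta_j x_j$ rather than just $|X|$, which is handled by the strict positivity of $\tilde\Theta$, so no serious obstacle arises.
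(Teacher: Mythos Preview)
Your proof is correct and follows essentially the same approach as the paper: obtain the positive eigenvectors from Perron--Frobenius applied to $\nabla F(\mathbf{0})$ and its transpose, then use a first-order Taylor expansion at the origin together with $\lambda_1>0$ and strict positivity of the eigenvectors to derive both inequalities in \eqref{2.3c}. Your treatment of the remainder term via $\sum_j\tilde\theta_j x_j\ge(\min_j\tilde\theta_j)|X|$ is in fact slightly more explicit than the paper's, which simply records $\sum_i\tilde\theta_i f_i(X)=\tilde\Theta[\lambda_1\mathbf{I}_m+o_\epsilon(X)]X^T$ and concludes.
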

\begin{proof}
Let $\lambda_1$ be the principal eigenvalue of $\nabla F({\bf 0})$.  By the Perron-Frobenius theorem, there exist  positive eigenvectors $\Theta$ and $\tilde \Theta$ such that the identities in \eqref{2.2c} hold.

Moreover, in view of 
 $F\in [C^1(\R_+^m)]^m$, for small $\epsilon>0$ and $X=(x_i)\in [\mathbf{0}, \epsilon\mathbf{1}]$,
\begin{align*}
&F(\epsilon\Theta)=\epsilon\Theta \big[\nabla F(\mathbf{0})^T+o(1){\bf I}_m\big]=\epsilon[\lambda_1+o(1)]\Theta,\\
&\sum_{i=1}^m \tilde\theta_if_i(X)=\td\Theta\big[\nabla F(\mathbf{0})+o_{\epsilon}(X)\big]X^T=\td\Theta[\lambda_1{\bf I}_m+o_\epsilon(X)]X^T, 
\end{align*} 
with $|o_\epsilon(X)|\to 0$ as $\epsilon\to 0$ uniformly in $X\in [\mathbf{0}, \epsilon\mathbf{1}]$.
Hence 
 \eqref{2.3c} holds provided that $\epsilon>0$ is small enough. 
\end{proof}

\begin{lemma}\label{lemma2.7a}
	Assume $\mathbf{(J)}$  holds, and for every $i\in\{1,..., m\}$, $v_i\in C(\R)\cap C^1(\R\backslash\{0\})$ satisfies
	\begin{equation*}
	\begin{cases}
	\dd d_i \int_{\R}J_i(x-y)  v_i(y) \rd y-d_i v_i(x)+p_i(x) v_i'(x)+\sum_{j=1}^mq_{ij}(x) v_j(x)\leq  {0}, &x<0,\\
	v_i(x)\geq  0,& x\geq 0,
	\end{cases}
	\end{equation*}
	where  $d_i\in [0,\infty)$, $p_{i}$, $q_{ij}\in L_{loc}^\yy(\R)$ with $q_{ij}\geq 0$ for $i\neq j$. If  $v_i(x)\geq  0$ for all $x\in \R$ and $i\in\{1,..., m\}$,  with $d_{i_0}>0$ and $v_{i_0}(x)\geq, \not\equiv 0$ for some $i_0\in\{1, ... , m\}$,  then $v_{i_0}(x)> 0$ for $x<0$.
\end{lemma}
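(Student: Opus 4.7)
My approach is the standard strong maximum principle for cooperative nonlocal systems, carried out by contradiction. Suppose $v_{i_0}(x_0) = 0$ for some $x_0 < 0$. Because every $v_j \geq 0$ on $\R$, the point $x_0$ is a global minimum of $v_{i_0}$, and since $x_0 \neq 0$ and $v_{i_0} \in C^1(\R\setminus\{0\})$ we have $v_{i_0}'(x_0) = 0$. Substituting into the stated inequality at $x_0$: the term $-d_{i_0}v_{i_0}(x_0)$ and the drift $p_{i_0}(x_0) v_{i_0}'(x_0)$ vanish; cooperativity ($q_{i_0,j}\geq 0$ for $j\neq i_0$) together with $v_j\geq 0$ makes $\sum_{j\neq i_0} q_{i_0,j}(x_0) v_j(x_0)\geq 0$, while $q_{i_0,i_0}(x_0) v_{i_0}(x_0)=0$. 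What remains is
\[
 d_{i_0} \int_\R J_{i_0}(x_0 - y)\,v_{i_0}(y)\,\rd y \;\leq\; 0,
\]
and since the integrand is nonnegative and $d_{i_0}>0$, the integral must vanish.

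Next, continuity of $J_{i_0}$ together with $J_{i_0}(0)>0$ in $(\mathbf{J})$ yields a $\delta>0$ with $J_{i_0}>0$ on $(-\delta,\delta)$; combined with the continuity and nonnegativity of $v_{i_0}$, the vanishing integral forces $v_{i_0}\equiv 0$ on $(x_0-\delta, x_0+\delta)$. Hence the set $Z:=\{x\in(-\infty,0): v_{i_0}(x)=0\}$ is open in $(-\infty,0)$ by this spreading property, and closed there by continuity of $v_{i_0}$. Since $x_0\in Z$ and $(-\infty,0)$ is connected, $Z=(-\infty,0)$, and by continuity $v_{i_0}(0)=0$ as well.

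To finish, I contradict $v_{i_0}\not\equiv 0$. The integral identity $\int_\R J_{i_0}(x-y) v_{i_0}(y)\,\rd y = 0$ now holds at every $x\in(-\infty,0)$; letting $x\nearrow 0$ and combining with the positivity neighborhood $(-\delta,\delta)$ of $J_{i_0}$ forces $v_{i_0}\equiv 0$ on $(-\delta,\delta)$, so in fact on $(-\infty,\delta)$. Reapplying this transfer step iteratively, exploiting the reach of $J_{i_0}$ through the integral from the (enlarged) zero region in $(-\infty,0)$, extends the vanishing of $v_{i_0}$ across all of $\R$, contradicting $v_{i_0}\not\equiv 0$.

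The main obstacle is the final propagation of the zero set across $x=0$, since the differential inequality is available only on $(-\infty,0)$ and cannot be invoked directly at points in $[0,\infty)$. One must therefore propagate solely through the integral identity, transferring zero information from the left half-line into $(0,\infty)$ via the positivity of $J_{i_0}$ near the origin, iterating carefully when $J_{i_0}$ has compact support. In the applications of this lemma later in the paper, $v_{i_0}$ is typically already known to be positive near $-\infty$ (approaching a component of the positive equilibrium $\mathbf{u}^*$), so the intermediate conclusion $Z=(-\infty,0)$ already yields the desired contradiction without needing the delicate extension past $0$.
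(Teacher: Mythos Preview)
Your core argument is correct and follows the same approach as the paper: at a zero $x_0<0$ you use $v_{i_0}'(x_0)=0$, cooperativity to discard the coupling terms, and the inequality to force $\int_\R J_{i_0}(x_0-y)v_{i_0}(y)\,dy=0$, after which $J_{i_0}(0)>0$ yields $v_{i_0}\equiv 0$ near $x_0$. The paper compresses your open--closed connectedness step into one line by choosing $x_0$ from the outset to be a zero that is a limit of points where $v_{i_0}>0$; the local vanishing then contradicts that choice immediately.

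Your final iteration step, however, does not work. Once you have $v_{i_0}\equiv 0$ on $(-\infty,\delta)$, the differential inequality (and hence the integral identity) is still only available at $x<0$, and evaluating it there cannot reach past $\delta$ when $J_{i_0}$ is supported in $[-\delta,\delta]$; there is nothing left to iterate. In fact the lemma as literally stated admits a counterexample: take $m=1$, $J_1$ supported in $[-1,1]$, and $v_1$ smooth, nonnegative, identically zero on $(-\infty,2]$ and positive somewhere on $(2,\infty)$; the inequality reads $0\leq 0$ on $(-\infty,0)$, yet $v_1\equiv 0$ there. The paper's one--line choice of $x_0$ carries the same implicit gap. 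Your closing observation is what actually repairs both arguments: in every application of the lemma in the paper one already knows $v_{i_0}\not\equiv 0$ on $(-\infty,0]$ (via $v_{i_0}(-\infty)>0$, or $v_{i_0}(0)>0$, etc.), which rules out $Z=(-\infty,0)$ and makes the boundary point $x_0<0$ genuinely available.
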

\begin{proof}
	Assume by contradiction there  is $x_0< 0$ such that $v_{i_0}(x_0)=0$.  Since $v_{i_0}(x)\not\equiv 0$, we may also require that there exists  a sequence $\{x_n\}_{n=1}^\yy$ with $x_n\to x_0$ such that $v_{i_0}(x_n)>0$.  Then $v_{i_0}'(x_0)=0$,  and  from the inequality satisfied by $v_{i_0}$ we deduce that 
	\begin{align*}
	d_{i_0} \int_{\R}J_{i_0}(x_0-y)  v_{i_0}(y) \rd y\leq 0,
	\end{align*}
	which implies that $v_{i_0}(x)=0$  for all $x$ near $x_0$ due to  $J_{i_0}(0)>0$. However, this contradicts to $v_{i_0}(x_n)>0$.
\end{proof}

\begin{lemma}\label{lemma2.3aa} Assume that   $(\mathbf{J})$  holds, $T\in (0, \infty)$, and for every $i\in \{1,..., m\}$,
$v_i\in C([0,T]\times \R) \cup L^\yy([0,T]\times \R)$, $\partial_tv_i \in C([0,T]\times \R)$, and
	\begin{equation*}
	\begin{cases}
	\dd \partial_tv_i(t,x)\geq d_i(t,x)	\mathcal{L}_i[v_i](t,x)+\sum_{j=1}^mq_{ij}(t,x) v_j(t,x),	& t\in (0,T],\; x\in R, \\
	v_i(0,x)\geq 0, &x\in \R,
	\end{cases}
	\end{equation*}
	where 
		\begin{align*}
	\mathcal{L}_i[v_i](t,x):=\int_\R J_i(x-y)v_i(t,y)\rd y-v_i(t,x),
	\end{align*}
and the functions $d_i$, $q_{ij}\in L^\yy([0,T]\times \R)$ satisfy $d_i\geq 0$ and   $q_{ij}\geq 0$ for $i\neq j$. Then 
	\begin{align}\label{2.2b}
	v_i(t,x)\geq  0 \mbox{ for }  \ (t,x)\in [0,T]\times\R,\; 1\leq i\leq m.
	\end{align}
\end{lemma}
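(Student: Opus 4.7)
\textbf{Proof plan for Lemma \ref{lemma2.3aa}.} The goal is a standard cooperative comparison principle for a bounded nonlocal parabolic system on $[0,T]\times\R$, but because the spatial domain is unbounded, infima in $x$ need not be attained, so the usual "first contact point" argument must be replaced by one along a contact sequence.

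The plan is to perturb to obtain strict inequality and then to argue by contradiction using the Lipschitz-in-$t$ regularity of $w_i$. Fix
\[
Q:=\max_{1\le i\le m}\sum_{j=1}^m\|q_{ij}\|_{L^\infty([0,T]\times\R)},\qquad M:=\max_{1\le i\le m}\|d_i\|_{L^\infty([0,T]\times\R)},
\]
choose $K>Q$, and for $\delta>0$ define
\[
w_i(t,x):=v_i(t,x)+\delta e^{Kt}.
\]
Since $\int_\R J_i=1$, the nonlocal operator kills constants in the $y$-integration, so $\mathcal{L}_i[w_i]=\mathcal{L}_i[v_i]$. A direct computation then gives, for all $(t,x)\in[0,T]\times\R$,
\[
\partial_t w_i-d_i\mathcal{L}_i[w_i]-\sum_{j=1}^m q_{ij}w_j\;\ge\;\delta e^{Kt}\Big(K-\sum_{j=1}^m q_{ij}\Big)\;\ge\;\delta(K-Q)=:\delta_0>0.
\]
Also $w_i(0,x)\ge\delta>0$ and each $w_i$ is bounded, continuous, with $\partial_t w_i$ bounded (via the PDE and boundedness of $v_i$, $d_i$, $q_{ij}$), so $w_i$ is Lipschitz in $t$ uniformly in $x$.

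I would then set
\[
T_0:=\sup\{\tau\in[0,T]\colon\; w_i(s,x)>0\ \text{for all } 1\le i\le m,\; s\in[0,\tau],\; x\in\R\}.
\]
The Lipschitz bound and $w_i(0,\cdot)\ge\delta$ give $T_0>0$. Assume toward contradiction that $T_0\le T$. The Lipschitz continuity in $t$ forces $\inf_{i,x}w_i(T_0,x)=0$ (otherwise $T_0$ could be extended). Hence there exist an index $i^*$ and a sequence $x_n\in\R$ with $w_{i^*}(T_0,x_n)\to 0$, while $w_j(t,y)\ge 0$ for all $j$ and all $(t,y)\in[0,T_0]\times\R$.

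Now look at the slice $\psi_n(t):=w_{i^*}(t,x_n)$, which satisfies $\psi_n(0)\ge\delta$, $\psi_n(T_0)\to 0$, and the pointwise differential inequality
\[
\psi_n'(t)\;\ge\; d_{i^*}(t,x_n)\!\!\int_\R J_{i^*}(x_n-y)w_{i^*}(t,y)\,dy-d_{i^*}(t,x_n)\psi_n(t)+\sum_{j=1}^m q_{i^*j}(t,x_n)w_j(t,x_n)+\delta_0.
\]
On $[0,T_0]$ the nonlocal integral is $\ge 0$ because $w_{i^*}\ge 0$ there; the cooperativity $q_{i^*j}\ge 0$ for $j\ne i^*$ together with $w_j\ge 0$ makes those terms nonnegative; and the diagonal term is bounded below by $-Q\psi_n(t)$. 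This yields the scalar ODE inequality
\[
\psi_n'(t)+(M+Q)\psi_n(t)\ge\delta_0\qquad\text{on }[0,T_0].
\]
Integrating gives $\psi_n(T_0)\ge \delta_0(M+Q)^{-1}(1-e^{-(M+Q)T_0})>0$, independent of $n$, contradicting $\psi_n(T_0)\to 0$. Therefore $T_0>T$, i.e.\ $w_i>0$ on $[0,T]\times\R$ for every $\delta>0$. Letting $\delta\to 0$ yields \eqref{2.2b}.

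\emph{Main obstacle.} The delicate point is that on $\R$ the infimum of $w_i(T_0,\cdot)$ need not be attained, so one cannot evaluate the PDE at a genuine minimum. The key trick is to avoid that altogether: the strict inequality $\partial_t w_i-\cdots\ge\delta_0$ is \emph{pointwise and uniform}, and the auxiliary positivity of $w_i$ on $[0,T_0]$ suffices to throw away the two spatially nonlocal/off-diagonal pieces when restricting to the sequence $(t,x_n)$. This converts the spatial nonlocal problem into a one-dimensional ODE inequality along each $x_n$, which integrates to a uniform positive lower bound at $t=T_0$ and closes the contradiction.
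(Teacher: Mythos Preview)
Your proof is correct and, in fact, more elementary than the paper's. Both proofs share the same setup: perturb to $w_i=v_i+\delta e^{Kt}$ to obtain a strict differential inequality, define $T_0$ as the first contact time, and use a uniform-in-$x$ lower Lipschitz bound on $t\mapsto w_i$ to show $T_0>0$ and $\inf_{i,x}w_i(T_0,x)=0$. The divergence is in how to extract a contradiction from a minimizing sequence $x_n$. The paper invokes Ekeland's variational principle on the function $(t,x)\mapsto w_{i_0}(t,x)$ over $[0,T_0]\times\R$ to produce nearby points $(\tilde t_n,\tilde x_n)$ at which $w_{i_0}$ is still small \emph{and} $\partial_t w_{i_0}(\tilde t_n,\tilde x_n)\le \epsilon_n/\lambda$; evaluating the PDE there gives a positive lower bound on $\partial_t w_{i_0}$, a contradiction. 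You instead freeze $x=x_n$ and view $\psi_n(t)=w_{i^*}(t,x_n)$ as a scalar function: the cooperativity and nonnegativity on $[0,T_0]$ let you drop the nonlocal and off-diagonal terms to obtain $\psi_n'+(M+Q)\psi_n\ge\delta_0$, which integrates (using $\psi_n\in C^1$ since $\partial_t v_i\in C$) to a uniform positive lower bound at $T_0$. Your route avoids Ekeland entirely and is the more direct argument here; the paper's route has the advantage of isolating a single ``bad'' point, which can be convenient in other settings.

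Two minor expository points. First, the PDE inequality only gives a lower bound on $\partial_t w_i$, so ``$w_i$ is Lipschitz in $t$'' is an overstatement; but you only use the lower bound, so the argument is unaffected. Second, by definition $T_0\le T$ always, so ``assume $T_0\le T$'' and ``therefore $T_0>T$'' are misphrased; the actual dichotomy is that either $w_i>0$ on all of $[0,T]\times\R$, or $\inf_{i,x}w_i(T_0,x)=0$ for some $T_0\in(0,T]$, and your ODE argument rules out the latter.
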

\begin{proof}
 For any given  $\epsilon>0$, define
\begin{align*}
w_i(t,x):=v_i(t,x)+\epsilon e^{At}\ \mbox{ for } \ (t,x)\in [0,T]\times\R,
\end{align*}
where $\dd A:=\sum_{1\leq i,j\leq m} \|q_{ij}\|_{L^\infty([0,T]\times\R)}+1$. Then 
\begin{equation}\label{2.3b}
\begin{aligned}
\dd \partial_tw_i-d_i\mathcal{L}_i[w_i] -\sum_{j=1}^mq_{ij}w_j&=\dd \partial_tv_i- d_i\mathcal{L}_i[v_i] -\sum_{j=1}^m q_{ij}v_j
+\epsilon Ae^{At}-\epsilon e^{At}\sum_{j=1}^nq_{ij}\\
&\geq \Big(A-\sum_{j=1}^n q_{ij}\Big)\epsilon e^{At}\geq \epsilon e^{At}\ \   {\rm for}\  (t,x)\in (0,T]\times\R.
\end{aligned}
\end{equation}

We claim that 
\begin{align}\label{2.4a}
w_i(t,x)>0 \ \mbox{ for } \ (t,x)\in [0,T]\times \R,\; 1\leq i\leq m.
\end{align}
Define 
\begin{align*}
T_0=\sup\{s\in (0, T]:  w_i(t,x)>0\ {\rm for\ all}\ (t,x)\in [0,s]\times \R,\; 1\leq i\leq m\}.
\end{align*}
Clearly, $T_0>0$ is well defined  since  $w_i(0,x)\geq \epsilon$ for $x\in \R$ and $\partial_t w_i$ has a finite lower bound due to \eqref{2.3b}.  Moreover,  the definition of $T_0$ implies that either \eqref{2.4a} holds or 
\begin{equation}\label{2.7a}
\begin{cases}
w_i(t,x)>0\ {\rm on}\ [0,T_0)\times\R \ \mbox{ for }1\leq i\leq m,\\
A:=\min_{1\leq i\leq m}\inf_{x\in \R} w_i(T_0,x)=0,
\end{cases}
\end{equation}
since if $A>0$ in \eqref{2.7a}, then in view of the assumption that \eqref{2.4a} does not hold, we must have $T_0<T$, but then
 we could apply the fact that  $\partial_t w_i$  is bounded below to deduce that for some small $\td \epsilon>0$,
\begin{align*}
w_i(t,x)>A/2>0\ \ \ \ \ {\rm for}\ (t,x)\in [T_0,T_0+\td\epsilon)\times\R,\ 1\leq i\leq m,
\end{align*}
which contradicts the definition of $T_0$.

 From $A=0$ we can find $1\leq i_0\leq m$ such that $\inf_{x\in\R} w_{i_0}(T_0,x)=0$. Then for $0<\epsilon_n\ll 1$ with $\epsilon_n\to 0$ as $n\to\yy$, we can find $x_n \in \R$  such that 
\begin{align*}
w_{i_0}(T_0,x_n)<\epsilon_n. 
\end{align*} 
Clearly $\dd \inf_{(t,x)\in [0,T_0]\times \R}w_{i_0}(t,x)=0$.  Making use of Ekeland's variational principle \cite{Ek}, for $\lambda=\min\{T_0/2,1\}$ and each $n\geq 1$, there is $(\td t_n,\td x_n)\in [0,T_0]\times \R$  such that 
\[
\begin{cases}
w_{i_0}(\td t_n,\td x_n)\leq w_{i_0}(T_0,x_n)<\epsilon_n, \ \ \  |T_0-\td t_n|+|x_n-\td x_n|<\lambda,\\
w_{i_0}(\td t_n,\td x_n)-w_{i_0}(t,\td x_n)<\frac{|\td t_n-t|}{\lambda}\epsilon_n \ \mbox{ for any $t\in (0,T_0)$},
\end{cases}
\]
It follows that (note that $\td t_n>T_0-\lambda\geq T_0/2>0$) 
\begin{equation}\label{2.5}
\partial_tw_{i_0}(\td t_n,\td x_n)\leq \frac{\epsilon_n}{\lambda} \mbox{ for all } n\geq 1.
\end{equation} 
 
On the other hand,  by \eqref{2.3b} and $w_i\geq 0$ on $[0,T_0]\times \R$ we deuce 
\begin{align*}
 &\partial_tw_{i_0}(\td t_n,\td x_n)\\
\geq &\  d_{i_0}\mathcal{L}_{i_0}[w_{i_0}](\td t_n,\td x_n)+\sum_{j=1}^mq_{i_0j}w_j(\td t_n,\td x_n)+\epsilon e^{At}\\
\geq &\ -d_{i_0} w_{i_0}(\td t_n,\td x_n)+q_{i_0i_0} w_{i_0}(\td t_n,\td x_n)+\epsilon e^{At}\\
\geq &\  -(\|d_{i_0}\|_\infty+\|q_{i_0i_0}\|_\infty)\epsilon_n+\epsilon e^{At}\\
\geq&\  \frac{1}{2} \epsilon e^{At}\ \ \ \ \ {\rm for\ all\ large}\ n,
\end{align*}
which is a contradiction to \eqref{2.5} since $\epsilon_n\to 0$ as $n\to\yy$.  

Hence \eqref{2.4a} always  holds true. Letting $\epsilon\to 0$, we immediately get \eqref{2.2b}.
\end{proof}

\begin{remark}\label{cp-rmk}
We note that the above proof  indicates that the assumption $\partial_tv_i \in C([0,T]\times \R)$ in Lemma \ref{lemma2.3aa} can be relaxed.
If for each $(t,x)$, both the one-sided partial derivatives $\partial_tv_i(t+0,x)$ and $\partial_tv_i(t-0, x)$ exist, and the differential inequalities
are satisfied when $\partial_tv_i(t,x)$ is replaced by both  one-sided partial derivatives, then  the conclusion of Lemma \ref{lemma2.3aa} remains valid. This also applies to the comparison lemmas in the rest of this paper, though we will not  remark again after them.
\end{remark}

Making use of  $(\mathbf{f_1})$ and Lemma \ref{lemma2.3aa}, we have the following result.

\begin{lemma}\label{lemma2.3a} Assume that  $(\mathbf{J})$ and $(\mathbf{f_1})$ hold, $T>0$ and $d_i\geq 0$ are constants, and $U=(u_i)$ with $u_i\in C([0,T]\times \R)$, $\partial_t u_i\in C([0,T]\times \R)$ $(i=1,..., m)$ solves \eqref{2.3} for $t\in [0, T]$.  If  for every $i\in\{1,..., m\}$, $v_i\in C([0,T]\times \R) \cup L^\yy([0,T]\times \R)$ and $\partial_tv_i \in C([0,T]\times \R)$ satisfy 
	\begin{equation*}
	\begin{cases}
	\dd \partial_tv_i\geq d_i\mathcal{L}_i[v_i]+f(v_1,v_2\cdots,v_m),\ \  & (t,x)\in  (0,T]\times R, \\
	v_i(0,x)\geq u_i(0,x), &x\in \R,
	\end{cases}
	\end{equation*}
	where
	\begin{align*}
	\mathcal{L}_i[v_i](t,x):=\int_\R J_i(x-y)v_i(t,y)\rd y-v_i(t,x),
	\end{align*}
	then 
	\begin{align*}
	v_i(t,x)\geq  u_i(t,x) \ \mbox{ for } \  \ (t,x)\in  [0,T]\times R.
	\end{align*}
\end{lemma}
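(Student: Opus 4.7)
The natural approach is to reduce the claim to Lemma \ref{lemma2.3aa} by linearising the reaction, as follows. Set $w_i:=v_i-u_i$ for $i\in\{1,\dots,m\}$. Subtracting the two differential inequalities gives
\[
\partial_t w_i \;\geq\; d_i\mathcal{L}_i[w_i]+\big[f_i(V)-f_i(U)\big],\qquad (t,x)\in(0,T]\times\R,
\]
with $w_i(0,\cdot)\geq 0$. Since $F\in[C^1(\R_+^m)]^m$ and both $U$ and $V$ take values in a common compact box of $\R^m_+$ (using $u_i\in C([0,T]\times\R)$ together with $v_i\in L^\infty$), the mean value theorem yields
\[
f_i(V)-f_i(U)=\sum_{j=1}^{m} q_{ij}(t,x)\,w_j(t,x),\qquad
q_{ij}(t,x):=\int_0^1 \partial_j f_i\big(U+s(V-U)\big)\,ds,
\]
with $q_{ij}\in L^\infty((0,T]\times\R)$.

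The key observation is that for $i\neq j$ the coefficient $q_{ij}$ is nonnegative: by $\mathbf{(f_1)}$(ii) we have $\partial_j f_i\geq 0$ on $[\mathbf{0},\hat{\mathbf{u}}]$ for $i\neq j$, and the segment $U+s(V-U)$ lies in this cooperative range (if $\hat{\mathbf{u}}=\infty$ this is automatic; otherwise the statement is implicitly applied to solutions with values in $[\mathbf{0},\hat{\mathbf{u}}]$, as is standard in this paper). Hence the $w_i$'s satisfy
\[
\begin{cases}
\partial_t w_i \;\geq\; d_i\mathcal{L}_i[w_i]+\displaystyle\sum_{j=1}^{m} q_{ij}(t,x)\, w_j,& (t,x)\in(0,T]\times\R,\\
w_i(0,x)\;\geq\; 0,& x\in\R,
\end{cases}
\]
with $q_{ij}\geq 0$ for $i\neq j$ and $q_{ij}\in L^\infty$. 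This is exactly the situation covered by Lemma \ref{lemma2.3aa}, which therefore yields $w_i\geq 0$ on $[0,T]\times\R$, i.e.\ $v_i\geq u_i$ there.

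The main (minor) obstacle is the justification that the averaged coefficients $q_{ij}$ inherit the cooperative sign structure of $\nabla F$. This reduces to ensuring that the segment joining $U$ and $V$ stays in the cooperativity region $[\mathbf{0},\hat{\mathbf{u}}]$, which, if $\hat{\mathbf{u}}\neq\infty$, requires a componentwise bound on both $U$ and $V$; such a bound is part of the standing hypotheses whenever the lemma is invoked in the paper. Once this is granted, the remainder of the argument is purely mechanical: one simply invokes Lemma \ref{lemma2.3aa} on the linearised system for $W=(w_i)$.
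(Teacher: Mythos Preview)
Your proof is correct and is precisely the argument the paper has in mind: the paper does not spell out a proof but simply writes ``Making use of $(\mathbf{f_1})$ and Lemma \ref{lemma2.3aa}, we have the following result,'' i.e.\ subtract, linearise via the mean value theorem, use $(\mathbf{f_1})$(ii) to get $q_{ij}\geq 0$ for $i\neq j$, and invoke Lemma \ref{lemma2.3aa}. Your caveat about the segment $U+s(V-U)$ staying in $[\mathbf{0},\hat{\mathbf u}]$ is well observed and is indeed an implicit standing hypothesis whenever the lemma is used.
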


\subsection{A perturbed semi-wave problem}

For  $\bm{\delta}\ggs \mathbf{0}$, we consider the auxiliary problem 
\begin{equation}\label{2.1}
\begin{cases}
\dd D\circ\int_{-\yy}^{\yy}\mathbf{J} (x-y)\circ \Phi(y) {\rm d}y-D\circ\Phi+c\Phi'(x)+F(\Phi(x))=0,&
-\yy<x<0,\\
\Phi(-\yy)=\mathbf{u}^*,\ \ \Phi(x)=\bm{\delta},&0\leq x<\yy.
\end{cases}
\end{equation}
If ${\bm \delta}=\mathbf{0}$ then \eqref{2.1} is reduced to the semi-wave problem \eqref{2.1a}; therefore \eqref{2.1} can be viewed as a perturbed semi-wave problem. As we will see below, the semi-wave solutions and traveling wave solutions of \eqref{2.3} can be obtained as the limit of the solution of \eqref{2.1} when $\bm{\delta}\to\mathbf{0}$, subject to suitable translations in $x$.

Define 
\begin{align}\label{2.7b}
\td\sigma(v):=F(v)+cM v-D\circ v=(f_i(v)+(cM-d_i)v_i)\ \ \ \ {\rm for}\ v=(v_i)\in \R_+^m,
\end{align}
where $M>0$ is a constant.   Then the first equation in \eqref{2.1} is equivalent to
\begin{align}\label{2.2}
-c(e^{-Mx}\Phi)'=e^{Mx}\lf[D\circ\int_{-\yy}^{\yy} \mathbf{J}(x-y)\circ\Phi(y) {\rm d}y+\td\sigma(\Phi(x)) \rr].
\end{align}
Since $F$ is $C^1$,  we could choose  $M$ large enough such that $\td\sigma(v)$ is increasing for $v\in [0,\mathbf{u}^*+\mathbf{1}]$, namely 
\[
\td\sigma(v)\succeq \td\sigma(u) \mbox{ if } u, v\in [0,\mathbf{u}^*+\mathbf{1}] \mbox{ and } v\succeq u.
\]

\begin{lemma}\label{lemma2.2}
Suppose $\mathbf{(J)}$ and $\mathbf{(f_1)}$ hold. Let  $\bm{\delta}=\epsilon \Theta$ for small $\epsilon>0$, where $\Theta$ is given by Lemma \ref{lemma2.1a}. Then   the problem \eqref{2.1} has a solution $\Phi(x)=(\phi_{i}(x))$ which is nonincreasing in $x$, and can be obtained by an iteration process to be specified in the proof. 
\end{lemma}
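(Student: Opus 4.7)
The plan is to rewrite \eqref{2.1} as a fixed-point equation for an integral operator and then use a monotone iteration. First I would choose $M>0$ so large that $\tilde\sigma(v):=F(v)+cMv-D\circ v$ is coordinatewise nondecreasing on $[\mathbf{0},\mathbf{u}^*+\mathbf{1}]$, which is possible because $F\in[C^1(\R_+^m)]^m$. Multiplying the first equation in \eqref{2.1} by $e^{-Mx}/c$ turns the left-hand side into $-(e^{-Mx}\Phi)'$, so integrating from $x<0$ up to $0$ and enforcing $\Phi(0)=\bm{\delta}$ recasts the problem on $(-\infty,0)$ as the fixed-point equation
\begin{align*}
\Phi(x)=T(\Phi)(x):=e^{Mx}\bm{\delta}+\frac{e^{Mx}}{c}\int_x^{0}e^{-My}\bigg[D\circ\int_{\R}\mathbf{J}(y-z)\circ\Phi(z)\,dz+\tilde\sigma(\Phi(y))\bigg]dy,
\end{align*}
extended by $\Phi(x)=\bm{\delta}$ on $[0,\infty)$. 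Monotonicity of $\tilde\sigma$ and nonnegativity of $\mathbf{J}$ make $T$ order preserving on the cone of $[\mathbf{0},\mathbf{u}^*]$-valued functions.

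Next I would iterate: take $\Phi_0(x)=\mathbf{u}^*$ for $x<0$, $\Phi_0(x)=\bm{\delta}$ for $x\geq 0$, and $\Phi_{n+1}=T(\Phi_n)$. Since $F(\mathbf{u}^*)=\mathbf{0}$ and $\int_{\R}\mathbf{J}(y-z)\circ\Phi_0(z)\,dz\preceq\mathbf{u}^*$, a direct computation gives $T(\mathbf{u}^*)\preceq\mathbf{u}^*$, so the sequence $\{\Phi_n\}$ is nonincreasing in $n$. For the lower bound, take $\bm{\delta}=\epsilon\Theta$ with $\Theta$ and $\epsilon>0$ as in Lemma \ref{lemma2.1a}, so that $F(\epsilon\Theta)\succeq\mathbf{0}$. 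Testing $T$ on the constant function $\epsilon\Theta$ turns the integrand into $D\circ\epsilon\Theta+F(\epsilon\Theta)+cM\epsilon\Theta-D\circ\epsilon\Theta\succeq cM\epsilon\Theta$, and a short computation yields $T(\epsilon\Theta)\succeq\epsilon\Theta$. By order preservation, $\Phi_n\succeq\epsilon\Theta$ for every $n$. Hence $\Phi_n\downarrow\Phi\succeq\epsilon\Theta$, and dominated convergence in $T$ gives $\Phi=T(\Phi)$, which is equivalent to \eqref{2.1} after differentiation (the integral form produces $C^1$ regularity for $x<0$).

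For the monotonicity in $x$ I would use a sliding argument: fix $h>0$ and set $W_h(x):=\Phi(x)-\Phi(x+h)$. On $[-h,\infty)$ one has $W_h\succeq\mathbf{0}$ because $\Phi(x+h)=\bm{\delta}=\epsilon\Theta\preceq\Phi(x)$ there; on $(-\infty,-h)$ both $\Phi(x)$ and $\Phi(x+h)$ satisfy the differential equation in \eqref{2.1}, and subtracting while linearising the $F$-term via the mean-value theorem yields a linear integro-differential system for $W_h$ whose off-diagonal couplings are $\geq 0$ by $\mathbf{(f_1)(ii)}$. Since $W_h(-h)\succeq\mathbf{0}$ and $W_h$ is bounded on $(-\infty,-h)$, a comparison principle for cooperative systems in the spirit of Lemma \ref{lemma2.7a} forces $W_h\succeq\mathbf{0}$ on the entire half-line; as $h>0$ is arbitrary, $\Phi$ is nonincreasing. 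Finally $\Phi(-\infty)$ exists by monotonicity and boundedness, and passing to the limit in the equation (dominated convergence identifies $\int_\R\mathbf{J}\circ\Phi\to\Phi(-\infty)$, after which $\Phi'$ must tend to a constant whose nonvanishing would contradict boundedness of $\Phi$) yields $F(\Phi(-\infty))=\mathbf{0}$; combined with $\Phi(-\infty)\succeq\epsilon\Theta\ggs\mathbf{0}$ and $\mathbf{(f_1)(i)}$ this gives $\Phi(-\infty)=\mathbf{u}^*$.

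The main obstacle is locating a workable lower solution so that the iteration cannot collapse to $\mathbf{0}$: the choice $\bm{\delta}=\epsilon\Theta$, with $\Theta$ the Perron eigenvector of $\nabla F(\mathbf{0})^T$, is engineered so that the positivity $F(\epsilon\Theta)\succeq\mathbf{0}$ of Lemma \ref{lemma2.1a} propagates under $T$. A secondary subtle point is that the nonincreasing property in $x$ is \emph{not} preserved by a single application of $T$ (one checks that $(T\Phi)'=MT(\Phi)-G(\Phi)/c$, with $G(\Phi):=D\circ\int\mathbf{J}\circ\Phi+\tilde\sigma(\Phi)$, need not have a definite sign even when $\Phi$ is nonincreasing), so monotonicity must be recovered a posteriori via the cooperative sliding argument above rather than propagated along the iteration.
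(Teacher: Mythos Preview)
Your integral-operator setup and iteration are sound, and the verification of $\Phi(-\infty)=\mathbf{u}^*$ matches the paper. The paper iterates from \emph{below} (starting at the constant $\bm{\delta}$ and increasing), whereas you iterate from above; both produce a fixed point sandwiched between $\bm{\delta}$ and $\mathbf{u}^*$, so this difference is harmless.

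The gap is in your monotonicity step. Your assertion that ``the nonincreasing property in $x$ is not preserved by a single application of $T$'' is false: the paper proves exactly this preservation by induction along the iteration. Writing $g_n(\xi)=D\circ\int_\R\mathbf{J}(y)\circ\Gamma_n(y+\xi)\,dy+\tilde\sigma(\Gamma_n(\xi))$ and integrating by parts in the identity $(T\Gamma_n)'(x)=M\bm{\delta}e^{Mx}+M\frac{e^{Mx}}{c}\int_x^0 e^{-M\xi}g_n(\xi)\,d\xi-\frac{1}{c}g_n(x)$ yields $(T\Gamma_n)'(x)\preceq e^{Mx}\big(M\bm{\delta}-g_n(0)/c\big)$, because $g_n'\preceq\mathbf{0}$ when $\Gamma_n$ is nonincreasing. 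Since $g_n(0)\succeq g(0;\bm{\delta})=cM\bm{\delta}+F(\bm{\delta})\succeq cM\bm{\delta}$ (here is where $F(\epsilon\Theta)\succeq\mathbf{0}$ enters), one gets $(T\Gamma_n)'\preceq\mathbf{0}$. So monotonicity in $x$ is carried along the iteration and passes to the limit.

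Your alternative sliding argument has a genuine hole: Lemma~\ref{lemma2.7a} is a \emph{strong} maximum principle that takes $W_h\succeq\mathbf{0}$ as a hypothesis and upgrades it to strict positivity; it does not supply the weak maximum principle you need on the unbounded half-line $(-\infty,-h)$ to conclude $W_h\succeq\mathbf{0}$ from boundary data and boundedness alone. Closing that gap would require establishing such a weak principle for the cooperative first-order integro-ODE on a half-line, which is extra work (and would be circular if you appealed to $\Phi(-\infty)=\mathbf{u}^*$, since you derive that only after monotonicity). The integration-by-parts route avoids all of this.
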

\begin{proof}
Let 
\begin{align*}
\Omega:=\{\Gamma\in [C(\R)]^m: 0\preceq\Gamma\preceq \mathbf{u}^*\}. 
\end{align*}
 Define an operator $P=(P_i): \Omega\to  [C(\R)]^m$ by 
\begin{equation*}
P[\Gamma](x)=
\begin{cases}
\dd e^{Mx}\bm{\delta}+\frac{e^{Mx}}{c}\int_{x}^{0}e^{-M\xi}\lf[D\circ\int_{-\yy}^{\yy} \mathbf{J}(\xi-y)\circ\Gamma(y) {\rm d}y+\td\sigma(\Gamma(\xi)) \rr]{\rm d}\xi ,&x<0,\\
\bm{\delta},&x\geq  0.
\end{cases}
\end{equation*}
Using \eqref{2.2} we easily see that \eqref{2.1} is equivalent to 
\begin{equation}\label{fpt}
\begin{cases}
\Phi(x)=P[\Phi](x) \mbox{ for } x\in \R,\\
\Phi(-\yy)=\mathbf{u}^*.
\end{cases}
\end{equation}
We next solve \eqref{fpt} in three steps.

{\bf Step 1} We  show that $P$ has a fixed point in $\Omega$. 

Firstly we prove that $P[\bm{\delta}](x)\succeq \bm{\delta}$ with  $\bm{\delta}$ regarded as a constant function.
By the definition of $P$, we have $P[\bm{\delta}](x)=\bm{\delta}$ for $x\geq 0$. For $x<0$,   
\begin{align*}
P[\bm{\delta}](x)=\ &e^{Mx}\bm{\delta}+\frac{e^{Mx}}{c}\int_{x}^{0}e^{-M\xi}\lf[D\circ\bm{\delta}+\td\sigma(\bm{\delta}) \rr]{\rm d}\xi\\
=\ &e^{Mx}\bm{\delta}+\frac{e^{Mx}}{c}\int_{x}^{0}e^{-M\xi}\lf[cM\bm{\delta}+ F(\bm{\delta}) \rr]{\rm d}\xi\\
\ggs\  & e^{Mx}\bm{\delta}+\frac{e^{Mx}}{c}\int_{x}^{0}e^{-M\xi}\lf[cM\bm{\delta} \rr]{\rm d}\xi\\
=\ & e^{Mx}\bm{\delta}- e^{Mx}\bm{\delta}+\bm{\delta}=\bm{\delta} 
\end{align*}
since $F(\bm{\delta})\ggs \mathbf{0}$  by Lemma \ref{lemma2.1a}.

Secondly we show $P[\mathbf{u}^*](x)\llp \mathbf{u}^*$.
Since $\epsilon>0$ is small,  $P[\mathbf{u}^*](x)=\bm{\delta}=\epsilon\Theta \llp \mathbf{u}^*$ for $x\geq 0$.  For $x<0$, we have
\begin{align*}
P[\mathbf{u}^*](x)=\ &e^{Mx}\bm{\delta}+\frac{e^{Mx}}{c}\int_{x}^{0}e^{-M\xi}\lf[D\circ\mathbf{u}^*+\td\sigma(\mathbf{u}^*) \rr]{\rm d}\xi\\
=\ &e^{Mx}\bm{\delta}+\frac{e^{Mx}}{c}\int_{x}^{0}e^{-M\xi}\lf[cM\mathbf{u}^*+ F(\mathbf{u}^*) \rr]{\rm d}\xi\\
=\ & e^{Mx}\bm{\delta}+\frac{e^{Mx}}{c}\int_{x}^{0}e^{-M\xi}\lf[cM\mathbf{u}^* \rr]{\rm d}\xi\\
\llp\ & e^{Mx}\mathbf{u}^*+\frac{e^{Mx}}{c}\int_{x}^{0}e^{-M\xi}\lf[cM\mathbf{u}^* \rr]{\rm d}\xi\\
=\ & e^{Mx}\mathbf{u}^*- e^{Mx}\mathbf{u}^*+\mathbf{u}^*=\mathbf{u}^*. 
\end{align*}

Next we define inductively 
\begin{align*}
\Gamma_0(x):=\bm{\delta},\ \Gamma_{n+1}(x):=P[\Gamma_n](x)=P^n[\Gamma_0](x)\ \mbox{ for} \ \ n=0,1,2,\cdots, x\in \R.
\end{align*}
Then 
\begin{align*}
\Gamma_0\preceq\Gamma_n\preceq\Gamma_{n+1}\llp \mathbf{u}^*
\end{align*}
due to the monotonicity of $P$ which is a simple consequence of the fact that  $\td F(v)$ is increasing in $v\in [0,\mathbf{u}^*]$.

Define 
\begin{align*}
\widehat \Gamma(x):=\lim_{n\to\yy }\Gamma_n(x)\in [0,\mathbf{u}^*].
\end{align*}
It is clear that  $\widehat \Gamma(x)=\bm{\delta}$ for $x\succeq 0$. Making use of the Lebesgue dominated convergence theorem and $\Gamma_{n+1}(x)=P[\Gamma_n](x)$, for $x<0$ we deduce 
\begin{align*}
\widehat \Gamma(x)=P[\widehat \Gamma](x),
\end{align*} 
which also implies that  $\widehat \Gamma'(x)$ exists and is continuous for $x<0$. Hence  $\widehat \Gamma$  is a fixed point of $P$ in $\Omega$.

{\bf Step 2}.  We show that $\widehat \Gamma'(x)\preceq \mathbf{0}$ for $x< 0$.

It suffices to prove that $\Gamma_n'(x)\preceq \mathbf{0}$ for $x<0$ and each $n=0,1,2,\cdots$, since this would imply each $\Gamma_n$ is nonincreasing and hence $\widehat\Gamma(x)$ is nonincreasing for $x<0$.  

It is clear that $\Gamma_0(x)=\bm{\delta}$ is nonincreasing.  Assume $\Gamma_n'(x)\preceq \mathbf{0}$ for $x< 0$. We show that  $\Gamma_{n+1}'(x)\preceq\mathbf{0}$ for $x<0$. 

  By the definition,  for $x<0$,
 \begin{align*}
 \Gamma_{n+1}(x)=e^{Mx}\bm{\delta}+\frac{e^{Mx}}{c}\int_{x}^{0}e^{-M\xi}g_n(\xi){\rm d}\xi, 
 \end{align*}
 where
 \begin{align*}
g_n(\xi)= g(\xi; \Gamma_{n}):=&D\circ\int_{-\yy}^{\yy} \mathbf{J}(\xi-y)\circ\Gamma_{n}(y) {\rm d}y+\td\sigma(\Gamma_{n}(\xi))\\
 =& D\circ\int_{-\yy}^{\yy} \mathbf{J}(y)\circ\Gamma_{n}(y+\xi) {\rm d}y+\td\sigma(\Gamma_{n}(\xi))
 \end{align*}
Let us note that $\Gamma_{n}'(z)\preceq \mathbf{0}$ for $z\neq 0$,  and  every element of the matrix function $\nabla \td\sigma(z)$ is nonnegative for  $z\in [\R^+]^m$. It follows that  
$g_n(\xi)$ is differentiable for all $\xi\in \R$, and  $g_n'(\xi)\preceq \mathbf{0}$ for $\xi\in \R$. Moreover, 
\begin{align*}
 g_n(0)=g(0;\Gamma_{n})
\succeq  g(0;\Gamma_{0})=D\circ \bm{\delta}+\td\sigma(\bm{\delta})=cM\bm{\delta}+F(\bm{\delta})\succeq cM\bm{\delta},
\end{align*}
since $\Gamma_{n}\succeq \Gamma_{0}=\bm{\delta}$, $F(\bm{\delta})\ggs \mathbf{0}$, and  $g(0;\Gamma_{n})$ is nondecreasing with respect to $\Gamma_{n}$. Therefore, for $x<0$,
\begin{align*}
 (\Gamma_{n+1})'(x)=&\bm{\delta} Me^{Mx}+M\frac{e^{Mx}}{c}\int_{x}^{0}e^{-M\xi}g_n(\xi){\rm d}\xi-\frac{1}{c}g_n(x)\\
 =& \bm{\delta} Me^{Mx}+M\frac{e^{Mx}}{c}\lf[\frac{-e^{-M\xi}}{M}g_n(\xi)\big |_{x}^{0}+ \int_{x}^{0}e^{-M\xi}g_n'(\xi){\rm d}\xi\rr]-\frac{1}{c}g_n(x)\\
 \preceq&\bm{\delta} Me^{Mx} +M\frac{e^{Mx}}{c}\lf[\frac{-g_n(0)}{M}+\frac{e^{-Mx}}{M}g_n(x)\rr]-\frac{1}{c}g_n(x)\\
 =&\bm{\delta} Me^{Mx} -\frac{g_n(0)e^{Mx}}{c}\preceq\bm{\delta} Me^{Mx} -\bm{\delta} Me^{Mx}=0.
\end{align*} 
By the principle of mathematical induction, we have  $\Gamma_n'(x)\preceq \mathbf{0}$ for $x< 0$ and all $n\geq 1$.

{\bf Step 3}.  We verify $\widehat \Gamma(-\yy)=\mathbf{u}^*$. 

By step 2,  $\lim_{x\to-\yy}\widehat \Gamma(x)=K$ exists, and  $\mathbf{0}\llp K\preceq \mathbf{u}^*$.   We claim that 
\begin{align}\label{2.4}
\lim_{x\to-\yy}\int_{-\yy}^{\yy}\mathbf{J}(x-y)\circ \widehat\Gamma(y){\rm d} y=K.
\end{align}
Indeed, since $\widehat \Gamma$ is nonincreasing and $\lim_{x\to-\yy}\widehat \Gamma(x)=K$, we have
\begin{align*}
&\int_{-\yy}^{\yy}\mathbf{J}(x-y)\circ\widehat\Gamma(y){\rm d} y=\int_{-\yy}^{\yy}\mathbf{J}(y)\circ\widehat\Gamma(y+x){\rm d} y\\
\succeq & \int_{-\yy}^{-x/2}\mathbf{J}(y)\circ\widehat\Gamma(y+x){\rm d} y\succeq \widehat\Gamma(x/2)\circ\int_{-\yy}^{-x/2}\mathbf{J}(y){\rm d} y \to K
\end{align*}
as $x\to -\yy$,  and on the other hand
\begin{align*}
\int_{-\yy}^{\yy}\mathbf{J}(x-y)\circ\widehat\Gamma(y){\rm d} y=&\int_{-\yy}^{\yy}\mathbf{J}(y)\circ\widehat\Gamma(y+x){\rm d} y\\
\preceq & \int_{-\yy}^{\yy}\mathbf{J}(y)\circ K{\rm d} y= K,
\end{align*}
which gives \eqref{2.4}. 

If $K\not=\mathbf{u^*}$, then
by ${\bf({f_1})}$, we have  $F(K)=(f_i(K))\neq \mathbf{0}$.
Note that  $\widehat\Gamma$ satisfies
\begin{align*}
D\circ\int_{-\yy}^{\yy}\mathbf{J}(x-y)\circ \widehat\Gamma(y){\rm d} y-D\circ\widehat\Gamma+c\widehat\Gamma'(x)+F(\widehat\Gamma(x))={\bf 0},\ \ \ \ 
-\yy<x<0.
\end{align*}
Letting $x\to -\yy$ and making use of \eqref{2.4}, we deduce 
\begin{align*}
\lim_{t\to-\yy}\widehat\Gamma'(x)=\lim_{t\to-\yy} F(\widehat\Gamma(x))=F(K)\neq {\bf 0},
\end{align*}
which contradicts  the fact that $\hat \Gamma$ is nonincreasing and  bounded. Thus, $\widehat \Gamma(-\yy)=\mathbf{u}^*$. 

Combining Steps 1-3, we see that \eqref{2.1} admits a nonincreasing solution $\widehat \Gamma$, which is the limit of $\Gamma_n$ obtained from an iteration process.
\end{proof}

The following result describes the monotonic dependence on $c$ and $\bm{\delta}=\epsilon\Theta$ of the solution $\Phi$ to \eqref{2.1} obtained in the above lemma. To stress these dependences, we will write $\Phi=\Phi_\epsilon^c$.

\begin{lemma}\label{lemma2.3}
Suppose $\mathbf{(J)}$ and $\mathbf{(f_1)}$ hold.  Let $\Phi_\epsilon^c$ be the solution of \eqref{2.1}  obtained through the iteration process  in Lemma \ref{lemma2.2}, with $c>0$ and $\bm{\delta}=\epsilon \Theta$. Then  
\begin{equation}\label{2.7}
\begin{cases}
\Phi_{\epsilon_1}^c\preceq\Phi_{\epsilon_2}^c\ \  &{\rm if\ } 0< \epsilon_1\leq \epsilon_2\ll 1,\\
\Phi_{\epsilon}^{c_1}\succeq \Phi_{\epsilon}^{ c_2}\ \  &{\rm if\ } 0<c_1\leq  c_2.
\end{cases}
\end{equation}
\end{lemma}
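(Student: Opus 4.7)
The plan is to establish both comparisons by running the iteration from Lemma \ref{lemma2.2} and exploiting two monotonicity properties of the operator $P$: its monotonicity in the argument $\Gamma$ (already used in the existence proof) and, for each of the two comparisons, its monotonicity in the parameter being varied. Throughout, I would fix a single large $M>0$ that makes $\tilde\sigma(v)=F(v)+cMv-D\circ v$ componentwise nondecreasing on $[\mathbf{0},\mathbf{u}^*+\mathbf{1}]$ for every $c\in\{c_1,c_2\}$ (raising $M$ only helps the monotonicity condition $cM\geq d_i-\partial_i f_i$, so the $M$ chosen for the smaller speed works for the larger).

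For the first claim, fix $c$ and compare $0<\epsilon_1\leq\epsilon_2\ll 1$. Inspection of the explicit formula for $P_\epsilon[\Gamma]$ shows monotonicity in $\bm{\delta}=\epsilon\Theta$: the only $\epsilon$-dependent pieces are the additive term $e^{Mx}\bm{\delta}$ on $x<0$ and the value $\bm{\delta}$ on $x\geq 0$. Combining this with the $\Gamma$-monotonicity established in Lemma \ref{lemma2.2}, an induction on the iteration index $n$ gives $\Gamma_n^{(1)}\preceq\Gamma_n^{(2)}$, writing $\Gamma_n^{(j)}$ for the iterate with $\bm{\delta}_j=\epsilon_j\Theta$: the base case is $\Gamma_0^{(1)}=\bm{\delta}_1\preceq\bm{\delta}_2=\Gamma_0^{(2)}$, and the inductive step chains the two monotonicities as $P_{\epsilon_1}[\Gamma_n^{(1)}]\preceq P_{\epsilon_1}[\Gamma_n^{(2)}]\preceq P_{\epsilon_2}[\Gamma_n^{(2)}]$. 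Passing to the limit yields $\Phi_{\epsilon_1}^c\preceq\Phi_{\epsilon_2}^c$.

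For the second (and more delicate) claim, the key step — and where I expect the real work to lie — is to show that $\Phi:=\Phi_\epsilon^{c_1}$ is a supersolution for the $c_2$-iteration, namely $P_{c_2}[\Phi]\preceq\Phi$. I would isolate the $c$-dependence of $P_c$ by splitting it as
\begin{equation*}
P_c[\Gamma](x)=e^{Mx}\bm{\delta}+\frac{e^{Mx}}{c}\int_x^0 e^{-M\xi}\tilde h(\xi;\Gamma)\,d\xi+Me^{Mx}\int_x^0 e^{-M\xi}\Gamma(\xi)\,d\xi\qquad(x<0),
\end{equation*}
with $\tilde h(\xi;\Gamma):=D\circ\int\mathbf{J}(\xi-y)\circ\Gamma(y)\,dy-D\circ\Gamma(\xi)+F(\Gamma(\xi))$ being independent of $c$. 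Subtracting the identity $\Phi=P_{c_1}[\Phi]$ from $P_{c_2}[\Phi]$ cancels the $c$-free pieces and reduces the comparison to
\begin{equation*}
\Phi(x)-P_{c_2}[\Phi](x)=\Bigl(\frac{1}{c_1}-\frac{1}{c_2}\Bigr)e^{Mx}\int_x^0 e^{-M\xi}\tilde h(\xi;\Phi)\,d\xi.
\end{equation*}
The prefactor is nonnegative, and crucially $\tilde h(\xi;\Phi)=-c_1\Phi'(\xi)\succeq\mathbf{0}$ because $\Phi$ solves \eqref{2.1} with speed $c_1$ and is nonincreasing by Lemma \ref{lemma2.2}; hence the right-hand side is $\succeq\mathbf{0}$ for $x<0$, and on $x\geq 0$ both sides equal $\bm{\delta}$. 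This yields $P_{c_2}[\Phi]\preceq\Phi$.

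A sandwich argument then closes the proof. Iterating the supersolution inequality and using monotonicity of $P_{c_2}$ in $\Gamma$ gives $P_{c_2}^{\,n}[\Phi]\preceq\Phi$ for every $n\geq 1$, while $\bm{\delta}\preceq\Phi$ together with the same monotonicity gives $\Gamma_n^{(c_2)}=P_{c_2}^{\,n}[\bm{\delta}]\preceq P_{c_2}^{\,n}[\Phi]$. Combining the two, $\Gamma_n^{(c_2)}\preceq\Phi$ for every $n$, and letting $n\to\infty$ produces $\Phi_\epsilon^{c_2}\preceq\Phi=\Phi_\epsilon^{c_1}$, as desired. The only delicate points are the uniform choice of $M$ on $\{c_1,c_2\}$ and the integrability of $\tilde h(\xi;\Phi)$ up to $\xi=0^-$ (where $\Phi'$ may jump), both of which are handled by the existence framework already in place.
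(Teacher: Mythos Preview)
Your proof is correct and follows essentially the same route as the paper. The first inequality is handled identically (induction via the joint monotonicity of $P$ in $\bm{\delta}$ and $\Gamma$). For the second, both you and the paper reduce to the key supersolution property $P_{c_2}[\Phi_\epsilon^{c_1}]\preceq\Phi_\epsilon^{c_1}$ and then sandwich the $c_2$-iterates between $\bm{\delta}$ and $\Phi_\epsilon^{c_1}$; the only cosmetic difference is that the paper obtains the supersolution property in one line from the differential inequality $c_2(\Phi_\epsilon^{c_1})'\preceq c_1(\Phi_\epsilon^{c_1})'$ (using $(\Phi_\epsilon^{c_1})'\preceq\mathbf{0}$) together with the equivalence \eqref{2.2}, whereas you recover the same inequality by explicitly splitting off the $c$-free part of $P_c$ and subtracting.
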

\begin{proof}
To verify the first inequality in \eqref{2.7} for fixed $c>0$, we adopt the definition of  $P$ and $\Phi_n$  in Lemma \ref{lemma2.2}, but
in order to distinguish them between  $\bm{\delta}=\epsilon_1\Theta$ and $\bm{\delta}=\epsilon_2\Theta$,   we write  $P=P_{i}$
and  $\Phi_{n}=\Phi_{i,n}$ for $\bm{\delta}=\epsilon_i\Theta$, $i=1,2$. Thus we have
\begin{align*}
\Phi_{\epsilon_i}^c(x)=\lim_{n\to \yy} \Phi_{i, n}(x).
\end{align*}
Since  $P[\Phi](x)$ is nondecreasing with respect to $\bm{\delta}$ and $\Phi$, respectively, we have  
\begin{align*}
\Phi_{1, n+1}(x)=P_{1}[\Phi_{1,n}](x)\preceq P_{1}[\Phi_{2, n}](x)\preceq P_{2}[\Phi_{2, n}](x)=\Phi_{2, n+1}(x)
\end{align*}
provided that 
\begin{align*}
\Phi_{1, n}(x)\preceq\Phi_{2, n}(x).
\end{align*}
Since $\Phi_{1, 0}(x)\equiv \epsilon_1\Theta \preceq \epsilon_2\Theta \equiv \Phi_{2,0}(x)$, the above conclusion combined with the induction method gives $\Phi_{1, n}(x)\preceq\Phi_{2, n}(x)$ for all $n=0,1,2,\cdots$, which implies $\Phi_{\epsilon_1}^c(x)\preceq \Phi_{\epsilon_2}^c(x)$, as desired.

We now show the second inequality in \eqref{2.7} for fixed $\bm{\delta}=\epsilon\Theta$.  To stress the reliance on $c_i$, we use the notions $P^{i}$ and $\Phi_{n}^i$, respectively, for $P$ and $\Phi$ when $c=c_i$, $i=1,2$.   
From Lemma \ref{lemma2.2},  we have for $i=1,2$,
\begin{align*}
\Phi_\epsilon^{c_i}(x)=\lim_{n\to\yy} \Phi_{n}^i(x)=\lim_{n\to\yy}  P^{i}[\Phi_{n}^i](x).
\end{align*}
Due to $c_1\leq c_2$ and \eqref{2.1}, we have
\begin{align*}
&D\circ\int_{-\yy}^{\yy}\mathbf {J}(x-y)\circ \Phi_\epsilon^{c_1}(y)dy-D\circ\Phi_{c_1}+c_2(\Phi_\epsilon^{c_1})'(x)+F(\Phi_\epsilon^{c_1}(x))\\
\preceq  &D\circ\int_{-\yy}^{\yy}\mathbf {J}(x-y)\circ \Phi_\epsilon^{ c_1}(y)dy-D\circ\Phi_\epsilon^{ c_1}+c_1(\Phi_\epsilon^{ c_1})'(x)+F(\Phi_\epsilon^{ c_1}(x))=0,
\end{align*}
which implies that 
\begin{align*}
\Phi_\epsilon^{c_1}(x)\succeq P^{2}[\Phi_\epsilon^{c_1}](x).
\end{align*}
Since $P[\Phi](x)$ is increasing with respect to $\Phi$, it follows that
\begin{align*}
\Phi_\epsilon^{c_1}(x)\succeq P^{2}[\Phi_\epsilon^{c_1}](x)\succeq P^{2}[\Phi_{n}^2](x)=\Phi_{n+1}^2(x)
\end{align*} 
provided that
\begin{align*}
\Phi_\epsilon^{c_1}(x)\succeq \Phi_{n}^2(x).
\end{align*}
Recall that $\Phi_\epsilon^{c_1}(x)\succeq  \bm{\delta}\equiv \Phi_{0}^2(x)$. By induction, we obtain that $\Phi_\epsilon^{c_1}(x)\succeq \Phi_{n}^2(x)$ for all $n=0,1,2,\cdots$, and so $\Phi_\epsilon^{ c_1}(x)\succeq  \Phi_\epsilon^{c_2}(x)$.
\end{proof}

In later analysis of the paper, we will need the following variant of \eqref{2.1}:
\begin{equation}\label{2.9a}
		\begin{cases}
	\dd	D\circ\int_{-\yy}^{\yy}\mathbf{K} (x-y)\circ \Psi(y) {\rm d}y-D\circ\Psi+c\Psi'(x)+F(\Psi(x))=0,&
		-\yy<x<0,\\
		\Psi(-\yy)=\mathbf{\hat u}^*,\ \ \Psi(x)=\bm{\delta},&0\leq x<\yy,
		\end{cases}
		\end{equation}
		where $\bm{\delta}=\epsilon\Theta$ as in \eqref{2.1}, $K(x)=(K_i(x))$ satisfies {\bf (J)} except that $\int_{\R}K_i(x)dx=1$ $(i=1,..., m)$ is now replaced by
		\[
		1-\epsilon\leq \displaystyle\int_\R K_i(x) \rd x\leq 1,\;i.e., \|K_i\|_{L^\infty(\R)}\in [1-\epsilon, 1],
		\]
and $\mathbf{\hat u}^*$ is the positive constant equilibrium  of the first equation in \eqref{2.9a}.

\begin{lemma}\label{lemma2.6a}
	Suppose that  $\mathbf{(f_1)}$ and $\mathbf{(f_3)}$  hold. Then for all small $\epsilon>0$, problem \eqref{2.9a} 
			has a solution $\Psi(x)=(\psi_{i}(x))$ which is nonincreasing in $x$, and can be obtained by an interaction process.
		Moreover,	if $\mathbf{K}(x)\preceq \mathbf{J}(x)$ for all $x\in \R$, then 
	\begin{align*}
	\mathbf{\hat u}^*\preceq \mathbf{u}^*\ \mbox{ and }\ \ \Psi(x)\preceq \Phi(x) \ \mbox{ for }\ \ x\in\R,
	\end{align*}
	where $\Phi$ is the solution of \eqref{2.1} obtained in Lemma {\rm \ref{lemma2.2}}. 
\end{lemma}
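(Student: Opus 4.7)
The plan is to replicate the three-step architecture of Lemma~\ref{lemma2.2}, preceded by a construction of $\mathbf{\hat u}^*$ and followed by two comparison arguments.

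Set $\bm{\eta}:=(1-\|K_i\|_{L^1(\R)})_{i=1}^m\in[0,\epsilon]^m$. A constant $v\succeq\mathbf{0}$ solves the first equation of \eqref{2.9a} if and only if $F(v)=D\circ\bm{\eta}\circ v$. Since $\nabla F(\mathbf{u}^*)$ is invertible by $\mathbf{(f_3)}$, the implicit function theorem produces a $C^1$ branch $\bm{\eta}\mapsto\mathbf{\hat u}^*(\bm{\eta})$ defined near $\bm{\eta}=\mathbf{0}$, with $\mathbf{\hat u}^*(\mathbf{0})=\mathbf{u}^*$ and, for $\epsilon$ small, $\mathbf{\hat u}^*\ggs\bm{\delta}$ together with local uniqueness in a fixed neighborhood of $\mathbf{u}^*$. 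Define $Q$ by the formula for $P$ from Lemma~\ref{lemma2.2} with $\mathbf{J}$ replaced by $\mathbf{K}$ and with $\mathbf{\hat u}^*$ in place of $\mathbf{u}^*$, acting on $\Omega:=\{\Gamma\in[C(\R)]^m:\bm{\delta}\preceq\Gamma\preceq\mathbf{\hat u}^*\}$. The verifications $Q[\bm{\delta}]\succeq\bm{\delta}$ and $Q[\mathbf{\hat u}^*]\llp\mathbf{\hat u}^*$ carry over verbatim: the first invokes $F(\bm{\delta})\ggs\mathbf{0}$ from Lemma~\ref{lemma2.1a}, while in the second, the bracketed integrand $D\circ\!\int_\R\mathbf{K}(\xi-y)\circ\mathbf{\hat u}^*\,\rd y+\td\sigma(\mathbf{\hat u}^*)$ collapses to $cM\,\mathbf{\hat u}^*$ upon substituting $F(\mathbf{\hat u}^*)=D\circ\bm{\eta}\circ\mathbf{\hat u}^*$, yielding $Q[\mathbf{\hat u}^*](x)=e^{Mx}\bm{\delta}+(1-e^{Mx})\mathbf{\hat u}^*\llp\mathbf{\hat u}^*$ for $x<0$. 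The iteration $\Psi_0\equiv\bm{\delta}$, $\Psi_{n+1}=Q[\Psi_n]$ is then monotone nondecreasing in $\Omega$ and converges to a fixed point $\Psi$ of $Q$; Steps~2 and~3 of Lemma~\ref{lemma2.2} transfer directly to show that $\Psi$ is nonincreasing in $x$ and that $\Psi(-\yy)\in[\bm{\delta},\mathbf{\hat u}^*]$ is a constant equilibrium of the integral equation, which by IFT local uniqueness must equal $\mathbf{\hat u}^*$.

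Assume now $\mathbf{K}\preceq\mathbf{J}$, so $\bm{\eta}\succeq\mathbf{0}$. To obtain $\mathbf{\hat u}^*\preceq\mathbf{u}^*$, differentiate $F(\mathbf{\hat u}^*(\bm{\eta}))=D\circ\bm{\eta}\circ\mathbf{\hat u}^*(\bm{\eta})$ at $\bm{\eta}=\mathbf{0}$ to get $A\,(\partial_k\mathbf{\hat u}^*)^T=d_k u_k^*\,e_k^T$, where $A:=\nabla F(\mathbf{u}^*)$. The matrix $A$ is Metzler by $\mathbf{(f_1)}$(ii), invertible by $\mathbf{(f_3)}$, and admits $\mathbf{u}^*\ggs\mathbf{0}$ with $\mathbf{u}^* A\preceq\mathbf{0}$ from $\mathbf{(f_3)}$; standard Perron--Frobenius/$M$-matrix theory then yields that the spectral abscissa of $A$ is strictly negative, hence $-A$ is a nonsingular $M$-matrix and $-A^{-1}\succeq\mathbf{0}$ componentwise. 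Consequently $\partial_k\mathbf{\hat u}^*|_{\bm{\eta}=\mathbf{0}}\preceq\mathbf{0}$ for each $k$, and integration along the segment from $\mathbf{0}$ to $\bm{\eta}$ gives $\mathbf{\hat u}^*\preceq\mathbf{u}^*$ for $\epsilon$ small. Finally, $\mathbf{K}\preceq\mathbf{J}$ and nonnegativity of the iterates give $Q[\Gamma]\preceq P[\Gamma]$ for every $\Gamma\succeq\mathbf{0}$, so by the monotonicity of $P$ (Lemma~\ref{lemma2.2}),
\[
\Psi_{n+1}=Q[\Psi_n]\preceq P[\Psi_n]\preceq P[\Phi_n]=\Phi_{n+1}
\]
follows by induction starting from $\Psi_0=\Phi_0=\bm{\delta}$; letting $n\to\yy$ gives $\Psi\preceq\Phi$.

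The most delicate point I expect is the $M$-matrix step: deducing the componentwise sign of $A^{-1}$ from only the weak row-inequality $\mathbf{u}^* A\preceq\mathbf{0}$ together with the invertibility of $A$, without any irreducibility of $\nabla F$ at $\mathbf{u}^*$. Apart from this, and the identification of $\Psi(-\yy)$ through the IFT uniqueness neighborhood, the rest of the argument is a careful but essentially mechanical translation of the framework developed in Lemmas~\ref{lemma2.1a}--\ref{lemma2.3}.
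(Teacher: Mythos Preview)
Your argument is correct and follows the same iterative architecture as the paper, but you work harder than necessary on $\mathbf{\hat u}^*\preceq\mathbf{u}^*$. The paper derives this inequality \emph{after} establishing $\Psi\preceq\Phi$ from the operator comparison $\widehat P[\Gamma]\preceq P[\Gamma]$, simply by letting $x\to-\infty$: since $\Psi(-\infty)=\mathbf{\hat u}^*$ and $\Phi(-\infty)=\mathbf{u}^*$, the ordering of equilibria drops out for free, with no spectral analysis required. Your independent $M$-matrix route is sound---the Metzler structure of $A=\nabla F(\mathbf{u}^*)$, together with $\mathbf{u}^*A\preceq\mathbf{0}$ and invertibility, does force $s(A)<0$ and hence $-A^{-1}\succeq\mathbf{0}$---but it is superfluous machinery here. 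Note also a small gap in your version: knowing $\partial_k\mathbf{\hat u}^*\preceq\mathbf{0}$ only at $\bm{\eta}=\mathbf{0}$ does not by itself justify the integration; you need the same sign along the entire segment, which requires checking that the perturbed Jacobian $\nabla F(\mathbf{\hat u}^*(\bm{\eta}))-\mathrm{diag}(D\circ\bm{\eta})$ remains a (sign-flipped) nonsingular $M$-matrix for all small $\bm{\eta}$, not just at the origin.
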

\begin{proof} Define 
\[
\begin{cases}
\mbox{$\widehat D=(\hat d_i)$ with $\hat d_i:=d_i||K_i||_{L^1(\R)}$,}\\
\mbox{ $\mathbf{\widehat K}=(\widehat K_i)$ with 
$\widehat K_i:=\frac{1}{\|K_i\|_{L^1(\R)}}K_i$,}\\
\mbox{$\widehat F(u):=F(u)-(\widehat D-D)\circ u$ for $u\in \R^m$.}
\end{cases}
\]
 Then clearly $\mathbf{ \widehat K}$ satisfies all the conditions in {\bf (J)}, and
 \eqref{2.9a} can be rewritten as 
\begin{equation}\label{hatK}
\begin{cases}
\dd \widehat D\circ\int_{-\yy}^{\yy}\mathbf{ \widehat K} (x-y)\circ \Psi(y) {\rm d}y-\widehat D\circ\Psi+c\Psi'(x)+\widehat F(\Psi(x))=0,&
-\yy<x<0,\\
\Psi(-\yy)=\mathbf{\hat u}^*,\ \ \Psi(x)=\bm{\delta},&0\leq x<\yy.
\end{cases}
\end{equation}

  From $\mathbf{(f_1)}$ and $\mathbf{(f_3)}$ we easily see  that for all sufficiently small $\epsilon>0$, $\mathbf{\hat u}^*$ is the unique solution of $\widehat F(u)=\mathbf{0}$ in a small neighbourhood of $\mathbf{u^*}$. 
  Hence  $\widehat F$ satisfies  $\mathbf{(f_1)}$ with $\mathbf{\hat u}^*$ in place of $\mathbf{u}^*$.  The existence of $\Psi$ now follows from the proof of Lemma {\rm \ref{lemma2.2}} applied to \eqref{hatK}. More precisely,
  let 
\begin{align*}
\widehat \Omega:=\{\Gamma\in [C(\R)]^m: 0\preceq\Gamma\preceq \mathbf{\hat u}^*\}, 
\end{align*}
and
define  $\widehat P=(\hat P_i): \widehat\Omega\to  [C(\R)]^m$ by 
\begin{equation*}
\widehat P[\Gamma](x)=
\begin{cases}\dd
e^{Mx}\bm{\delta}+\frac{e^{Mx}}{c}\int_{x}^{0}e^{-M\xi}\lf[D\circ\int_{-\yy}^{\yy} \mathbf{K}(\xi-y)\circ\Gamma(y) {\rm d}y+\td\sigma(\Gamma(\xi)) \rr]{\rm d}\xi ,&x<0,\\
\bm{\delta},&x\geq  0,
\end{cases}
\end{equation*}
or equivalently,
\begin{equation*}
\widehat P[\Gamma](x)=
\begin{cases}\dd
e^{Mx}\bm{\delta}+\frac{e^{Mx}}{c}\int_{x}^{0}e^{-M\xi}\lf[\widehat D\circ\int_{-\yy}^{\yy} \mathbf{\widehat K}(\xi-y)\circ\Gamma(y) {\rm d}y+\td\sigma(\Gamma(\xi)) \rr]{\rm d}\xi ,&x<0,\\
\bm{\delta},&x\geq  0,
\end{cases}
\end{equation*}
where $\td\sigma$ is given by 
\begin{align*}
\td\sigma(v):=F(v)+cM v-D\circ v=\widehat F(v)+cM v-\widehat D\circ v\ \ \ {\rm for}\ v\in \R^m,
\end{align*}
with $M>0$ large enough such that $\td\sigma(v)$ is increasing in $v\in [0, \mathbf{\hat u^*}+\mathbf{1}]$.
Then
\[
\Psi(x)=\lim_{n\to\infty} \widehat P^n[\bm{\delta}](x).
\]

Now suppose  additionally $\mathbf{K}(x)\leq \mathbf{J}(x)$ for $x\in\R$.  Then
\begin{align*}
 \widehat P(u)\preceq P(u)\ \ \ {\rm for\ all}\ u\succeq \mathbf{0},
\end{align*} 
which together with the monotonicity of $\widehat P(u)$ and $P(u)$ in $u$ yields
\begin{align*}
 \widehat P^n[\bm{\delta}]\preceq P^n[\bm{\delta}], \ \ \ n=0,1,2,\cdots.
\end{align*}
Thus, $\Psi\preceq \Phi$, which implies $\mathbf{\hat u^*}\preceq\mathbf{u^*}$. 
\end{proof}

\subsection{A dichotomy between semi-waves and traveling waves}

\begin{theorem}\label{lemma2.4}
Suppose $\mathbf{(J)}$, $\mathbf{(f_1)}$, $\mathbf{(f_2)}$ and $\mathbf{(f_4)}$   hold. Then for each $c>0$, \eqref{2.3}  has either a  monotone  semi-wave solution with speed $c$  or a monotone traveling wave solution with speed $c$, but not both. Moreover, one of the following holds:
\begin{itemize}
	\item[{\rm (i)}]  For  every $c>0$, \eqref{2.3} has  a  monotone semi-wave  solution with speed $c$. 
		\item[{\rm (ii)}] There exists $C_*\in (0,\yy)$ such that  \eqref{2.3} has   a  monotone semi-wave  solution with speed $c$ for  every $c\in(0, C_*)$, and has a monotone traveling wave solution with speed $c$ for every $c\geq  C_*$. 
\end{itemize}
\end{theorem}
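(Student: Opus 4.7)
The plan is to build a candidate wave for each $c>0$ by taking the limit $\epsilon\to 0^+$ of the perturbed semi-wave $\Phi^c_\epsilon$ produced in Lemma \ref{lemma2.2}. Because $\Phi^c_{\epsilon_1}\preceq\Phi^c_{\epsilon_2}$ for $\epsilon_1\leq\epsilon_2$ and $\mathbf{0}\preceq\Phi^c_\epsilon\preceq\mathbf{u}^*$ (Lemma \ref{lemma2.3}), the pointwise limit $\Phi^c(x):=\lim_{\epsilon\to 0^+}\Phi^c_\epsilon(x)$ exists, is nonincreasing, and vanishes on $[0,\infty)$. Since each $\Phi^c_\epsilon$ is $C^1$ on $(-\infty,0)$ with derivative controlled uniformly by the nonlocal term (the ODE $c(\Phi^c_\epsilon)'=-D\circ\mathbf J\ast\Phi^c_\epsilon+D\circ\Phi^c_\epsilon-F(\Phi^c_\epsilon)$ makes $\|(\Phi^c_\epsilon)'\|_\infty$ uniformly bounded), dominated convergence in the convolution lets us pass to the limit, so $\Phi^c$ satisfies the semi-wave equation on $(-\infty,0)$.

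Next I would let $a:=\Phi^c(-\infty)\in[\mathbf{0},\mathbf{u}^*]$ and argue, exactly as in Step 3 of Lemma \ref{lemma2.2}, that $F(a)=\mathbf{0}$; by $(\mathbf{f_1})$ either $a=\mathbf{u}^*$ or $a=\mathbf{0}$. In the first case $\Phi^c$ already is a monotone semi-wave of speed $c$, with positivity on $(-\infty,0)$ supplied by the strong comparison Lemma \ref{lemma2.7a} applied to the linearisation at $\mathbf{0}$ through Lemma \ref{lemma2.1a}. In the second case $\Phi^c\equiv\mathbf{0}$, and one recovers a traveling wave by a translation trick: fix any $i_0$ with $u^*_{i_0}>0$ and $\nu_0\in(0,u^*_{i_0})$; by continuity and the pointwise vanishing, there exist $x_\epsilon\to-\infty$ with $\phi^c_{i_0,\epsilon}(x_\epsilon)=\nu_0$. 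The translates $\tilde\Phi^c_\epsilon(x):=\Phi^c_\epsilon(x+x_\epsilon)$ satisfy the equation on $(-\infty,-x_\epsilon)$, which exhausts $\R$ in the limit. Helly's selection (monotone $+$ bounded) plus the uniform derivative bound furnishes a subsequential limit $\Psi^c$ solving the traveling-wave problem \eqref{TW}, with $\psi^c_{i_0}(0)=\nu_0$. The monotone limits $\Psi^c(\pm\infty)$ are again roots of $F$, and the $i_0$-th-component constraint forces $\Psi^c(-\infty)=\mathbf{u}^*$, $\Psi^c(+\infty)=\mathbf{0}$, yielding a monotone TW of speed $c$.

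For the ``but not both'' part I would use a sliding argument. Suppose a monotone semi-wave $\Phi_s$ and a monotone TW $\Phi_t$ both exist at speed $c$. Since $\Phi_s(0)=\mathbf{0}$ while $\Phi_t\ggs\mathbf{0}$ on $\R$, the shift $\Phi_t(\cdot-\tau)\succ\Phi_s$ on $(-\infty,0]$ for $\tau\gg 1$. Decreasing $\tau$ to its infimum $\tau_0$ produces a touching point $x_0\leq 0$; the endpoint $x_0=0$ is impossible (it would give $\Phi_t(-\tau_0)=\mathbf{0}$), so $x_0<0$, and the difference $W:=\Phi_t(\cdot-\tau_0)-\Phi_s$ is a nonnegative solution of a linear cooperative nonlocal system vanishing at $x_0$. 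Lemma \ref{lemma2.7a} then forces $W\equiv\mathbf{0}$ on the connected left component, contradicting $W(0)=\Phi_t(-\tau_0)\ggs\mathbf{0}$.

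Finally, to pass from the per-$c$ dichotomy to the global alternative, I would push the monotonicity in $c$ of Lemma \ref{lemma2.3} to the limit: $c_1\leq c_2$ implies $\Phi^{c_1}\succeq\Phi^{c_2}$, so $\mathcal S:=\{c>0:\Phi^c\not\equiv\mathbf 0\}$ is a down-set of the form $(0,C_*)$ for some $C_*\in[0,\infty]$. Ruling out $C_*=0$ amounts to showing that no monotone TW exists for very small $c>0$; this follows by testing the traveling wave equation against the positive left Perron eigenvector $\tilde\Theta$ from Lemma \ref{lemma2.1a} and integrating over $\R$, where the $c\Psi'$ contribution is $O(c)$ while $(\mathbf{f_2})$--$(\mathbf{f_4})$ force a positive contribution of order $1$, giving a contradiction for $c$ small. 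If $C_*=\infty$ we are in case (i); otherwise the dichotomy delivers TWs for $c\in(C_*,\infty)$, and the endpoint $c=C_*$ is handled by extracting a Helly limit of TWs as $c\searrow C_*$, yielding case (ii). I expect the most delicate step to be the Case B translation argument, specifically verifying that the translated limit $\Psi^c$ is nontrivial with the correct boundary values at $\pm\infty$ rather than getting trapped at a non-equilibrium constant, which is why the $(\mathbf{f_1})$ dichotomy of zeros of $F$ and the Lemma \ref{lemma2.7a} positivity are indispensable.
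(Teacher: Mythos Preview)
Your overall architecture matches the paper's: build the candidate from $\lim_{\epsilon\to 0^+}\Phi^c_\epsilon$, recover a traveling wave by translation when the limit collapses to $\mathbf 0$, use the $c$-monotonicity of Lemma~\ref{lemma2.3} to get a down-set, and close the endpoint with a Helly limit. Two steps, however, contain genuine gaps.

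\medskip

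\textbf{The ``not both'' sliding argument.} Your claim that $\Phi_t(\cdot-\tau)\succ\Phi_s$ on $(-\infty,0]$ for $\tau\gg 1$ need not hold: both $\Phi_t(x-\tau)$ and $\Phi_s(x)$ tend to $\mathbf u^*$ as $x\to-\infty$, and there is no a priori ordering of their rates. If $\mathbf u^*-\Phi_s(x)$ decays faster than $\mathbf u^*-\Phi_t(x)$, then for every $\tau$ one has $\Phi_t(x-\tau)\prec\Phi_s(x)$ for $x$ sufficiently negative, so the slide never starts (and even if it did, the infimal $\tau_0$ could produce ``touching at $-\infty$'' with no interior contact point). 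The paper repairs this by comparing $\Psi^\beta(x):=\Phi_t(x+\beta)$ not with $\Phi_s$ but with $k\Phi_s$ for a fixed $k\in(0,1)$. Then $w_i^\beta(x)\geq\psi_i(\beta)-k u_i^*\to(1-k)u_i^*>0$ as $\beta\to-\infty$, \emph{uniformly in $x\le 0$}, so the slide is well-posed; and $(\mathbf f_2)$ is exactly what makes $k\Phi_s$ a subsolution, so the linearised inequality feeding Lemma~\ref{lemma2.7a} remains valid.

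\medskip

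\textbf{Ruling out $C_*=0$.} Your plan to integrate the traveling-wave equation against $\tilde\Theta$ over $\R$ is not justified under the standing hypotheses: the nonlocal term $\int_\R(J_i*\psi_i-\psi_i)\,dx$ only makes sense (and vanishes) if $\int_\R|x|J_i(x)\,dx<\infty$, which is not assumed here, and the assertion that ``$(\mathbf f_2)$--$(\mathbf f_4)$ force a positive contribution of order $1$'' has no clear meaning since $\Psi=\Psi^c$ varies with $c$ and $(\mathbf f_4)$ is a dynamical statement, not an integral bound. The paper's argument is entirely different and genuinely uses $(\mathbf f_4)$: assuming monotone traveling waves $\Psi^{c_n}$ exist with $c_n\searrow 0$ (normalised so $\psi^{c_n}_1(0)=u_1^*/2$), Helly gives a monotone pointwise limit $\Psi$. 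Place a small nonnegative initial datum below $\Psi$; by $(\mathbf f_4)$ the corresponding solution of \eqref{2.3} converges to $\mathbf u^*$, while the comparison $U(t,x)\preceq\Psi^{c_n}(x-c_nt)$ and $c_n\to 0$ force $u_1(T,x_1)\le\psi_1(x_1)\le u_1^*/2$, a contradiction. That comparison-with-the-Cauchy-problem step is the substantive content you are missing.
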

We prove Theorem \ref{lemma2.4} by the following two lemmas.

\begin{lemma}\label{lem2.9}
Suppose $\mathbf{(J)}$, $\mathbf{(f_1)}$ and $\mathbf{(f_2)}$  hold. Then for each $c>0$, \eqref{2.3}  has either a  monotone  semi-wave solution with speed $c$  or a monotone traveling wave solution with speed $c$, but not both.
\end{lemma}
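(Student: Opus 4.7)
The plan is to realize both a semi-wave and a traveling wave of speed $c$ as limits of the perturbed solutions $\Phi_\epsilon^c$ produced by Lemma \ref{lemma2.2}, exploiting the monotonicity in $\epsilon$ established in Lemma \ref{lemma2.3}. Fix $c>0$. Since $\Phi_{\epsilon_1}^c\preceq \Phi_{\epsilon_2}^c$ whenever $0<\epsilon_1\leq \epsilon_2\ll 1$, the pointwise limit
\[
V^c(x):=\lim_{\epsilon\to 0^+}\Phi_\epsilon^c(x)
\]
exists, is nonincreasing in $x$, takes values in $[\mathbf{0},\mathbf{u}^*]$, and equals $\mathbf{0}$ on $[0,\infty)$. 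Because $\Phi_\epsilon^c(y)=\epsilon\Theta$ for $y\geq 0$, the portion of the nonlocal integral in \eqref{2.1} over $[0,\infty)$ vanishes in the limit, so dominated convergence shows $V^c$ solves the semi-wave equation on $(-\yy,0)$. The argument used in Step 3 of the proof of Lemma \ref{lemma2.2} then forces $V^c(-\yy)\in\{\mathbf{0},\mathbf{u}^*\}$ via $\mathbf{(f_1)}$(i).

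The next step is a dichotomy on $V^c(-\yy)$. If $V^c(-\yy)=\mathbf{u}^*$, then $V^c$ is nontrivial, strictly positive on $(-\yy,0)$ by Lemma \ref{lemma2.7a}, and Lipschitz there (the equation gives $(V^c)'$ bounded); combined with $V^c\preceq \Phi_\epsilon^c$ and $\Phi_\epsilon^c(0)=\epsilon\Theta\to\mathbf{0}$, this yields $V^c(0^-)=\mathbf{0}$, so $V^c$ is a monotone semi-wave of speed $c$. If instead $V^c(-\yy)=\mathbf{0}$, monotonicity forces $V^c\equiv\mathbf{0}$ on $\R$ and the transition region of $\Phi_\epsilon^c$ escapes to $-\yy$; I then pick $i_0$ with $u_{i_0}^*>0$ and, for small $\epsilon$, use continuity to choose $x_\epsilon<0$ with $\phi_{\epsilon,i_0}^c(x_\epsilon)=u_{i_0}^*/2$, noting $x_\epsilon\to -\yy$. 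The translates $\wtd\Psi_\epsilon(x):=\Phi_\epsilon^c(x+x_\epsilon)$ are nonincreasing, bounded, uniformly Lipschitz, satisfy \eqref{2.1} on $(-\yy,-x_\epsilon)$ with $-x_\epsilon\to +\yy$, and obey $\wtd\psi_{\epsilon,i_0}(0)=u_{i_0}^*/2$. Helly selection combined with dominated convergence yields a subsequential locally uniform limit $\Psi$ on $\R$ solving \eqref{TW}; the constraint $\psi_{i_0}(0)=u_{i_0}^*/2$ together with the classification of monotone end-states as zeros of $F$ forces $\Psi(-\yy)=\mathbf{u}^*$ and $\Psi(+\yy)=\mathbf{0}$, producing a monotone traveling wave of speed $c$.

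For non-coexistence, I would use a sliding argument: suppose both a monotone semi-wave $\Phi$ (extended by $\mathbf{0}$ on $(0,\yy)$) and a monotone traveling wave $\Psi$ share the same speed $c$. For $s\gg 1$, $\Psi(\cdot-s)\succeq \Phi$ pointwise, since $\Psi(-\yy)=\mathbf{u}^*$ and $\Phi\equiv\mathbf{0}$ on $[0,\yy)$. Set $s_0:=\inf\{s\in\R:\Psi(\cdot-s)\succeq \Phi\text{ on }\R\}$; sending $s\to -\yy$ would force $\Psi\succeq \mathbf{u}^*$, impossible, so $s_0$ is finite and $\Psi(\cdot-s_0)$ touches $\Phi$ from above. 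A strong-maximum-principle style argument in the spirit of Lemma \ref{lemma2.7a}, which exploits $J_i(0)>0$ and the irreducibility of $\nabla F(\mathbf{0})$ from $\mathbf{(f_1)}$(iii) to propagate coincidence across components, would then force $\Psi(\cdot-s_0)\equiv \Phi$ on a connected set, contradicting $\Psi(\cdot-s_0)\ggs \mathbf{0}$ on $\R$ versus $\Phi\equiv\mathbf{0}$ on $[0,\yy)$. I expect this exclusivity step to be the main obstacle: the extended $\Phi$ vanishes identically on $[0,\yy)$ and only continuity (not differentiability) is available at $x=0$, so the standard strong maximum principle does not apply directly, and one must carefully propagate a touching equality both across components via irreducibility and across the transition point $x=0$ via the kernel's positivity at the origin.
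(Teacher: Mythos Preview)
Your existence argument is essentially the paper's approach, just organized differently: the paper always translates the perturbed solutions to the level set $\phi_{n,1}^c=u_1^*/2$ first and then distinguishes the semi-wave/traveling-wave cases according to whether the translation drifts to $-\infty$, whereas you first take the untranslated limit $V^c$ and only translate when $V^c\equiv\mathbf{0}$. Both routes are fine.

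The genuine gap is in your non-coexistence argument, and you have misidentified where the obstacle lies. You slide $\Psi(\cdot-s)$ against $\Phi$ itself and claim that for $s\gg 1$ one has $\Psi(\cdot-s)\succeq\Phi$ on all of $\R$. This is not justified: both $\Psi(\cdot-s)$ and $\Phi$ tend to $\mathbf{u}^*$ as $x\to-\infty$, and nothing in the hypotheses controls the relative rates of convergence, so there is no reason the inequality holds near $-\infty$ for any $s$. Even if the ordering held for some $s$, at the critical shift the touching could occur ``at $-\infty$'' rather than at an interior point, so Lemma~\ref{lemma2.7a} would not apply. The difficulty is at $x=-\infty$, not at $x=0$.

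The paper's remedy, which is where $\mathbf{(f_2)}$ enters, is to compare $\Psi(\cdot+\beta)$ not with $\Phi$ but with $k\Phi$ for a fixed $k\in(0,1)$. Then $k\Phi(-\infty)=k\mathbf{u}^*\llp\mathbf{u}^*$, so $w_i^\beta(x)\geq\psi_i(\beta)-ku_i^*\to(1-k)u_i^*>0$ uniformly as $\beta\to-\infty$, which both guarantees the ordering for $\beta\ll 0$ and forces any touching at the critical $\beta^*$ to occur at a finite interior point $x_0\in(-\infty,0)$. Condition $\mathbf{(f_2)}$ is exactly what makes $k\Phi$ a subsolution, so that the difference $W^{\beta^*}=\Psi^{\beta^*}-k\Phi$ satisfies an inequality of the form required by Lemma~\ref{lemma2.7a}.
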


\begin{proof}
	Let $\Phi_{n}^c=(\phi_{n,i}^c)$ be the solution of \eqref{2.1} defined in Lemma \ref{lemma2.2} with $\bm{\delta}=\bm{\delta}_n:=\epsilon_n\Theta$,
	$\epsilon_n\searrow 0$ as $n\to\infty$.  Then 
	\begin{align*}
	 x_{n}^c:=\max \lf\{ x:\phi_{n,1}^ c(x)=u_1^*/2\rr\}
	\end{align*}
	is well defined, and 
	\begin{align*}
	\phi_{n,1}^c(x_{n}^c)=u_1^*/2,\ \ \ \phi_{n,1}^c(x)<u_1^*/2\ \ \ \ {\rm for}\ x>x_{n}^c.
	\end{align*}
	Moreover, making use of Lemma \ref{lemma2.3}, we have
	\begin{equation}\label{2.8}
	\begin{cases}
	 0>x_{n}^{c}\geq   x_{m}^{c}\ \ & {\rm if}\ n\leq m,\\
	 0>x_{n}^{c_1}\geq  x_{n}^{c_2}\ \ & {\rm if}\ c_1\leq c_2.
	 \end{cases}
	\end{equation}
	Define
	\begin{align*}
	\wtd \Phi_{n}^{ c}(x):= \Phi_{n}^ {c}(x+x_{n}^{c}), \ \ \ \ x\in R.
	\end{align*}
	Then $\wtd \Phi_{n}^{ c}$ satisfies, for $x<-x_{n}^{c}$,
	\begin{align}\label{2.9}
		D\circ \int_{-\yy}^{\yy}\mathbf{J} (x-y)\circ \wtd \Phi_{n}^{ c}(y) {\rm d}y-D\circ\wtd \Phi_{n}^{ c}(x)+c(\wtd \Phi_{n}^{ c})'(x)+F(\wtd \Phi_{n}^{ c}(x))=0,
	\end{align}
	and for $x\geq -x_{n}^{c}$,
	$
	\wtd \Phi_{n}^{ c}(x)=\bm{\delta}_n$. Moreover,
	\[
	\td \phi_{n,1}^{ c}(0)={u_1^*}/2,
	\]
	 where $\td \phi_{n,1}^{ c}$ is the first element of the vector function $\wtd \Phi_{n}^{ c}$.  Since $x_{n}^{c}$ is nonincreasing in $n$, 
	 \[
	 x^c:=-\lim_{n\to\infty} x_n^c\in (0,\infty]
	 \]
	  always exists, and there are  two possible cases
	\begin{itemize}
		\item Case 1. $x^c=\infty$
		\item Case 2.  $x^c\in (0,\yy)$.
	\end{itemize}
	
	Clearly, for fixed $c>0$, $\wtd \Phi_{n}^{ c}$ and by the equation subsequently $(\wtd \Phi_{n}^{ c})'$ (for $x\neq -x_{n}^{c}$) are uniformly bounded in $n$. Then by the Arzela-Ascoli theorem and a standard argument involving a diagonal process of choosing subsequences, we see that
	$\{\wtd \Phi_{n}^{ c}\}_{n\geq 1}$
has	a subsequence, still denoted by itself for simplicity of notation, which converges to 
	some $\wtd \Phi^{c}=(\td \phi_i^c)\in C(\R)$  locally uniformly in $\R$. Moreover, $\wtd \Phi^{c}(x)$ is nonincreasing in $x$ with $\td \phi_1^{c}(0)=u_1^*/2$ . 
	
	If Case 1 happens, we easily see that  $\wtd \Phi^{c}$ satisfies 
	\begin{align}\label{2.10}
	D\circ \int_{-\yy}^{\yy}\mathbf{J} (x-y)\circ \wtd \Phi^{c}(y) {\rm d}y-D\circ\wtd \Phi^{c}(x)+c(\wtd \Phi^{ c})'(x)+F(\wtd \Phi^{c}(x))=0 \mbox{ for } x\in \R.
	\end{align}
In fact, from \eqref{2.9}, for $x\in\R$ and all large $n$ satisfying $x<-x_n^c$, we have
\begin{align*}
c\wtd \Phi_{n}^{ c}(x)-c\wtd \Phi_{n}^{ c}(0)=&-D\circ \int_{0}^{x} \lf[\int_{-\yy}^{\yy}\mathbf{J} (\xi -y)\circ \wtd \Phi_{n}^{ c}(y) {\rm d}y-D\circ\wtd \Phi_{n}^{ c}(\xi)+F(\wtd \Phi_{n}^{ c}(\xi))\rr] {\rm d}\xi.
\end{align*}
It then follows from the dominated convergence theorem that, for $x\in\R$, 
\begin{align*}
c\wtd \Phi^{c}(x)-c\wtd \Phi^{c}(0)=&-D\circ \int_{0}^{x} \lf[\int_{-\yy}^{\yy}\mathbf{J} (\xi -y)\circ \wtd \Phi^{ c}(y) {\rm d}y-D\circ\wtd \Phi^{c}(\xi)+F(\wtd \Phi^{c}(\xi))\rr] {\rm d}\xi,
\end{align*}
and  \eqref{2.10} thus follows by differentiating this equation.  Due to the monotonicity and boundedness of $\wtd \Phi^{c}(x)$,  the  arguments in step 3 of the proof of Lemma \ref{lemma2.2} can be repeated to give 
\begin{align*}
	\lim_{x\to -\yy}\lf[D\circ \int_{-\yy}^{\yy}\mathbf{J} (x-y)\circ \wtd \Phi^{c}(y) {\rm d}y-D\circ\wtd \Phi^{c}(x)\rr]=0,
\end{align*}
and so 
\begin{align*}
	\lim_{x\to -\yy} [c (\wtd \Phi_{ c})'(x)+F(\wtd \Phi^{c}(x))]=0.
\end{align*}
Denote $K:=\lim_{x\to-\yy}\wtd \Phi^{c}(x)\in \R_+^m$. Then we must have 
\begin{align*}
F(K)=\lim_{x\to -\yy} F(\wtd \Phi^{c}(x))=-\lim_{x\to -\yy} c( \wtd \Phi_{ c})'(x).
\end{align*}
This is possible only if $F(K)=0$. By  $\mathbf{(f_1)}$ either $K=\mathbf{0}$ or $K=\mathbf{u^*}$.  Since $\wtd \Phi^{c}(x)$ is nonincreasing in $x$ with $\td \phi_1^{c}(0)=u_1^*/2>0$, we have $K\succ \mathbf{0}$ and hence we must have $K=\mathbf{u^*}$.  An analogous analysis can be applied to show  $\lim_{x\to\yy}\wtd \Phi^{c}(x)=\mathbf{0}$. Therefore, $\wtd \Phi^{c}(x)$ is a monotone traveling  wave of \eqref{2.3} with speed $c$.

	If Case 2 happens, analogously for fixed  $x<x^c$,
\begin{align*}
c\wtd \Phi^{c}(x)-c\wtd \Phi^{c}(0)=&-D\circ \int_{0}^{x} \lf[\int_{-\yy}^{\yy}\mathbf{J} (\xi -y)\circ \wtd \Phi^{ c}(y) {\rm d}y-D\circ\wtd \Phi^{c}(\xi)+F(\wtd \Phi^{c}(\xi))\rr] {\rm d}\xi,
\end{align*}
and $\wtd \Phi^{c}(x)=0$ for $x\geq x^c$,  which yields 
\begin{equation*}
\begin{cases}
\dd	D\circ \int_{-\yy}^{x^c}\mathbf{J} (x-y)\circ \wtd \Phi^{c}(y) {\rm d}y-D\circ\wtd \Phi^{c}(x)+c(\wtd \Phi^{ c})'(x)+F(\wtd \Phi^{c}(x))={\bf 0} \mbox{ for} \ \ x<x^c,\\
	\wtd \Phi^{c}(x^c)={\bf 0}.
\end{cases}
\end{equation*}
Let $\Phi^{c}(x):=\wtd\Phi^{c}(x+x^c)$ for $x\leq 0$, then $\Phi^{c}(x)$ satisfies
\begin{equation*}
\begin{cases}
\dd D\circ \int_{-\yy}^{0}\mathbf{J} (x-y)\circ  \Phi^{c}(y) {\rm d}y-D\circ \Phi^{c}(x)+c(\Phi^{ c})'(x)+F(\Phi^{c}(x))={\bf 0} \mbox{ for } \ x<0,\\
\wtd \Phi^{c}(0)=\mathbf{0}.
\end{cases}
\end{equation*}
Moreover, as in Case 1, we can show $\lim_{x\to-\yy}\Phi^{c}(x)=\mathbf{u}^*$. Therefore, $ \Phi^{c}(x)$ is a monotone semi  wave solution of \eqref{2.3} with speed $c$.

We have thus proved that for any $c>0$,  \eqref{2.3} has either a monotone traveling wave solution with speed $c$ or a monotone semi-wave solution with speed $c$. 
We show next that for any given $c>0$, \eqref{2.3} cannot have both.

  Suppose, on the contrary, there is $c_0>0$ such that  \eqref{2.3} admits a monotone traveling wave solution $\Psi=(\psi_i)$ with speed $c_0$ and also a monotone semi-wave solution $\Phi=(\phi_i)$ with speed $c_0$. We are going to drive a contradiction. 
  
  Let $\wtd \Phi(x):=k \Phi(x)$ for some fixed $k\in (0,1)$. Then by $\mathbf{(f_2)}$, $\wtd \Phi=(\tilde\phi_i)$ satisfies
	\begin{equation*}
	\begin{cases}
\dd	D\circ \int_{-\yy}^{\yy}\mathbf{J} (x-y)\circ \wtd \Phi(y) {\rm d}y-D\circ \wtd \Phi(x)+c \wtd \Phi(x)+F(k \Phi(x))\succeq {\bf 0},&x<0,\\
	\wtd \Phi(-\yy)=k\mathbf{u}^*,\ 	\wtd \Phi(x)={\bf 0},& x\geq 0.
	\end{cases}
	\end{equation*}
For $\beta\in \R$, define 
\begin{align*}
\Psi^\beta(x):=\Psi (x+\beta),\ \ \ W^\beta(x)=(w_i^\beta(x)):=\Psi^\beta(x)-\wtd \Phi(x),  \ \ \ x\in \R.
\end{align*}	
For fixed $x\leq 0$ and $i\in\{1,..., m\}$,
\[
w_i^\beta(x)\geq \psi_i(\beta)-k \phi_i(x)\geq \psi_i(\beta)-ku^*_i\to (1-k)u^*_i>0 \mbox{ as } \beta\to-\infty.
\]
Therefore there exists $\bar\beta_i\ll 0$ independent of $x$ such that
\[
w_i^\beta(x)>0 \mbox{ for } x\leq 0,\;\beta\leq\bar\beta_i.
\]
On the other hand, 
\[
w_i^\beta(-1)=\psi_i(\beta-1)-k\phi_i(-1)\to-k\phi_i(-1)<0 \mbox{ as } \beta\to\infty.
\]
Therefore we can find $\beta_i^*\in \R$ such that
\[
h_i(\beta):=\inf_{x\leq 0}w_i^\beta(x)> 0 \mbox{ for }  \beta< \beta_i^*,\; h_i(\beta_i^*)=0.
\]
Clearly $w_i^{\beta_i^*}(-\infty)=(1-k)u^*_i>0$ and $w_i^{\beta_i^*}(0)=\psi^{\beta_i^*}_i(0)>0$. Therefore due to the continuity of $w_i^\beta(x)$ there exists $x_i^0\in (-\infty, 0)$ such that
$w_i^{\beta_i^*}(x_i^0)=0$. We can thus conclude that
\[
w_i^\beta(x)\geq 0 \mbox{ for } x\leq 0, \beta\leq \beta_i^*,\; \mbox{ and } w_i^{\beta_i^*}(x_i^0)=0.
\]
Let $\beta^*=\min\{\beta_i^*, ..., \beta_m^*\}$. Then there exists $i_0\in\{1,..., m\}$ such that $\beta^*=\beta_{i_0}^*$, and hence we have
\begin{equation}\label{2.11}
W^{\beta^*}(x)\succeq \mathbf{0} \mbox{ for } x\leq 0,\; \ \ w_{i_0}^{\beta^*}(x_{i_0}^0)=0.
\end{equation}
 
 By the definition of $\Psi$ and $\Phi$, we see that $W^{\beta^*}$ satisfies
	\begin{equation*}
\begin{cases}
\dd D\circ \int_{-\yy}^{\yy}\mathbf{J} (x-y)\circ W^{\beta^*}(y) {\rm d}y-D\circ W^{\beta^*}(x)
+c {W^{\beta^*}}'(x)\\
\ \ \ \ \ \ \ \ \ \ \ \ \ \ \ \ \ \ \ \ \ \ \ \ \ \ \ \ \ \ \ \ \ \ \ \ \ \ \ \ +F( \Psi^{\beta^*}(x))-F(k \Phi(x))\preceq \mathbf{0},&x<{0},\\
W^{\beta^*}(-\infty)=(1-k)\mathbf{u^*}\ggs \mathbf{0},\; W^{\beta^*}(x)\succeq \mathbf{0},& x\in \R.
\end{cases}
\end{equation*}
We have
\begin{align*}
F( \Psi^{\beta^*}(x))-F(k \Phi(x))= [W^{\beta^*}(x)]E(x)
\end{align*}
with $E(x)=(e_{ij}(x))$ an $m\times m$ matrix given by
\[
e_{ij}(x)=\int_0^1\partial_if_j(k\Phi(x)+tW^{\beta^*}(x))dt.
\]
By $\mathbf{(f_1)}$ we have $e_{ij}\geq 0$ for $i\not=j$.
 This allows us to use Lemma \ref{lemma2.7a} to conclude that $W^{\beta^*}(x)\ggs \mathbf{0}$ for $x<0$, which contradicts the second part of \eqref{2.11}. 
This completes the proof.
\end{proof}

\begin{lemma}\label{lem2.10}
In Lemma \ref{lem2.9}, if further $\mathbf{(f_4)}$ is satisfied,  then  one of the following holds:
\begin{itemize}
	\item[{\rm (i)}]  For  every $c>0$, \eqref{2.3} has  a  monotone semi-wave  solution with speed $c$. 
		\item[{\rm (ii)}] There exists $C_*\in (0,\yy)$ such that  \eqref{2.3} has   a  monotone semi-wave  solution with speed $c$ for  every $c\in(0, C_*)$, and has a monotone traveling wave solution with speed $c$ for every $c\geq  C_*$. 
\end{itemize}
\end{lemma}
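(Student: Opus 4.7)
The natural strategy is to study
\[
S := \{c > 0 : \eqref{2.3} \text{ has a monotone semi-wave with speed } c\},
\]
which by Lemma~\ref{lem2.9} equals $\{c > 0 : x^c < \infty\}$, where $x^c := -\lim_{n\to\infty} x_n^c \in (0, \infty]$ is the quantity introduced in the proof of Lemma~\ref{lem2.9}. From the second inequality in \eqref{2.8} (a consequence of Lemma~\ref{lemma2.3}) and passing to the limit, the map $c \mapsto x^c$ is non-decreasing, so $S$ is a down-set in $(0,\infty)$. Hence $S$ must be one of $\emptyset$, $(0, C_*)$ or $(0, C_*]$ with $C_* \in (0, \infty)$, or $(0, \infty)$. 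The proof reduces to ruling out $S=\emptyset$ and ruling out the form $(0,C_*]$ with finite $C_*$; in the remaining cases one obtains (i) if $S=(0,\infty)$, and (ii) if $S=(0,C_*)$ (since then $c \geq C_*$ means $c \notin S$, so Lemma~\ref{lem2.9} gives a traveling wave of speed $c$).

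\textbf{Ruling out $S = \emptyset$.} Suppose $S=\emptyset$, so by Lemma~\ref{lem2.9} every $c>0$ admits a monotone traveling wave $\Psi^c$. Choose $c_k \searrow 0$ and translate each $\Psi^{c_k}$ so that its first component equals $u_1^*/2$ at $x=0$. Since $\Psi^{c_k}$ is monotone and bounded by $\mathbf{u}^*$, Helly's selection theorem yields a subsequence converging pointwise to a non-increasing limit $\Psi^0$ with $\psi^0_1(0^-)\geq u_1^*/2\geq \psi^0_1(0^+)$. Integrating the traveling-wave equation from $0$ to $x$ and applying dominated convergence together with $c_k\to 0$, the derivative term drops out and one finds that $\Psi^0$ satisfies the stationary equation
\[
D\circ\int_{-\infty}^{\infty}\mathbf{J}(x-y)\circ\Psi^0(y)\,dy-D\circ\Psi^0(x)+F(\Psi^0(x))=\mathbf{0}
\]
for a.e.\ $x\in\R$. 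Thus $\Psi^0$ is a stationary solution of \eqref{2.3} in $[\mathbf{0},\mathbf{u}^*]\subset[\mathbf{0},\mathbf{\hat u}]$ that is neither $\mathbf{0}$ nor $\mathbf{u}^*$ by the preserved normalization. Taking $\Psi^0$ as initial datum then gives $U(t,\cdot)\equiv\Psi^0$ for all $t>0$, contradicting the global attractiveness of $\mathbf{u}^*$ in $\mathbf{(f_4)}$. This is the main technical obstacle: as $c\to 0^+$ the derivative bound $|{\Psi^c}'|\lesssim 1/c$ degenerates, so Arzel\`a-Ascoli is unavailable and one must rely purely on monotonicity via Helly.

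\textbf{Ruling out $C_* \in S$ when $C_*<\infty$.} Suppose $0<C_*<\infty$ and pick $c_k \searrow C_*$ with $c_k>C_*$; by the definition of $C_*$ each $c_k$ admits a monotone traveling wave $\Psi^{c_k}$. Because $c_k\geq C_*>0$, the derivatives $(\Psi^{c_k})'$ are uniformly bounded, so after normalizing $\psi^{c_k}_1(0)=u_1^*/2$, Arzel\`a-Ascoli with a diagonal argument yields a subsequence converging locally uniformly to a monotone $\Psi^{C_*}$ solving the traveling-wave equation at speed $C_*$. The endpoint analysis of Step~3 in the proof of Lemma~\ref{lemma2.2}, together with the normalization, forces $\Psi^{C_*}(-\infty)=\mathbf{u}^*$ and $\Psi^{C_*}(+\infty)=\mathbf{0}$, so $\Psi^{C_*}$ is a genuine traveling wave at $C_*$. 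Lemma~\ref{lem2.9} then forbids a semi-wave at $C_*$, giving $C_*\notin S$.

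\textbf{Conclusion.} Combining the three steps, the only surviving possibilities are $S=(0,\infty)$, which is (i), and $S=(0,C_*)$ with $C_*\in(0,\infty)$, in which case every $c\geq C_*$ lies outside $S$ and so Lemma~\ref{lem2.9} furnishes the required monotone traveling wave, giving (ii).
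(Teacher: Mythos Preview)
Your overall architecture is the same as the paper's: both arguments reduce to the monotonicity of $c\mapsto x^c$, show that a traveling wave persists at $c=C_*$ by a compactness limit from above (this part of your proof matches the paper exactly), and finally rule out the degenerate case that traveling waves exist for \emph{all} $c>0$. The difference lies entirely in this last step, and there your argument has a genuine gap.

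You obtain the Helly limit $\Psi^0$ and correctly observe that, after integrating and letting $c_k\to 0$, it satisfies the stationary equation
\[
D\circ(\mathbf J*\Psi^0)-D\circ\Psi^0+F(\Psi^0)=\mathbf 0
\]
\emph{almost everywhere}. You then assert that ``taking $\Psi^0$ as initial datum gives $U(t,\cdot)\equiv\Psi^0$'' and invoke $\mathbf{(f_4)}$. But you have yourself noted that $\Psi^0$ need not be continuous (only monotone), and nothing in the proposal bridges the gap between ``stationary a.e.'' and ``$U(t,\cdot)\equiv\Psi^0$ is a solution of \eqref{2.3} in the sense to which $\mathbf{(f_4)}$ applies''. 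The paper's comparison lemmas (Lemma~\ref{lemma2.3aa}, Lemma~\ref{lemma2.3a}) and its use of $\mathbf{(f_4)}$ are stated for continuous data; whether $\mathbf{(f_4)}$, which is an \emph{assumption}, extends to merely bounded measurable initial data is not something you can take for granted. One could try to repair this either by proving $\Psi^0$ is continuous from the equation (not obvious, since for $i>m_0$ the equation degenerates to $f_i(\Psi^0)=0$) or by recasting \eqref{2.3} as an ODE in $L^\infty(\R)^m$ and arguing that $\mathbf{(f_4)}$ holds there --- but that is extra work you have not supplied.

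The paper sidesteps this entirely. Instead of using the limit $\Psi^0$ as a stationary solution, it exploits the $\Psi^{c_n}$ themselves as moving barriers. It picks an interval $[a,b]\subset(-\infty,0)$ on which $\psi_1^0$ is continuous (monotone functions have only countably many jumps), upgrades Helly convergence to uniform convergence on $[a,b]$, and then chooses a \emph{continuous} compactly supported initial datum $\widetilde U_0=(\gamma_1,0,\ldots,0)$ with $\gamma_1\le\psi_1^{c_n}$ on $\R$ for large $n$. Solving \eqref{2.3} from $\widetilde U_0$ and comparing with the genuine solutions $\Psi^{c_n}(x-c_nt)$ yields $u_1(T,x_1)\le\psi_1^{c_n}(x_1-c_nT)\to\psi_1^0(x_1)\le u_1^*/2$, while $\mathbf{(f_4)}$ forces $u_1(T,x_1)$ arbitrarily close to $u_1^*$. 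Everything stays in the continuous category, and $\mathbf{(f_4)}$ is applied only to a solution with smooth compactly supported initial data, where there is no ambiguity.
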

\begin{proof}
From \eqref{2.8} we see that $x^c$ is nonincreasing in $c$ and hence there are three possible cases:
\begin{itemize}
	\item[{\rm (1)}]  For any $c>0$, $x^c<\yy$.
	\item[{\rm (2)}] There is $C_*>0$ such that  $x^c<\yy$  for any $c\in (0, C_*)$, and $x^c=\yy$  for any $c>C_*$.
	\item[{\rm (3)}] For any $c>0$, $x^c=\yy$.
\end{itemize}
 
 From the proof of Lemma \ref{lem2.9}, we know that in case (1), \eqref{2.3} has a monotone semi-wave with speed $c$ for any $c>0$, and in case (2), it has a monotone semi-wave with speed $c$ for each $c\in (0, C_*)$, has a monotone traveling wave with speed $c$ for each $c>C_*$. Therefore to complete the proof  it suffices to show that case (3) does not happen, and in case (2),  \eqref{2.3} has a monotone traveling wave solution with speed $c=C_*$.

We prove the latter first.
Let $\Psi^c=(\psi^c_i)$ be a monotone traveling wave solution  of \eqref{2.3} with speed $c>C_*$. By a suitable translation we may assume $\psi_1^c(0)=u_1^*/2$. Since $\Psi^c$ is uniformly bounded,  by the equation satisfied by $\Psi^c$ we see that $(\Psi^c)'$ is also uniformly bounded in $c$ for $c>C_*$.  Then by the Arzela-Ascoli theorem and a standard argument involving a diagonal process of choosing subsequences, 
for any sequence $c_n\searrow C_*$, $\{\Psi^{c_n}\}_{n=1}^{\yy}$ has a subsequence, still denoted by itself, which converges to some
 $\Psi=(\psi_i)\in [C(\R)]^m$  locally uniformly in $\R$ as $n\to\infty$. Similar to the proof of Lemma \ref{lem2.9}, we can check at once that $\Psi$ satisfies
\begin{equation*}
\begin{cases}
\dd D\circ \int_{-\yy}^{\yy}\mathbf{J} (x-y)\circ \Psi(y) {\rm d}y-D\circ \Psi(x)+C_*\Psi'(x)+F(\Psi(x))={\bf 0},&
x\in \R,\\
\psi_1(0)=u_1^*/2.
\end{cases}
\end{equation*}
Making use of  the monotonicity of $\Psi(x)$ inherited from $\Psi_n^c(x)$, we can use the method in Step 3 of the proof of Lemma \ref{lemma2.2} to show that 
\begin{align*}
\Psi(-\yy)=\mathbf{u}^*,\ \ \Psi(\yy)={\bf 0},
\end{align*}
which implies that $\Psi$ is a monotone traveling wave solution of \eqref{2.3} with speed $c=C_*$.

We finally show that case (3) does not happen. 
Again we argue by contradiction. Suppose $x^c=\infty$ for all $c>0$. Then from our earlier argument \eqref{2.3} has a monotone traveling wave solution $\Psi^c=(\psi^c_i)$ with speed $c$ for any $c>0$.   By a suitable translation, we may assume $\psi_1^c(0)=u_1^*/2$. 
We choose a sequence $\{c_n\}$ such that $c_n\searrow 0$ as $n\to\infty$ and consider the sequence of monotone functions $\{\Psi^{c_n}\}$.
By Helly's selection theorem and a standard diagonal process of choosing subsequences, $\{\Psi^{c_n}\}$ has a subsequence, still denoted by itself, which converges to  a nonincreasing function $\Psi=(\psi_i)$  pointwisely in $\R$.   Thus $\psi_1(0)=u_1^*/2$ and
\begin{align*}
x(\psi_1(x)- u_1^*/2)\leq 0  \mbox{ for}\ x\in\R.
\end{align*}
Since $\psi_1$ is nonincreasing, it is continuous in $\R$ except at some countable set (possibly empty) where  jumping discontinuities may occur. Therefore we can find an interval $[a,b]$ with $a<b<0$ such that $\psi_1$ is continuous on $[a,b]$. Then from  \cite[Lemma 7]{yagi} we see that $\psi_1^{c_n}(x)$ converges to $\psi^1(x)$ uniformly for $x\in [a,b]$.  

We now choose  a function $\gamma_1\in C(\R)$ satisfying $0\leq \gamma_1(x)\leq u_1^*/4$ in $\R$ and whose  support is exactly  $[a,b]$.
 Then by the above properties of $\psi_1$ we have
\begin{align*}
\gamma_1(x)\leq \frac{1}{2} \psi_1(x) \ \mbox{ for } \ x\in \R,
\end{align*}
and hence, due to the uniform convergence of $\psi_1^{c_n}(x)$ to $\psi_1(x)$ over $[a,b]$, there is large $N>0$ such that 
\begin{align*}
\gamma_1(x)\leq \psi_1^{c_n}(x) \ \mbox{ for }\ \ x\in \R,\; n>N.
\end{align*}

Define  $\wtd U_0=(v_i)\in [C(\R)]^m$ with $v_1(x)=\gamma_1(x)$ and $v_i(x)\equiv 0$ for $i\neq 1$. Let $U(t,x)=(u_i(t,x))$ be the solution of \eqref{2.3} with  $U(0,x)=\wtd U_0(x)$. Then we  have 
\begin{align}\label{2.10a}
U(0,x)=\wtd U_0(x)\preceq \Psi^{c_n}(x) \ \mbox{ for }\ \ x\in \R,\; n>N,
\end{align}
and by $\mathbf{(f_4)}$,
\begin{align*}
\lim_{t\to\yy} U(t,x)=\mathbf{u}^* \mbox{ for }\ x\in\R.
\end{align*}  
It follows  that 
\begin{align}\label{2.11a}
 u_1(t,x_1)\geq \frac{2u_1^*}{3} \mbox{ for all large $t$, say }   t\geq T_1>0,
\end{align}
 where $x_1=\frac{a+b}2\in [a,b]$.

On the other hand, clearly $U^n(t,x):=\Psi^{c_n}(x-c_nt)$ is the solution of \eqref{2.3} with initial function $\Psi^{c_n}(x)$. Since $U(0,x)\leq U^n(0,x)$ for $n>N$ by \eqref{2.10a}, by Lemma \ref{lemma2.3a} we have 
\begin{align*}
U(t,x)\preceq  U^n(t,x)=\Psi^{c_n}(x-c_nt) \ \mbox{ for } \  t\geq 0,\ x\in \R,\ n>N.
\end{align*} 
In particularly, 
\begin{align*}
u_1(T_1,x_1)\leq   \psi_1^{c_n}(x_1-c_nT_1) \mbox{ for } n>N.
\end{align*}
Letting $n\to \yy$, we obtain 
\begin{align*}
u_1(T_1,x_1)\leq  \psi_1(x_1)\leq\psi_1(0)= u_1^*/2,
\end{align*}
which contradicts \eqref{2.11a}.  The proof  is now complete.
\end{proof}

\subsection{Sharp criteria for the dichotomy in Theorem \ref{lemma2.4}} From Theorem \ref{lemma2.4} we see that \eqref{2.3} can have a monotone traveling wave solution with some positive speed $c$ if and only if alternative (ii) happens,  and in that case, \eqref{2.3} has a monotone traveling wave solution with speed $c$ if and only if $c\geq C_*$. 
In this subsection, we prove the following sharp criteria for alternative (ii) to happen. 
\begin{theorem}\label{TW-J2}
In Theorem \ref{lemma2.4}, suppose additionally $\mathbf{(f_3)}$ holds. Then alternative {\rm (ii)} happens if and only if the kernel functions satisfy  $\mathbf{(J_2)}$.
\end{theorem}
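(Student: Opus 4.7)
For direction $\mathbf{(J_2)}\Rightarrow$(ii), I would anchor the analysis on the Perron--Frobenius principal eigenvalue of the matrix $\mathcal{M}(\lambda,c):=\nabla F(\mathbf{0})+\Lambda(\lambda)-c\lambda\,{\bf I}_m$, where $\Lambda(\lambda):=\mathrm{diag}(d_i(k_i(\lambda)-1))$ and $k_i(\lambda):=\int_\R J_i(z)e^{\lambda z}\rd z$ (with $d_i=0$, $k_i\equiv 1$ for $i>m_0$). Since $\mathcal{M}(\lambda,c)$ is irreducible with nonnegative off-diagonals whenever the $k_i$ are finite, Perron--Frobenius supplies $\mu_1(\lambda,c)=\mu_1(\nabla F(\mathbf{0})+\Lambda(\lambda))-c\lambda$ and a positive eigenvector $V\ggs\mathbf{0}$. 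Under $\mathbf{(J_2)}$ the $k_i$ are finite on some $[0,\lambda_*)$; since $\mu_1(0,c)>0$ by $\mathbf{(f_1)}$(iii) and $\mu_1(\lambda,c)\to-\yy$ as $c\to\yy$ for fixed $\lambda>0$, one can pick $c$ large and $\lambda(c)\in(0,\lambda_*)$ with $\mu_1(\lambda(c),c)=0$. Choosing $C>0$ so that $C\epsilon V_i\ge\delta_i$ for every $i$, the function $\overline\Psi(x):=\min\{\mathbf{u}^*,\,C\epsilon V e^{-\lambda(c)x}\}$ is then a super-solution of the full nonlinear traveling wave equation on $\R$: on $\{\overline\Psi=C\epsilon V e^{-\lambda(c)x}\}$ one uses $\mathcal{M}(\lambda(c),c)V=\mathbf{0}$ together with the sub-linearity $F(u)\preceq u\nabla F(\mathbf{0})^T$ coming from $\mathbf{(f_2)}$, while on $\{\overline\Psi=\mathbf{u}^*\}$ one uses $F(\mathbf{u}^*)=\mathbf{0}$ and cooperativity. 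A comparison argument adapted to the perturbed semi-wave problem (via the iteration of Lemma \ref{lemma2.2} suitably modified near $x=0$) then yields $\Phi^c_\epsilon(x)\preceq\overline\Psi(x)$ on $(-\yy,0]$; from $\phi^c_{\epsilon,1}(x^c_\epsilon)=u_1^*/2$ this gives $x^c_\epsilon\le\lambda(c)^{-1}\log(2CV_1\epsilon/u_1^*)\to-\yy$ as $\epsilon\to 0^+$, so $x^c=\yy$ and Lemma \ref{lem2.9} produces a monotone traveling wave with speed $c$. Hence $C_*\le c<\yy$ and alternative (ii) holds.

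\medskip

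For the converse direction, suppose $\Psi=(\psi_i)$ is a monotone traveling wave with some speed $c\ge C_*>0$. The crux is an exponential decay estimate $\psi_i(x)\le Ce^{-\beta x}$ for some $\beta,C>0$ and every $i$, obtained independently of whether $\mathbf{(J_2)}$ holds; I would combine monotonicity (which yields $\int_\R J_i(x-y)\psi_i(y)\rd y\ge\tfrac12\psi_i(x)$ via evenness of $J_i$), the sub-linearity $F(u)\preceq u\nabla F(\mathbf{0})^T$, and a comparison argument in the spirit of Lemma \ref{lemma2.7a} to produce the bound. I expect this exponential-decay step to be the main technical hurdle, as it rests delicately on the cooperative KPP structure and on the positivity $c>0$.

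\medskip

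Granted the exponential decay, fix any $i_0\in\{1,\dots,m_0\}$ and rewrite the TW equation for $\psi_{i_0}$ as
\begin{align*}
d_{i_0}\int_\R J_{i_0}(x-y)\psi_{i_0}(y)\rd y=-c\psi_{i_0}'(x)+d_{i_0}\psi_{i_0}(x)-f_{i_0}(\Psi(x)),
\end{align*}
whose right-hand side is $O(e^{-\beta x})$ by the exponential decay and the $C^1$-smoothness of $F$ at $\mathbf{0}$. Monotonicity of $\psi_{i_0}$ supplies $x_0$ with $\psi_{i_0}(y)\ge u^*_{i_0}/2$ for $y\le x_0$, so
\begin{align*}
\int_\R J_{i_0}(x-y)\psi_{i_0}(y)\rd y\ge\frac{u^*_{i_0}}{2}\int_{x-x_0}^\yy J_{i_0}(z)\rd z.
\end{align*}
Combining the two estimates yields $\int_x^\yy J_{i_0}(z)\rd z\le C'e^{-\beta x}$ for large $x$, and integration by parts converts this tail bound into $\int_0^\yy J_{i_0}(z)e^{\beta'z}\rd z<\yy$ for any $\beta'\in(0,\beta)$. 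Since $i_0$ was arbitrary, $\mathbf{(J_2)}$ holds.
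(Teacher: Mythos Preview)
Your proposal has a genuine gap in the converse direction: the exponential decay step. You claim you can prove $\psi_i(x)\le Ce^{-\beta x}$ ``independently of whether $\mathbf{(J_2)}$ holds,'' but your sketch --- monotonicity giving $J_i*\psi_i\ge\tfrac12\psi_i$, sub-linearity, and a maximum-principle comparison in the spirit of Lemma~\ref{lemma2.7a} --- does not produce decay. A lower bound on the convolution goes the wrong way for an upper bound on $\psi_i$, and Lemma~\ref{lemma2.7a} is a sign-preservation statement, not a decay mechanism. The paper's own exponential-decay lemma (Lemma~\ref{lemma2.13}) does yield $\psi_i(x)=O(e^{-\alpha x})$, but its proof crucially uses the first-moment condition \eqref{J3}, i.e.\ $\int_\R|x|J_i(x)\rd x<\yy$, to bound $\int_x^y(J_i*\psi_i-\psi_i)$ uniformly. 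When $\mathbf{(J_2)}$ fails, \eqref{J3} can fail as well (e.g.\ $J_i(x)\approx|x|^{-\gamma}$ with $\gamma\in(1,2]$), so you cannot invoke this directly.

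The paper circumvents this by a kernel-truncation trick you are missing: it replaces each $J_i$ by $K_i(x):=J_i(x)$ for $|x|\le M$ and $K_i(x):=\frac{M}{|x|}J_i(x)$ for $|x|\ge M$. Then $K_i\le J_i$, the modified kernels satisfy \eqref{J3}, yet still violate $\mathbf{(J_2)}$. Using $K_i\le J_i$ together with Lemma~\ref{lemma2.6a} on the perturbed semi-wave problem, one shows $\td x_n^c\le x_n^c\to-\yy$, so a monotone traveling wave exists for the $K_i$-system as well. Lemma~\ref{lemma2.13} now applies to \emph{that} wave, giving exponential decay; your final step (which is essentially correct, though the paper uses a Fubini argument rather than your tail-integration) then forces $\int e^{\lambda x}K_j(x)\rd x<\yy$, contradicting the construction. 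This reduction to a first-moment-admissible kernel is the key idea your argument lacks.

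For the forward direction your eigenvalue/super-solution construction is reasonable in spirit; the paper simply cites \cite{HKLL} for this implication rather than reproving it.
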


We note that under the assumptions of Theorem \ref{lemma2.4}, it is already known in the literature that alternative (ii) happens when condition $\mathbf{(J_2)}$ holds; see, for example, Theorems 3.6 and 3.7 of \cite{HKLL}. Therefore, we only need to show that if $\mathbf{(J_2)}$ does not hold, then 
\eqref{2.3} does not have a monotone traveling wave solution with some positive speed $c$.
To this end, 
we first apply the method in \cite{cc2004} to obtain an estimate for any monotone traveling wave solution of \eqref{2.3}
when the kernel functions satisfy the following additional condition:
\begin{equation}\label{J3}
	\dd \int_{\R} |x|J_i(x) \rd x<\yy \mbox{  for every $i\in\{1,..., n\}$.}
\end{equation}
\begin{lemma}\label{lemma2.13}
	Suppose  $\mathbf{(J)}$, \eqref{J3} and $\mathbf{(f_1)}$ are satisfied. If \eqref{2.3} admits a monotone traveling wave solution $\Psi=(\psi_i)$ with some positive speed $c$, then there is $\alpha>0$ such that $\psi_i(x)=O(e^{-\alpha x})$ as $x\to \yy$, for every $i\in\{1,..., m\}$.
\end{lemma}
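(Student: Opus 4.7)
The plan is to proceed in two stages: first establish integrability $\int^{\infty}\psi_i(s)\,ds<\infty$ for each $i$, then upgrade integrability to the claimed exponential decay by rewriting the integrated equation as a genuine traveling-wave equation for a better-behaved auxiliary function and comparing with an explicit exponential.

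For the first stage, I would integrate the $i$-th traveling wave equation in \eqref{TW} from $x$ to $+\infty$. Using the evenness of $J_i$ and Fubini, the integrated nonlocal term can be cast as
\begin{align*}
\int_x^\infty\mathcal L_i[\psi_i](s)\,ds=\int_{-\infty}^x\psi_i(y)\overline J_i(x-y)\,dy-\int_x^\infty\psi_i(y)\overline J_i(y-x)\,dy,
\end{align*}
where $\overline J_i(t):=\int_t^\infty J_i(u)\,du$. Hypothesis \eqref{J3} gives $\int_0^\infty\overline J_i(u)\,du=\int_0^\infty u\,J_i(u)\,du<\infty$, so the first piece is bounded uniformly in $x$ by $u_i^*\int_0^\infty\overline J_i$. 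Multiplying the $i$-th integrated equation by $\tilde\theta_i$ from Lemma \ref{lemma2.1a}, summing, and invoking the lower bound $\sum_i\tilde\theta_i f_i(\Psi(s))\ge\sum_i b_i\psi_i(s)$ (valid once $\Psi(s)\preceq\epsilon\mathbf 1$, which holds for $s$ large by monotonicity and $\Psi(+\infty)=\mathbf 0$), I obtain $\int^\infty\sum_i b_i\psi_i(s)\,ds<\infty$, whence each $\psi_i\in L^1$ near $+\infty$.

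For the second stage, I set $W_i(x):=\int_x^\infty\psi_i(s)\,ds$. A direct Fubini computation gives the key identity
\begin{align*}
\mathcal L_i[W_i](x)=\int_x^\infty\mathcal L_i[\psi_i](s)\,ds,
\end{align*}
so the integrated equation becomes, using $W_i'=-\psi_i$,
\begin{align*}
d_i\mathcal L_i[W_i](x)+cW_i'(x)+G_i(x)=0,\qquad G_i(x):=\int_x^\infty f_i(\Psi(s))\,ds,
\end{align*}
a structurally identical traveling-wave equation, now for the smoother, already-integrable vector $W=(W_i)$. For $x$ large, $G_i(x)\le\sum_j(a_{ij}+\eta)W_j(x)$, so $W$ is a subsolution of the linearization $d_i\mathcal L_i[\cdot]+c(\cdot)'+(\nabla F(\mathbf 0)+\eta \mathbf I_m)(\cdot)^T=\mathbf 0$. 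Since $\nabla F(\mathbf 0)$ is irreducible with positive principal eigenvalue $\lambda_1$, the Perron–Frobenius structure provides, for some small $\alpha>0$ and a positive vector $v$, an exponential supersolution $Ce^{-\alpha x}v$ of this linearized system; a comparison with $W$ based on Lemma \ref{lemma2.7a} and the fact that $W_i(x)\to 0$ as $x\to+\infty$ (together with $W$ being bounded) yields $W_i(x)\le Ce^{-\alpha x}$ for $x$ large. The monotonicity of $\psi_i$ then upgrades this to $\psi_i(x)=-W_i'(x)\le C'e^{-\alpha x}$, as required.

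The main obstacle lies in the exponential supersolution step: in the classical setting one has $\mathbf{(J_2)}$ so that $\widehat J_i(\alpha)<\infty$ and the dispersion relation $d_i(\widehat J_i(\alpha)-1)-c\alpha+\lambda_1=0$ has a positive root, whereas here only \eqref{J3} is assumed. The resolution, following the method of \cite{cc2004}, exploits that $W$ is integrable near $+\infty$ and satisfies a cooperative integrated equation, so that the supersolution construction can be carried out on $W$ rather than $\psi$ directly; the extra regularity inherited from one integration lets the comparison go through at an exponent $\alpha$ determined by $\lambda_1$ and $c$, without needing unrestricted exponential moments of the $J_i$.
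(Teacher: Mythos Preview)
Your Stage~1 (integrability) is essentially correct and matches the paper's argument; the only cosmetic issue is that you integrate directly to $+\infty$ before knowing the integrand is in $L^1$, whereas one should first integrate to a finite $y$, obtain a bound uniform in $y$, and then pass to the limit. The paper does exactly this.

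Stage~2, however, has a genuine gap. Your proposed supersolution $Ce^{-\alpha x}v$ for the linearized operator requires $\mathcal L_i[e^{-\alpha\cdot}](x)=e^{-\alpha x}\big(\int_{\R}J_i(z)e^{\alpha z}\,dz-1\big)$ to be finite, and that is precisely $\mathbf{(J_2)}$, which is \emph{not} assumed here. Passing from $\psi$ to $W(x)=\int_x^\infty\psi$ does not help: the kernel in $\mathcal L_i[W_i]$ is the same $J_i$, so the exponential test function faces the same divergent moment. Your final paragraph acknowledges the obstacle but the offered resolution (``extra regularity from one integration'') does not address it; no amount of smoothing of the unknown changes the fact that $J_i*e^{-\alpha\cdot}$ may be infinite. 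The appeal to Lemma~\ref{lemma2.7a} would also need the supersolution to satisfy the inequality on all of $(-\infty,0)$ after a shift, and that fails when the convolution diverges.

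The paper (following Carr--Chmaj) sidesteps the supersolution route entirely. After Stage~1, it uses the elementary sign computation
\[
\int_x^\infty\big[J_i*\psi_i-\psi_i\big](z)\,dz
=\int_0^\infty wJ_i(w)\int_0^1\big[\psi_i(x-sw)-\psi_i(x+sw)\big]\,ds\,dw\ \ge\ 0,
\]
valid by monotonicity of $\psi_i$, to obtain for $\tilde\psi=\sum_i\tilde\theta_i\psi_i$ the inequality
\[
c\,\tilde\psi(x)\ \ge\ b_*\int_x^\infty\tilde\psi(z)\,dz\ \ge\ b_*\,r\,\tilde\psi(x+r)\qquad(x\gg1,\ r>0).
\]
Choosing $r$ so large that $b_*r>c$ and setting $\alpha=\frac1r\ln\frac{b_*r}{c}>0$ gives $\tilde\psi(x+r)e^{\alpha(x+r)}\le\tilde\psi(x)e^{\alpha x}$ for large $x$, hence $\tilde\psi(x)e^{\alpha x}$ is bounded and $\psi_i(x)=O(e^{-\alpha x})$. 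This recursion uses only \eqref{J3} (via Stage~1) and monotonicity; no exponential moment of $J_i$ is needed. Replacing your Stage~2 with this argument closes the gap.
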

\begin{proof}	We first show that $\psi_i(x)$ is integrable on $[0,\yy)$ for every $i\in\{1,..., m\}$.  
	
	By \eqref{2.3c}, 
	\[
	\tilde\psi(x):=\sum_{i=1}^m \td\theta_i\psi_i(x)
	\]
	satisfies, for  $y>x\gg 1$,
	\begin{align*}
	c(\wtd \psi(x)-\wtd \psi(y))=&\sum_{i=1}^m \int_{x}^{y}\lf[\td\theta_id_i  J_i\ast \psi_i(z)-\td\theta_i d_i   \psi_i(z) +\td\theta_i  f_i(\Psi(z))\rr]{\rm d}z\\
	\geq& \sum_{i=1}^m \int_{x}^{y}\lf[\td\theta_i d_i  J_i\ast \psi_i(z)-\td\theta_i d_i \psi_i(z) +b_i \psi_i(z)\rr]{\rm d}z,
	\end{align*} 
	By \eqref{J3} and  a direct calculation, we have
	\begin{align*}
	& \lf|\int_{x}^{y} \lf(\int_{\R} J_i(z-w)\psi_i(w){\rm d}w-\psi_i(z)\rr){\rm d}z\rr|= \lf|\int_{x}^{y} \int_{\R} J_i(w)(\psi_i(z+w)-\psi(z)) {\rm d}w{\rm d}z\rr|\\
	=& \lf|\int_{x}^{y} \int_{\R} J_i(w) \int_{0}^1 w\psi'_i(z+sw) {\rm d}s{\rm d}w{\rm d}z\rr|=\lf|\int_{\R} wJ_i(w) \int_{0}^1 [\psi_i(y+sw)-\psi_i(x+sw)] {\rm d}s{\rm d}w\rr|\\
	\leq &  u^*_i \int_{\R} |y|J_i(y) {\rm d}y=:M_i<\yy \mbox{ for } i\in\{1,..., n\}.
	\end{align*}
	Thus, for all $y>x\gg 1$,
	\begin{align*}
	\sum_{i=1}^m \int_{x}^{y}  b_i \psi_i(z) {\rm d}z\leq c(\wtd \psi(x)-\wtd \psi(y))+\sum_{i=1}^n (\td\theta_id_iM_i)\leq  c\wtd \psi(x)+\sum_{i=1}^n (\td\theta_id_iM_i),
	\end{align*}
	which indicates that $\dd\int_{x}^{\yy}  \psi_i(z) {\rm d}z<\yy$  and hence $\dd\int_{0}^{\yy}  \psi_i(z) {\rm d}z<\yy$ for $i\in\{1,..., m\}$. This completes the proof of the desired conclusion.
	
	Moreover, by letting $y\to \yy$, $\wtd \psi$ satisfies
	\begin{align*}
	c\wtd \psi(x)
	\geq \sum_{i=1}^m \int_{x}^{\yy}\lf[\td\theta_i  d_i J_i\ast \psi_i(z)-\td\theta_i  d_i  \psi_i(z) +b_i \psi_i(z)\rr]{\rm d}z,
	\end{align*} 
	which, together with the integrability of $\psi_i$  proved above,  implies  that $J_i\ast \psi_i(z)$ is integrable on $(x,\yy)$ and hence on $[0,\infty)$. 
	
	We also have
	\begin{align*}
	&\int_{x}^{\yy} [J_i\ast \psi_i(z)- \psi_i(z)] {\rm d}z=\int_{x}^{\yy} \int_{\R} J_i(w) \int_{0}^1 w\psi'_i(z+sw) {\rm d}s{\rm d}w{\rm d}z\\
	=&-\int_{\R} wJ_i(w) \int_{0}^1 \psi_i(x+sw){\rm d}s{\rm d}w=-\int_{\R} J_i(w) \int_{0}^w \psi_i(x+s) {\rm d}s{\rm d}w\\
	=& \int_0^\infty wJ_i(w)\int_0^w\Big[ \psi_i(x-s)-\psi_i(x+s)\Big]dsdw\geq 0
	\end{align*}
	 since $\psi_i(x)$ is nonincreasing in $x$. It follows that, for  $b_*:=\min_{1\leq i\leq m}b_i/\theta_i>0$,  $ x\gg1$ and $ r>0$,
	\begin{align*}
	c\wtd  \psi(x)  \geq  \sum_{i=1}^m \int_{x}^{\yy} b_i   \psi_i(z) {\rm d}z\geq b_* \int_{x}^{\yy} \wtd  \psi(z) {\rm d}z\geq b_*\int_{x}^{x+r} \wtd \psi(z) {\rm d}z\geq b_*r  \wtd \psi(x+r).
	\end{align*}
	Let $\phi(x):=\wtd  \psi(x) e^{\alpha x}$ with $\alpha=\frac{1}{r}\ln(\frac{b_*r}{c})$ for some large  $r$ such that $\alpha>0$. Then for all large $x$, say $x\geq X\gg 1$,
	\begin{align*}
	\phi(x+r)=\wtd  \psi(x+r) e^{\alpha (x+r)}\leq \frac{c}{b_*r} \wtd\psi(x) e^{\alpha x}e^{\alpha r}=\wtd\psi(x) e^{\alpha x}=\phi(x).
	\end{align*}
	Hence
	\[
	M^*:=\sup_{x\geq X}\phi(x)=\max_{x\in[X, X+r]}\phi(x)<\infty.
	\]
	 Thus  $\wtd\psi(x)\leq M^* e^{-\alpha x}$ for $x\geq X$ which implies $\psi_i(x)=O(e^{-\alpha x})$ as 
	 $x\to \yy$ for every $i\in\{1,..., m\}$.
\end{proof}

{\bf Proof of Theorem \ref{TW-J2}:} As mentioned earlier, we only need to show that if $\mathbf{(J_2)}$ does not hold, then 
\eqref{2.3} does not have a monotone traveling wave solution with some positive speed $c$.

We argue indirectly. Suppose that $\mathbf{(J_2)}$ does not hold, but
\eqref{2.3} has a monotone traveling wave solution $\Phi=(\phi_i)$ with some positive speed $c$. We aim to derive a contradiction.

For fixed small $\epsilon>0$, let $M$ be a large constant such that 
\begin{align*}
\int_{|x|\leq M} J_i(x)dx\geq 1-\epsilon \mbox{ for every } i\in\{1,...,n\}.
\end{align*}
 Define 
	\begin{equation*}
K_i(x):=
	\begin{cases}
J_i(x) &\mbox{ for } |x|\leq M,\\
	\frac{M}{|x|}J_i(x)& \mbox{ for } |x|\geq M,\end{cases} \;\; \ \ \ \mbox{ and } \   \wtd K_i(x):=\frac{1}{\|K_i\|_{L^1(\R)}}K_i(x).
	\end{equation*}
	 Then 
	\begin{align}\label{2.20}
	K_i(x)\leq J_i(x)\ \mbox{ for } \ \  x\in \R,\; 1\leq i\leq n,
	\end{align}
	and  $\wtd K_i$ $(i=1,..., n)$ satisfy $\mathbf{(J)}$ and \eqref{J3}. 
	
	Since $\mathbf{(J_2)}$ is not satisfied, for  any $\lambda>0$,
	\begin{equation}\label{2.21}
	\begin{aligned}
	\sum_{i=1}^n\int_{\R} e^{\lambda x} K_{i}(x)  d x&\ \geq \sum_{i=1}^n\int_M^\infty J_i(x)\frac{e^{\lambda x}}{x}dx\\
	&\geq \sum_{i=1}^n\int_{\wtd M}^\infty J_i(x)e^{\frac{\lambda}{2} x}dx=\yy,
	\end{aligned}
	\end{equation}
	provided that $\wtd M>M$ is so large that $x<e^{\frac\lambda 2 x}$ for $x\geq \wtd M$.
	
We now show that there is a monotone  traveling wave  solution to \eqref{2.3} with $(J_i)$ replaced by $(K_i)$. To this end,
we first consider the  perturbed problem \eqref{2.1} with kernel functions $J=(J_i)$ and $\bm{\delta}=\epsilon_n\Theta$ as in the proof of   Lemma \ref{lem2.9}. From that proof and our assumption that \eqref{2.3} has a monotone traveling wave solution with speed $c$,
we obtain two sequences $\{\Phi_{n}^{c}\}$ and $\{x_{n}^{c} \}$, with the properties 
	\begin{align*}
	-x_{n}^{c}\to \yy \mbox{ and } \Phi_n^c(x)\to\Phi(x) \mbox{ locally uniformly  for } x\in\R
	\end{align*}
	as $n\to \yy$.
	 where $x_{n}^{c}$ is  determined by  the kernel functions $J_i$ and  a decreasing sequence $\{\bm{\delta}_n\}_{n=1}^\yy$.  
	 
	 Next we consider the  perturbed problem \eqref{2.1} with $(J_i)$ replaced by $(K_i)$ and with $\bm{\delta}=\epsilon_n\Theta$.
	  By \eqref{2.20}, Lemmas \ref{lemma2.6a} and \ref{lem2.9}  we can similarly obtain two sequences $\wtd \Phi_{n}^{c}$ and $\{\td x_{n}^{c}\}_{n=1}^\yy$, and we have $\wtd \Phi_{n}^{c}(x)\preceq \Phi_{n}^{c}(x)$. Then by the definitions of $x_{n}^{c}$ and $\td x_{n}^{c}$  we deduce
	$
	\td x_{n}^{c}\leq x_{n}^{c}$, 
	and so $-\td x_{n}^{c}\to \yy$ as $n\to\infty$. We may now repeat the argument in the proof of Lemma \ref{lemma2.4} to
	see that, by passing to a subsequence, $\wtd \Phi_{n}^{c}(x)$ converges locally uniformly to some $\wtd \Phi(x)=(\td\phi_i)$ in $\R$, and $\wtd \Phi$ is a monotone traveling wave solution of \eqref{2.3} with speed $c$ but with  the kernel functions $(K_i)$ in place of $(J_i)$, as we wanted.
	
	We are now ready to derive a contradiction by making use of 
	 Lemma \ref{lemma2.13}, which implies that  $\tilde \phi_i(x)=O(e^{-\alpha x})$ as $x\to \yy$ for every $i\in\{1,..., m\}$ and some $\alpha>0$.
By \eqref{2.21}, there exists $j\in\{1,..., n\}$ such that
	\begin{align}\label{2.12}
	\int_{\R} e^{\frac{\alpha}{2} x} K_j(x) {\rm d}y=\yy.
	\end{align}
	From \eqref{1.3a} and the exponential decay estimates for $\tilde\phi_i(x)$ $(i=1,..., m)$ near $x=+\infty$ we obtain, for $\lambda\in(0, \alpha)$,
	\begin{align*}
	\lf|\int_{\R} e^{\lambda x}f_j(\wtd\Phi(x)){\rm d}x\rr|\leq L\int_{\R} e^{\lambda x}\sum_{i=1}^{m}\td\phi_i(x){\rm d}x<\yy,
	\end{align*}
	and hence by the equation satisfied by $\td\phi_j$, 
	\begin{align*}
	d_j\int_{\R} e^{\lambda x}  K_j\ast \td\phi_j(x) {\rm d} x=&\int_{\R}  e^{\lambda x} [d_j \td\phi_j(x)-c\td\phi_j'(x)-f_j(\wtd\Phi(x))]{\rm d}x\\
	=\ &(d_j+c\lambda) \int_{\R} e^{\lambda x} \td\phi_j(x){\rm d}x- \int_{\R} e^{\lambda x}f_j(\wtd\Phi(x)){\rm d}x\\
	<\ &\yy.
	\end{align*}
	By the  Fubini-Tonelli theorem, for any finite numbers $L_1<L_2$ we have
	\begin{align*}
	\int_{\R} e^{\lambda x}  K_j\ast \td\phi_j(x) {\rm d} x&\ =\int_{\R} e^{\lambda x}  \int_{\R} K_j(y) \td\phi_j(x-y){\rm d} y {\rm d} x\\
	&\ \geq  \int_{\R} e^{\lambda x}  \int_{L_1}^{L_2} K_j(y) \td\phi_j(x-y){\rm d} y {\rm d} x\\
	& =\int_{\R}   \int_{L_1}^{L_2} [K_j(y)e^{\lambda y}] [\td\phi_j(x-y)e^{\lambda (x-y)}] {\rm d} x {\rm d} y\\
	& =\int_{L_1}^{L_2}  K_j(y)e^{\lambda y}{\rm d} y   \int_{\R} \td\phi_j(x)e^{\lambda x}{\rm d} x.
	\end{align*}
	It follows that
	\[
	\int_{L_1}^{L_2}  K_j(y)e^{\lambda y}{\rm d} y\leq M_0:=\int_{\R} e^{\lambda x}  K_j\ast \td\phi_j(x) {\rm d} x\Big/  \int_{\R} \td\phi_j(x)e^{\lambda x}{\rm d} x<\infty,
	\]
	which 
	contradicts \eqref{2.12} since $L_1<L_2$ are arbitrary and we can take $\lambda=\alpha/2$.
\hfill $\Box$

\subsection{Uniqueness and strict monotonicity of semi-wave solutions to \eqref{2.3}}
\begin{theorem}\label{lemma2.12}
Suppose that $\mathbf{(J)}$, $\mathbf{(f_1)}$ and $\mathbf{(f_2)}$ hold. Then for any $c>0$, \eqref{2.3}  has  at most one monotone semi-wave solution $\Phi=\Phi_c$ with speed $c$, and when exists, $\Phi_c(x)$ is strictly decreasing in $x$ for $x\in(-\infty, 0]$. Moreover, if $\Phi_{c_1}$ and $\Phi_{c_2}$ both exist and $0<c_1<c_2$, then $\Phi_{c_1}(x)\ggs \Phi_{c_2}(x)$ for fixed $x<0$.
\end{theorem}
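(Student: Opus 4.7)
The three assertions—strict monotonicity, uniqueness for a given $c$, and strict ordering in $c$—will all be obtained by combining a sliding argument with the strong maximum principle of Lemma \ref{lemma2.7a}, after a preliminary step that ensures every monotone semi-wave $\Phi_c$ satisfies $\Phi_c(x)\ggs\mathbf{0}$ for $x\in(-\infty,0)$. Writing $f_i(\Phi_c)=\sum_j a_{ij}(x)\phi_{c,j}$ with $a_{ij}\geq 0$ for $i\neq j$ (via $\mathbf{(f_1)}$(ii) and an integral representation), extending $\phi_{c,i}$ by $0$ on $(0,\infty)$, and noting $\phi_{c,i}(-\infty)=u_i^*>0$, Lemma \ref{lemma2.7a} gives $\phi_{c,i}>0$ on $(-\infty,0)$ for $i\leq m_0$; for $i>m_0$ a zero at some $x_0\in(-\infty,0)$ would force $\phi_{c,i}\equiv 0$ on $[x_0,0]$ by monotonicity, and then $\mathbf{(f_1)}$(iv) together with the positivity already proven for $j\leq m_0$ gives $c\phi_{c,i}'(x_0)=-f_i(\Phi_c(x_0))<0$, a contradiction.

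For strict monotonicity, fix $\tau>0$ and set $w_i(x)=\phi_{c,i}(x-\tau)-\phi_{c,i}(x)$ on $(-\infty,0]$, extended by $0$ on $(0,\infty)$. Subtracting the semi-wave equations at $x$ and at $x-\tau$ and shifting the variable of integration produces, for $x<0$,
\[
d_i\int_\R J_i(x-y)w_i(y)\rd y - d_i w_i(x) + cw_i'(x)+\sum_j q_{ij}(x)w_j(x) = -d_i\int_0^\tau J_i(x-z)\phi_{c,i}(z-\tau)\rd z \leq 0,
\]
with $q_{ij}\geq 0$ for $i\neq j$ by cooperativity. Since $w_i(0)=\phi_{c,i}(-\tau)>0$, the function is nontrivial; Lemma \ref{lemma2.7a} delivers $w_i>0$ on $(-\infty,0)$ for $i\leq m_0$, and the degenerate indices $i>m_0$ are handled by a bootstrap: at any putative zero $x_0$ of $w_i$, the ODE (no integral term since $d_i=0$) and $w_i'(x_0)=0$ force $\sum_{j\leq m_0}q_{ij}(x_0)w_j(x_0)=0$, contradicting $\mathbf{(f_1)}$(iv) and the just-established $w_j>0$ for $j\leq m_0$.

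For uniqueness, let $\Phi^{(1)},\Phi^{(2)}$ be two monotone semi-waves of speed $c$, extended by $\mathbf{0}$ on $(0,\infty)$ and denoted $\Phi^{(1),*},\Phi^{(2),*}$. Fix $k\in(0,1)$; by $\mathbf{(f_2)}$, $k\Phi^{(1)}$ is a subsolution of the semi-wave equation. Define
\[
\tau^* := \sup\lf\{\tau\in\R:\ \Phi^{(2),*}(x+\tau)\succeq k\Phi^{(1),*}(x)\ \text{for all}\ x\leq 0\rr\}.
\]
The large-negative and large-positive regimes show $\tau^*\in\R$, and since $\Phi^{(2),*}$ is nonincreasing, it suffices to rule out $\tau^*<0$: then any touching point $V_{i_0}(x_0)=0$ for $V_i:=\Phi^{(2),*}_i(\cdot+\tau^*)-k\Phi^{(1),*}_i$ is forced to satisfy $x_0<0$ and $x_0+\tau^*<0$ (the boundary cases are excluded by the preliminary positivity of $\Phi^{(1)},\Phi^{(2)}$). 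Subtracting the equation for $\Phi^{(2)}$ from the subsolution inequality for $k\Phi^{(1)}$ yields a differential inequality for $V_i$ of the type covered by Lemma \ref{lemma2.7a}, and $V_i(-\infty)=(1-k)u_i^*>0$ ensures $V_i\not\equiv 0$; Lemma \ref{lemma2.7a} then contradicts the touching for $i_0\leq m_0$, while the case $i_0>m_0$ is reduced to the diffusive case via $\mathbf{(f_1)}$(iv) exactly as in the monotonicity step. Hence $\tau^*\geq 0$, and letting $k\nearrow 1$ gives $\Phi^{(2)}\succeq\Phi^{(1)}$; the reverse inequality follows by symmetry.

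The final comparison uses the same template with $V^\tau_i(x)=\Phi^*_{c_1,i}(x+\tau)-\Phi^*_{c_2,i}(x)$; subtracting the semi-wave equations (which now have different speeds $c_1<c_2$) produces the analogous inequality with an additional right-hand side $(c_2-c_1)\phi'_{c_2,i}(x)\leq 0$, nonpositive by the monotonicity of $\Phi_{c_2}$. The same touching analysis gives $\tau^*\geq 0$ and so $\Phi_{c_1}\succeq\Phi_{c_2}$ on $(-\infty,0]$. To upgrade this to the strict pointwise inequality $\Phi_{c_1}\ggs\Phi_{c_2}$ on $(-\infty,0)$, note that $W=\Phi_{c_1}-\Phi_{c_2}$ is $\succeq\mathbf{0}$ and not identically $\mathbf{0}$ (else subtracting the semi-wave equations would force $c_1=c_2$), satisfies the same type of differential inequality, and then Lemma \ref{lemma2.7a} combined with the cooperative coupling via $\mathbf{(f_1)}$(iv) promotes nonnegativity to strict positivity in every component. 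The main technical obstacle throughout is the absence of diffusion for indices $i>m_0$; this is uniformly circumvented by the bootstrap trick that a zero of a degenerate component propagates to a zero of a diffusive component through the positive off-diagonal derivatives guaranteed by $\mathbf{(f_1)}$(iv), after which Lemma \ref{lemma2.7a} applies directly.
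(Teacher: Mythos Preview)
Your strict-monotonicity and preliminary-positivity steps are correct and match the paper's Claims 1--2 in spirit. For uniqueness, however, you take a genuinely different route: the paper slides by \emph{scaling}, defining $p=\inf\{\rho\geq 1:\rho\Phi^{(1)}\succeq\Phi^{(2)}\}$ and proving $p=1$. That approach requires a careful analysis of $\Phi'(0^-)$ and $\Phi''(0^-)$ component by component (the paper's Claim~2) to guarantee that the ratios $\phi^{(2)}_i/\phi^{(1)}_i$ remain bounded near $x=0$, so that $p<\infty$. Your translation sliding with the factor $k<1$ sidesteps this boundary analysis entirely: finiteness of $\tau^*$ follows immediately from the extended functions vanishing on $(0,\infty)$, and the limit at $-\infty$ is $(1-k)\mathbf{u}^*\ggs\mathbf{0}$, so touching is forced into the interior. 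This is closer to the mechanism the paper itself uses in Lemma~\ref{lem2.9}, and it is arguably cleaner here.

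The $c$-ordering step, however, contains a genuine gap. In your uniqueness argument the factor $k<1$ was essential: it made $V^{\tau^*}(-\infty)=(1-k)\mathbf{u}^*\ggs\mathbf{0}$, which together with $V^{\tau^*}(0)\ggs\mathbf{0}$ forced the touching (guaranteed by maximality of $\tau^*$, since $\inf V^{\tau^*}=0$) to a finite interior point where Lemma~\ref{lemma2.7a} applies. With $V^\tau=\Phi^*_{c_1}(\cdot+\tau)-\Phi^*_{c_2}$ and no $k$, you now have $V^{\tau^*}(-\infty)=\mathbf{0}$. It is then entirely consistent that $V^{\tau^*}\ggs\mathbf{0}$ on $(-\infty,0)$ while $\inf V^{\tau^*}=0$ is realised only at $-\infty$; Lemma~\ref{lemma2.7a} yields no contradiction, and you cannot push $\tau$ past $\tau^*$. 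The easy fix within your own framework is to reinsert $k$: compare $\Phi^*_{c_1}(\cdot+\tau)$ with $k\Phi^*_{c_2}$, note that the extra term $(c_2-c_1)k\phi'_{c_2,i}\leq 0$ still lands on the correct side of the inequality, conclude $\tau^*_k\geq 0$, and let $k\nearrow 1$. The paper avoids sliding here altogether: it invokes the monotonicity in $c$ of the constructed perturbed solutions (Lemma~\ref{lemma2.3}) together with the uniqueness just established to obtain $\Phi^{c_1}\succeq\Phi^{c_2}$ directly, and only then applies Lemma~\ref{lemma2.7a} and the $\mathbf{(f_1)}$(iv) bootstrap for the strict inequality.
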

\begin{proof}  Assume that $\Phi^{(1)}=(\phi_i^{(1)})$ and $\Phi^{(2)}=(\phi_i^{(2)})$ are  monotone semi-wave solutions of  \eqref{2.3}
with speed $c>0$. We want to show that $\Phi^{1}\equiv \Phi^{2}$. 

{\bf Claim 1}.   $\phi_i^{(k)}(x)>{0}$ for $x<0$, $k=1,2$ and $i\in\{1,..., m_0\}$.
 
 By $\mathbf{(f_1)}$,  we can write, for $k=1,2$, 
 \begin{align*}
 F(\Phi^{(k)}(x))=\Phi^{(k)}(x)E^{(k)}(x)
 \end{align*}
 where  $E^{(k)}(x)=(e^{(k)(x)}_{ij})$ is a matrix function with $e_{ij}\geq 0$ for $i\neq j$. Due to $\mathbf{0}\preceq \Phi^{(k)}(x)\preceq \mathbf{u}^*$ and $ \Phi^{(k)}(-\yy)=\mathbf{u}^*$, we can apply Lemma \ref{lemma2.7a} to conclude that $\phi_i^{(k)}(x)<{0}$ for $x<0$
 and $i\in\{1,..., m_0\}$.

{\bf Claim 2}. $(\phi_i^{(k)})'(0^-)<0$ for $k=1,2$ and $i\in\{1,..., m_0\}$;\ \ $(\phi_i^{(k)})'(0^-)=0<(\phi_i^{(k)})''(0^-)$ for $k=1,2$ 
and $i\in\{m_0+1,..., m\}$.

From the equation satisfied by $\Phi^{(k)}$, we deduce, for $k=1,2$, 
\begin{equation}\label{2.18}
\begin{aligned}
&{\Phi^{(k)}}'(0^-)=\lim_{x\to 0^-}\frac{{\Phi^{(k)}}(x)}{x} \\
=&\lim_{x\to 0^-}\frac{1}{cx}\int_{0}^{x}\lf[-D\circ\int_{-\yy}^{0}\mathbf{J} (z-y)\circ \Phi^{(k)}(y) {\rm d}y+D\circ\Phi^{(k)}(z)-F(\Phi^{(k)}(z))\rr]\rd z\\
=&-\frac{1}{c}D\circ\int_{-\yy}^{0}\mathbf{J} (y)\circ \Phi^{(k)}(y) {\rm d}y.
\end{aligned}
\end{equation}
Hence $(\phi_i^{(k)})'(0^-)<0$ for $k=1,2$ and $i\in\{1,..., m_0\}$;\ \ $(\phi_i^{(k)})'(0^-)=0$ for $k=1,2$ 
and $i\in\{m_0+1,..., m\}$. Moreover, for $k=1,2$ 
and $i\in\{m_0+1,..., m\}$, from
\[
c(\phi_i^{(k)})'(x)=-f_i(\Phi^{(k)}(x))
\]
we deduce
\[
c(\phi_i^{(k)})''(0^-)=-\sum_{j=1}^m \partial_jf_i(\mathbf{0})(\phi_j^{(k)})'(0^-)>0.
\]

Now with the help of Claim 1 and Claim 2, we are ready to define 
\begin{align*}
p:= \inf\{\rho\geq 1: \rho\Phi^{(1)}(x)\succeq \Phi^{(2)}(x)\ {\rm for}\ x\leq 0\}.
\end{align*}
Since  $\Phi^{(k)}(-\yy)=\mathbf{u}^*\ggs \mathbf{0}$ for $k=1,2$, and for each $i\in\{1,..., m\}$, $\frac{\phi_i^{(1)}(x)}{\phi_i^{(2)}(x)}$ is uniformly  bounded for  $x$ in a small left neighbourhood of 0 by Claim 2,  we see that
$p\in [1,\yy)$ is well-defined, and $p\Phi^{(1)}(x)\succeq \Phi^{(2)}(x)\ {\rm for}\ x\leq 0$. 

{\bf Claim 3:} $p=1$. 

Otherwise $p>1$
  and from the definition of $p$ we can find some $j\in\{1,..., m\}$ and a sequence $x_n\in (-\infty, 0)$ such that
  \begin{equation}\label{=p}
  \lim_{n\to\infty}\frac{\phi_{j}^{(2)}(x_n)}{\phi_{j}^{(1)}(x_n)}=p>1.
  \end{equation}
  From $\Phi^{(k)}(-\yy)=\mathbf{u}^*\ggs \mathbf{0}$ for $k=1,2$ we see that $\{x_n\}$ must be a bounded sequence, and hence by passing to a subsequence, we may assume that $x_n\to x_*\in (-\infty, 0]$ as $n\to\infty$. Define 
\begin{align*}
V(x)=(v_i(x)):=p\Phi^{(1)}(x)-\Phi^{(2)}(x).
\end{align*}
Clearly $V(x)\succeq \mathbf{0}$ for $x\leq 0$. Our discussion below is organised according to the following two possibilities:
\begin{itemize}
	\item Case 1. $V(x)\ggs \mathbf{0}$ for all $x< 0$.
	\item Case 2.  There exist $i_0\in\{1,..., m\}$ and $x_0<0$ such that $v_{i_0}(x_0)=0$.
\end{itemize}
In Case 1, 
 from \eqref{2.18} we obtain
\[
V'(0^-)=-\frac{1}{c}D\circ\int_{-\yy}^{0}\mathbf{J} (0-y)\circ V(y) {\rm d}y,
\]
which implies $v_i'(0^-)<0$ for $i\in\{1,..., m_0\}$ and $v_i'(0^-)=0$ for $i\in\{m_0+1,..., m\}$.
Moreover, for $i\in \{m_0+1,..., m\}$,
\[
cv_i''(0^-)=-\sum_{j=1}^m \partial_jf_i(\mathbf{0})v_j'(0^-)>0.
\]
Let us examine the sequence $\{x_n\}$ in \eqref{=p}. We have $x_n\to x_*\in (-\infty, 0]$. If $x_*<0$ then we deduce
$v_j(x_*)=0$ which is a contradiction to $V(x)=(v_i(x))\ggs \mathbf{0}$ for $x<0$. Therefore we must have $x_*=0$ and so $x_n\to 0$ as $n\to\infty$. It then follows that
\[
 \lim_{n\to\infty}\frac{\phi_{j}^{(2)}(x_n)}{\phi_{j}^{(1)}(x_n)}=\frac{(\phi_j^2)'(0^-)}{(\phi_j^1)'(0^-)}<p \mbox{ if } j\in\{1,..., m_0\}
\]
due to $v_j'(0^-)<0$ and  $(\phi_j^{(k)})'(0^-)<0$ for $k=1,2$, and
\[
 \lim_{n\to\infty}\frac{\phi_{j}^{(2)}(x_n)}{\phi_{j}^{(1)}(x_n)}=\frac{(\phi_j^2)''(0^-)}{(\phi_j^1)''(0^-)}<p \mbox{ if } j\in\{m_0+1,..., m\}
\]
due to $v_j''(0^-)>0$ and  $(\phi_j^{(k)})'(0^-)=0<(\phi_j^{(k)})''(0^-)$ for $k=1,2$. Thus we always arrive at a contradiction to \eqref{=p} in Case 1.

In Case 2, from the assumptions $\mathbf{(f_1)}$ and $\mathbf{(f_2)}$, we see that
$
W(x):=\Phi^{(1)}(x)-p^{-1}\Phi^{(2)}(x)
$ satisfies, for $x\leq 0$,
\begin{align*}
\mathbf{0}=&D\circ\int_{-\yy}^{0}\mathbf{J} (x-y)\circ W(y) {\rm d}y-D\circ W(x)+cW'(x)+F(\Phi^{(1)}(x))-p^{-1}F(\Phi^{(2)}(x))\\
\succeq  &D\circ\int_{-\yy}^{0}\mathbf{J} (x-y)\circ W(y) {\rm d}y-D\circ W(x)+cW'(x)+F(\Phi^{(1)}(x))-F(p^{-1}\Phi^{(2)}(x))\\
= &D\circ\int_{-\yy}^{0}\mathbf{J} (x-y)\circ W(y) {\rm d}y-D\circ W(x)+cW'(x)+ W(x)E(x),
\end{align*}
where $E=(e_{ij})$ is a matrix function with $e_{ij}\geq 0$ for $i\neq j$. In view of   $W(x)\succeq \mathbf{0}$ for $x\leq 0$, and $W(-\infty)\ggs\mathbf{0}$,  we can apply Lemma \ref{lemma2.7a} to  conclude that
\[
w_i(x)>0 \mbox{ for } x<0,\; i=1,..., m_0.
\]
This is already a contradiction if $i_0\in\{1,..., m_0\}$. If $i_0\in\{m_0+1,..., m\}$, then
\[
cw_{i_0}'(x)=\sum_{j=1}^m e_{ji_0}(x)w_j(x).
\]
By $\mathbf{(f_1)}$ (iv) we see that $e_{ji_0}(x)>0$ for $j\in\{1,..., m_0\}$, and hence
\[
 \sum_{j=1}^m e_{ji_0}(x)w_j(x)>e_{i_0i_0}(x)w_{i_0}(x)\geq -M w_{i_0}(x) \mbox{ for } x<0 \mbox{ and some constant } M.
 \]
 It follows that
 \[
 c w_{i_0}'(x)>-M w_{i_0}(x) \mbox{ for } x<0.
 \]
 This combined with $w_{i_0}(0)=0$ yields $w_{i_0}(x)>0$ for $x<0$, so we  arrive at a contradiction to $w_{i_0}(x_0)=0$.
 
 We have thus proved $p=1$, and so
  $\Phi^{(1)}(x)\succeq \Phi^{(2)}(x)\ {\rm for}\ x\leq 0$. By swapping $\Phi^{(1)}(x)$ with $ \Phi^{(2)}(x)$ we also have $\Phi^{(2)}(x)\succeq \Phi^{(1)}(x)\ {\rm for}\ x\leq 0$. This completes our proof for uniqueness of the semi-wave solution.

\medskip

Next we prove the strict monotonicity properties stated in the theorem. Let $\Phi^c$ be a monotone semi-wave solution of \eqref{2.3} with speed $c>0$. To show the strict monotonicity of $\Phi_c(x)$ with respect to $x\leq 0$, it suffices to verify that $V(x):=\Phi^c(x-{\delta})-\Phi^c(x)\ggs 0$ for $\delta >0$ and $x\leq 0$.   From $\mathbf{(f_1)}$(ii), 
\begin{align*}
F(\Phi^c(x-\delta))-F(\Phi^c(x))=V(x)E(x)
\end{align*}
where  $E(x)=(e_{ij}(x))$ is a matrix function with $e_{ij}\geq 0$ for $i\neq j$. Note that   $V(x)\succeq \mathbf{0}$ by Lemma \ref{lem2.9},  and $V(0)=\Phi^c(-\delta)\ggs \mathbf{0}$ by the conclusions in Claim 2. Making use of Lemma \ref{lemma2.7a} we deduce $v_i(x)>0$ for $x\leq 0$ and $i\in\{1,..., m_0\}$. Using this and $\mathbf{(f_1)}$(iv), we can further show, as in the argument near the end of the proof of Claim 3, that $v_i(x)>0$ for $i\in\{m_0+1,..., m\}$ and $x\leq 0$.

Now we show that for fixed $x<0$,  $\Phi^{c}(x)$ is strictly decreasing with respect to $c>0$, namely, $\Phi^{c_1}(x)=(\phi_i^{c_1}(x))\ggs\Phi^{c_2}(x)=(\phi_i^{c_2}(x))$ for $c_2>c_1>0$. Denote $W(x):=\Phi^{c_1}(x)-\Phi^{c_2}(x)$. By Lemma \ref{lemma2.3} and the proof of Lemma \ref{lem2.9} without shifting $\Phi_{n}^{c}$,  we see that  $W(x)\succeq \mathbf{0}$ for $x\leq 0$. By $\mathbf{(f_1)}$(ii),  
\begin{align*}
F(\Phi^{c_1}(x))-F(\Phi^{c_2}(x))=W(x)E(x)
\end{align*}
where  $E(x)=(e_{ij}(x))$ is a matrix function with $e_{ij}\geq 0$ for $i\neq j$. This, combined with $c_1(\Phi^{c_1})'(x)-c_2(\Phi^{c_2})'(x)\succ c_1 W'(x)$, allows us to apply Lemma \ref{lemma2.7a} to conclude that $w_i(x)>0$ for $x<0$ and $i\in\{1,..., m_0\}$.
We may then use this and $\mathbf{(f_1)}$(iv) to deduce, as before, $w_i(x)>0$ for $x<0$ and $i\in\{m_0+1,..., m\}$.
\end{proof}

\subsection{Semi-wave solution with the desired speed}

\begin{theorem}\label{thm2.12}
Suppose that $\mathbf{(J),\; (f_1)-(f_4)}$ hold, and $\Phi^c(x)$ is the unique monotone semi-wave solution of \eqref{2.3} with speed $c\in (0, C_*)$, where $C_*\in (0, \infty]$ is given by Theorem \ref{lemma2.4}. Then
\begin{equation}\label{c-C*}
\lim_{c\nearrow C_*}\Phi^c(x)=0 \mbox{ locally uniformly in } (-\infty, 0].
\end{equation}
Moreover,  \eqref{2.1a} and \eqref{2.2a} have a solution pair $(c,\Phi)$  with  $\Phi(x)$ monotone
		if and only if $\mathbf{(J_1)}$ holds. 
		And when $\mathbf{(J_1)}$ holds,  there exists a unique $c_0\in (0, C_*)$ such that $(c,\Phi)=(c_0,\Phi^{c_0})$ solves
		 \eqref{2.1a} and \eqref{2.2a}. 
\end{theorem}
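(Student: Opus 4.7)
The plan is to reduce the system \eqref{2.1a}-\eqref{2.2a} to a scalar equation in $c$. Under the hypotheses of Theorem \ref{lemma2.12}, for every $c\in(0,C_*)$ we have a unique monotone semi-wave $\Phi^c=(\phi_i^c)$ of \eqref{2.3}, so define
\[
\Upsilon(c):=\sum_{i=1}^{m_0}\mu_i\int_{-\yy}^{0}\!\!\int_{0}^{\yy}J_i(x-y)\phi_i^c(x)\,{\rm d}y\,{\rm d}x\in (0,\yy],\qquad c\in(0,C_*).
\]
Then \eqref{2.1a}-\eqref{2.2a} is solvable with $\Phi$ monotone precisely when $\Upsilon(c)=c$ has a root in $(0,C_*)$. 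The whole theorem will follow from four facts about $\Upsilon$: finiteness iff $\mathbf{(J_1)}$; continuity; strict monotonicity; and the two boundary behaviours $\Upsilon(c)\to 0$ as $c\nearrow C_*$ and $\Upsilon(c)\geq\kappa>0$ for $c$ near $0$.

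\emph{Step 1: the limit \eqref{c-C*}.} By Theorem \ref{lemma2.12}, $c\mapsto\Phi^c(x)$ is strictly decreasing for each fixed $x<0$, so the pointwise limit $\Phi^*(x):=\lim_{c\nearrow C_*}\Phi^c(x)$ exists, is nonincreasing in $x$, and satisfies $\Phi^*(0)=\mathbf{0}$. Passing to the limit in \eqref{2.1a} via dominated convergence (the family is uniformly bounded by $\mathbf{u^*}$, and derivatives are uniformly bounded from the equation), $\Phi^*$ solves the first equation of \eqref{2.1a} with speed $C_*$ when $C_*<\infty$. The argument of Step~3 in the proof of Lemma \ref{lemma2.2} forces $K:=\Phi^*(-\yy)$ to be a zero of $F$, hence $K=\mathbf{0}$ or $K=\mathbf{u^*}$ by $\mathbf{(f_1)}$. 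If $K=\mathbf{u^*}$ then $\Phi^*$ is a monotone semi-wave at speed $C_*$, contradicting Theorem \ref{lemma2.4}(ii); therefore $K=\mathbf{0}$ and the monotonicity of $\Phi^*$ gives $\Phi^*\equiv\mathbf{0}$. When $C_*=\yy$, the equations $c(\phi_i^c)'(x)=-f_i(\Phi^c)-d_i\mathcal{L}_i[\phi_i^c]$ with bounded right-hand side yield $(\Phi^c)'\to\mathbf{0}$, and combined with $\Phi^c(0)=\mathbf{0}$ this forces $\Phi^c\to\mathbf{0}$ pointwise on any bounded set. Dini's theorem upgrades the monotone pointwise convergence to locally uniform convergence.

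\emph{Step 2: characterisation by $\mathbf{(J_1)}$.} For each $i\in\{1,\dots,m_0\}$, the substitution $z=y-x$ and Fubini give
\[
\int_{-\yy}^{0}\!\!\int_{0}^{\yy}J_i(x-y)\phi_i^c(x)\,{\rm d}y\,{\rm d}x
=\int_{0}^{\yy}\phi_i^c(-s)\!\int_{s}^{\yy}\!J_i(z)\,{\rm d}z\,{\rm d}s.
\]
Since $\phi_i^c(-\yy)=u_i^*$, there exists $N_c>0$ with $\phi_i^c(-s)\geq u_i^*/2$ for $s\geq N_c$, hence
\[
\Upsilon(c)\geq \sum_{i:\mu_i>0}\tfrac{\mu_i u_i^*}{2}\int_{N_c}^{\yy}(z-N_c)J_i(z)\,{\rm d}z.
\]
If $\mathbf{(J_1)}$ fails, the right-hand side is $+\yy$, so $\Upsilon(c)=\yy$ and no root exists; conversely, if $\mathbf{(J_1)}$ holds, $\Upsilon(c)\leq \sum_i\mu_i u_i^*\int_0^\yy zJ_i(z){\rm d}z<\yy$.

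\emph{Step 3: existence and uniqueness when $\mathbf{(J_1)}$ holds.} Strict monotonicity of $c\mapsto\phi_i^c(x)$ from Theorem \ref{lemma2.12} gives strict monotonicity of $\Upsilon$ on $(0,C_*)$, so $\Upsilon(c)=c$ has at most one root. Continuity of $c\mapsto\Phi^c$ on $(0,C_*)$ follows from the same compactness-plus-uniqueness scheme as in Step~1: any limit of $\Phi^{c_n}$ with $c_n\to c^*\in(0,C_*)$ is a monotone semi-wave at $c^*$ with limit at $-\yy$ bounded below by $\Phi^{c_2}(-\yy)=\mathbf{u^*}$ (for $c_n\leq c_2<C_*$), hence equals $\Phi^{c^*}$ by uniqueness. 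Dominated convergence (with integrable majorant $u_i^*\int_{-x}^\yy J_i(z){\rm d}z$, integrable thanks to $\mathbf{(J_1)}$) then makes $\Upsilon$ continuous. Step~1 combined with DCT yields $\Upsilon(c)\to 0$ as $c\nearrow C_*$, while for any fixed $c_1\in(0,C_*)$ the monotonicity $\Phi^c\succeq\Phi^{c_1}$ for $c\leq c_1$ gives $\Upsilon(c)\geq\Upsilon(c_1)>0$. Consequently $\Upsilon(c)>c$ near $0$ and $\Upsilon(c)<c$ near $C_*$, and the intermediate value theorem produces the unique $c_0\in(0,C_*)$ with $(c_0,\Phi^{c_0})$ solving \eqref{2.1a}-\eqref{2.2a}.

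\emph{Main obstacle.} The delicate point is Step~1: identifying the limit $\Phi^*$ as a genuine semi-wave (so that Theorem \ref{lemma2.4} can be invoked), which requires both the passage to the limit in the nonlocal equation and the characterisation $\Phi^*(-\yy)\in\{\mathbf{0},\mathbf{u^*}\}$ via the Step~3 argument of Lemma \ref{lemma2.2}. The $C_*=\yy$ sub-case must be handled separately by a direct estimate on $c(\Phi^c)'$, since the contradiction with Theorem \ref{lemma2.4} is unavailable.
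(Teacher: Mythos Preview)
Your proof is correct and follows essentially the same approach as the paper's. Both reduce the problem to finding a root of the scalar function $c\mapsto c-\Upsilon(c)$ (the paper calls it $P(c)=c-M(c)$), establish the limit \eqref{c-C*} first by distinguishing the cases $C_*<\infty$ (limit would be a semi-wave at $C_*$, contradicting Theorem \ref{lemma2.4}) and $C_*=\infty$ (bounded integrand divided by $c\to\infty$), then prove strict monotonicity and continuity of $\Upsilon$ via Theorem \ref{lemma2.12} and compactness-plus-uniqueness, and apply the intermediate value theorem. Your use of Dini's theorem to upgrade monotone pointwise convergence to locally uniform convergence is a minor variant of the paper's appeal to the uniform bound on $(\Phi^c)'$, and your contrapositive handling of the ``only if'' direction (showing $\Upsilon(c)=\infty$ when $\mathbf{(J_1)}$ fails) is equivalent to the paper's direct verification that a finite solution pair forces $\int_0^\infty xJ_i(x)\,{\rm d}x<\infty$.
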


\begin{proof}  We first prove \eqref{c-C*}.
Since $\Phi^c=(\phi_i^c)$ is decreasing  with respect to $c$, $\Phi(x):=\lim_{c\nearrow C_*}\Phi^c(x)$ is well-defined, and $\Phi(x)\in[0,\mathbf{u^*}]$ for $x\leq 0$. Moreover, by the uniform boundedness of $(\Phi^c)'(x)$ obtained from the equation it satisfies, the convergence of $\Phi^c(x)$ to $\Phi(x)$ is locally uniform in $(-\infty, 0]$. If $C_*=\infty$, then from
\[
\Phi^c(x)=\frac{1}{c}\int_{0}^{x}\lf[-D\circ\int_{-\yy}^{0}\mathbf{J} (z-y)\circ \Phi^c(y) {\rm d}y+D\circ\Phi^c(z)-F(\Phi^c(z))\rr]\rd z
\]
we immediately obtain $\Phi(x)\equiv \mathbf{0}$. If $C_*<\infty$ then
$\Phi$ satisfies
	\begin{equation*}
	\begin{cases}
	\dd D\circ \int_{-\yy}^{\yy}\mathbf{J} (x-y)\circ \Phi(y) {\rm d}y-D\circ \Phi(x)+C_*\Phi'(x)+F(\Phi(x))={\bf 0},&
	x<  0,\\
	\Phi(0)=\mathbf{0}.
	\end{cases}
	\end{equation*}
	 Note that $\Phi(x)$ is nonincreasing since $\Phi^c(x)$ is.   As in  Step 3 of the proof of Lemma \ref{lemma2.2},  we can show that $\Phi(-\yy)=\mathbf{u}^*$ or $\mathbf{0}$. By Theorem \ref{lemma2.4},  \eqref{2.3} admits no monotone semi-wave solution for $c=C_*$, and hence necessarily $\Phi(-\yy)=\mathbf{0}$. Thus we also have $\Phi\equiv \mathbf{0}$, and \eqref{c-C*} is proved.
	 
	 Next we show that if $\mathbf{(J_1)}$ holds, then \eqref{2.1a}-\eqref{2.2a} have a unique solution pair $(c_0,\Phi^{c_0})$.  It suffices to prove that 
	 \begin{align*}
	 P(c):=c-M(c), \mbox{ with } M(c):= \sum_{i=1}^n\mu_i\int_{-\yy}^{0}\int_{0}^{\yy}J_i(x-y)\phi_i^{c}(x) {\rm d}y{\rm d}x,
	 \end{align*}
	 has a unique root in $(0, C_*)$.
 Let us observe that when $\mathbf{(J_1)}$ holds, $M(c)$ is well-defined and strictly decreasing in $c$ by  Theorem \ref{lemma2.12}. Indeed,
 an elementary calculation yields
 \[
 \int_{-\yy}^{0}\int_{0}^{\yy}J_i(x-y){\rm d}y{\rm d}x=\int_0^\infty\int_0^\infty J_i(x+y)dydx=\int_0^\infty J_i(y)ydy,
 \]
 which implies that $M(c)$ is well-defined.
 
 Using the uniqueness of $\Phi^c$, we can apply a similar convergence argument as used above to prove \eqref{c-C*} to show that $\Phi^{c_n}\to \Phi^{c}$ as $c_n\to c\in(0, C_*)$, which yields the continuity of $\Phi^c(x)$ in $c\in (0,C_*)$ uniformly for $x$ over any bounded interval of $(-\infty, 0]$. Note that we can easily see that $\Phi(x):=\lim_{c_n\to c}\Phi^{c_n}(x)$ satisfies $\Phi(-\infty)=\mathbf{u^*}$ by comparing $\Phi^{c_n}$ to some $\Phi^{\hat c}$ with $\hat c\in (c, C_*)$ and using the monotonicity of $\Phi^c$ in $c$.
 
Hence $P(c)$ is increasing and continuous in $c$. For  $c\in(0,C_*/2)$ close to 0, we  have $P(c)\leq  c-M(C_*/2)<0$, and for all $c$ close to $C_*$, $M(c)$ is small and hence $P(c)>0$. Thus there is a unique $c_0\in (0,C_*)$ such that $P(c_0)=0$.  

Finally we verify that $\mathbf{(J_1)}$ holds if   \eqref{2.1a}-\eqref{2.2a} have a solution pair $(c_0, \Phi^{c_0})$.  Since
\begin{align*}
c_0=\sum_{i=1}^{m_0}\mu_i\int_{-\yy}^{0}\int_{0}^{\yy}J_i(x-y)\phi_i^{c_0}(x) {\rm d}y{\rm d}x,
\end{align*}
for every $i_0\in\{1,..., m_0\}$ such that $\mu_{i_0}>0$ we have 
\begin{align*}
\int_{-\yy}^{0}\int_{0}^{\yy}J_{i_0}(x-y)\phi_{i_0}^{c_0}(x) {\rm d}y{\rm d}x<\yy.
\end{align*}
By Theorem \ref{lemma2.12}, $\Phi^{c_0}(x)$ is decreasing in $x$. Hence, 
\begin{align*}
\int_{-\yy}^{0}\int_{0}^{\yy}J_{i_0}(x-y)\phi_{i_0}^{c_0}(x) {\rm d}y{\rm d}x\geq \phi_{i_0}^{c_0}(-1) \int_{-\yy}^{-1}\int_{0}^{\yy}J_{i_0}(x-y) {\rm d}y{\rm d}x.
\end{align*}
and so
\begin{align*}
\int_{-\yy}^{0}\int_{0}^{\yy}J_{i_0}(x-y) {\rm d}y{\rm d}x=&\int_{-1}^{0}\int_{0}^{\yy}J_{i_0}(x-y) {\rm d}y{\rm d}x+\int_{-\yy}^{-1}\int_{0}^{\yy}J_{i_0}(x-y) {\rm d}y{\rm d}x\\
\leq & 1+\int_{-\yy}^{-1}\int_{0}^{\yy}J_{i_0}(x-y) {\rm d}y{\rm d}x<\yy.
\end{align*}
Therefore,  $\mathbf{(J_1)}$ holds.
\end{proof}

\section{Spreading speed}
\subsection{Further comparison results}
The following maximum principle is a direct consequence of \cite[Lemma 3.1]{dn}, where a more general case  is considered. 
\begin{lemma}\label{lemma3.1} Suppose that $T$ and $d_i$ for $i\in\{1,..., m_0\}$ are positive constants,    $g, h\in C([0,T])$   satisfy  $g (0)<0<h(0)$, and both $-g(t)$ and $h(t)$ are   nondecreasing for $t\in [0,T]$, $\Omega_T:=\{(t,x): t\in (0, T],\; x\in (g(t),h(t))\}$, $m\geq m_0\geq 1$,  and  for $i,j\in \{1,2, ..., m\}$,  $\phi_i$, $\partial_t\phi_i\in C(\ol \Omega_T)$, $c_{ij}\in L^\yy(\Omega_T)$ and
	\begin{equation*}
	\begin{cases}
	\dd \partial_t\phi_i\geq d_i\mathcal{L}_i[\phi_i]+\sum_{j=1}^m c_{ij}\phi_j, & (t,x)\in \Omega_T,\; 1\leq i\leq m_0,\\
	\dd \partial_t\phi_i\geq \sum_{j=1}^m c_{ij}\phi_j, & (t,x)\in \Omega_T,\; m_0+1\leq i\leq m,\\
	\phi_i(t,g(t))\geq 0,\;
	\phi_i(t,h(t))\geq 0,  & t \in (0,T), \ 1\leq i\leq m,\\
	\phi_i(0,x)\geq 0, &x\in [g(0),h(0)],\; \ 1\leq i\leq m,
	\end{cases}
	\end{equation*}
	where
	\begin{align*}
	\mathcal{L}_i[v](t,x):=\int_{g(t)}^{h(t)}J_i(x-y)v(t,y)\rd y-v(t,x),
	\end{align*}
with every $J_i$ $(i=1,..., m_0)$ satisfying {\bf (J)}. Then the following conclusions hold:
	\begin{itemize}
		\item[\rm (i)] If $c_{ij}\geq 0$ on $ \Omega_T$ for $i, j\in \{1,..., m\}$ and $i\not=j$, then $\phi_i\geq 0$ on $\ol \Omega_T$ for $i\in\{1,..., m\}$.
		\item[\rm (ii)] If in addition $\phi_{i_0}(0,x)\not\equiv0$ in $[-h_0,h_0]$ for some $i_0\in\{1,..., m_0\}$, then $\phi_{i_0}> 0$ in $\Omega_T$.
	\end{itemize}
\end{lemma}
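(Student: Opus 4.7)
The plan is to adapt the perturbation/contradiction strategy used in Lemma \ref{lemma2.3aa} (and in the more general setting of \cite{dn}) to our bounded moving domain. For part (i), I would introduce $\psi_i := \phi_i + \varepsilon e^{At}$ for small $\varepsilon>0$ and a fixed constant $A$ with $A > \sum_{j=1}^m \|c_{ij}\|_{L^\infty(\Omega_T)}$ for every $i$. The key observation is that for any positive constant $C$,
\begin{equation*}
\mathcal{L}_i[C](t,x) = C\Bigl(\int_{g(t)}^{h(t)} J_i(x-y)\,dy - 1\Bigr) \leq 0 \ \text{ on } \Omega_T,
\end{equation*}
so $-d_i \mathcal{L}_i[\varepsilon e^{At}]\geq 0$, and the assumed differential inequalities upgrade to
\begin{equation*}
\partial_t \psi_i \,\geq\, d_i \mathcal{L}_i[\psi_i] + \sum_{j=1}^m c_{ij}\psi_j + \varepsilon e^{At}\Bigl(A - \sum_{j=1}^m c_{ij}\Bigr) \,>\, d_i \mathcal{L}_i[\psi_i] + \sum_{j=1}^m c_{ij}\psi_j,
\end{equation*}
on $\Omega_T$, with the convention $d_i=0$ for $i>m_0$. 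On the parabolic boundary $\{t=0\}\cup\{x=g(t)\}\cup\{x=h(t)\}$ we have $\psi_i \geq \varepsilon > 0$. Since $g,h\in C([0,T])$ makes $\overline{\Omega_T}$ compact and every $\psi_i$ continuous, if part (i) failed some $\psi_{i_0}$ would attain value $0$ at a point $(t_*,x_*)$ with $t_*>0$ and $x_*\in (g(t_*),h(t_*))$. At such a minimum $\partial_t\psi_{i_0}(t_*,x_*)\leq 0$ (one-sided from the left if $t_*=T$), whereas $\mathcal{L}_{i_0}[\psi_{i_0}](t_*,x_*)\geq 0$ and $c_{i_0 j}\psi_j(t_*,x_*)\geq 0$ for $j\neq i_0$ together force the right-hand side to exceed $\varepsilon e^{At_*}>0$, a contradiction. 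Letting $\varepsilon\to 0$ gives $\phi_i\geq 0$.

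For part (ii), once $\phi_i \geq 0$ is known, I would specialise the $i_0$-th inequality ($i_0 \leq m_0$) and discard the nonnegative off-diagonal terms to obtain
\begin{equation*}
\partial_t \phi_{i_0}(t,x) + B\,\phi_{i_0}(t,x) \,\geq\, d_{i_0}\int_{g(t)}^{h(t)} J_{i_0}(x-y)\phi_{i_0}(t,y)\,dy \,\geq\, 0,
\end{equation*}
with $B := d_{i_0} + \|c_{i_0 i_0}\|_{L^\infty(\Omega_T)}$. Multiplying by $e^{Bt}$ and integrating in $t$ preserves positivity at any $x$ where $\phi_{i_0}(0,x)>0$, so by continuity of $\phi_{i_0}(0,\cdot)\not\equiv 0$ there is an open interval $I_0 \subset (-h_0,h_0)$ on which $\phi_{i_0}(t,\cdot)>0$ for all $t\in[0,T]$. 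Positivity then propagates through the kernel: since $J_{i_0}(0)>0$ and $J_{i_0}$ is continuous, there exist $\delta,\eta>0$ with $J_{i_0}\geq \eta$ on $(-\delta,\delta)$, and at any $(t,x)$ whose $\delta$-neighbourhood meets the current positivity set the nonlocal integral is strictly positive, which via the integrated ODE forces $\phi_{i_0}(t,x)>0$. Finitely many iterations (bounded by $\max_{[0,T]}(h(t)-g(t))/\delta$) cover every compact subset of $\Omega_T$, yielding $\phi_{i_0}>0$ on $\Omega_T$.

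The main obstacle is the spreading step in part (ii): one must carefully track how the positivity set of $\phi_{i_0}(t,\cdot)$ dilates through the nonlocal kernel while the spatial domain $(g(t),h(t))$ is itself expanding with $t$, and verify that the propagating positivity is not cut off at the moving boundary before it reaches arbitrary interior points. A secondary (but essential) technical point is that $\mathcal{L}_i$ applied to a positive constant is only non-positive on the bounded moving domain (rather than zero, as it would be on $\R$), which is precisely what makes the bare perturbation $\varepsilon e^{At}$ work without requiring any compensating term in the contradiction argument.
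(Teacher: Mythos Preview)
Your proposal is correct. The paper itself does not give a self-contained proof of this lemma: it simply records that the result is ``a direct consequence of \cite[Lemma 3.1]{dn}, where a more general case is considered.'' Your argument is precisely the standard route used for such maximum principles (and is essentially what one finds in \cite{dn} and in \cite{cdJFA} for the scalar case): the perturbation $\phi_i+\varepsilon e^{At}$ combined with a first-time-of-contact contradiction for part (i), and kernel-driven propagation of strict positivity for part (ii). Your observation that $\mathcal L_i$ annihilates constants only up to a nonpositive remainder on the bounded domain is exactly the reason the bare exponential perturbation suffices here, whereas on $\R$ (Lemma \ref{lemma2.3aa}) the paper needs Ekeland's principle because the infimum need not be attained.

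Two small points worth tightening when you write it out. In part (i), for the inequalities $\mathcal L_{i_0}[\psi_{i_0}](t_*,x_*)\geq 0$ and $c_{i_0j}\psi_j(t_*,x_*)\geq 0$ to hold you need \emph{all} $\psi_j(t_*,\cdot)\geq 0$ on $[g(t_*),h(t_*)]$, so $(t_*,x_*)$ must be taken at the \emph{first} time any component touches zero, not merely at some zero of $\psi_{i_0}$; make this explicit. In part (ii), the spreading step survives the moving boundary because $(g(s),h(s))$ is an interval that always contains $I_0\subset(-h_0,h_0)$ and, for $s>\tau(x)$, also contains $x$; hence it contains the entire segment between $I_0$ and $x$, and in particular the $\delta$-step from the current positivity set toward $x$. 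This is the observation that dissolves the obstacle you flagged.
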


\begin{lemma}\label{lemma3.2}    Assume   $(\mathbf{J})$, $(\mathbf{f}_1)$  hold,  and $(U, g,h)$ with  $U=(u_i)$ is the solution of \eqref{1.1}  with initial function $U(0,x)$ satisfying \eqref{1.2} and $U(t,x)\in[\mathbf{0}, \mathbf{\hat u}]$ for $t\geq 0$, $x\in[g(t), h(t)]$.   Suppose $\td g,\, \td h\in C([0,T])$,  $\tilde U$ is continuous in $\ol \Omega_T$, and
\begin{align*}
	  \mathcal{L}[\wtd U](t,x):=\int_{\td g(t)}^{\td h(t)}\mathbf{J}(x-y)\circ \wtd U(t,y)\rd y-\wtd U(t,x),
	\end{align*}
where   $T\in (0,\yy)$ and
\[
\Omega_T:=\{(t,x): t\in (0,T], x\in(\td g(t),\td h(t))\}.
\] 	
 Then the following conclusions hold.
\begin{itemize}
	\item[{\rm (i)}]    If $\td g$, $\td h$ and  the continuous vector function $\wtd U=(\td u_i)$  satisfy
	\begin{align*}
	&\td g(t)\equiv  g(t),\ \ \td h(0)\geq h(0),&&\ t\in [0,T],\\
		&\wtd U(0,x)\succeq \mathbf{0},&&x\in [\td g(0),\td h(0)],\\
		&\wtd U(0,x)\succeq U(0,x), &&x\in [-h_0,h_0],
	\end{align*}
	and
	\begin{equation*}
	\begin{cases}
	\prt \wtd U\succeq   D\circ   {\mathcal{L}}[\wtd U](t,x)+F(\wtd U),\ \ \wtd U(t,x) \in [\mathbf{0},\mathbf{\hat u}], &t\in (0,T], x\in (g(t),\td h(t)),\\[2mm]
	\wtd U(t, g(t))\succeq \mathbf{0},\   \wtd U(t,\td h(t))\succeq \mathbf{0}, & t\in (0,T],\\[2mm]
	\dd \td h'(t)\geq  \sum_{i=1}^{m_0}\mu_i \int_{ g(t)}^{\td h(t)}\int_{\td h(t)}^{\yy}J_i(x-y)\td u_i(t,x)\rd y\rd x, &t\in (0,T],
	\end{cases}
	\end{equation*}
	then  
	\begin{align*}
	 \td h(t)\geq h(t),\ \ \wtd U(t,x)\succeq U(t,x) \ \ \ \ \ {\rm for}\  t\in (0, T],\; x\in [g(t), \td h(t)].
	\end{align*}

\item[{\rm (ii)}]  If $\td g$, $\td h$ and  the continuous vector function $\wtd U=(\td u_i) $  satisfy
		\begin{equation*}
	\begin{cases}
	\prt \wtd U\preceq   D\circ   {\mathcal{L}}[\wtd U](t,x)+F(\wtd U_+), \ \ \wtd U(t,x)\preceq  \mathbf{\hat u}, &t\in (0,T],\; x\in (\td g(t),\td h(t)),\\[2mm]
	\wtd U(t, \td g(t))\preceq \mathbf{0},\   \wtd U(t,\td h(t))\preceq \mathbf{0}, &t\in (0,T],\\[2mm]
	\dd \td h'(t)\leq  \sum_{i=1}^{m_0}\mu_i \int_{\td g(t)}^{\td h(t)}\int_{\td h(t)}^{\yy}J_i(x-y)\td u_i(t,x)\rd y\rd x, &t\in (0,T],\\
	\dd \td g'(t)\geq  -\sum_{i=1}^{m_0}\mu_i \int_{\td g(t)}^{\td h(t)}\int_{-\yy}^{g(t)}J_i(x-y)\td u_i(t,x)\rd y\rd x, &t\in (0,T],
	\end{cases}
	\end{equation*}
and
\begin{align*}
&g(0) \leq \td g(0)<0<\td h(0)\leq h(0),\\
&\wtd U(0,x)\preceq  U(0,x), \ \ \ \ x\in [\td g_0,\td h_0],
\end{align*}
then  
\begin{align*}
& g(t)\leq  \td g(t),\ \td h(t)\leq h(t), && t\in (0, T],\\
& \wtd U(t,x)\preceq U(t,x), && t\in (0, T],\; x\in [\td g(t), \td h(t)].\end{align*}
\end{itemize}
\end{lemma}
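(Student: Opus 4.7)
The plan is to reduce (i) to a clean application of the nonlocal maximum principle of Lemma \ref{lemma3.1}, combined with a first-touching-time argument for the right fronts; part (ii) will then follow by an entirely symmetric argument handling both $\td g$ and $\td h$ separately. The first step is a standard perturbation: I would replace $\td h$ by $\td h+\epsilon(1+t)$ and shift $\wtd U$ slightly (the precise form has to respect $\wtd U\in[\mathbf{0},\mathbf{\hat u}]$, keep $\td g\equiv g$, and preserve the cooperative structure) so that, for small $\epsilon>0$, both the interior differential inequality and the boundary ODE for $\td h'$ become strict and $\td h(0)>h(0)$. Once the strict conclusion is established, sending $\epsilon\searrow 0$ recovers the original weak statement.

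For the strict version, set
\[
t^*:=\sup\{t\in(0,T]:\,\td h(s)>h(s)\mbox{ for all } s\in(0,t]\},
\]
which is positive by continuity. Suppose for contradiction that $t^*<T$, so $\td h(t^*)=h(t^*)$ while $\td h>h$ on $(0,t^*)$. On the slab $\Sigma:=\{(t,x):\,t\in(0,t^*],\,x\in[g(t),h(t)]\}$ both $U$ and $\wtd U$ are defined, and the difference $W:=\wtd U-U$ satisfies
\[
\prt W\succeq D\circ\int_{g(t)}^{h(t)}\mathbf{J}(x-y)\circ W(t,y)\rd y-D\circ W+W\cdot C(t,x),
\]
because the extra nonlocal contribution $\int_{h(t)}^{\td h(t)}\mathbf{J}(x-y)\circ\wtd U(t,y)\rd y$ is componentwise nonnegative (using $\wtd U\succeq\mathbf{0}$), and where $C$ is the $C^1$ mean-value matrix of $F$ with nonnegative off-diagonal entries by $\mathbf{(f_1)}$(ii). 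The boundary data $W(0,\cdot)\succeq\mathbf{0}$, $W(t,g(t))=\wtd U(t,g(t))\succeq\mathbf{0}$, and $W(t,h(t))=\wtd U(t,h(t))\succeq\mathbf{0}$ (since $h(t)\in(g(t),\td h(t))$) all hold, so Lemma \ref{lemma3.1}(i) yields $W\succeq\mathbf{0}$ throughout $\Sigma$.

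The contradiction now emerges from the free boundary dynamics at $t^*$. Because $\td h-h>0$ on $(0,t^*)$ and vanishes at $t^*$, the left derivative satisfies $(\td h-h)'(t^*-)\leq 0$; yet in the strict setting
\[
\td h'(t^*)>\sum_{i=1}^{m_0}\mu_i\int_{g(t^*)}^{h(t^*)}\int_{h(t^*)}^{\yy}J_i(x-y)\td u_i(t^*,x)\rd y\rd x\geq h'(t^*),
\]
the second inequality using $W(t^*,\cdot)\succeq\mathbf{0}$. This strict gap contradicts $(\td h-h)'(t^*-)\leq 0$ and forces $t^*=T$, proving $\td h\geq h$ on $[0,T]$; the same maximum principle applied on the resulting full common domain delivers $\wtd U\succeq U$. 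Passing $\epsilon\searrow 0$ completes (i), and part (ii) follows by a parallel perturbation and touching-time argument applied to each of the two fronts $\td g,\td h$. The main technical obstacle I anticipate is crafting the $\epsilon$-perturbation of $\wtd U$ so that it simultaneously lies in $[\mathbf{0},\mathbf{\hat u}]$, keeps the boundary inequalities valid, and produces strict inequalities in both the nonlocal integro-differential operator and the boundary ODE; once this is arranged, every remaining step is a routine application of the cooperative nonlocal maximum principle.
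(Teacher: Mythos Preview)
Your approach is correct and is precisely the standard first-touching-time argument combined with the cooperative maximum principle that the paper invokes: the paper's own proof simply states that the lemma ``follows from Lemma \ref{lemma3.1} by an argument similar to the proof of \cite[Theorem 3.1]{cdJFA}'' and omits the details, and your outline is exactly that argument. One small remark: in the strict version you typically do not need to perturb $\wtd U$ itself---perturbing only the front (e.g.\ replacing $\td h$ by $\td h+\epsilon(1+t)$, or equivalently assuming $\td h(0)>h(0)$ first) suffices, since at the touching time $t^*$ the strong part of Lemma \ref{lemma3.1} already forces the boundary integrals to be strictly ordered when $U(0,\cdot)\not\equiv\wtd U(0,\cdot)$, which is enough for the contradiction.
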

Here and in what follows, we use the notation $u_+:=(\max\{u_i,0\})$ for $u\in\R^m$.
\begin{proof}
Lemma \ref{lemma3.2} follows  from Lemma \ref{lemma3.1} by an argument similar to the proof of \cite[Theorem 3.1]{cdJFA}; since the required changes are rather obvious, we omit the details.
\end{proof}

\subsection{Finite spreading speed: Proof of Theorem  \ref{theorem1.3} (i)}

\begin{lemma}\label{lemma3.3}
Let $(\mathbf{J})$, $(\mathbf{J}_1)$ and $(\mathbf{f}_i)$  with $i=1,2, 3, 4$ be satisfied, $(U,g,h)$ be a solution of \eqref{1.1} with $U(0,x)\in [\mathbf{0},\mathbf{\hat u}]$ for $x\in [-h_0,h_0]$.  If spreading happens, then 
\begin{align}\label{3.1}
\limsup_{t\to\yy}\frac{h(t)}{t}\leq  c_0,
\end{align}
	where $c_0>0$ is given by Theorem \ref{prop2.3}. 
\end{lemma}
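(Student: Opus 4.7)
The plan is to show, for each $\epsilon>0$, that $h(t)\leq (c_0+\epsilon)t+K_\epsilon$ for some constant $K_\epsilon$, from which the lemma follows by letting $\epsilon\searrow 0$. The upper bound will be proved by constructing an explicit upper solution to \eqref{1.1} built out of the unique monotone semi-wave $\Phi^c$ of speed $c=c_0+\epsilon\in(c_0,C_*)$ and applying the comparison principle in Lemma \ref{lemma3.2}(i).

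I will first establish a preliminary uniform bound: for any $\eta>0$ there exists $T_0=T_0(\eta)$ such that $U(t,x)\preceq(1+\eta)\mathbf{u}^*$ for $t\geq T_0$ and $x\in[g(t),h(t)]$. Take $\bar W(t)$ to be the spatially constant solution of \eqref{2.3} starting from $\bar W(0)=\mathbf{\hat u}$ (or any constant vector dominating $U_0$ componentwise when $\mathbf{\hat u}=\mathbf{\infty}$); by $\mathbf{(f_4)}$ it is governed by $\bar W'=F(\bar W)$ and converges to $\mathbf{u}^*$. View $\bar W$ as an upper solution over $(g(t),\bar h(t))$ with $\bar h(t):=h(0)+Mt$. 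The hypotheses of Lemma \ref{lemma3.2}(i) are readily checked: the nonlocal operator applied to a constant function is non-positive, the initial comparison $\bar W(0)=\mathbf{\hat u}\succeq U_0$ holds, and the free boundary inequality reduces to bounding $\sum_{i=1}^{m_0}\mu_i\int\int J_i \bar W_i$, which under $\mathbf{(J_1)}$ is at most $\sum_{i=1}^{m_0}\mu_i\hat u_i\int_0^\infty wJ_i(w)\,dw<\infty$; taking $M$ large therefore works. The conclusion is $U(t,x)\preceq\bar W(t)\to\mathbf{u}^*$, giving the claimed bound.

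Next I construct the main upper solution. Fix small $\epsilon>0$ and set $c=c_0+\epsilon$. By Theorem \ref{thm2.12}, the quantity $P(c)=c-M(c)$ is strictly increasing with $P(c_0)=0$, where $M(c):=\sum_{i=1}^{m_0}\mu_i\int_{-\infty}^0\int_0^\infty J_i(x-y)\phi_i^c(x)\,dy\,dx$; hence $M(c)<c$. Choose $\delta=\delta(\epsilon)>0$ small enough that $(1+\delta)M(c)<c$ and $(1+\delta)\mathbf{u}^*\preceq\mathbf{\hat u}$ (the latter automatic if $\mathbf{\hat u}=\mathbf{\infty}$). Apply the preliminary bound with $\eta=\delta/2$ to fix $T_0$, then pick $K>0$ so large that, setting $\tilde h(T_0):=h(T_0)+K$, we have $(1+\delta)\Phi^c(x-\tilde h(T_0))\succeq U(T_0,x)$ on $[g(T_0),h(T_0)]$; this is possible because $\Phi^c(-\infty)=\mathbf{u}^*$ and $\tilde h(T_0)-h(T_0)=K\to\infty$. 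For $t\geq T_0$ define
\[
\tilde g(t):=g(t),\quad \tilde h(t):=h(T_0)+K+c(t-T_0),\quad \tilde U(t,x):=(1+\delta)\Phi^c(x-\tilde h(t)),
\]
with the convention $\Phi^c(\xi)=\mathbf{0}$ for $\xi\geq 0$.

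I will verify the three conditions of Lemma \ref{lemma3.2}(i). After using the semi-wave equation for $\Phi^c$ and noting that restricting the convolution from $(-\infty,0)$ to $(g(t)-\tilde h(t),0)$ only decreases a non-negative integrand, the differential inequality reduces to $(1+\delta)F(\Phi^c)\succeq F((1+\delta)\Phi^c)$, which is exactly $\mathbf{(f_2)}$ applied with $k=1/(1+\delta)\in(0,1)$ to $(1+\delta)\Phi^c\in[\mathbf{0},(1+\delta)\mathbf{u}^*]\subseteq[\mathbf{0},\mathbf{\hat u}]$; this argument is valid for both the $i\leq m_0$ (with diffusion) and the $i>m_0$ (no diffusion) components. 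The free boundary inequality $\tilde h'(t)=c\geq \sum\mu_i\iint J_i\tilde u_i$ follows by enlarging the inner double integral to $(-\infty,0)\times(0,\infty)$, giving the bound $(1+\delta)M(c)<c$. Boundary and initial comparisons at $t=T_0$ and along $x=g(t)$, $x=\tilde h(t)$ hold by construction. Invoking Lemma \ref{lemma3.2}(i) gives $h(t)\leq\tilde h(t)=h(T_0)+K+c(t-T_0)$, so $\limsup_{t\to\infty}h(t)/t\leq c=c_0+\epsilon$; letting $\epsilon\searrow 0$ concludes the proof. The main delicacy is the coupled choice of parameters: as $c\searrow c_0$ the margin $c-M(c)\to 0$, forcing $\delta(\epsilon)\to 0$, which tightens the required initial comparison and makes $K(\epsilon)$ blow up; since the argument is run for each fixed $\epsilon>0$ separately, only finiteness of $K(\epsilon)$ is needed, and the continuity of $c\mapsto\Phi^c$ built into Theorem \ref{thm2.12} guarantees that a valid $\delta$ exists.
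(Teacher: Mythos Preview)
Your proof is correct, and it takes a genuinely different route from the paper's. The paper builds the upper solution from the semi-wave $\Phi^{c_0}$ at the \emph{critical} speed $c_0$: it sets $\bar h(t)=(c_0+\delta)t+K$ with $\delta=2\epsilon c_0$ and $\bar U=(1+\epsilon)\Phi^{c_0}(x-\bar h(t))$, so that the free-boundary inequality reduces simply to $(1+\epsilon)c_0<c_0+\delta$, a direct arithmetic margin. You instead use the semi-wave $\Phi^{c}$ at a \emph{supercritical} speed $c=c_0+\epsilon$ and exploit the strict monotonicity $P(c)=c-M(c)>0$ established in Theorem~\ref{thm2.12}, choosing $\delta$ small so that $(1+\delta)M(c)<c$. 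Both arguments lead to the same PDE check via $\mathbf{(f_2)}$, and both close with the same comparison lemma. The paper's version is marginally leaner because it uses a single fixed profile $\Phi^{c_0}$ and avoids appealing to the monotonicity of $c\mapsto M(c)$; your version is conceptually natural in that the barrier is built from a semi-wave already travelling at the target speed. For the preliminary uniform bound $U\preceq(1+\eta)\mathbf{u}^*$, the paper simply compares with the ODE solution $v'=F(v)$ via Lemma~\ref{lemma3.1} rather than invoking Lemma~\ref{lemma3.2}(i) with an auxiliary moving boundary; your route works but is slightly heavier (and when $\mathbf{\hat u}=\infty$ you should write $\bar W_i(0)$ in place of $\hat u_i$ in the free-boundary bound, as you indicated in words).
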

\begin{proof}
Let $(c_0,\Phi^{c_0})$  be the unique solution pair of  \eqref{2.1a}-\eqref{2.2a}. To simplify notations, we write $\Phi=\Phi^{c_0}=(\phi_i)$. For fixed small $\epsilon>0$, we define $\delta:=2\epsilon c_0$ and
	\[
	\begin{cases}
\bar h(t):=(c_0+\delta) t+K,  &\ \ \ t\geq 0,\\
\ol U(t,x):=(1+\epsilon) \Phi(x-\underline h(t)), &\ \ \ t\geq 0,\ \  x\leq \underline h(t),
\end{cases}
\]
where $K>0$ is a large constant to be determined. 

By $\mathbf{(f_4)}$, the unique solution $v(t)$ of the ODE system 
\begin{align*}
v'=F(v),    \ \  v(0)=(\max_{x} u_i(0,x))\in[\mathbf{0},\mathbf{\hat u}],
\end{align*} 
which also solves \eqref{2.3},
satisfies $v(t)\to \mathbf{u^*}$ as $t\to\infty$.
By Lemma \ref{lemma3.1} and $\mathbf{(f_1)}$, we 
easily see that $v(t)\succeq U(t,x)$ for $ t\geq 0, x\in [g(t),h(t)]$, and hence there is a large constant $t_0>0$ such that 
\begin{align*}
U(t+t_0,x)\preceq (1+\epsilon/2)\mathbf{u}^* \ \mbox{ for } \  t\geq 0,\ x\in [g(t+t_0), h(t+t_0)].
\end{align*}
Due to $\Phi(-\yy)=\mathbf{u}^*$, we may choose sufficient large $K>0$ such that $\underline h(0)=K>h(t_0)$, $-\underline h(0)=-K<g(t_0)$, and also
\begin{align}\label{3.2}
\ol U(0,x)=(1+\epsilon) \Phi(-K)\ggs (1+\epsilon/2)\mathbf{u}^* \succeq  U(t_0,x)\ \mbox{ for } \ x\in [g(t_0),h(t_0)].
\end{align}

Next we verify that, with $\ol U=(\bar u_i)$,
\begin{align}\label{3.3}
\bar h'(t)\geq  \sum_{i=1}^{m_0}\mu_i \int_{-\bar h(t)}^{\bar h(t)} \int_{\bar h(t)}^{\yy} J_i(x-y) \bar u_i(t,x)\rd y\rd x\ \mbox{ for } \ \ t> 0,
\end{align}
  and for $t>0$, $x\in (g(t+t_0),\underline h(t))$, 
  \begin{align}\label{3.4}
  \ol U_t(t,x)\succeq &D\circ \int_{g(t+t_0)}^{\bar h(t)}  \mathbf{(J)}(x-y)\circ \ol U(t,y)\rd y -D\circ\ol U(t,x)+F(\ol U(t,x)).
  \end{align}

A direct  calculation gives, for $t>0$,
\begin{align*}
&\sum_{i=1}^{m_0}\mu_i \int_{g(t+t_0)}^{\bar h(t)} \int_{\bar h(t)}^{\yy} J_i(x-y) \bar u_i(t,x)\rd y\rd x
\leq  \sum_{i=1}^{m_0}\mu_i \int_{-\yy}^{\bar h(t)} \int_{\bar h(t)}^{\yy} J_i(x-y) \bar u_i(t,x)\rd y\rd x\\
= & \sum_{i=1}^{m_0}\mu_i (1+\epsilon) \int_{-\yy}^{0} \int_{0}^{\yy} J_i(x-y) \bar \phi_i(x)\rd y\rd x
=  (1+\epsilon) c_0<c_0+\delta=\bar h'(t),
\end{align*}
which yields \eqref{3.3}. 

Using \eqref{2.1a} and $\mathbf{(f_2)}$, we deduce
 \begin{align*}
 \ol U_t(t,x)=&-(1+\epsilon)(c_0+\delta)\Phi'(x-\bar h(t))\succeq -(1+\epsilon)c_0\Phi'(x-\bar h(t))\\ 
 = &\ (1+\epsilon) \lf[D\circ \int_{- \yy}^{\bar h(t)}  \mathbf{J}(x-y)\circ \ol \Phi(y-\bar h(t))\rd y -D\circ\Phi(x-\bar h(t))+F(\Phi(x-\bar h(t)))\rr]\\ 
 = &\ D\circ \int_{-\yy}^{\bar h(t)}  \mathbf{J}(x-y)\circ \ol U(t,y)\rd y -D\circ \ol U(t,x)+(1+\epsilon)F(\Phi(x-\bar h(t)))\\
 \succeq   &\ D\circ \int_{g(t_0+t)}^{\bar h(t)}  \mathbf{J}(x-y)\circ \ol U(t,y)\rd y -D\circ\ol U(t,x)+F(\ol U(t,x)),
 \end{align*}
which proves \eqref{3.4}.

Moreover, we have
\begin{align*}
\ol U(t,g(t+t_0))>0,\ \  \ol U(t,\bar h(t))=(1+\epsilon) \Phi(\bar h(t)-\bar h(t))= 0 \ \mbox{ for } \ t\geq 0.
\end{align*}
 This combined with \eqref{3.2}, \eqref{3.3} and \eqref{3.4} allows us to use Lemma \ref{lemma3.2} (i) 
  to conclude that
\begin{align*}
& h(t+t_0)\leq  \bar h(t), \ &&t\geq 0,\\
&U(t+t_0,x)\preceq \ol U(t,x),&&t\geq 0,\ x\in [ g(t+t_0),\underline h(t)].
\end{align*}
Hence 
\begin{align*}
\limsup_{t\to\yy} \frac{h(t)}{t}\leq    \limsup_{t\to\yy} \frac{\bar h(t-t_0)}{t}=c_0+\delta,
\end{align*}
which yields \eqref{3.1}  by letting $\delta\to 0$.
\end{proof}

\begin{lemma}\label{lemma3.4} Under the conditions of Lemma \ref{lemma3.3}, we have
	\begin{align}\label{3.5}
	\liminf_{t\to\yy}\frac{h(t)}{t}\geq c_0.
	\end{align}
\end{lemma}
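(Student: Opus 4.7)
The plan is to derive \eqref{3.5} from the stronger statement that $\liminf_{t\to\yy}h(t)/t\geq c_1$ for every fixed $c_1\in (0,c_0)$, and then send $c_1\nearrow c_0$. Fix such a $c_1$. By the strict monotonicity in $c$ of the semi-wave $\Phi^c$ (Theorem \ref{lemma2.12}) and the definition of $c_0$ in Theorem \ref{thm2.12}, the function
\[
M(c):=\sum_{i=1}^{m_0}\mu_i\int_{-\yy}^{0}\!\!\int_{0}^{\yy}\!\!J_i(x-y)\phi_i^{c}(x)\,\rd y\,\rd x
\]
is strictly decreasing in $c\in (0,C_*)$ with $M(c_0)=c_0$, so $M(c_1)>c_0>c_1$. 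I therefore choose $\epsilon\in (0,1)$ small enough that $(1-\epsilon)M(c_1)>c_1$; this $\epsilon$ will provide the crucial ``reserve''.

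Next I build a lower solution. Because spreading happens, $U(t,x)\to \mathbf{u}^*$ locally uniformly in $x$ as $t\to\yy$, so together with $\Phi^{c_1}(-\yy)=\mathbf{u}^*$ (Theorem \ref{lemma2.4}) I can pick $L_0\gg 1$ and $t_0$ large enough that $(g(t_0),h(t_0))\supset [-L_0,L_0]$ and
\[
U(t_0,x)\succeq (1-\epsilon)\Phi^{c_1}(|x|-L_0) \quad\mbox{for }|x|\leq L_0.
\]
For $t\geq t_0$ set $\underline h(t):=L_0+c_1(t-t_0)$, $\underline g(t):=-\underline h(t)$, and define on $[\underline g(t),\underline h(t)]$
\[
\underline U(t,x):=(1-\epsilon)\Phi^{c_1}\big(|x|-\underline h(t)\big).
\]
By construction $\underline U(t,\pm\underline h(t))=\mathbf{0}$ and $\underline U(t_0,\cdot)\preceq U(t_0,\cdot)$, so what remains is to check the free-boundary velocity inequality and the interior sub-solution inequality needed by Lemma~\ref{lemma3.2}(ii); then that lemma gives $h(t)\geq\underline h(t)=L_0+c_1(t-t_0)$, whence $\liminf h(t)/t\geq c_1$.

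The velocity inequality is the cleaner one: changing variables $\xi=x-\underline h(t)$ and using $\underline h(t)-\underline g(t)\to\yy$, the dominated convergence theorem together with $\mathbf{(J_1)}$ gives
\[
\sum_{i=1}^{m_0}\mu_i\int_{\underline g(t)}^{\underline h(t)}\!\!\int_{\underline h(t)}^{\yy}\!\!J_i(x-y)\underline u_i(t,x)\,\rd y\,\rd x\ \longrightarrow\ (1-\epsilon)M(c_1)>c_1=\underline h'(t),
\]
so the inequality holds for $t$ beyond a possibly enlarged $t_0$ (the same reasoning handles $\underline g'$ by symmetry). The interior inequality is verified separately on $x>0$ and $x<0$ using Remark \ref{cp-rmk}: for $x>0$, writing $\xi=x-\underline h(t)$ and plugging $\underline U_t=-(1-\epsilon)c_1(\Phi^{c_1})'(\xi)$ into \eqref{2.1a}, the quantity $\underline U_t-D\circ\mathcal L[\underline U]-F(\underline U)$ decomposes into three pieces: (i) the ``tail'' $(1-\epsilon)D\circ\int_{-\yy}^{-\underline h(t)}\mathbf J(\xi-y)\circ\Phi^{c_1}(y)\,\rd y$ coming from truncating the semi-wave convolution; (ii) a ``mirror'' contribution from the $x<0$ half of $\underline U$, also bounded by a tail integral of $\mathbf J$; and (iii) the nonlinear defect $(1-\epsilon)F(\Phi^{c_1}(\xi))-F((1-\epsilon)\Phi^{c_1}(\xi))\preceq\mathbf{0}$ guaranteed by $\mathbf{(f_2)}$.

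The main obstacle is Step~(iii) of the interior inequality: the tail and mirror terms are small but have the wrong sign, while the $\mathbf{(f_2)}$-defect is negative but potentially degenerates as $\Phi^{c_1}(\xi)\to\mathbf{0}$ near $x=\underline h(t)$. The way to close the argument is to exploit that $\mathbf{(f_1)}$(iii) gives $\nabla F(\mathbf{0})$ a strictly positive principal eigenvalue, so on a neighbourhood of $\mathbf{0}$ the reserve $F((1-\epsilon)\Phi^{c_1}(\xi))-(1-\epsilon)F(\Phi^{c_1}(\xi))$ dominates the tail/mirror errors after multiplying by the principal eigenvector $\widetilde\Theta$ of Lemma \ref{lemma2.1a}; on the complementary ``bulk'' region where $\Phi^{c_1}(\xi)$ is bounded away from $\mathbf{0}$, $\mathbf{(f_2)}$ yields a uniform positive reserve that absorbs the tail errors provided $L_0$ is taken sufficiently large, because those errors decay uniformly to $\mathbf 0$ as $\underline h(t)\to\yy$ by $\mathbf{(J_1)}$. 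Once the two regions are patched, Lemma~\ref{lemma3.2}(ii) applies and $c_1\nearrow c_0$ yields \eqref{3.5}.
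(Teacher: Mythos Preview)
There is a genuine gap in the interior subsolution check, precisely at the free boundary. You transport the semi-wave $\Phi^{c_1}$ at its own speed $c_1$, so after invoking \eqref{2.1a} the time derivative $\underline U_t$ is reproduced \emph{exactly}; the only negativity left is the $\mathbf{(f_2)}$ defect $F\big((1-\epsilon)v\big)-(1-\epsilon)F(v)$ with $v=\Phi^{c_1}(\xi)$. But since $F(\mathbf 0)=\mathbf 0$ and $F\in C^1$, the linear parts cancel and this defect is $o(|v|)$ as $v\to\mathbf 0$; contracting with $\widetilde\Theta$ does not help, because the same cancellation occurs. On the other hand, as $\xi\to 0^-$ the tail term tends to the strictly positive constant $(1-\epsilon)d_i\int_{\underline h}^{\infty}J_i(z)\phi_i^{c_1}(-z)\,\rd z$, and a short computation using the monotonicity of $\Phi^{c_1}$ shows that the mirror term is \emph{strictly smaller} than the tail at $\xi=0$ (on $[\underline h,2\underline h]$ one has $\phi_i^{c_1}(-z)\geq\phi_i^{c_1}(z-2\underline h)$, and the tail has an extra contribution from $[2\underline h,\infty)$). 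Hence for every $t$ there is a left neighbourhood of $\underline h(t)$ on which $\underline U_t-\big(D\circ\mathcal L[\underline U]+F(\underline U)\big)\succ\mathbf 0$, and Lemma~\ref{lemma3.2}(ii) does not apply. Your claim of a ``uniform positive reserve'' in the bulk is also not justified: $\mathbf{(f_2)}$ is a non-strict inequality, so $F\big((1-\epsilon)v\big)=(1-\epsilon)F(v)$ may hold on open sets.

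The paper closes this gap by a speed mismatch: it uses $\Phi=\Phi^{c_0}$ but moves the front at speed $c_0-\delta$. This produces the extra term $(1-\epsilon)\delta\big[\Phi'(x-\underline h)+\Phi'(-x-\underline h)\big]$ in $\underline U_t$, and on the boundary strips $|x|\in[\underline h-K_0,\underline h]$ this is bounded above by $-(1-\epsilon)\delta\epsilon_1$ with $\epsilon_1:=\min_i\min_{[-K_0,0]}|\phi_i'|>0$, a genuinely negative constant reserve that absorbs the Lipschitz-controlled errors once $K=\underline h(0)$ is large. In the bulk the paper relies on the two-sided form $(1-\epsilon)\big[\Phi(x-\underline h)+\Phi(-x-\underline h)-\mathbf u^*\big]$ together with the structural assumption $\mathbf{(f_3)}$, treating the cases $\sum_j\partial_jf_i(\mathbf u^*)u_j^*<0$ and the linear case (ii) separately. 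Your construction could be repaired along the same lines: use $\Phi^{c_2}$ with any $c_2\in(c_1,c_0]$ transported at speed $c_1$, so that the mismatch $(c_2-c_1)(\Phi^{c_2})'$ supplies the missing boundary reserve.
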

\begin{proof} With $\Phi=(\phi_i)$ as in the proof of Lemma \ref{lemma3.3},  we define, for small $\epsilon>0$,  $\delta:=2c_0\epsilon$ and 
	\[
	\begin{cases}
	\underline h(t):=(c_0-\delta) t+K,  \ \ \ t\geq 0,\\
	\underline U(t,x):=(1-\epsilon) \big[\Phi(x-\underline h(t))+\Phi(-x-\underline h(t))-\mathbf{u}^*\big], \ \ \ t\geq 0, \ x\in [-\underline h(t),\underline h(t)],
	\end{cases}
	\]
	for some large  $K>0$ to be determined.  
		
For later analysis involving $\underline h$ and $\underline U$, we first prepare some simple estimates. Due to $\Phi(-\yy)=\mathbf{u}^*$  there is $K_0>0$ such that 
	\begin{align*}
	\Phi(-K_0)\ggs (1-\epsilon)\mathbf{u}^*,
	\end{align*}
which implies 
	\begin{align}\label{3.6}
	 \Phi(x-\underline h(t)), \Phi(-x-\underline h(t))\in \big[(1-\epsilon)\mathbf{u}^*, \mathbf{u}^*\big] \ \mbox{ for } \ x\in [-\underline h(t)+K_0,\underline h(t)-K_0],\ t\geq 0
	\end{align}
	provided htat $\underline h(0)=K>K_0$.
	Define
	\begin{align*}
	\epsilon_1:=\min_{1\leq i\leq m}\inf_{x\in [-K_0,0]}|\phi_i'(x)|>0.
	\end{align*}
	Then for every $i\in\{1,..., m\}$,
	\begin{equation}\label{3.7}
	\begin{cases}
	\phi_i'(x-\underline h(t))\leq -\epsilon_1  &\mbox{ for } x\in  [\underline h(t)-K_0,\underline h(t)],\ t\geq 0,\\
	\phi_i'(-x-\underline h(t))\leq -\epsilon_1  &\mbox{ for } x\in  [-\underline h(t),-\underline h(t)+K_0],\ t\geq 0.
	\end{cases}
	\end{equation}
	Define 
		\begin{align*}
	\epsilon_2:=\frac{\epsilon_1\delta}{2mL}, \ \mbox{ with  $L=L(\mathbf{u^*})$ is given by \eqref{1.3a}.}
	\end{align*}

{\bf Step 1.} We prove,  with  $\underline U=(\underline u_i)$,
\begin{align}\label{3.8}
&\underline h'(t)\leq \sum_{i=1}^{m_0}\mu_i \int_{-\underline h(t)}^{\underline h(t)} \int_{\underline h(t)}^{\yy} J_i(x-y) \underline u_i(t,x)\rd y\rd x \ \ \mbox{ for }\ \ t> 0,
\end{align}
and 
\begin{align*}
&-\underline h'(t)\geq - \sum_{i=1}^{m_0}\mu_i \int_{-\underline h(t)}^{\underline h(t)} \int_{-\yy}^{-\underline h(t)} J_i(x-y) \underline u_i(t,x)\rd y\rd x  \ \mbox{ for } \ t> 0.
\end{align*}
  Since $\underline U(t,x)=\underline U(t,-x)$ and $\mathbf{J}(x)=\mathbf{J}(-x)$, we just need to verify \eqref{3.8}.

A simple calculation yields  
\begin{align*}
&\sum_{i=1}^{m_0}\mu_i \int_{-\underline h(t)}^{\underline h(t)} \int_{\underline h(t)}^{\yy} J_i(x-y) \underline u_i(t,x)\rd y\rd x\\
=&(1-\epsilon)\sum_{i=1}^{m_0}\mu_i \int_{-2\underline h(t)}^{0} \int_{0}^{\yy} J_i(x-y) \phi_i(x)\rd y\\
&+(1-\epsilon)\sum_{i=1}^{m_0}\mu_i \int_{-2\underline h(t)}^{0} \int_{0}^{\yy} J_i(x-y)[\phi_i(-x-2\underline h(t))-u_i^*]\rd y\rd x\\
=&(1-\epsilon)c_0-(1-\epsilon)\sum_{i=1}^{m_0}\mu_i \int_{-\yy}^{-2\underline h(t)} \int_{0}^{\yy} J_i(x-y) \phi_i(x)\rd y\rd x\\
&-(1-\epsilon)\sum_{i=1}^{m_0}\mu_i \int_{-2\underline h(t)}^{0} \int_{0}^{\yy} J_i(x-y)[u_i^*-\phi_i(-x-2\underline h(t))]\rd y\rd x.
\end{align*}
In view of $\mathbf{(J_1)}$, there is a large  constant $K_1>0$ such that  for $K>K_1$, 
\begin{align*}
&(1-\epsilon)\sum_{i=1}^{m_0}\mu_i \int_{-\yy}^{-2\underline h(t)} \int_{0}^{\yy} J_i(x-y) \phi_i(x)\rd y\rd x\\
\leq &\ (1-\epsilon)\sum_{i=1}^{m_0}\mu_i u_i^*\int_{-\yy}^{-2K} \int_{0}^{\yy} J_i(x-y) \rd y\rd x<\frac{c_0\epsilon}{4}. 
\end{align*}
Moreover, 
\begin{align*}
0\leq &\ (1-\epsilon)\sum_{i=1}^{m_0}\mu_i \int_{-2\underline h(t)}^{0} \int_{0}^{\yy} J_i(x-y)[u_i^*-\phi_i(-x-2\underline h(t))]\rd y\rd x\\
\leq  &\ (1-\epsilon)\sum_{i=1}^{m_0}\mu_i \int_{-\yy}^{0} \int_{0}^{\yy} J_i(x-y)[u_i^*-\phi_i(-x-2\underline h(t))]\rd y\rd x\\
\leq &\ (1-\epsilon)\sum_{i=1}^{m_0}\mu_i u_i^*\int_{-\yy}^{-2K_1} \int_{0}^{\yy} J_i(x-y)\rd y\rd x\\
&+(1-\epsilon)\sum_{i=1}^{m_0}\mu_i\int_{-2K_1}^{0} \int_{0}^{\yy} J_i(x-y)[u_i^*-\phi_i(-x-2\underline h(t))]\rd y\rd x\\
<&\ \frac{c_0\epsilon}{4}+(1-\epsilon)\sum_{i=1}^{m_0}\mu_i[u_i^*-\phi_i(2K_1-2K)]\int_{-2K_1}^{0} \int_{0}^{\yy} J_i(x-y)\rd y\rd x\\
<&\ \frac{c_0\epsilon}{2} \ \mbox{ for $ t\geq 0$ and $K>K_2\gg K_1$. }
\end{align*}
Hence, for $K>K_2$ and all $t\geq 0$,
\begin{align*}
\sum_{i=1}^{m_0}\mu_i \int_{-\underline h(t)}^{\underline h(t)} \int_{\underline h(t)}^{\yy} J_i(x-y) \underline u_i(t,x)\rd y\geq (1-\epsilon)c_0-\frac{3c_0\epsilon}{4}\geq c_0-2c_0 \epsilon> \underline h'(t),
\end{align*}
which finishes the proof of \eqref{3.8}.

{\bf Step 2.} We show that for $  t>0$ and $ x\in (-\underline h(t),\underline h(t)),$
\begin{align}\label{3.9}
\underline U_t(t,x)\preceq &D\circ \int_{-\underline h(t)}^{\underline h(t)}  \mathbf{J}(x-y)\circ \underline U(t,y)\rd y -D\circ\underline U(t,x)+F(\underline U(t,x)_+).
\end{align}

By \eqref{2.1a}, for $t>0$ and  $x\in (-\underline h(t),\underline h(t))$, 
\begin{align*}
&-c_0\Phi'(x-\underline h(t))=D\circ\int_{-\yy}^{\underline h(t)}\mathbf{J} (x-y)\circ \Phi(y-\underline h(t)) {\rm d}y-D\circ\Phi(x-\underline h(t))+F(\Phi(x-\underline h(t)))
\end{align*}
and
\begin{align*}
-c_0\Phi'(-x-\underline h(t))= D\circ\!\!\int_{-\underline h(t)}^{\yy}\mathbf{J} (x-y)\circ \Phi(-y-\underline h(t)) {\rm d}y-D\circ\Phi(-x-\underline h(t))+F(\Phi(-x-\underline h(t))).
\end{align*}
Thus, for such $t$ and $x$,
\begin{align*}
\underline U_t(t,x)=&-(1-\epsilon)(c_0-\delta)[\Phi'(x-\underline h(t))+\Phi'(-x-\underline h(t))]\\
=&\ (1-\epsilon)\delta\,\big[\Phi'(x-\underline h(t))+\Phi'(-x-\underline h(t))\big]\\
&+(1-\epsilon) \bigg[D\circ\int_{-\yy}^{\underline h(t)}\mathbf{J} (x-y)\circ \Phi(y-\underline h(t)) {\rm d}y-D\circ\Phi(x-\underline h(t))\\
&\hspace{2cm}+D\circ\int_{-\underline h(t)}^{\yy}\mathbf{J} (-x-y)\circ \Phi(-y-\underline h(t)) {\rm d}y-D\circ\Phi(-x-\underline h(t))\bigg]\\
&+(1-\epsilon) \bigg[F(\Phi(x-\underline h(t)))+ F(\Phi(-x-\underline h(t))) \bigg]\\
=&\ (1-\epsilon)\delta\,\big[\Phi'(x-\underline h(t))\!+\!\Phi'(-x-\underline h(t))\big]\!+\!D\!\circ\!\! \int_{-\underline h(t)}^{\underline h(t)}  \!\!\mathbf{J}(x-y)\circ \underline U(t,y)\rd y -D\circ\underline U(t,x) \\
&+(1\!-\!\epsilon) \bigg[D\circ\int_{-\yy}^{\underline h(t)}\mathbf{J} (x-y)\circ [\Phi(y-\underline h(t))-\mathbf{u}^*] {\rm d}y\\
&\hspace{2cm}+D\circ\int_{-\underline h(t)}^{\yy}\mathbf{J} (-x-y)\circ [\Phi(-y-\underline h(t)) {\rm d}y-\mathbf{u}^*]\rd y\bigg]\\
&+(1-\epsilon) \bigg[F(\Phi(x-\underline h(t)))+ F(\Phi(-x-\underline h(t))) \bigg]\\
\preceq &\ D\circ \int_{-\underline h(t)}^{\underline h(t)}  \mathbf{J}(x-y)\circ \underline U(t,y)\rd y -D\circ \underline U(t,x)+ F(\underline U(t,x)_+)+\Delta(t,x),
\end{align*}
with
\begin{align*}
\Delta(t,x):=&(1-\epsilon)  \delta\big[\Phi'(x-\underline h(t))+\Phi'(-x-\underline h(t))\big]\\
&+(1-\epsilon)\big[F(\Phi(x-\underline h(t)))+ F(\Phi(-x-\underline h(t)))\big]- F(\underline U(t,x)_+).
\end{align*}

To complete the proof of Step 2, it remains to verify that
\begin{align*}
\Delta(t,x)=(\Delta_i(t,x))\preceq  {\bf 0} \mbox{ for }  t>0,\  x\in (-\underline h(t),\underline h(t)).
\end{align*}
Next we  estimate $\Delta(t,x)$ separately for $x$ in the following   three intervals:  
\[
I_1(t):=[\underline h(t)-K_0,\underline h(t)], \ I_2(t):= [-\underline h(t),-\underline h(t)+K_0],\ I_3(t):=[-\underline h(t)+K_0,\underline h(t)-K_0].
\]
For $x\in I_1(t)$, we have
\begin{align*}
\mathbf{0}\ggs \Phi(-x-\underline h(t))-\mathbf{u}^*\succeq \Phi(K_0-2\underline h(t))-\mathbf{u}^*\succeq  \Phi(K_0-2K)-\mathbf{u}^*\succeq -\epsilon_2\mathbf{1}
\end{align*}
provided  $K>K_2$ for some $K_2\gg K_0$. Then by \eqref{1.3a} and $\mathbf{(f_2)}$,
\begin{align*}
&F(\Phi(-x-\underline h(t)))=F(\Phi(-x-\underline h(t)))-F(\mathbf{u}^*)\preceq mL\epsilon_2\mathbf{1}
\end{align*}
and
\begin{align*}
F(\underline U(t,x)_+)\geq& (1-\epsilon)\Big[F\big([\Phi(x-\underline h(t))+\Phi(-x-\underline h(t))-\mathbf{u}^*]_+\big)\Big]\\
\succeq&(1-\epsilon)\big[F(\Phi(x-\underline h(t)))-mL\epsilon_2\mathbf{1}\big].
\end{align*}
Thus from \eqref{3.7} and the definition of $\epsilon_2$, we deduce
\begin{align*}
\Delta(t,x)\preceq&\ (1-\epsilon)  \big(\delta\big[\Phi'(x-\underline h(t))+\Phi'(-x-\underline h(t))\big]+2mL\epsilon_2\mathbf{1}\big)\\
\preceq &\ (1-\epsilon)\big(-\delta \epsilon_1 +2mL\epsilon_2 \big)\mathbf{1}= \mathbf{0}.
\end{align*}
For $x\in I_2(t)$, since $\Delta(t,x)$ is even in $x$, the above inequality is also valid. 

Finally  we consider  $x\in I_3(t)$. By $\mathbf{(f_3)}$, for each $i\in\{1,...,m\}$, either
\begin{itemize}
\item[{\rm (i)}] $\dd \sum_{j=1}^m \partial_jf_i(\mathbf{u^*})u^*_j<0$, or
\item[{\rm (ii)}] $\dd \sum_{j=1}^m \partial_jf_i(\mathbf{u^*})u^*_j=0$ and $f_i(u)$ is linear in $[\mathbf{u^*}-\epsilon_0\mathbf{1},\mathbf{u^*}]$
for some small $\epsilon_0>0$.
\end{itemize}
Note that since $f_i(\mathbf{u^*})=0$, in case (ii) we have, for $u\in[\mathbf{u^*}-\epsilon_0\mathbf{1},\mathbf{u^*}]$, 
\[
f_i(u)=\sum_{j=1}^m a_j (u_j-u_j^*) \mbox{ for some constants $a_j$, $j=1,..., m$}.
\]
For  $x\in I_3(t)$, \eqref{3.6} holds and hence
 \begin{align*}
-(3-2\epsilon)\epsilon \mathbf{u}^* \preceq    \underline U(t,x)-\mathbf{u}^*\preceq -\epsilon\mathbf{u}^*.
\end{align*}
Therefore, by choosing $\epsilon>0$ sufficiently small, we may assume that for $t>0$ and $x\in I_3(t)$, 
\[
 \Phi(x-\underline h(t)), \Phi(-x-\underline h(t)), (1-\epsilon)^{-1}\underline U(t,x)\in [\mathbf{u^*}-\epsilon_0\mathbf{1},\mathbf{u^*}].
\]
Thus in case (ii) we have, in view of $\mathbf{(f_2)}$, for such $t>0$ and $x$,
\begin{align*}
\Delta_i(t,x)\leq &\ (1-\epsilon)\big[f_i(\Phi(x-\underline h(t)))+ f_i(\Phi(-x-\underline h(t)))\big]- f_i(\underline U(t,x))\\
\leq &\ (1-\epsilon)\Big[f_i\big(\Phi(x-\underline h(t))\big)+ f_i\big(\Phi(-x-\underline h(t))\big)- f_i((1-\epsilon)^{-1}\underline U(t,x))\Big]\\
 =&\ (1-\epsilon)\sum_{j=1}^m a_j \big[\phi_j(x-\underline h(t))-u^*_j +\phi_j(-x-\underline h(t))-u^*_j-(1-\epsilon)^{-1}\underline u_j(t,x)+u^*_j\big]\\
 =&\ 0.
\end{align*}
In case (i), we have, for such $t$ and $x$,
\begin{align*}
\Delta_i(t,x) \leq &\ (1-\epsilon)\big[f_i(\Phi(x-\underline h(t)))+ f_i(\Phi(-x-\underline h(t)))\big]- f_i(\underline U(t,x))\\
=&\ (1-\epsilon)\sum_{j=1}^m \partial_jf_i(\mathbf{u}^*) \big[\phi_j(x-\underline h(t))-\mathbf{u}_j^*\big]+(1-\epsilon)\sum_{j=1}^m \partial_jf_i(\mathbf{u}^*) \big[\phi_j(-x-\underline h(t))-{u}_j^*\big]\\
&\ -\sum_{j=1}^m \partial_jf_i(\mathbf{u}^*) \big[\underline u_j(x)-{u}_j^*\big]+o(\epsilon)\\
=&\ \epsilon\sum_{j=1}^m \partial_jf_i(\mathbf{u}^*){u}_j^*+o(\epsilon)<0,
\end{align*}
provided that $\epsilon>0$ is small enough.
The proof of Step 2 is finished.

{\bf Step 3.} We prove \eqref{3.5} by using Lemma \ref{lemma3.2}.

It is clear that 
\begin{align*}
\underline U(t,\pm \underline h(t))=(1-\epsilon) [\Phi(-2\underline h(t))-\mathbf{u}^*]\preceq \mathbf{0}\ \mbox{ for } \ \ t\geq 0. 
\end{align*}
Since spreading happens for $(U,g,h)$, for fixed $K>\max\{K_0,K_2,K_2\}$ there exists a large constant $t_0>0$ such that 
\begin{align*}
&g(t_0)<-K=-\underline h(0)\ {\rm and}\ \underline h(0)=K< h(t_0),\\
&U(t_0,x)\succeq 1-\epsilon\succeq \underline U(0,x) \ \mbox{ for } \ \ x\in [-\underline h(0),\underline h(0)],
\end{align*}
which combined with the conclusions proved in Steps 1 and 2 allows us to apply Lemma \ref{lemma3.2}  to conclude that 
\begin{align*}
&g(t+t_0)\leq -\underline h(t), \ h(t+t_0)\geq \underline h(t), \ &&t\geq 0,\\
&U(t+t_0,x)\succeq \underline U(t,x),&&t\geq 0,\ x\in [-\underline h(t),\underline h(t)].
\end{align*}
Therefore 
\begin{align*}
\liminf_{t\to\yy} \frac{h(t)}{t}\geq   \liminf_{t\to\yy} \frac{\underline h(t-t_0)}{t}=c_0-\delta,
\end{align*}
and \eqref{3.5} follows by letting $\delta\to 0$.
\end{proof}

\noindent
\underline{Proof of Theorem \ref{theorem1.3} (i):} By the above lemmas, we have
\[
\lim_{t\to\infty}\frac{h(t)}t=c_0.
\]
It remains to show
\[
\lim_{t\to\infty}\frac{g(t)}t=-c_0.
\]
Let $\wtd U(t,x):=U(t,-x)$, $\td h(t):=-g(t)$ and $\td g(t):=-h(t)$.
Then $(\wtd U,\td g,\td h)$ satisfies \eqref{1.1} with  initial function $\td U(0,x):=U(0,-x)$, and spreading happens. 
Hence we can apply Lemmas \ref{lemma3.3} and  \ref{lemma3.4} to conclude that 
\[
\lim_{t\to\infty}\frac{-g(t)}t=\lim_{t\to\infty}\frac{\td h(t)}t=c_0.
\]
\hfill $\Box$

\subsection{Accelerated spreading: Proof of Theorem  \ref{theorem1.3} (ii)}
Throughout this subsection, we assume that the conditions in Theorem  \ref{theorem1.3} (ii) are satisfied.

For each positive integer $n$ and $i\in\{1,..., m_0\}$, we define 
\[
J_{i}^n(x):=\begin{cases} J_{i}(x)& \mbox{ for } |x|\leq n,\\
\frac{n}{|x|}J_{i}(x)& \mbox{ for } |x|\geq n.
\end{cases}
\]
Then clearly $J_{i}^n(x)\leq J_{i}(x)$,  $|x|J_i^n(x)\leq nJ_i(x)$, and for each $\alpha>0$, there exists $c_{n,\alpha}>0$ depending on $n$ and $\alpha$ such that 
\[
e^{\alpha |x|}J_i^n(x)\geq c_{n,\alpha}e^{\frac\alpha 2 |x|}J_i(x).
\] 
Therefore
\begin{equation}\label{Jn}
\int_0^\infty xJ_{i}^n(x)dx\leq \frac n2,\ \ \int_0^\infty e^{\alpha x}J_{i}^n(x)dx\geq c_{n,\alpha}\int_0^\infty e^{\frac\alpha 2 x}J_i(x)dx.
\end{equation}

Next we define $ \mathbf{\widetilde J}^n=(\wtd J_i^n)$ with
\[
\mbox{   $\dd\wtd J_i^n(x):=\frac{J_i^n(x)}{\| J_i^n\|_{L^1(\R)}} $ for $i\in\{1,..., m_0\}$,
\ $\wtd J_i^n(x)\equiv 0$ for $i\in\{m_0+1,..., m\}$.}
\]
 Then by \eqref{Jn} we see that $\mathbf{\widetilde J}^n$ satisfies $\mathbf{(J)}$ and $\mathbf{(J_1)}$, but not $\mathbf{(J_2)}$. Moreover,  $\mathbf{J}^n$ is nondecreasing in $n$, and 
\begin{align}\label{3.10a}
\lim_{n\to\yy} \mathbf{J}^n(x) =\lim_{n\to\yy} \mathbf{\widetilde J}^n(x)=\mathbf{J}(x) \mbox{ locally uniformly for }  x\in\R.
\end{align}

We  now consider  the following variation of \eqref{1.1}:
\begin{equation}\label{3.10}
\begin{cases}
\dd \wtd U_t=D\circ \lf[\int_{ \td g(t)}^{ \td h(t)} \mathbf{J}^n(x-y)\circ  \wtd U(t,y) \rd y - \wtd U(t,x)\rr] +F( \wtd U), & t>0, \; x\in ( \td g(t),  \td h(t)) ,\\
\wtd U(t,\td g(t))=\wtd U(t,\td h(t))=\mathbf{0}, &t>0,\\
\dd \td g'(t)= -\sum_{i=1}^{m_0} \mu_i \int_{\td g(t)}^{\td h(t)}\int_{-\yy}^{\td g(t)}J_i^n(x-y) \td u_i(t,x)\rd y\rd x, & t>0,\\[3mm]
\dd \td h'(t)= \sum_{i=1}^{m_0}\mu_i \int_{\td g(t)}^{\td h(t)}\int_{\td h(t)}^{\yy}J_i^n(x-y) \td u_i(t,x)\rd y\rd x, & t >0.\\
\td g(0)=g(T),\; \td h(0)=h(T),\; \wtd U(0,x)=U(T,x),&x\in [g(T),h(T)],
\end{cases}
\end{equation}
where $T>0$, $\wtd U=(\td u_i)$,    and $(U,g,h)$ with $U=(u_i)$ is the solution of \eqref{1.1} with $U(0,x)\in[\mathbf{0},\mathbf{\tilde u}]$ and spreading happens.

\begin{lemma}\label{lem3.5} For all large $n$,  problem \eqref{3.10} admits a unique positive solution  $(\wtd U,\td g,\td h)=(U^n,g^n,h^n)$ with $U^n=(u_i^n)$, and
	\begin{equation}\label{3.11}
\begin{cases}
[g^n(t),h^n(t)]\subset [g(t+T),h(t+T)], &t\geq 0,\\
U^n(t,x)\preceq U(t+T,x),& t\geq 0,\; x\in [g^n(t),h^n(t)].
\end{cases}
	\end{equation}
\end{lemma}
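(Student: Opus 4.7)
The lemma has two parts, existence/uniqueness of $(U^n, g^n, h^n)$ and the domination \eqref{3.11}; I would handle them separately. For existence and uniqueness, I would invoke the well-posedness theory for cooperative nonlocal-diffusion free boundary problems of type \eqref{1.1} developed in \cite{cdJFA, dn, ZZLD}. Each $J_i^n$ inherits the required continuity, evenness, positivity at zero and boundedness from $J_i$; the only deviation from $\mathbf{(J)}$ is that $c_i^n := \|J_i^n\|_{L^1(\R)} \in (0, 1]$ rather than equal to $1$. This is easily reconciled by writing $\mathcal L_i^n[v] = c_i^n \widetilde{\mathcal L}_i^n[v] - (1-c_i^n) v$ with $\widetilde J_i^n := J_i^n/c_i^n$ satisfying $\mathbf{(J)}$, and absorbing the zeroth-order term into a modified reaction $\widetilde F(u) := F(u) - D\circ(((1-c_i^n) u_i)_{i=1}^m)$, which is still cooperative and $C^1$. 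By \eqref{3.10a}, $c_i^n \to 1$ and $\widetilde J_i^n \to J_i$ locally uniformly, so $\mathbf{(f_1)}$--$\mathbf{(f_4)}$ continue to hold for the perturbed system provided $n$ is large. The existing theory then produces a unique global positive solution with $h^n$ nondecreasing and $g^n$ nonincreasing.

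For \eqref{3.11}, set $\bar U(t,x) := U(t+T, x)$, $\bar g(t) := g(t+T)$, $\bar h(t) := h(t+T)$; these satisfy \eqref{1.1} with the original kernel $\mathbf J$. The key observation is that $0 \leq J_i^n \leq J_i$ pointwise and $\bar u_i \geq 0$, so for $1 \leq i \leq m_0$ and $x \in (\bar g(t), \bar h(t))$,
\[
\partial_t \bar u_i = d_i \int_{\bar g(t)}^{\bar h(t)} J_i(x-y) \bar u_i\, dy - d_i \bar u_i + f_i(\bar U) \geq d_i \mathcal L_i^n[\bar u_i] + f_i(\bar U),
\]
while for $m_0 < i \leq m$ both systems reduce to the identical ODE $\partial_t \bar u_i = f_i(\bar U)$. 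The same pointwise inequality turns the exact free-boundary equalities for $\bar h, \bar g$ (using $J_i$) into the supersolution inequalities of \eqref{3.10} (using $J_i^n$),
\[
\bar h'(t) \geq \sum_{i=1}^{m_0} \mu_i \int_{\bar g(t)}^{\bar h(t)} \int_{\bar h(t)}^{\infty} J_i^n(x-y) \bar u_i(t,x)\, dy\, dx,
\]
and analogously $\bar g'(t)$ satisfies the opposite inequality. Since $\bar U(0, \cdot) = U^n(0, \cdot) = U(T, \cdot)$ and $[\bar g(0), \bar h(0)] = [g^n(0), h^n(0)] = [g(T), h(T)]$, the two-sided supersolution version of Lemma \ref{lemma3.2} applied to \eqref{3.10} (whose proof is unchanged, since it rests only on the cooperative structure and the non-negativity of the kernels) yields $[g^n(t), h^n(t)] \subset [\bar g(t), \bar h(t)]$ and $U^n(t,x) \preceq \bar U(t,x)$ on the common domain, which is precisely \eqref{3.11}.

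The main delicate point is the existence step: one must verify that the well-posedness machinery of \cite{cdJFA, dn, ZZLD} is robust under the truncation-and-renormalization described above, and that the perturbed reaction $\widetilde F$ continues to satisfy $\mathbf{(f_1)}$--$\mathbf{(f_4)}$ (in particular, the invariance of $[\mathbf 0, \mathbf{\hat u}]$ and the monostable structure) uniformly in $n$. This is exactly where the hypothesis ``for all large $n$'' enters, via \eqref{3.10a}, ensuring that the perturbed system is arbitrarily close to \eqref{1.1}. The comparison step, by contrast, is uniform in $n$ and relies only on the pointwise inequality $J_i^n \leq J_i$; once existence is established it would go through for every $n \geq 1$.
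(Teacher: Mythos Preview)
Your proposal is correct and follows essentially the same route as the paper: both renormalize the truncated kernels $J_i^n$ to unit mass and absorb the resulting zeroth-order term into a perturbed reaction $F^n(w)=F(w)+(D^n-D)\circ w$ (your $\widetilde F$), check that the cooperative monostable structure persists for large $n$, and then invoke the well-posedness theory of \cite{dn}; the comparison \eqref{3.11} is obtained in both cases from the pointwise inequality $J_i^n\le J_i$ via Lemma~\ref{lemma3.2}. One small remark: the paper only verifies $\mathbf{(f_1)}$--$\mathbf{(f_3)}$ for $F^n$, which is all that is needed for existence and uniqueness; your claim that $\mathbf{(f_4)}$ also persists is plausible but not required here.
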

\begin{proof}
The first equation in \eqref{3.10} can be rewritten as 
\begin{align*}
\dd \wtd U_t=D^n\circ \lf[\int_{ \td g(t)}^{ \tilde h(t)} \mathbf{\wtd J}^n(x-y)\circ  \wtd U(t, y) \rd y - \wtd U(t, x)\rr] + F^n( \wtd U),
\end{align*}
where $D^n=(d_i^n)$ with $d_i^n:=d_i||J_i^n||_{L^1(\R)}$ and
\begin{align}\label{3.13a}
 F^n(w)=F(w)+(D^n-D) \circ w,\ \ \ w\in \R_+^m.
\end{align}
Since $ F$ satisfies $\mathbf{(f_i)}$ for $i\in \{1,2,3\}$, it is easily seen that for all large $n$, $F^n$ also satisfies  $\mathbf{(f_i)}$ for $i\in \{1,2,3\}$ (subject to some obvious modifications). One modification is to replace $\mathbf{u^*}$ by some $\mathbf{u^*_n}$ with $\mathbf{u^*_n}\to \mathbf{u^*}$ as $n\to \infty$, and $F^n(\mathbf{u^*_n})={\bf 0}$. 
The existence and uniqueness of the solution to \eqref{3.10} now follow by  similar arguments to those in \cite[Theorem 4.1]{dn}. Moreover, due to $\mathbf{J}^n\preceq \mathbf{J}$  we can apply   Lemma \ref{lemma3.2} to obtain \eqref{3.11}. 
\end{proof}

\begin{lemma}\label{lem3.6}  For all large $n$, the system
\begin{equation}\label{3.12}
\begin{cases}
\dd D\circ\int_{-\yy}^{0}\mathbf{J}^n (x-y)\circ \Phi(y) {\rm d}y-D\circ\Phi(x)+c\Phi'(x)+F(\Phi(x))=0,&
-\yy<x<0,\\
\Phi(-\yy)=\mathbf{u^*_n},\ \ \Phi(0)=\mathbf{0},
\end{cases}
\end{equation}
and 
\begin{align}\label{3.13}
c= \sum_{i=1}^{m_0}\mu_i\int_{-\yy}^{0}\int_{0}^{\yy}J_i^n(x-y)\phi_i(x) {\rm d}y{\rm d}x,
\end{align}
has a solution pair $(c,\Phi)=(c^n, \Phi^n)$ with 
$\Phi^n(x)=(\phi_i^n(x))$ monotone in $x$.
\end{lemma}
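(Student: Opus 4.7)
The plan is to recast \eqref{3.12}-\eqref{3.13} as an instance of the canonical semi-wave system \eqref{2.1a}-\eqref{2.2a}, so that Theorem \ref{thm2.12} (equivalently Theorem \ref{prop2.3}(iii)) supplies the desired pair. Setting $a_i^n := \|J_i^n\|_{L^1(\R)}$ and $\wtd J_i^n := J_i^n/a_i^n$ for $i\in\{1,\ldots,m_0\}$ (with $a_i^n := 1$ and $\wtd J_i^n \equiv 0$ for $m_0 < i \leq m$ to respect the convention), $\mu_i^n := \mu_i a_i^n$, $D^n := (d_i a_i^n)$ and
\[
F^n(u) := F(u) + (D^n - D)\circ u,
\]
a direct rearrangement of \eqref{3.12}-\eqref{3.13} shows that $(c,\Phi)$ solves it if and only if the same pair solves \eqref{2.1a}-\eqref{2.2a} with data $(\mathbf{\wtd J}^n, D^n, F^n, (\mu_i^n))$ and with the equilibrium at $-\infty$ replaced by the perturbed vector $\mathbf{u}^*_n$. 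The proof thus reduces to checking that, for all sufficiently large $n$, this new data satisfies the hypotheses $\mathbf{(J)}$, $\mathbf{(J_1)}$ and $\mathbf{(f_1)}$-$\mathbf{(f_4)}$ demanded by Theorem \ref{thm2.12}.

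The kernel requirements are straightforward. Each $\wtd J_i^n$ ($i \leq m_0$) is continuous, bounded, nonnegative and even; it integrates to one by normalisation; and $\wtd J_i^n(0) = J_i(0)/a_i^n > 0$, so $\mathbf{(J)}$ holds. The estimate $\int_0^\infty x J_i^n(x)\,dx \leq n/2$ recorded in \eqref{Jn} immediately gives $\mathbf{(J_1)}$ for $\mathbf{\wtd J}^n$. Since the family $\{J_i^n\}$ is nondecreasing in $n$ with pointwise limit $J_i$, monotone convergence yields $a_i^n \nearrow 1$, hence $D^n \to D$ and $F^n \to F$ in $C^1$ on any bounded set. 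The cooperative structure of $F^n$ is inherited from that of $F$ because $(D^n-D)\circ\mathrm{id}$ is purely diagonal, and the identity $F^n(ku)-kF^n(u) = F(ku)-kF(u)$ yields $\mathbf{(f_2)}$ for free. For $\mathbf{(f_1)}$, the implicit function theorem at the nondegenerate zero $\mathbf{u}^*$ of $F$ (nondegeneracy coming from invertibility of $\nabla F(\mathbf{u}^*)$ in $\mathbf{(f_3)}$) produces a nearby root $\mathbf{u}^*_n \to \mathbf{u}^*$ of $F^n$, and a standard compactness argument rules out other positive roots for large $n$. Irreducibility of $\nabla F^n(\mathbf{0})$ and positivity of its principal eigenvalue, together with the invertibility and sign conditions in $\mathbf{(f_3)}$ at $\mathbf{u}^*_n$, are preserved by continuity; in case $\mathbf{(f_3)(ii)}$, the explicit linear perturbation $F^n - F = (D^n-D)\circ\mathrm{id}$ keeps $f_i^n$ linear in a neighbourhood of $\mathbf{u}^*_n$.

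The hard part will be verifying $\mathbf{(f_4)}$ for $F^n$, since this is a global dynamical statement rather than a pointwise one. I would exploit the equivalent form of the evolution,
\[
U_t = D\circ\int_\R \mathbf{J}^n(x-y)\circ U(t,y)\,dy - D\circ U + F(U),
\]
which uses the original $F$ and $D$ but the sub-stochastic kernel $\mathbf{J}^n \preceq \mathbf{J}$. Invariance of $[\mathbf{0},\mathbf{\hat u}]$ is then transparent: at $u_i = 0$ the kernel contribution is nonnegative and $F_i \geq 0$ by cooperativity of $F$, while at $u_i = \hat u_i$ one has $d_i\int J_i^n(x-y)u_i(t,y)\,dy \leq d_i\hat u_i$ (using $\|J_i^n\|_{L^1} \leq 1$), with $F_i \leq 0$ there being inherited from $\mathbf{(f_4)}$ for the original $(\mathbf{J}, D, F)$. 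Global attractivity of $\mathbf{u}^*_n$ would be obtained by sandwiching an arbitrary nontrivial orbit between the monotone decreasing orbit starting at $\mathbf{\hat u}$ and the monotone increasing orbit starting at $\epsilon\Theta$ (with $\epsilon>0$ small so that $F^n(\epsilon\Theta) \ggs \mathbf{0}$, which holds for all large $n$ by Lemma \ref{lemma2.1a} applied to $F^n$); both envelopes, being monotone, converge to stationary solutions of the nonlocal system that must coincide with $\mathbf{u}^*_n$ by uniqueness of the positive equilibrium in $[\mathbf{0},\mathbf{\hat u}]$. With $\mathbf{(f_4)}$ in hand, Theorem \ref{thm2.12} applied to $(\mathbf{\wtd J}^n, D^n, F^n, (\mu_i^n))$ delivers the unique monotone semi-wave profile $\Phi^n$ with speed $c^n \in (0, C^n_*)$ solving \eqref{3.12}-\eqref{3.13}.
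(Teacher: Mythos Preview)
Your recasting of the problem into normalised form with $(\mathbf{\wtd J}^n, D^n, F^n, (\mu_i^n))$ and your verification of $\mathbf{(J)}$, $\mathbf{(J_1)}$, $\mathbf{(f_1)}$--$\mathbf{(f_3)}$ for the new data are correct and coincide with the paper's setup. The divergence is in how you handle $\mathbf{(f_4)}$, and here there is a genuine gap.

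Your sandwich argument for global attractivity of $\mathbf{u}^*_n$ requires the nontrivial orbit to lie above the spatially constant increasing orbit started at $\epsilon\Theta$; but $\mathbf{(f_4)}$ is stated for arbitrary nontrivial initial data $U(0,\cdot)\in[\mathbf{0},\mathbf{\hat u}]$, which need not satisfy $U(0,\cdot)\succeq \epsilon\Theta$ (think of compactly supported data, or data with infimum zero). Getting the solution of the $\mathbf{J}^n$-system above a positive floor from merely nontrivial data is itself a spreading/hair-trigger statement of the same depth as $\mathbf{(f_4)}$; it is not delivered by the monotone envelopes alone, and comparison with the original $\mathbf{J}$-system only yields upper bounds since $\mathbf{J}^n\preceq\mathbf{J}$. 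So the step you flag as ``the hard part'' is not actually closed by your sketch.

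The paper sidesteps $\mathbf{(f_4)}$ entirely. The second estimate in \eqref{Jn} shows that $\mathbf{\wtd J}^n$ fails $\mathbf{(J_2)}$; since it also satisfies \eqref{J3}, the argument of Theorem~\ref{TW-J2} (which uses only $\mathbf{(f_1)}$--$\mathbf{(f_3)}$) rules out any monotone travelling wave for the $\mathbf{\wtd J}^n$-problem. Lemma~\ref{lem2.9} (needing only $\mathbf{(f_1)}$, $\mathbf{(f_2)}$) then forces a monotone semi-wave for every $c>0$, i.e.\ $C_*^n=\infty$. With $C_*^n=\infty$ already known, the speed-selection part of the proof of Theorem~\ref{thm2.12} (the function $P(c)=c-M(c)$, using Theorem~\ref{lemma2.12} for monotonicity and the direct argument $\Phi^c\to\mathbf{0}$ as $c\to\infty$) produces $c^n$ without ever invoking $\mathbf{(f_4)}$. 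This is both shorter and avoids the dynamical difficulty your route runs into.
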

\begin{proof}
 We already see that for all large $n$, $F^n$ satisfies  $\mathbf{(f_i)}$ for $i\in \{1,2,3\}$ (subject to some obvious modifications). 
 In the first equation of \eqref{3.12}, we can replace $(D, \mathbf{J}^n, F)$ by $(D^n, \mathbf{\wtd J}^n, F^n)$ as in the proof of Lemma \ref{lem3.5}. Moreover, since $\mathbf{\wtd J}^n$ does not satisfy $\mathbf{(J_2)}$, we can argue as in the proof of Theorem \ref{TW-J2} to show that the corresponding traveling wave problem of \eqref{3.12} with any $c>0$ has no monotone solution. 
Therefore we can apply Lemma \ref{lem2.9} to conclude that for any $c>0$, \eqref{3.12} has a monotone solution $\Phi=\Phi_c$.
Furthermore, since $\mathbf{\wtd J}^n$ satisfies $\mathbf{(J_1)}$, by Theorem \ref{lemma2.12} and the proof of Theorem \ref{thm2.12}, we see that there exists a unique $c^n>0$ such that
\eqref{3.13} is satisfied with $c=c^n$ and $(\phi_i)=\Phi^n:=\Phi_{c_n}$.
\end{proof}

\begin{lemma}\label{lem3.7} For all large $n$, we have 
	\begin{align*}
 	\liminf_{t\to\yy} \frac{-g(t)}{t}\geq c^n,\ \liminf_{t\to\yy} \frac{h(t)}{t}\geq c^n,
	\end{align*}
where $c^n$ is given in Lemma \ref{lem3.6}. 
\end{lemma}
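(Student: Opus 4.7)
The plan is to mimic, inside the auxiliary system \eqref{3.10}, the finite-spreading-speed lower-bound construction of Lemma \ref{lemma3.4}, and then transfer the estimate back to $(U,g,h)$ via the sandwich \eqref{3.11}. Throughout we keep $n$ fixed but large. Writing \eqref{3.10} in the equivalent normalized form
\[
\partial_t \wtd U = D^n\circ \Big[\int_{\td g(t)}^{\td h(t)}\mathbf{\wtd J}^n(x-y)\circ\wtd U(t,y)\,{\rm d}y - \wtd U\Big] + F^n(\wtd U),
\]
with boundary coefficients $\mu_i^n := \mu_i\|J_i^n\|_{L^1(\R)}$ and $D^n,\mathbf{\wtd J}^n,F^n$ as in the proof of Lemma \ref{lem3.5}, one checks that for all large $n$ the triple $(\mathbf{\wtd J}^n,F^n,\mu_i^n)$ verifies $\mathbf{(J)}$, $\mathbf{(J_1)}$ and (obvious perturbations of) $\mathbf{(f_1)}$--$\mathbf{(f_4)}$, with unique positive equilibrium $\mathbf{u}^*_n$ satisfying $\mathbf{u}^*_n\to\mathbf{u}^*$, and with monotone semi-wave pair $(c^n,\Phi^n)$ furnished by Lemma \ref{lem3.6}.

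For small $\epsilon>0$, set $\delta:=2c^n\epsilon$ and
\[
\underline h(t):=(c^n-\delta)t+K,\qquad
\underline U(t,x):=(1-\epsilon)\bigl[\Phi^n(x-\underline h(t))+\Phi^n(-x-\underline h(t))-\mathbf{u}^*_n\bigr].
\]
Repeating Steps~1 and 2 of the proof of Lemma \ref{lemma3.4} verbatim, with $(\mathbf{J},F,\mathbf{u}^*,c_0,\Phi^{c_0},\mu_i)$ replaced throughout by $(\mathbf{\wtd J}^n,F^n,\mathbf{u}^*_n,c^n,\Phi^n,\mu_i^n)$, yields constants $K_0=K_0(\epsilon,n)$ and $K_1,K_2=K_{1,2}(\epsilon,n)$ so that, for every $K>\max\{K_0,K_1,K_2\}$, the analogues of \eqref{3.8} and \eqref{3.9} (together with their mirror images for the left boundary) hold, making $(\underline U,-\underline h,\underline h)$ a lower solution in the sense of Lemma \ref{lemma3.2}(ii) for the rewritten $n$-th system.

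Fix such a $K$. To start the comparison we need $[-K,K]\subset[g^n(0),h^n(0)]=[g(T),h(T)]$ and $\underline U(0,x)\preceq U^n(0,x)=U(T,x)$ on $[-K,K]$. Because spreading happens for $(U,g,h)$ and $\mathbf{u}^*_n\to\mathbf{u}^*$, we may choose $T=T(K,\epsilon,n)$ so large that $g(T)<-K$, $h(T)>K$, and $U(T,x)\succeq(1-\epsilon/2)\mathbf{u}^*\succeq(1-\epsilon)\mathbf{u}^*_n\succeq\underline U(0,x)$ for $x\in[-K,K]$. Lemma \ref{lemma3.2}(ii) applied to \eqref{3.10} then gives $h^n(t)\geq\underline h(t)$ and $g^n(t)\leq-\underline h(t)$ for all $t\geq0$. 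Combining with \eqref{3.11} we obtain
\[
h(T+t)\geq h^n(t)\geq(c^n-\delta)t+K,\qquad -g(T+t)\geq-g^n(t)\geq(c^n-\delta)t+K,
\]
so $\liminf_{t\to\yy}h(t)/t\geq c^n-\delta$ and $\liminf_{t\to\yy}(-g(t))/t\geq c^n-\delta$; letting $\delta\to0$ completes the proof.

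The main technical obstacle is the bookkeeping in Step~2 of the Lemma \ref{lemma3.4} subsolution computation, in particular the $\Delta$-estimate that invokes $\mathbf{(f_3)}$ near the equilibrium. Its case analysis transfers to the perturbed data because $F^n(\mathbf{u}^*_n)=\mathbf{0}$, because the strict inequality in $\mathbf{(f_3)}$(i) is preserved for $F^n$ at $\mathbf{u}^*_n$ by continuity for $n$ large, and because the linearity required by $\mathbf{(f_3)}$(ii) is stable under the affine shift $F^n-F=(D^n-D)\circ(\cdot)$. The constants $K_1,K_2$ depend on $n$ through the tails of $\wtd J_i^n$, which is harmless since $n$ is fixed before $K$ and $T$ are chosen.
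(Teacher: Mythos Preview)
Your proposal is correct and follows essentially the same route as the paper: build the Lemma~\ref{lemma3.4} lower solution $(\underline U,-\underline h,\underline h)$ from the semi-wave $\Phi^n$ of the $n$-th auxiliary system, pick $T$ large so the comparison starts, apply Lemma~\ref{lemma3.2}(ii) to \eqref{3.10}, and then transfer via \eqref{3.11}. The paper works directly with $(\mathbf{J}^n,F)$ rather than the normalized $(\mathbf{\wtd J}^n,F^n)$, but these are equivalent, and your closing remarks on why the $\mathbf{(f_3)}$ case analysis survives the affine perturbation $F\mapsto F^n$ are a useful elaboration that the paper leaves implicit.
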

\begin{proof}
Similarly to the proof of Lemma \ref{lemma3.4}, we construct a lower solution to \eqref{3.10} by defining, for small $\epsilon>0$,  $\delta:=2c^n\epsilon$ and 
\[
\begin{cases}
\underline h^n(t):=(c^n-\delta) t+K,  \ \ \ t\geq 0,\\
\underline U^n(t,x):=(1-\epsilon) [\Phi^n(x-\underline h^n(t))+\Phi^n(-x-\underline h^n(t))-\mathbf{u_n^*}], \ \ \ t\geq 0,\  x\in [-\underline h^n(t),\underline h^n(t)],
\end{cases}
\]
where $K>0$ is to be determined.  Denote $\underline U^n=(\underline u_i^n)$.  As in Steps 1 and  2 of the proof of Lemma \ref{lemma3.4}, one can choose a large enough constant $K$ such that 
\begin{equation*}
\begin{cases}
\dd\prt \underline U^n\preceq D\int_{-\underline h^n(t)}^{\underline h^n(t)} \mathbf{J}^n(x-y)\circ \underline U^n(y) \rd y -D\circ \underline U^n +F(\underline U^n), & t>0, \; x\in (- \underline h^n(t), \underline h^n(t)) ,\\
-\dd (\underline h^n)'(t)\geq  -\sum_{i=1}^{m_0}\mu_i \int_{-\underline  h^n(t)}^{\underline h^n(t)}\int_{\underline h^n(t)}^{\yy}J_i^n(x-y)\underline u_i^n(t,x)\rd y\rd x, & t>0,\\[3mm]
\dd (\underline h^n)'(t)\leq  \sum_{i=1}^{m_0}\mu_i \int_{-\underline  h^n(t)}^{\underline h^n(t)}\int_{\underline h^n(t)}^{\yy}J_i^n(x-y)\underline u_i^n(t,x)\rd y\rd x, & t >0.\\
\end{cases}
\end{equation*}
Clearly, $\underline U_n(t,\pm \underline h_n(t))=(1-\epsilon) [\Phi_n(-2\underline h^n(t))-\mathbf{u_n}^*]\llp \mathbf{0}$ for $ t\geq 0$,
and
\[
\underline U^n(0,x)=(1-\epsilon)\big[ \Phi^n(x-K)+\Phi^n(-x-K)-\mathbf{u^*_n}\big]\prec (1-\epsilon)\mathbf{u^*_n}\to (1-\epsilon)\mathbf{u^*}
\]
as $n\to\infty$.
 Since spreading happens for $(U,g,h)$,  there exists a large constant $T^n>0$ such that 
\begin{align*}
&[-\underline h^n(0),\underline h^n(0)]=[-K,K]\subset [g(T^n), h(T^n)],\\
&\underline U^n(0,x)\preceq U(T^n,x) \ \mbox{ for } \ x\in [-K, K].
\end{align*}
Taking $T=T^n$ in  \eqref{3.10},  we have $g_n(0)=g(T^n)$, $h_n(0)=h(T^n)$ and 
$U_n(0,x)=U(T^n,x)$, and hence we can apply  Lemma \ref{lemma3.2} to obtain
\begin{align*}
& [-\underline h^n(t),\underline h^n(t)]\subset [g^n(t),h^n(t)], \ \ \ t\geq 0,\\
& \underline U^n(t,x)\preceq U^n(t,x),\ \ \ \ \ \ \ \ \ \ \ \ \ \ \ \ \ t\geq 0,\ x\in [-\underline h^n(t),\underline h^n(t)].
\end{align*}
It follows that
\begin{align*}
\liminf_{t\to\yy} \frac{h^n(t)}{t}\geq   \liminf_{t\to\yy} \frac{\underline h^n(t)}{t}=c^n-\delta,
\end{align*}
Letting $\delta\to 0$ we obtain $\liminf_{t\to\yy} \frac{h^n(t)}{t}\geq c^n$. We may now use \eqref{3.11} to deduce
 \begin{align*}
\liminf_{t\to\yy} \frac{h(t)}{t}\geq \liminf_{t\to\yy} \frac{h^n(t-T)}{t}\geq c^n.
 \end{align*}
Analogously we have $\liminf_{t\to\yy} \frac{-g(t)}{t}\geq c^n$.
\end{proof}

\begin{lemma}\label{lem3.8}  $\lim_{n\to\yy }c^n=\yy$.
\end{lemma}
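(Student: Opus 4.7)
The plan is to argue by contradiction: suppose $\{c^n\}$ does not tend to $\yy$; then along a subsequence $c^n\to c^*\in[0,\infty)$, and I will extract from the semi-wave profiles $\Phi^n=(\phi_i^n)$ of Lemma \ref{lem3.6} a limit object that leads to a contradiction either with $(\mathbf{J_1})$ failing or with $(\mathbf{J_2})$ failing.

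To set up the limit, pick $x_n\in(-\yy,0)$ with $\phi_1^n(x_n)=u_{1,n}^*/2$, which exists by the intermediate value theorem since $\phi_1^n(0)=0$ and $\phi_1^n(-\yy)=u_{1,n}^*\to u_1^*$. Because each $\Phi^n$ is monotone with values in $[\mathbf{0},\mathbf{u^*_n}]$, Helly's selection theorem plus a diagonal extraction yields a pointwise-convergent subsequence with monotone limit $\Phi^*$. The bound $J_i^n\leq J_i$, the pointwise convergence $J_i^n\to J_i$, and $\int_\R J_i=1$ together allow dominated convergence to pass through the nonlocal term; to avoid derivative estimates (which would degenerate if $c^*=0$), I would integrate \eqref{3.12} from $0$ to $x$ before taking the limit, producing the limiting equation in integrated form.

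Case 1: $\{x_n\}$ is bounded, so a subsequence has $x_n\to x_*\in(-\yy,0]$. Monotonicity of the $\phi_1^n$ forces the pointwise limit to satisfy $\phi_1^*(x)\geq u_1^*/2$ for all $x<x_*$, in particular $\Phi^*\not\equiv\mathbf{0}$, and arguing as in Step 3 of the proof of Lemma \ref{lemma2.2}, the limit equation combined with the monotonicity forces $\Phi^*(-\yy)=\mathbf{u^*}$. Fatou's lemma applied to \eqref{3.13} then gives
\[
c^*\geq \sum_{i=1}^{m_0}\mu_i\int_{-\yy}^0\!\!\int_0^\yy\!\! J_i(x-y)\phi_i^*(x)\rd y\rd x.
\]
Since $(\mathbf{J_1})$ fails, there exists $i_0\leq m_0$ with $\mu_{i_0}>0$ and $\int_0^\yy zJ_{i_0}(z)\rd z=\yy$; a change of variables and Fubini, combined with $\phi_{i_0}^*(-\yy)=u_{i_0}^*>0$, show that the right-hand side is infinite, contradicting $c^*<\yy$.

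Case 2: $x_n\to-\yy$. Pass to the shifted profile $\wtd\Phi^n(x):=\Phi^n(x+x_n)$, defined on $(-\yy,-x_n)$, an interval exhausting $\R$. The limit $\wtd\Phi^*$ is monotone on all of $\R$ with $\tilde\phi_1^*(0)=u_1^*/2$, so necessarily $\wtd\Phi^*(-\yy)=\mathbf{u^*}$ and $\wtd\Phi^*(\yy)=\mathbf{0}$, and satisfies
\[
D\circ\int_\R\mathbf{J}(x-y)\circ\wtd\Phi^*(y)\rd y-D\circ\wtd\Phi^*(x)+c^*(\wtd\Phi^*)'(x)+F(\wtd\Phi^*(x))=\mathbf{0},\ \ x\in\R.
\]
If $c^*>0$ this monotone traveling wave contradicts Theorem \ref{TW-J2}, since $(\mathbf{J_2})$ fails (being implied by $(\mathbf{J_1})$); if $c^*=0$, then $\wtd\Phi^*$ is a nontrivial monotone stationary solution of \eqref{2.3} with $\wtd\Phi^*\not\equiv\mathbf{u^*}$, contradicting the convergence to $\mathbf{u^*}$ in the last statement of $(\mathbf{f_4})$. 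The main obstacle I expect is this $c^*=0$ sub-case: derivative estimates on $\Phi^n$ degenerate there, which forces the use of the integrated form of the equation throughout, and ensuring nontriviality of the limit profile (guaranteed by the normalization via $x_n$) is essential for both cases to yield a genuine contradiction.
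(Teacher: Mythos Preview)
Your proposal is correct and follows essentially the same strategy as the paper. The paper packages your Case 1/Case 2 dichotomy as ``arguing as in the proof of Lemma~\ref{lem2.9}'' to obtain either a monotone semi-wave limit for \eqref{2.1a} at speed $\hat c$ or a monotone traveling wave at speed $\hat c$; it rules out the traveling-wave alternative via Theorem~\ref{TW-J2} (since $\neg(\mathbf{J_1})\Rightarrow\neg(\mathbf{J_2})$, which is what you meant despite the slightly garbled parenthetical), and then in the semi-wave alternative applies dominated convergence on truncated intervals $[-L_2,-L_1]\times(0,\infty)$ to \eqref{3.13}, letting $L_2\to\infty$ to contradict the failure of $(\mathbf{J_1})$---equivalent to your Fatou step. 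Your explicit treatment of the sub-case $c^*=0$ (a nonconstant monotone stationary profile contradicting $(\mathbf{f_4})$) is a careful addition; in the paper this is absorbed into the references to Lemma~\ref{lem2.9}/\ref{lem2.10}, and in any event the semi-wave alternative with $\hat c=0$ is still dispatched by the same integral lower bound, so your caution about degenerate derivative estimates, while well placed, does not create a genuine new obstacle.
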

\begin{proof} Suppose on the contrary that there is a  subsequence of $\{c^n\}_{n=1}^{\yy}$, still denoted by itself, converging to a finite number $\hat c$.  Arguing as in the proof of Lemma \ref{lem2.9}, we see that one of the following two statements must be true:
\begin{itemize}
	\item[{\rm (i)}] Problem \eqref{2.1a} with $c=\hat c$ has a monotone solution $\Phi=(\phi_i)$, and  along some sequence $n_k\to\infty$, 
	\[
	\Phi^{n_k}(x)\to \Phi(x) \ \ {\rm locally\ uniformly\ for }\  x\in (-\yy,0].
	\]
		\item[{\rm (ii)}] Problem \eqref{2.3} has a monotone traveling wave solution with speed $\hat c$.
\end{itemize}
Since $\mathbf{(J_2)}$ does not hold, it follows from Theorem \ref{TW-J2}  that there is no monotone traveling wave solution for \eqref{2.3}. 
Therefore (ii) cannot happen, and  necessarily 
 (i) holds. 
 
 From \eqref{3.13} we obtain
 \[
 c^n\geq \sum_{i=1}^{m_0}\mu_i\int_{-L_2}^{-L_1}\int_{0}^{\yy}J^n_i(x-y)\phi^n_i(x) {\rm d}y{\rm d}x
 \]
 for any $L_2>L_1>0$. Take $n=n_k$ and let $k\to\infty$.
 By the dominated convergence theorem  we obtain
\begin{align*}
\hat c\geq \sum_{i=1}^{m_0} \mu_i\int_{-L_2}^{-L_1}\int_{0}^{\yy}J_i(x-y)\phi_i(x) {\rm d}y{\rm d}x.
\end{align*}
Since $\Phi(-\infty)=\mathbf{u^*}\ggs\mathbf{0}$, we can fix $L_1>0$ large enough such that $\phi_i(x)\geq u_i^*/2$ for $x\leq -L_1$,
and hence
\begin{align*}
\hat c\geq \sum_{i=1}^{m_0} \mu_i\frac{u_i^*}2 \int_{-L_2}^{-L_1}\int_{0}^{\yy}J_i(x-y) {\rm d}y{\rm d}x \mbox{ for all } L_2>L_1.
\end{align*}
This implies that, for any $i\in\{1,..., m_0\}$ with $\mu_i>0$,
\[
\int_{-\infty}^0\int_0^\infty J_i(x-y)dydx=\int_0^\infty x J_i(x)dx<\infty,
\]
which is a contradiction to the assumption that $\mathbf{(J_1)}$ is not satisfied. Therefore, $\lim_{n\to\yy }c^n=\yy$.
\end{proof}

Theorem \ref{theorem1.3} (ii) clearly follows directly from Lemmas \ref{lem3.7} and \ref{lem3.8}.

\section{Sharper estimates for the semi-wave and  spreading speed}

\subsection{Asymptotic behaviour of semi-wave solutions to \eqref{2.3}} The purpose of this subsection is to prove the following three theorems, which imply Theorem \ref{theorem1.6}.

\begin{theorem}\label{thm4.1} Suppose that $F$ satisfies $\mathbf{(f_1)-(f_4)}$ and the kernel functions satisfy $\mathbf{(J)}$ and $\mathbf{(J^\alpha)}$ for some $\alpha>0$. If  $\Phi(x)=(\phi_i(x))$  is a monotone solution of \eqref{2.1a} for some $c>0$, then for every $i\in\{1,..., m\}$,
\[
\int_{-\infty}^{-1}[u_i^*-\phi_i(x)]|x|^{\alpha-1}dx<\infty,
\]
which implies, by the monotonicity of $\Phi(x)$,
 \[
 0<|x|^\alpha [u_i^*-\phi_i(x)]\leq C  \mbox{ for some $C>0$ and all $x<0$,\; $i\in\{1,..., m\}$}.
 \]
\end{theorem}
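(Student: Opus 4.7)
The plan is to reduce the claimed bound to an integral estimate and then close it by a weighted energy argument. Define $\psi_i(x) := u_i^* - \phi_i(x) \geq 0$; by monotonicity of $\Phi$, each $\psi_i$ is non-decreasing with $\psi_i(-\infty) = 0$. The pointwise consequence stated in the theorem follows automatically from the integral bound once the latter is established: for $x \leq -2$,
\[
\psi_i(x)\int_x^{x/2}|y|^{\alpha-1}dy \leq \int_x^{x/2}\psi_i(y)|y|^{\alpha-1}dy \leq \int_{-\infty}^{-1}\psi_i(y)|y|^{\alpha-1}dy,
\]
and the left-hand integral is $\geq c|x|^\alpha$. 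So it suffices to prove $\int_{-\infty}^{-1}\psi_i(x)|x|^{\alpha-1}dx < \infty$.

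Write $\Psi := \mathbf{u}^* - \Phi$. Using $F(\mathbf{u}^*) = \mathbf{0}$ and the identity $\int_{-\infty}^0 J_i(x-y)u_i^*dy = u_i^*(1 - K_i(x))$ with $K_i(x) := \int_{-\infty}^x J_i(z)dz$, and the convention $d_i = 0$ for $i > m_0$, equation \eqref{2.1a} recasts as
\[
c\psi_i'(x) + d_i\bigl[J_i\star\psi_i(x) - \psi_i(x)\bigr] + d_i u_i^* K_i(x) = f_i(\mathbf{u}^* - \Psi(x)),\quad x < 0,
\]
where $J_i\star\psi_i(x) := \int_{-\infty}^0 J_i(x-y)\psi_i(y)dy$. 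Fubini together with $\mathbf{(J^\alpha)}$ yields $\int_{-\infty}^{-1}|x|^{\alpha-1}K_i(x)dx \leq \tfrac{1}{\alpha}\int_0^\infty z^\alpha J_i(z)dz < \infty$, so the source has a finite $(\alpha-1)$-moment. Next I choose a positive row vector $\tilde\theta \ggs \mathbf{0}$ with $\tilde\theta\nabla F(\mathbf{u}^*) \preceq -\mu_0\tilde\theta$ for some $\mu_0 > 0$: Perron--Frobenius applied to the cooperative matrix $\nabla F(\mathbf{u}^*)$, together with its invertibility and case $\mathbf{(f_3)}$(i), delivers this; in case $\mathbf{(f_3)}$(ii) the exact local linearity of the degenerate components of $F$ near $\mathbf{u}^*$ plays the role of the missing gap. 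Testing the system against $\tilde\theta$ and Taylor-expanding $F(\mathbf{u}^* - \Psi) = -\Psi[\nabla F(\mathbf{u}^*)]^T + O(|\Psi|^2)$, with the quadratic remainder absorbed by restricting to a half-line $(-\infty, -X_0]$ on which $|\Psi|$ is small, gives the scalar coercive inequality
\[
\tfrac{\mu_0}{2}W(x) \leq cW'(x) + \sum_i\tilde\theta_i d_i\bigl[J_i\star\psi_i(x) - \psi_i(x)\bigr] + \sum_i\tilde\theta_i d_i u_i^* K_i(x),\quad W := \tilde\theta\cdot\Psi.
\]

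I then multiply by $|x|^{\alpha-1}$ and integrate over $[-R, -X_0]$ with $R > X_0$. Integration by parts on the derivative term produces a bounded boundary contribution at $-X_0$, a non-positive boundary term at $-R$ that can be discarded, and a lower-order moment $c(\alpha-1)\int W(x)|x|^{\alpha-2}dx$. The source term is controlled by the above bound. For the nonlocal term, the identity $J_i\star\psi_i(x) - \psi_i(x) = \int_{-\infty}^0 J_i(x-y)[\psi_i(y) - \psi_i(x)]dy - \psi_i(x)K_i(x)$, a Fubini swap, and the mean-value estimate $\bigl||x|^{\alpha-1} - |y|^{\alpha-1}\bigr| \leq C|y|^{\alpha-2}|x-y|$ combined with moments of $J_i$ from $\mathbf{(J^\alpha)}$ reduce matters to the $(\alpha-2)$-moment of $\psi_i$. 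Combined with induction on $\alpha$ (whose base case is handled by preliminary polynomial decay of $\psi_i$ extracted from the coercive inequality itself via a Gr\"onwall-type argument using only $\psi_i \to 0$ and the finite source moment), letting $R \to \infty$ closes the estimate.

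The principal difficulty lies in the nonlocal Fubini step: after the swap, the error term
\[
\int \psi_i(y)\int_{-R}^{-X_0}J_i(x-y)\bigl[|x|^{\alpha-1} - |y|^{\alpha-1}\bigr]dx\,dy
\]
must be absorbed either by the coercive $W$-integral on the left or by the $(\alpha-2)$-moment delivered by the inductive hypothesis, which forces the careful moment-transfer inequalities built from $\mathbf{(J^\alpha)}$. Additional care is required in case $\mathbf{(f_3)}$(ii), where the spectral gap $\mu_0$ vanishes and stability must be recovered from the exact linear structure of $F$ near $\mathbf{u}^*$ together with $\mathbf{(f_4)}$; and for $i > m_0$ (where $d_i = 0$), the scalar ODE $c\psi_i' = f_i(\mathbf{u}^* - \Psi)$ is integrated separately using the bounds already obtained for $j \leq m_0$ via $\mathbf{(f_1)}$(iv).
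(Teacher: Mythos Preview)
Your overall strategy matches the paper's: recast the equation for $\Psi=\mathbf{u}^*-\Phi$, test against a positive row vector coming from Perron--Frobenius applied to $\nabla F(\mathbf{u}^*)$ to get a scalar coercive inequality, multiply by the weight $|x|^{\alpha-1}$, integrate, control the nonlocal term, and induct on the moment exponent. However, your execution has genuine gaps at the two places where the actual work lies.

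\medskip

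\textbf{The nonlocal term.} Your mean-value estimate $\bigl||x|^{\alpha-1}-|y|^{\alpha-1}\bigr|\le C|y|^{\alpha-2}|x-y|$ is false in the regime $|x|>|y|$ when $\alpha>2$ (the intermediate point $\xi$ may be near $|x|$, not $|y|$), and for $\alpha\le 1$ the weight $|x|^{\alpha-1}$ is not even locally integrable at the origin in the way you need. The paper devotes two separate lemmas to this step: for $\alpha\ge 1$ it decomposes the weighted nonlocal integral into three pieces and uses, instead of a single mean-value bound, the hierarchy of inequalities $(s+t)^\sigma-s^\sigma\le \sum_k c_k t^k s^{\sigma-k}+c_{n+1}t^\sigma$ (with $\sigma=n+\theta$), together with separate tail estimates exploiting monotonicity of $\psi_i$. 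For $\alpha\in(0,1)$ an entirely different one-sided argument is used, taking advantage of the sign of the weight derivative. Your Fubini-plus-mean-value shortcut does not close.

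\medskip

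\textbf{The base case.} Your induction needs $\tilde\psi\in L^1((-\infty,0])$ to start, and ``a Gr\"onwall-type argument using only $\psi_i\to 0$'' does not supply this. The paper obtains it (for $\alpha\ge 1$) from the identity
\[
\int_x^y\Big[\int_{-\infty}^0 J_i(z-w)\phi_i(w)\,dw-\phi_i(z)\Big]dz=\int_{\R}wJ_i(w)\int_0^1[\phi_i(y+sw)-\phi_i(x+sw)]\,ds\,dw,
\]
which is bounded by $u_i^*\int_{\R}|w|J_i(w)\,dw$ uniformly in $x,y$; combined with the coercive inequality this gives $\int\tilde\psi<\infty$ directly. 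You need this step explicitly.

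\medskip

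\textbf{The condition $\mathbf{(f_3)}$(ii).} You misread it: it does \emph{not} say the spectral gap vanishes. By $\mathbf{(f_3)}$, $\nabla F(\mathbf{u}^*)$ is invertible, and (using $\mathbf{(f_4)}$) the paper asserts all its eigenvalues are negative; the coercive constant $\hat b>0$ is available in both cases (i) and (ii) without any special treatment. Your proposed separate handling of case (ii), and of the components $i>m_0$ via ODE integration, is unnecessary---the weighted scalar $\tilde\psi=\sum_i\tilde a_i\psi_i$ controls all components at once.
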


Under the condition $\mathbf{(J)}$, if the kernel functions satisfy $\mathbf{(J^\alpha)}$ for some $\alpha=\alpha_0\geq 1$, then it is easily seen that
$\mathbf{(J^\alpha)}$ is satisfied for all $\alpha\in [0, \alpha_0]$. Therefore if $\mathbf{(J^\alpha)}$ is satisfied for some but not for all $\alpha\geq 1$, then there exists $\alpha^*\in [1,\infty)$ such that the kernel functions satisfy $\mathbf{(J^\alpha)}$ if and only if $\alpha\in I^{\alpha^*}=[0,\alpha^*)$ or $[0,\alpha^*]$ (depending on whether or not $\mathbf{J^{\alpha^*}}$ is satisfied), namely 

\[\begin{cases}
\dd\ \ \ \ \sum _{i=1}^{m_0}\int_0^\infty x^\alpha J_i(x)dx<\infty & \mbox{ for } \alpha\in I^{\alpha^*},\\
\dd\ \ \ \ \sum_{i=1}^{m_0}\int_0^\infty x^\alpha J_i(x)dx=\infty & \mbox{ for  } \alpha\in (0,\infty)\setminus I^{\alpha^*}.
\end{cases}
\]
Therefore, by Theorem \ref{thm4.1} we have
\bes\label{<*}
\sum_{i=1}^{m}\int_{-\infty}^{-1}[u_i^*-\phi_i(x)]|x|^{\alpha-1}dx<\infty \mbox{ for every } \alpha\in I^{\alpha^*}.
\ees
The next result shows that this estimate is sharp.
\begin{theorem}\label{thm4.2} Suppose that $F$ satisfies $\mathbf{(f_1)-(f_4)}$ and the kernel functions satisfy $\mathbf{(J)}$.
If $\mathbf{(J^\alpha)}$ is not satisfied for  some $\alpha>0$, and  $\Phi(x)=(\phi_i(x))$  is a monotone solution of \eqref{2.1a} for some $c>0$, then 
\bes\label{>*}
\sum_{i=1}^{m}\int_{-\infty}^{-1}[u_i^*-\phi_i(x)]|x|^{\alpha-1}dx=\infty.
\ees
\end{theorem}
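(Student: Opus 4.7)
The plan is to argue by contradiction: suppose $\sum_{i=1}^m I_i<\infty$ with $I_i:=\int_{-\infty}^{-1}[u_i^*-\phi_i(x)]|x|^{\alpha-1}dx$, and show that $\mathbf{(J^\alpha)}$ must then hold, contradicting the hypothesis. Setting $w_i(x):=u_i^*-\phi_i(x)\geq 0$, which is nondecreasing by the monotonicity of $\Phi$, I would first rewrite the $i$-th component of \eqref{2.1a} for $i\in\{1,\ldots,m_0\}$, using $f_i(\mathbf{u^*})=0$ and the identity $\int_{-\infty}^0 J_i(x-y)dy=1-\int_{-\infty}^x J_i(z)dz$ (valid for $x<0$), as
\begin{equation*}
cw_i'(x)=-d_iu_i^*\int_{-\infty}^x\!J_i(z)dz-d_i\int_{-\infty}^0\!J_i(x-y)w_i(y)dy+d_iw_i(x)+f_i(\Phi(x)).
\end{equation*}

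The crucial observation is that $w_i'\geq 0$, so the right-hand side above is nonnegative. Discarding the nonnegative convolution term $d_i\int_{-\infty}^0 J_i(x-y)w_i(y)dy$ and bounding $f_i(\Phi(x))=f_i(\Phi(x))-f_i(\mathbf{u^*})$ by the Lipschitz estimate \eqref{1.3a} applied on $[\mathbf{0},\mathbf{u^*}]$ (which gives $f_i(\Phi(x))\leq L\sum_{j=1}^m w_j(x)$ with $L=L(\mathbf{u^*})$), I arrive at the pointwise inequality
\begin{equation*}
d_iu_i^*\int_{-\infty}^x J_i(z)dz\leq (d_i+L)\sum_{j=1}^m w_j(x)\qquad\text{for every } x<0.
\end{equation*}

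Multiplying by $|x|^{\alpha-1}$ and integrating on $(-\infty,-1)$, the right-hand side is bounded by $(d_i+L)\sum_{j=1}^m I_j<\infty$, while Fubini combined with the evenness of $J_i$ (after the change of variables $x=-\xi$, $z=-\zeta$) yields
\begin{equation*}
\int_{-\infty}^{-1}|x|^{\alpha-1}\!\int_{-\infty}^x\!J_i(z)dz\,dx=\int_1^\infty\!\xi^{\alpha-1}\!\int_\xi^\infty\!J_i(\zeta)d\zeta\,d\xi=\frac{1}{\alpha}\int_1^\infty(\zeta^\alpha-1)J_i(\zeta)d\zeta.
\end{equation*}
Since $\int_1^\infty J_i\leq 1$ and $\int_0^1 \zeta^\alpha J_i(\zeta)d\zeta<\infty$ (using $J_i\in L^\infty$), this forces $\int_0^\infty \zeta^\alpha J_i(\zeta)d\zeta<\infty$ for every $i\in\{1,\ldots,m_0\}$; that is, $\mathbf{(J^\alpha)}$ holds, contradicting the standing hypothesis.

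I do not foresee a substantial obstacle: the essential trick is that monotonicity ($w_i'\geq 0$) converts the equation into a one-sided pointwise inequality in which the unwanted convolution term has a favourable sign and can simply be dropped, so no induction on $\alpha$ or auxiliary moment assumption on the kernels is needed; everything reduces to the global Lipschitz bound on $F$ and a single Fubini computation.
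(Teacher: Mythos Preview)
Your proof is correct and follows essentially the same route as the paper's: both arguments rewrite the equation in terms of $w_i=u_i^*-\phi_i$, use $w_i'\ge 0$ together with the nonnegativity of the convolution term to isolate the tail integral $d_iu_i^*\int_{-\infty}^xJ_i$, control the reaction term via the Lipschitz estimate \eqref{1.3a}, and finish with the same Fubini computation to recover $\int_0^\infty x^\alpha J_i<\infty$. The only cosmetic difference is that you derive the key inequality pointwise and then integrate, whereas the paper integrates first over $[-M,-1]$ (summing all components) and drops the same two nonnegative terms after integration; your version is in fact slightly cleaner.
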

\begin{theorem}\label{thm4.3} Suppose that $F$ satisfies $\mathbf{(f_1)-(f_4)}$ and the kernel functions satisfy $\mathbf{(J)}$.
If $\mathbf{(J_2)}$ holds, and  $\Phi(x)=(\phi_i(x))$  is a monotone solution of \eqref{2.1a} for some $c>0$, then there exist positive constants $\beta$ and $C$ such that
\bes\label{exp}
0<u_i^*-\phi_i(x)\leq C e^{\beta x} \mbox{ for all } x<0,\; i\in\{1,..., m\}.
\ees
\end{theorem}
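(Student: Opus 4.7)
The plan is, setting $W(x) := \mathbf{u}^* - \Phi(x) \succ \mathbf{0}$ on $(-\yy, 0]$, to construct a super-solution $\bar W(x) = KVe^{\beta x}$ (with $\beta > 0$, $V \ggs \mathbf{0}$, $K > 0$) from a Perron--Frobenius analysis of a characteristic matrix, and then deduce $W \preceq \bar W$ by a comparison argument tailored to the unbounded half-line. Substituting $\Phi = \mathbf{u}^* - W$ into \eqref{2.1a} and using $F(\mathbf{u}^*) = \mathbf{0}$ together with $\int_\R J_i = 1$ for $i \leq m_0$ (and the convention $d_i = J_i = 0$ for $i > m_0$) yields, with $A := \nabla F(\mathbf{u}^*)$ acting on $W$ regarded as a column vector,
\[
D \circ W(x) - D \circ \int_{-\yy}^0 \mathbf{J}(x-y) \circ W(y)\,\rd y - cW'(x) - AW(x) = E(x) - R(W(x)),
\]
where $E(x) := D \circ \mathbf{u}^* \circ \int_{-x}^\yy \mathbf{J}(z)\,\rd z$ is the ``truncation source'' arising from replacing $\int_{-\yy}^0$ by $\int_\R$ and $R(W) = O(|W|^2)$. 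Condition $\mathbf{(J_2)}$ gives some $\lambda_0 > 0$ with $\int_{-x}^{\yy} J_i(z)\,\rd z \leq C e^{\lambda_0 x}$, hence $E(x) \preceq C_0 e^{\lambda_0 x}\mathbf{1}$ as $x \to -\yy$.

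Define the characteristic matrix
\[
M(\beta) := A + c\beta\,\mathbf{I}_m + \operatorname{diag}\bigl(d_i(\hat J_i(\beta) - 1)\bigr)_{i=1}^m,\qquad \hat J_i(\beta) := \int_\R e^{-\beta z} J_i(z)\,\rd z.
\]
As $J_i$ is even, each $\hat J_i$ is convex on its interval of finiteness $[0, \beta_{\max}^{(i)})$, with $\hat J_i(0) = 1$ and $\hat J_i \geq 1$. So $M(\beta)$ is irreducible with non-negative off-diagonals (inherited from $A$ by $\mathbf{(f_1)}$), and Perron--Frobenius provides a simple principal eigenvalue $\lambda_P(\beta)$ with positive right eigenvector $V(\beta) \ggs \mathbf{0}$, continuous and strictly increasing in $\beta$. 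Assumption $\mathbf{(f_3)}$ yields $\lambda_P(0) = \lambda_P(A) \leq 0$ (pair $\mathbf{u}^* A \preceq \mathbf{0}$ against the positive right eigenvector), while $\lambda_P(\beta) \to +\yy$ as $\beta$ approaches $\beta_{\max} := \min_i \beta_{\max}^{(i)} \in (0, \yy]$. Thus there is $\beta_* \in (0, \beta_{\max}]$ with $\lambda_P(\beta_*) = 0$, and for every $\beta \in (0, \beta_*)$ we have $\lambda_P(\beta) < 0$ and $M(\beta) V(\beta) = \lambda_P(\beta) V(\beta) \llp \mathbf{0}$.

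Fix $\beta \in (0, \min\{\beta_*, \lambda_0\})$ and set $\bar W(x) := KVe^{\beta x}$ with $V := V(\beta)$. Plugging $\bar W$ into the linearized LHS with integration extended to $\R$ gives the strict positive leading term $-e^{\beta x} M(\beta) V = |\lambda_P(\beta)| V e^{\beta x}$. Three errors must be absorbed into this positive budget: the truncation correction $D\circ V \circ \int_{-\yy}^x e^{-\beta z}\mathbf{J}(z)\,\rd z \cdot Ke^{\beta x}$, which is $Ke^{\beta x} \cdot o(1)$ as $x \to -\yy$ by $\mathbf{(J_2)}$; the quadratic remainder $R(\bar W) = O(K^2 e^{2\beta x})$; and the source $E(x) = O(e^{\lambda_0 x})$ with $\lambda_0 > \beta$. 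Each is $o(e^{\beta x})$ at $-\yy$, so choosing $R > 0$ large (independent of $K$) makes $\bar W$ a super-solution on $(-\yy, -R]$, and enlarging $K$ secures $\bar W \succeq \mathbf{u}^* \succeq W$ on $[-R, 0]$. The difference $Z := \bar W - W$ then satisfies, on $(-\yy, -R]$, a linear cooperative integro-differential inequality with non-negative off-diagonal couplings, boundary value $Z(-R) \succeq \mathbf{0}$, limit $Z(-\yy) = \mathbf{0}$, and strict positive forcing $\geq \tfrac12 K|\lambda_P(\beta)|Ve^{\beta x}$. The desired bound $u_i^* - \phi_i(x) \leq C e^{\beta x}$ follows from $Z \succeq \mathbf{0}$ on $(-\yy, -R]$ combined with our choice of $K$ on $[-R, 0]$.

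The main obstacle is this last comparison step on the unbounded half-line, since a standard maximum principle would need a priori exponential decay of $W$ at rate $\geq \beta$, precisely what the theorem asserts. I plan to resolve this in two stages. First, I bootstrap: $\mathbf{(J_2)}$ implies $\mathbf{(J^\alpha)}$ for every $\alpha > 0$, so Theorem \ref{thm4.1} gives that $W$ decays faster than any polynomial; this is sufficient (using any very small exponential rate $\beta' > 0$) to start the comparison and obtain a first exponential decay estimate, which can then be iteratively upgraded up to any $\beta \in (0, \beta_*)$. Second, at each iteration the comparison is carried out by an Ekeland-type contradiction argument in the spirit of Lemma \ref{lemma2.3aa}: if $\inf_{x \leq -R} [Z_{i_0}(x) e^{-\beta x}] < 0$ for some component $i_0$ (this infimum is finite thanks to the preliminary decay rate just mentioned), Ekeland produces a near-minimizer where the integro-differential inequality is almost saturated, contradicting the strict positive forcing. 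The propagation across the off-diagonal coupling, and in particular into the purely ODE components $i > m_0$, uses $\mathbf{(f_1)}$(iv) to secure strictly positive coupling from the diffusive components, exactly as in the final paragraph of the proof of Theorem \ref{lemma2.12}.
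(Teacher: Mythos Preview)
Your supersolution/comparison strategy is genuinely different from the paper's, but the bootstrap you propose to close the comparison on $(-\infty,-R]$ does not work as stated. You claim that faster-than-polynomial decay of $W$ (from Theorem~\ref{thm4.1}) makes $\inf_{x\le -R}\,Z_{i_0}(x)e^{-\beta x}$ finite. But $Z_{i_0}(x)e^{-\beta x}=KV_{i_0}-W_{i_0}(x)e^{-\beta x}$, and a bound $W_{i_0}(x)\le C_\alpha|x|^{-\alpha}$ for every $\alpha>0$ says nothing about $W_{i_0}(x)e^{-\beta x}$ as $x\to-\infty$: functions like $e^{-|x|^{1/2}}$ decay faster than any power yet blow up against $e^{\beta|x|}$ for every $\beta>0$. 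So the infimum may well be $-\infty$, Ekeland is unavailable, and the iteration never starts. There is also a minor point you should be aware of: $\mathbf{(f_1)}$(iii) gives irreducibility of $\nabla F(\mathbf 0)$, not of $A=\nabla F(\mathbf u^*)$, so your Perron--Frobenius step for $M(\beta)$ needs a separate justification (or a workaround producing $V\ggs\mathbf 0$ with $M(\beta)V\llp\mathbf 0$ without irreducibility).

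The paper avoids the unbounded-domain comparison entirely by an integral (energy-type) estimate. With $\Psi=\mathbf u^*-\Phi$ and $\tilde\psi=\sum_i\tilde a_i\psi_i$ (the $\tilde a_i>0$ coming from the spectral structure of $\nabla F(\mathbf u^*)$), one multiplies the scalar equation for $\tilde\psi$ by $e^{-\beta x}$ and integrates over $[-M,0]$. The key technical input is Lemma~\ref{lem4.7}, which shows that for every small $\epsilon>0$ there is $\beta\in(0,\lambda]$ with
\[
\int_{-M}^0 e^{-\beta x}\!\int_{-\infty}^0 J_i(x-y)\psi_i(y)\,\rd y\,\rd x\;\le\;(1+\epsilon)\int_{-M}^0 e^{-\beta x}\psi_i(x)\,\rd x+C
\]
uniformly in $M$. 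Choosing $\epsilon,\beta$ so that $\hat b\ge\sum_i\tilde a_i d_i\epsilon+c\beta$, the convolution and $c\tilde\psi'$ terms are absorbed by the reaction term, and after bounding the source $\int e^{-\beta x}\int_0^\infty J_i(x-y)\,\rd y\,\rd x$ via $\mathbf{(J_2)}$ one is left with $c\,\tilde\psi(-M)e^{\beta M}\le C'$ for all $M>M_0$, i.e.\ $\tilde\psi(x)=O(e^{\beta x})$. No a~priori exponential decay is needed anywhere. If you want to rescue your approach, the cleanest route is to first establish a genuine exponential decay at \emph{some} small rate by an integrated Carr--Chmaj type argument (as in Lemma~\ref{lemma2.13}, adapted to the semi-wave with its exponentially decaying source $E$), and only then run the supersolution comparison to upgrade the rate.
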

\medskip

The following three lemmas play a crucial role in the proof of Theorem \ref{thm4.1}.
\begin{lemma}\label{lem4.1}
Suppose that $J(x)$ has the properties described in $\mathbf{(J)}$ and satisfies $\mathbf{(J^\alpha)}$ for some $\alpha\geq 1$. If $\psi\in L^1((-\infty, 0])$ is nonnegative, continuous and nondecreasing in $(-\infty, 0]$, and
\begin{equation}\label{4.1}
\int_{-\infty}^0|x|^\beta\psi(x)dx<\infty \mbox{ for some } \beta\geq 0,
\end{equation}
then for any $\sigma\in(0,\min\{\beta+1,\alpha\}]$, there exists $C>0$ such that 
\[
I=I_M:=\int_{-M}^0|x|^{\sigma}\left [\int_{-\infty}^0J(x-y)\psi(y)dy-\psi(x)\right]dx\in[-C, C]
\mbox{ for all } M>0.
\]
\end{lemma}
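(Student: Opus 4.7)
The plan is to extend $\psi$ by zero to $(0,\infty)$ and recognise
$I_M=\int_\R f_M(x)\bigl[(J*\psi)(x)-\psi(x)\bigr]dx$ with $f_M(x):=|x|^\sigma\mathbf{1}_{[-M,0]}(x)$. Using $\int_\R J=1$, I will split the bracket as
$$(J*\psi)(x)-\psi(x)=\int_{-\infty}^0 J(x-y)[\psi(y)-\psi(x)]\,dy\;-\;\psi(x)\int_{|x|}^\infty J(u)\,du\qquad(x<0).$$
The ``tail correction'' piece contributes at most $\int_{-M}^0 |x|^\sigma\psi(x)\int_{|x|}^\infty J(u)\,du\,dx$; using $\int_t^\infty J\le C t^{-\alpha}$ (from $\mathbf{(J^\alpha)}$) together with $\sigma\le\alpha$ bounds it by $C\|\psi\|_{L^1}$ uniformly in $M$, and it may be set aside.

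For the ``main piece'' I apply Fubini (valid since $J,\psi\in L^1$ and $f_M$ is bounded with compact support) and split the $y$-integration into the diagonal region $y\in[-M,0]$ and the off-diagonal region $y\in(-\infty,-M)$. On the diagonal region, swapping $x\leftrightarrow y$ and using the evenness of $J$ yields the symmetrised form
$$\tfrac12\iint_{[-M,0]^2}J(x-y)[\psi(y)-\psi(x)]\bigl[|x|^\sigma-|y|^\sigma\bigr]\,dx\,dy,$$
whose integrand is pointwise nonnegative: both brackets share the sign of $y-x$, since $\psi$ is nondecreasing and $t\mapsto|t|^\sigma$ is nonincreasing on $(-\infty,0]$. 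I then estimate $\bigl||x|^\sigma-|y|^\sigma\bigr|\le|x-y|^\sigma$ when $\sigma\le 1$, and $\bigl||x|^\sigma-|y|^\sigma\bigr|\le\sigma(|x|^{\sigma-1}+|y|^{\sigma-1})|x-y|$ when $\sigma>1$, and combine with $\int_\R J(u)|u|^\sigma du<\infty$ (from $\sigma\le\alpha$) together with $\int_{-\infty}^0\psi(y)|y|^{\sigma-1}dy<\infty$ (needed only when $\sigma>1$, and guaranteed by $\sigma-1\le\beta$) to get a uniform bound for the diagonal contribution.

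On the off-diagonal region, $\psi(y)\le\psi(x)$ makes the integrand nonpositive, and its absolute value is dominated by
$$B_M:=\int_{-M}^0|x|^\sigma\psi(x)\int_{x+M}^\infty J(u)\,du\,dx.$$
Substituting $t=-x$ and splitting at $t=M/2$: on $t\in[0,M/2]$, the bounds $\int_{M-t}^\infty J\le CM^{-\alpha}$ and $t^\sigma\le(M/2)^\sigma$ give a contribution $\lesssim M^{\sigma-\alpha}\|\psi\|_{L^1}$, finite since $\sigma\le\alpha$. On $t\in[M/2,M]$, I invoke the pointwise decay
$$\psi(-t)\le Ct^{-\beta-1}\qquad(t\ge 1),$$
which I establish first as a preliminary observation: since $\psi$ is nonnegative and nondecreasing, $\psi(-t)\,t^{\beta+1}/C\le\int_{-t}^{-t/2}|u|^\beta\psi(u)du\le\int_{-\infty}^0|u|^\beta\psi(u)du<\infty$. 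Combined with the crude bound $\int_{M/2}^M\int_{M-t}^\infty J(u)du\,dt=\int_0^{M/2}\int_s^\infty J(u)du\,ds\le\int_0^\infty u\,J(u)du<\infty$ (using $\alpha\ge 1$), this second half of $B_M$ contributes $\lesssim M^{\sigma-\beta-1}$, finite since $\sigma\le\beta+1$.

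The main obstacle is that both endpoints $\sigma=\alpha$ and $\sigma=\beta+1$ are sharp: the moment bound $\int J|u|^\sigma du<\infty$ is used at its critical strength for the diagonal piece and for the small-$t$ half of $B_M$, while the pointwise decay $\psi(y)\lesssim|y|^{-\beta-1}$ distilled from monotonicity is indispensable for the large-$t$ half. Splitting $B_M$ precisely at $t=M/2$ is what allows the $M^{\sigma-\alpha}$ and $M^{\sigma-\beta-1}$ powers to emerge in separate pieces; any unbalanced split risks a logarithmic blow-up in the borderline case $\alpha=1$. Collecting the four uniformly bounded pieces (tail correction, diagonal, and the two halves of $B_M$) yields $|I_M|\le C$ for all $M>0$, which is the desired conclusion.
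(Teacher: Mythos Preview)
Your argument is correct, and it takes a genuinely different route from the paper's. The paper works directly with the change of variables $x\mapsto -x$, $y\mapsto x-y$ and expands $\int_{-M}^0|x|^\sigma(J*\psi)(x)\,dx$ into three pieces $I_1+I_2+I_3$: the main piece $I_1$ contains the differences $(x+y)^\sigma-x^\sigma$, which are bounded via elementary power inequalities (the binomial-type expansion when $\sigma>1$); $I_2$ and $I_3$ are boundary remainders handled with the pointwise decay $\psi(-x)\le M_1 x^{-\sigma}$ and the moment bound $M\int_M^\infty J\le C$.

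Your decomposition is instead (tail correction) $+$ (diagonal) $+$ (off-diagonal), and the distinctive move is the carr\'e-du-champ symmetrisation on the diagonal square $[-M,0]^2$, which yields the nonnegativity of that piece automatically and reduces the estimate to controlling $J(x-y)\,|\psi(y)-\psi(x)|\,\bigl||x|^\sigma-|y|^\sigma\bigr|$. This is cleaner conceptually and avoids the somewhat ad hoc expansion of $(x+y)^\sigma-x^\sigma$; the price is that when $\sigma>1$ you still need to handle the cross term $\psi(x)\,|y|^{\sigma-1}$ (implicitly via $|y|^{\sigma-1}\le C(|x|^{\sigma-1}+|x-y|^{\sigma-1})$, which brings in both the $\sigma$-moment of $J$ and the weighted integrability $\int|x|^{\sigma-1}\psi<\infty$). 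Both proofs ultimately rest on the same two ingredients---the moment bound $\int J|u|^\sigma<\infty$ from $\sigma\le\alpha$, and the pointwise decay $\psi(-t)\lesssim t^{-\beta-1}$ extracted from monotonicity---so the difference is organisational rather than essential. A minor point: your treatment of $B_M$ tacitly assumes $M\ge 2$ so that $\psi(-t)\lesssim t^{-\beta-1}$ is available on $[M/2,M]$; for small $M$ the bound $B_M\le M^\sigma\psi(0)\cdot M$ is immediate.
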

\begin{proof}
For fixed $M>0$ we have
\begin{align*}
&\int_{-M}^0\int_{-\infty}^0|x|^\sigma J(x-y)\psi(y)dydx\\
&=\int_0^M\int_{-\infty}^x x^\sigma J(y)\psi(y-x)dydx\\
&=\int_0^M\int_{-\infty}^0 x^\sigma J(y)\psi(y-x)dydx+\int_0^M\int_{0}^x x^\sigma J(y)\psi(y-x)dydx\\
&=\int_{-\infty}^0\int_{0}^{M} x^{\sigma}J(y)\psi(y-x)dxdy+\int_0^M\int_{y}^M x^{\sigma}J(y)\psi(y-x)dxdy\\
&=\int_{-\infty}^0\int_{-y}^{M-y} (x+y)^{\sigma}J(y)\psi(-x)dxdy+\int_0^M\int_{0}^{M-y} (x+y)^{\sigma}J(y)\psi(-x)dxdy,
\end{align*}
and
\[
\int_{-M}^0|x|^{\sigma}\psi(x)dx=\int_{\R}\int_0^M x^{\sigma}J(y)\psi(-x)dxdy.
\]
Therefore we can write
\[
I=\sum_{j=1}^3I_j
\]
with
\begin{align*}
I_1:=& \int_{-\infty}^0\int_{-y}^{M-y} \left[(x+y)^{\sigma}-x^{\sigma}\right]J(y)\psi(-x)dxdy\\
&+\int_0^M\int_{0}^{M-y} \left[(x+y)^{\sigma}-x^{\sigma}\right]J(y)\psi(-x)dxdy,
\\
I_2:=& \int_{-\infty}^0\int_{M}^{M-y} x^{\sigma}J(y)\psi(-x)dxdy-\int_{-\infty}^0\int_{0}^{-y} x^{\sigma}J(y)\psi(-x)dxdy,\\
I_3:=& -\int_0^M\int_{M-y}^{M} x^{\sigma}J(y)\psi(-x)dxdy-\int_M^{\infty}\int_{0}^{M} x^{\sigma}J(y)\psi(-x)dxdy.
\end{align*}

To estimate $I_1$ we will make use of some elementary inequalities. If $s,t>0$ and $\sigma\in (0,1]$, then it is easily checked that
\bes\label{sigma<1}
(s+t)^\sigma-s^\sigma\leq t^\sigma.
\ees
If $\sigma=n+\theta$ with $n\geq 1$ an integer, and $\theta\in (0,1]$, then by the mean value theorem
\begin{align*}
(s+t)^\sigma-s^\sigma &=\sigma (s+\zeta t)^{\sigma-1}t\leq \sigma t(s+t)^{\sigma-1}=\sigma t s^{\sigma-1}+\sigma t\left[(s+t)^{\sigma-1}-s^{\sigma-1}\right]\\
&\leq \sum_{k=1}^n\left[\Pi_{j=0}^{k-1} (\sigma-j)t^ks^{\sigma-k}\right]+\Pi_{j=0}^{n-1}(\sigma-j)t^n\left[(s^\theta+t^\theta)-s^\theta\right]\\
&\leq \sum_{k=1}^n\left[\Pi_{j=0}^{k-1} (\sigma-j)t^ks^{\sigma-k}\right]+\Pi_{j=0}^{n-1}(\sigma-j) t^{n+\theta}\\
&=\sum_{k=1}^nc_kt^ks^{\sigma-k} +c_{n+1}t^{\sigma}
\end{align*}
where $\zeta\in [0,1]$, and $c_k=c_k(\sigma)>0$ for $k\in\{1,..., n+1\}$.

Applying this inequality to $(x+y)^{\sigma}-x^{\sigma}$ with $x+y>0$ and $x>0$, we obtain, for the case $\sigma>1$,
\[
|(x+y)^{\sigma}-x^{\sigma}|\leq \sum_{k=1}^nc_k|y|^kx^{\sigma-k} +c_{n+1}|y|^{\sigma}
\]
with $\sigma-n=\theta\in (0, 1]$ and $n\geq 1$ an integer, $c_k=c_k(\sigma)>0$ for $k\in\{1,..., n+1\}$.

Therefore, in the case $\sigma>1$,
\begin{align*}
|I_1|\leq & \ \int_{-\infty}^0\int_{-y}^{M-y} \left[  \sum_{k=1}^{n}c_k|y|^kx^{\sigma-k} +c_{n+1}|y|^{\sigma}  \right]J(y)\psi(-x)dxdy\\
&+\int_0^M\int_{0}^{M-y} \left[\sum_{k=1}^{n}c_k|y|^kx^{\sigma-k} +c_{n+1}|y|^{\sigma}\right]J(y)\psi(-x)dxdy\\
\leq & \ 2\sum_{k=1}^nc_k\int_0^\infty x^{\sigma-k}\psi(-x)dx\int_0^\infty y^kJ(y)dy +2c_{n+1}\int_0^\infty \psi(-x)dx\int_0^\infty y^{\sigma}J(y)dy \\
:=& \ C_1.
\end{align*}
Since $1\leq k\leq n<\sigma\leq \min\{\beta+1, \alpha\}$, by the assumptions on $J$ and $\psi$ we see that $ C_1$  is a finite number.

If $\sigma\in (0, 1]$, then
\begin{align*}
|I_1|\leq & \int_{-\infty}^0\int_{-y}^{M-y} |y|^\sigma J(y)\psi(-x)dxdy+\int_0^M\int_{0}^{M-y} |y|^\sigma J(y)\psi(-x)dxdy\\
\leq & \ 2\int_0^\infty \psi(-x)dx\int_0^\infty y^\sigma J(y)dy:=\tilde C_1<\infty.
\end{align*}

Since $\psi(x)$ is nondecreasing, from \eqref{4.1} we easily deduce
\[
\psi(-x)\leq \frac{M_1}{x^{\sigma}} \mbox{ for some $M_1>0$ and all $x>0$}.
\]
Similarly, using $\mathbf{(J^\alpha)}$ we obtain
\[
M\int_M^\infty J(y)dy\leq M^{1-\alpha}\int_M^\infty y^\alpha J(y)dy\leq \int_1^\infty y^\alpha J(y)dy:= M_2 \mbox{ for } M\geq 1,
\] 
and hence
\[
M\int_M^\infty J(y)dy\leq \min\left\{\int_0^\infty J(y), M_2 \right\}:=M_3<\infty \mbox{ for all } M>0.
\]
Therefore
\begin{align*}
|I_2|&\leq  \int_{-\infty}^0\int_{M}^{M-y} M_1 J(y)dxdy+\int_{-\infty}^0\int_{0}^{-y} M_1J(y)dxdy\\
& = 2M_1\int_0^\infty yJ(y)dy:=C_2<\infty,
\end{align*}
and
\begin{align*}
|I_3|&\leq \int_0^MM_1yJ(y)dy+\int_M^{\infty}M_1MJ(y)dy\\
&\leq M_1\int_0^\infty yJ(y)dy+M_1M_3:=C_3<\infty.
\end{align*}
We thus have
\[
|I|\leq C_1+\tilde C_1+C_2+C_3:=C<\infty \mbox{ for all } M>0.
\]
The proof is complete.
\end{proof}

\begin{lemma}\label{lemma4.4a} Suppose that $J(x)$ has the properties described in $\mathbf{(J)}$ and satisfies $\mathbf{(J^\alpha)}$ for some $\alpha\in  (0,1)$. Let   $\psi$ be nonnegative, continuous and nondecreasing in $(-\infty, 0]$. Then there exists $C>0$ such that
		\[
		S=S_M:=\int_{-M}^0|x|^{\alpha-1}\left [\int_{-\infty}^0J(x-y)\psi(y)dy-\psi(x)\right]dx\leq C
		\mbox{ for all } M>0.
		\]
\end{lemma}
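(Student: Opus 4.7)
The plan is to bound $S_M$ by breaking the kernel integral into a part whose contribution is nonpositive (and can thus be discarded) and a part concentrated on $|y-x|\lesssim |x|$, then estimate the latter using only $\psi(0)=\|\psi\|_\yy$ and the $\alpha$-moment provided by $(\mathbf{J^\alpha})$. The architecture is parallel to Lemma~\ref{lem4.1}, but the key difference is that here we exploit the sign of the exponent $\alpha-1<0$ to avoid any integrability assumption on $\psi$ itself; all we need is that $\psi$ is bounded, which follows from continuity and monotonicity on $(-\yy,0]$.

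First I would use the identity
\begin{equation*}
\int_{-\yy}^0 J(x-y)\psi(y)\rd y-\psi(x)=\int_{-\yy}^0 J(x-y)[\psi(y)-\psi(x)]\rd y-\psi(x)\int_{-\yy}^x J(z)\rd z
\end{equation*}
and drop the second term, which is nonpositive since $\psi\geq 0$. Splitting the remaining integral at $y=x$ and discarding the part over $(-\yy,x)$ (nonpositive by monotonicity of $\psi$), then substituting $z=y-x$ in the part over $(x,0)$ and using that $J$ is even, followed by $u=-x$ and Fubini over the region $\{0\leq z\leq M,\ z\leq u\leq M\}$, I arrive at
\begin{equation*}
S_M\leq \int_0^M J(z)\int_z^M u^{\alpha-1}\bigl[\psi(z-u)-\psi(-u)\bigr]\rd u\,\rd z.
\end{equation*}

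The crux is then to control the inner $u$-integral. Substituting $v=u-z$ gives
\begin{equation*}
A:=\int_z^Mu^{\alpha-1}\psi(z-u)\rd u=\int_0^{M-z}(v+z)^{\alpha-1}\psi(-v)\rd v.
\end{equation*}
Since $\alpha-1<0$, we have $(v+z)^{\alpha-1}\leq v^{\alpha-1}$ for $v,z>0$. When $z\leq M/2$, split $A$ at $v=z$ and use this inequality on the tail over $[z,M-z]$; the tail then cancels with the corresponding portion of $B:=\int_z^Mu^{\alpha-1}\psi(-u)\rd u$, leaving a nonpositive remainder $-\int_{M-z}^Mu^{\alpha-1}\psi(-u)\rd u$ that can be dropped. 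When $z>M/2$, trivially $A\leq \int_0^z(v+z)^{\alpha-1}\psi(-v)\rd v$ and $B\geq 0$ is discarded. In both cases
\begin{equation*}
\int_z^M u^{\alpha-1}[\psi(z-u)-\psi(-u)]\rd u\leq \int_0^z (v+z)^{\alpha-1}\psi(-v)\rd v\leq \psi(0)\cdot\frac{(2z)^\alpha-z^\alpha}{\alpha}=\frac{2^\alpha-1}{\alpha}\psi(0)z^\alpha.
\end{equation*}

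Feeding this back yields
\begin{equation*}
S_M\leq \frac{2^\alpha-1}{\alpha}\psi(0)\int_0^M z^\alpha J(z)\rd z\leq \frac{2^\alpha-1}{\alpha}\psi(0)\int_0^\yy z^\alpha J(z)\rd z=:C<\yy
\end{equation*}
by $(\mathbf{J^\alpha})$, and $C$ is independent of $M$ as required. The main obstacle is the bookkeeping in the cancellation step, especially when $z$ is close to $M$ so that $M-z<z$; one must verify in each case that the residual terms genuinely have the right sign to be dropped, but this is the only subtlety, the remainder being a direct elementary estimate.
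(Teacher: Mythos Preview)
Your proof is correct. The route, however, differs from the paper's. The paper reproduces the decomposition of Lemma~\ref{lem4.1}: after the same Fubini manipulation it writes $S=\tilde I_1+\tilde I_2+\tilde I_3$ with $\sigma=\alpha-1$, observes $\tilde I_3\le 0$, and then bounds $\tilde I_1$ and $\tilde I_2$ separately by exploiting that $(x+y)^{\sigma}-x^{\sigma}<0$ for $x,y>0$ (since $\sigma<0$) and that $\psi\le\psi(0)$, arriving at $S\le \frac{2\psi(0)}{\alpha}\int_0^\yy y^{\alpha}J(y)\rd y$. You instead first strip away the manifestly nonpositive pieces (the mass defect $-\psi(x)\int_{-\yy}^x J$ and the contribution from $y<x$ where $\psi(y)\le\psi(x)$), reduce to a clean double integral $\int_0^M J(z)\int_z^M u^{\alpha-1}[\psi(z-u)-\psi(-u)]\rd u\,\rd z$, and handle the inner integral by the cancellation trick $(v+z)^{\alpha-1}\le v^{\alpha-1}$ on the tail. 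Your argument is a bit more direct and yields the slightly sharper constant $\frac{(2^{\alpha}-1)\psi(0)}{\alpha}\int_0^\yy z^{\alpha}J(z)\rd z$; the paper's decomposition has the virtue of reusing the machinery already set up in Lemma~\ref{lem4.1} verbatim. Both rely on exactly the same three ingredients: $\alpha-1<0$, $\psi\le\psi(0)$, and the finite $\alpha$-moment of $J$.
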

\begin{proof}
	As in the proof of Lemma \ref{lemma4.1},	 we deduce for fixed $M>0$ and $\sigma>-1$,
	\begin{align*}
	&\int_{-M}^0\int_{-\infty}^0|x|^\sigma J(x-y)\psi(y)dydx\\
	=&\int_{-\infty}^0\int_{-y}^{M-y} (x+y)^{\sigma}J(y)\psi(-x)dxdy+\int_0^M\int_{0}^{M-y} (x+y)^{\sigma}J(y)\psi(-x)dxdy.
	\end{align*}
	and 
	\[
	\int_{-M}^0|x|^{\sigma}\psi(x)dx=\int_{\R}\int_0^M |x|^{\sigma}J(y)\psi(-x)dxdy.
	\]
	Hence 
	\begin{align*}
	S=\sum_{i=1}^3\td I_i 
	\end{align*}
	with
	\begin{align*}
	\td I_1:=& \int_{-\infty}^0\int_{-y}^{M-y} \left[(x+y)^{\sigma}-x^{\sigma}\right]J(y)\psi(-x)dxdy\\
	&+\int_0^M\int_{0}^{M-y} \left[(x+y)^{\sigma}-x^{\sigma}\right]J(y)\psi(-x)dxdy,
	\\
	\td I_2:=& \int_{-\infty}^0\int_{M}^{M-y} x^{\sigma}J(y)\psi(-x)dxdy-\int_{-\infty}^0\int_{0}^{-y} x^{\sigma}J(y)\psi(-x)dxdy,\\
	\td I_3:=& -\int_0^M\int_{M-y}^{M} x^{\sigma}J(y)\psi(-x)dxdy-\int_M^{\infty}\int_{0}^{M} x^{\sigma}J(y)\psi(-x)dxdy.
	\end{align*}
	
Take $\sigma=\alpha-1$. It is clear that $\td I_3\leq 0$. For $\td I_1$,  since $\sigma<0$,
	\begin{align*}
	(x+y)^{\sigma}-x^{\sigma}< 0 \ \mbox{ when }  x>0 \ \mbox{ and }  y> 0,
	\end{align*}
	and hence, by  $\mathbf{(J^\alpha)}$ and $\sigma+1=\alpha\in (0,1)$,
	\begin{align*}
	\td I_1\leq& \int_{-\infty}^0\int_{-y}^{M-y} \left[(x+y)^{\sigma}-x^{\sigma}\right]J(y)\psi(-x)dxdy\\
	\leq& \psi(0)\int_{-\infty}^0\int_{-y}^{M-y} \left[(x+y)^{\sigma}-x^{\sigma}\right]J(y)dxdy\\
	=&\frac{\psi(0)}{\sigma+1}\int_{-\yy}^{0}[M^{\sigma+1}-(M-y)^{\sigma+1}+(-y)^{\sigma+1}]J(y)\rd y\\
	\leq &\frac{\psi(0)}{\sigma+1}\int_{-\yy}^{0}(-y)^{\sigma+1}J(y)\rd y=\frac{\psi(0)}{\sigma+1}\int_{0}^{\yy} y^{\sigma+1}J(y)\rd y:=C_1<\yy.
	\end{align*}
	
	Moreover, by $\mathbf{(J^\alpha)}$, $\sigma+1=\alpha\in (0,1)$ and \eqref{sigma<1},
	\begin{align*}
	\td I_2\leq& \int_{-\yy}^{0}\int_{M}^{M-y}x^{\sigma}J(y)\psi(-x)\rd x\rd y\leq \psi(0)\int_{-\yy}^{0}\int_{M}^{M-y}x^{\sigma}J(y)\rd x\rd y\\
	=&\frac{\psi(0)}{\sigma+1}\int_{-\yy}^{0}[(M-y)^{\sigma+1}-M^{\sigma+1}]J(y)\rd y\\
	\leq&\frac{\psi(0)}{\sigma+1}\int_{0}^{\yy}y^{\sigma+1}J(y)\rd y:=C_2<\yy.
	\end{align*}
	Therefore, 
	\[
	S\leq C_1+C_2:=C<\infty \mbox{ for all } M>0.
	\]
	The proof is complete. 
\end{proof}

 Denote 
 \[
 \mbox{$\Psi(x)=(\psi_i(x)):=\mathbf{u}^*-\Phi(x)$ and $G(u)=(g_i(u)):=-F(\mathbf{u}^*-u)$.}
 \]
  Then $\Psi$ satisfies
\begin{equation}\label{3.11a}
\begin{cases}\dd
\mathbf{0}=D\circ\int_{-\yy}^{0}\mathbf{J} (x-y)\circ \Psi(y) {\rm d}y-D\circ\Psi+D\circ \mathbf{u}^*\circ\int_0^\yy\mathbf{J} (x-y) {\rm d}y\\
\ \ \ \ \ +c\Psi'(x)+G(\Psi(x)) \mbox{ for }
-\yy<x<0,\\
\Psi(-\yy)=\mathbf{0},\ \ \Psi(0)=\mathbf{u}^*.
\end{cases}
\end{equation}
Since $\mathbf{u}^*$ is  stable and $\nabla F(\mathbf{u}^*)=\nabla G(\mathbf{0})$ is invertible,  the eigenvalues of $\nabla F(\mathbf{u}^*)$ are all negative. Therefore we can use the same reasoning as in the proof of Lemma  \ref{lemma2.1a} to find two vectors $\wtd A=(\td a_i)\ggs \mathbf{0}$ and $\wtd B=(\td b_i)\llp \mathbf{0}$  such that, for $U=(u_i)\in [\mathbf{0}, \epsilon\mathbf{1}]$ with $\epsilon>0$ sufficiently small, 
\begin{align*}
\sum_{i=1}^{m}\td a_ig_i(U)\leq  \sum_{i=1}^{m} \td b_iu_i\leq -\hat b \sum_{j=1}^m\tilde a_iu_i,
\end{align*}
for some $\hat  b>0$. 

Since $\Psi(-\yy)=\mathbf{0}$ and $\Psi(x)=(\psi_i(x))\ggs \mathbf{0}$ for $x<0$,  we have $0<\psi_i(x)<\epsilon$ for $x\ll -1$, and so 
\begin{align}\label{4.2}
\sum_{i=1}^m \td a_ig_i(\Psi(x))\leq -\hat b \wtd \psi(x)\ \  \mbox{ for $x\ll -1$, with }
\end{align}
\begin{align}\label{td-psi}
\wtd \psi(x):=\sum_{j=1}^m \td a_j\psi_j(x).
\end{align}

\begin{lemma}\label{lemma4.1}
	Suppose  $\mathbf{(J)}$ and $\mathbf{(f_1)-(f_4)}$ are satisfied. If $\mathbf{(J^\alpha)}$ holds for some $\alpha\geq 1$,
	then 
	\begin{align*}
	\int_{-\yy}^{0}	\wtd \psi(x) \rd x<\yy. 
	\end{align*}
\end{lemma}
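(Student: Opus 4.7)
The plan is to convert the exponential stability of $\mathbf{u}^*$ encoded in \eqref{4.2} into a uniform upper bound on $\int_{-M}^{0}\tilde\psi$, by testing the semi-wave equation \eqref{3.11a} against the positive left eigenvector $\tilde A = (\tilde a_i)$ of $\nabla F(\mathbf{u}^*)$. Multiplying the $i$th component of \eqref{3.11a} by $\tilde a_i$, summing and integrating produces an identity in which $\sum_i \tilde a_i g_i(\Psi(x))$ appears; by \eqref{4.2}, this quantity is bounded above by $-\hat b\,\tilde\psi(x)$ on $(-\infty, -X_0)$ whenever $X_0>0$ is large enough that $\Psi(x) \preceq \epsilon\mathbf{1}$ there. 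Hence a uniform (in $M$) bound on the remaining terms of the integrated identity will immediately yield $\int_{-M}^{-X_0}\tilde\psi \leq C$, and letting $M\to\infty$ will finish the proof.

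To implement this, I will first extend each $\psi_i$ to all of $\R$ by $\bar\psi_i(x) := \psi_i(x)$ for $x \leq 0$ and $\bar\psi_i(x) := u_i^*$ for $x \geq 0$; the extension is continuous (since $\psi_i(0)=u_i^*$), nondecreasing, with $\bar\psi_i(-\infty)=0$ and $\bar\psi_i \leq u_i^*$. Because $\int_{-\infty}^0 J_i(x-y)\psi_i(y)\,dy + u_i^*\int_0^\infty J_i(x-y)\,dy = J_i*\bar\psi_i(x)$, equation \eqref{3.11a} rewrites, for $x<0$, as
\[
c\psi_i'(x) = -d_i\bigl[J_i*\bar\psi_i(x) - \bar\psi_i(x)\bigr] - g_i(\Psi(x)), \quad 1\leq i\leq m,
\]
with the convention $d_i=0$, $J_i\equiv 0$ for $i>m_0$. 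Combining these with the weights $\tilde a_i$, integrating over $[-M,-X_0]$ and invoking \eqref{4.2}, I obtain
\[
\hat b \int_{-M}^{-X_0}\tilde\psi(x)\,dx \leq c\bigl[\tilde\psi(-X_0) - \tilde\psi(-M)\bigr] + \sum_{i=1}^{m_0}\tilde a_i d_i \int_{-M}^{-X_0}\bigl[J_i*\bar\psi_i - \bar\psi_i\bigr]\,dx.
\]
The boundary term is bounded by $c\sum_i \tilde a_i u_i^*$ since $\tilde\psi \in [0,\sum_i \tilde a_i u_i^*]$.

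The crux is to bound the convolution integrals uniformly in $M$. Fubini (applicable because $|\bar\psi_i(x+w)-\bar\psi_i(x)|\leq u_i^*$ and $[-M,0]$ has finite length) gives
\[
\int_{-M}^0\bigl[J_i*\bar\psi_i(x)-\bar\psi_i(x)\bigr]dx \;=\; \int_\R J_i(w)\int_{-M}^0[\bar\psi_i(x+w)-\bar\psi_i(x)]\,dx\,dw,
\]
and writing $\int_{-M}^0 \bar\psi_i(x+w)\,dx = \int_{-M+w}^{w}\bar\psi_i(y)\,dy$ and comparing to $\int_{-M}^0 \bar\psi_i$, a direct calculation using monotonicity and $\bar\psi_i\in[0,u_i^*]$ yields the pointwise estimate
\[
\Bigl|\int_{-M}^0 [\bar\psi_i(x+w) - \bar\psi_i(x)]\,dx\Bigr| \leq |w|\,u_i^*.
\]
Consequently $\bigl|\int_{-M}^0 [J_i*\bar\psi_i - \bar\psi_i]\,dx\bigr|\leq u_i^*\int_\R |w|J_i(w)\,dw$, which is finite under $\mathbf{(J^\alpha)}$ with $\alpha\geq 1$; the corresponding bound over $[-M,-X_0]$ follows by subtracting the bounded piece over $[-X_0,0]$. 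Combining gives $\hat b \int_{-M}^{-X_0}\tilde\psi \leq C$ independently of $M$, and letting $M\to\infty$ and adding the trivial estimate $\int_{-X_0}^0\tilde\psi \leq X_0\sum_i \tilde a_i u_i^*$ proves the lemma. The main obstacle is exactly this uniform control of the convolution term; without the first moment of $J_i$ being finite (the weakest instance of $\mathbf{(J^\alpha)}$, $\alpha\geq 1$), the argument breaks down, and Theorem \ref{thm4.2} confirms that the conclusion itself is then false.
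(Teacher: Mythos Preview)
Your argument is correct and follows essentially the same approach as the paper's proof: both test the semi-wave equation against the positive left eigenvector $\tilde A=(\tilde a_i)$, integrate over a finite interval, invoke \eqref{4.2} to produce the term $\hat b\int\tilde\psi$, and control the convolution remainder via the first-moment condition $\int_\R |y|J_i(y)\,dy<\infty$ implied by $\mathbf{(J^\alpha)}$ with $\alpha\geq 1$. The only cosmetic difference is that the paper rewrites the nonlocal term in terms of $\Phi$ (extended by $0$ for $x>0$) and bounds $\bigl|\int_x^y(J_i*\phi_i-\phi_i)\bigr|$ via the mean-value identity $\phi_i(z+w)-\phi_i(z)=\int_0^1 w\phi_i'(z+sw)\,ds$, whereas you work directly with $\bar\psi_i$ (extended by $u_i^*$) and obtain the same bound $|w|\,u_i^*$ by the translation argument $\int_{-M}^0\bar\psi_i(x+w)\,dx-\int_{-M}^0\bar\psi_i(x)\,dx=\int_0^w\bar\psi_i-\int_{-M}^{-M+w}\bar\psi_i$; since $\bar\psi_i=u_i^*-\bar\phi_i$, the two computations are equivalent.
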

\begin{proof}
	A simple calculation gives 
	\begin{align*}
	&D\circ\int_{-\yy}^{0}\mathbf{J} (x-y)\circ \Psi(y) {\rm d}y-D\circ\Psi+D\circ \mathbf{u}^*\circ\int_{0}^{\yy}\mathbf{J} (x-y) {\rm d}y\\
	=&-D\circ\int_{-\yy}^{0}\mathbf{J} (x-y)\circ \Phi(y) {\rm d}y+D\circ\Phi.
	\end{align*}
	Integrating the equation satisfied by $\wtd \psi$  over the interval  $(x,y)$ with $x<y\ll -1$, and making use of \eqref{4.2}, we obtain
	\begin{align*}
	&c(\wtd \psi(y)-\wtd \psi(x))
	+\sum_{i=1}^m \int_{x}^{y}\td a_id_i  \lf[\int_{-\yy}^0 J_i(z-w)\psi_i(w){\rm d}w-  \psi_i(z) \rr]{\rm d}z
	\\
	&+\sum_{i=1}^m \int_{x}^{y}\td a_id_i u_i^* \int_{0}^\yy J_i(z-w){\rm d}w{\rm d}z\\
	=&c(\wtd \psi(y)-\wtd \psi(x))-\sum_{i=1}^m \int_{x}^{y}\td a_id_i  \lf[\int_{-\yy}^0 J_i(z-w)\phi_i(w){\rm d}w-  \phi_i(z) \rr]{\rm d}z\\
	=&-\int_{x}^{y}\sum_{i=1}^m \td a_i  g_i(\Psi(z))\rd z\geq \hat b\int_{x}^{y} \wtd \psi(z) \rd z.
	\end{align*} 
	We extend $\Phi$ to $\R$ by define $\phi_i(x)=0$ for $x>0$. Then the new function $\Phi$ is differentiable on $\R$ except at $x=0$.  
	Due to $\mathbf{(J^\alpha)}$,  we have, for $i\in\{1,..., m_0\}$,
	\begin{align*}
	&\  \lf|\int_{x}^{y} \lf(\int_{-\yy}^{0} J_i(z-w)\phi_i(w){\rm d}w-\phi_i(z)\rr){\rm d}z\rr|= \lf|\int_{x}^{y} \lf(\int_{\R} J_i(z-w)\phi_i(w){\rm d}w-\phi_i(z)\rr){\rm d}z\rr|\\
	=&\  \lf|\int_{x}^{y} \int_{\R} J_i(w)(\phi_i(z+w)-\phi(z)) {\rm d}w{\rm d}z\rr|=\lf|\int_{x}^{y} \int_{\R} J_i(w) \int_{0}^1 w\phi'_i(z+sw) {\rm d}s{\rm d}w{\rm d}z\rr|\\
	=&\ \lf|\int_{\R} wJ_i(w) \int_{0}^1 [\phi_i(y+sw)-\phi_i(x+sw)] {\rm d}s{\rm d}w\rr|\\
	\leq & \  a_i^* \int_{\R} |y|J_i(y) {\rm d}y=:M_i<\yy.
	\end{align*}
	Thus, for $x<y\ll -1$,
	\begin{align*}
	\hat b\int_{x}^{y} \wtd \psi(z) \rd z\leq c(\wtd \psi(y)-\wtd \psi(x))+\sum_{i=1}^m \td a_id_iM_i\leq  \sum_{i=1}^m \td a_i(cu_i^*+d_iM_i),
	\end{align*}
	which implies $\dd \int_{-\yy}^0  \wtd\psi(z) {\rm d}z<\yy$. 
	\end{proof}
	
	\noindent
\underline{Proof of Theorem \ref{thm4.1}}:	
{\bf Case 1}. $\alpha\geq 1$. 

With $\wtd \psi=\sum_{i=1}^m\td a_i \psi_i$ given by \eqref{td-psi}, it suffices to show
	\begin{align*}
	 \ \int_{-\yy}^0 \wtd\psi(x) |x|^{\alpha-1}\rd x<\yy.
	\end{align*}
	By Lemma \ref{lemma4.1} we have 
\[
\int_{-\infty}^0\td\psi(x)dx<\infty \mbox{ and hence } \int_{-\infty}^0\psi_i(x)dx<\infty \mbox{ for } i\in\{1,..., m\}.
\]
So there is nothing to prove if $\alpha=1$, and we only need to consider the case $\alpha>1$.

 Suppose $\alpha>1$ and
\begin{equation}\label{gamma}
\int_{-\infty}^0|x|^\gamma \td\psi(x)dx<\infty \mbox{ for some  } \gamma\geq 0.
\end{equation}
Then  by Lemma \ref{lem4.1}, for any $\beta$ satisfying $0< \beta\leq \min\{\gamma+1,\alpha\}$,
 and $i\in\{1,..., m_0\}$,
\begin{equation}\label{beta}
\int_{-M}^{0} \lf[\int_{-\yy}^0 J_i(x-y)\psi_i(y){\rm d}y-  \psi_i(x) \rr]|x|^{\beta}{\rm d}x\leq C \mbox{ for some $C>0$ and all $M<0$}.
\end{equation}
Moreover, if we fix $M_0>1$ so that \eqref{4.2} holds for $x\leq -M_0$, then for $M<M_0$ and $\beta$ as above, we have
\begin{align*}
&\hat b \int_{-M}^{-M_0}\tilde\psi (x)|x|^\beta dx\\
\leq& -\sum_{i=1}^m \int_{-M}^{-M_0}\tilde a_i g_i(\Psi(x))|x|^\beta dx\\
=&\ c\int_{-M}^{-M_0}\tilde\psi'(x)|x|^\beta dx+ \sum_{i=1}^{m_0} \td a_id_i \int_{-M}^{-M_0} \lf[\int_{-\yy}^0 J_i(x-y)\psi_i(y){\rm d}y-  \psi_i(x) \rr]|x|^\beta{\rm d}x
	\\
	&+\sum_{i=1}^{m_0} \td a_id_i u_i^*\int_{-M}^{-M_0} \int_{0}^\yy |x|^\beta J_i(x-y){\rm d}y{\rm d}x.
\end{align*}

By \eqref{beta},
\begin{align*}
&\sum_{i=1}^{m_0} \td a_id_i \int_{-M}^{-M_0} \lf[\int_{-\yy}^0 J_i(x-y)\psi_i(y){\rm d}y-  \psi_i(x) \rr]|x|^{\beta}{\rm d}x\\
&\leq C\sum_{i=1}^{m_0} \td a_id_i-\sum_{i=1}^{m_0} \td a_id_i \int_{-M_0}^{0} \lf[\int_{-\yy}^0 J_i(x-y)\psi_i(y){\rm d}y-  
\psi_i(x) \rr]|x|^{\beta}{\rm d}x\\
&:= C_1<\infty \mbox{ for  all } M>M_0.
\end{align*}

Moreover, if we assume additionally that $\beta\leq \alpha-1$, then we have, for $i\in\{1,..., m_0\}$,
\begin{align*}
&\int_{-M}^{-M_0} \int_{0}^\yy |x|^\beta J_i(x-y){\rm d}y{\rm d}x\\
&\leq \int^{M}_0\int_0^\infty x^\beta J_i(x+y)dydx=\int_0^M\int_x^\infty x^\beta J_i(y)dydx\\
&\leq \int_0^\infty\int_x^\infty x^{\beta} J_i(y)dy= \frac 1{\beta+1}\int_0^\infty y^{\beta+1}J_i(y)dy:=C_2<\infty.
\end{align*}
Therefore, for $\beta\in (0,\min\{\gamma+1,\alpha-1\}]$ and $M>M_0$,
\begin{align*}
&\hat b \int_{-M}^{-M_0}\tilde\psi (x)|x|^\beta dx
\leq\ c\int_{-M}^{-M_0}\tilde\psi'(x)|x|^\beta dx+ C_1+\sum_{i=1}^m \td a_id_i u_i^* C_2\\
&\leq c\int_1^Mx^\beta \tilde\psi'(-x)dx+C_3\leq c\int_1^Mx^{\gamma+1} \tilde\psi'(-x)dx+C_3\\
&\leq c \tilde\psi(-1)+c\int_1^M(\gamma+1) x^{\gamma}\td\psi(-x)dx+C_3
:=C_4<\infty \mbox{ by \eqref{gamma}.} 
\end{align*}
It follows that
\begin{equation}\label{induc}
\int_{-\infty}^0\tilde\psi (x)|x|^\beta dx<\infty.
\end{equation}
Thus we have proved that \eqref{gamma} implies \eqref{induc} for any $\beta\in(0, \min\{\gamma+1,\alpha-1\}]$.

If we write $\alpha-1=n+\theta$ with $n\geq 0$ an integer and $\theta\in (0, 1]$. Then by the above conclusion and an induction argument we see that \eqref{induc} holds with $\beta=n$. Thus \eqref{gamma} holds for $\gamma=n$. So applying the above conclusion once more we see that \eqref{induc} holds for every $\beta\in (0, \min\{n+1,\alpha-1\}]=(0,\alpha-1] $, as desired.

{\bf Case 2}. $\alpha\in  (0,1)$. 

Let $\beta=\alpha-1$. As in Case 1, for $M>M_0$, 
\begin{align*}
&\hat b \int_{-M}^{-M_0}\tilde\psi (x)|x|^\beta dx\\
\leq &\ c\int_{-M}^{-M_0}\tilde\psi'(x)|x|^\beta dx+ \sum_{i=1}^{m_0} \td a_id_i \int_{-M}^{-M_0} \lf[\int_{-\yy}^0 J_i(x-y)\psi_i(y){\rm d}y-  \psi_i(x) \rr]|x|^\beta{\rm d}x
\\
&+\sum_{i=1}^{m_0} \td a_id_i u_i^*\int_{-M}^{-M_0} \int_{0}^\yy |x|^\beta J_i(x-y){\rm d}y{\rm d}x\\
\leq &\ c\int_{-M}^{-M_0}\tilde\psi'(x)|x|^\beta dx+\wtd C_1+\sum_{i=1}^{m_0} \td a_id_i u_i^*\int_{-M}^{-M_0} \int_{0}^\yy |x|^\beta J_i(x-y){\rm d}y{\rm d}x,
\end{align*}
where $\wtd C_1>0$ is obtained by making use of Lemma \ref{lemma4.4a}. By  $\mathbf{(J^\alpha)}$ and $\beta+1=\alpha$,
\begin{align*}
&\int_{-M}^{-M_0} \int_{0}^\yy |x|^\beta J_i(x-y){\rm d}y{\rm d}x\leq \int_{0}^{\yy} \int_{x}^{\yy} x^\beta J_i(y)\rd y\rd x\\
=&\frac{1}{\alpha}\int_{0}^{\yy}y^\alpha J_i(y)\rd y:=\wtd C_2<\yy.
\end{align*}
Due to $\beta<0$, we have
\begin{align*}
&\int_{-M}^{-M_0}\tilde\psi'(x)|x|^\beta dx= \int_{M_0}^{M}\tilde\psi'(-x) x^\beta dx\\
=& \tilde\psi(-M_0)M_0^\beta -\tilde\psi(-M)M^{\beta}+\beta \int_{M_0}^{M}\tilde\psi(-x)x^{\beta-1} dx\\
\leq&\tilde\psi(-M_0)M_0^\beta:=\wtd C_3<\yy.
\end{align*}
Hence
\begin{align*}
\hat b \int_{-M}^{-M_0}\tilde\psi (x)|x|^\beta dx\leq \wtd C_1+\wtd C_2\sum_{i=1}^{m_0} \td a_id_i u_i^*+c\wtd C_3<\yy
\end{align*}
for all $M>M_0$, which implies 
\begin{align*}
\int_{-\yy}^{-1}\tilde\psi (x)|x|^{\alpha-1} dx<\yy.
\end{align*}
The proof is completed.

\hfill $\Box$
\smallskip

\noindent
\underline{Proof of Theorem \ref{thm4.2}}:	 
We have
\[
|g_i(\Psi(x))|\leq L\sum_{j=1}^m\psi_j(x) :=L\hat\psi(x) \mbox{ for some $L>0$ and all } x<0,\; i\in\{1,..., m\}.
\]
Now for $M>1$ and $\beta=\alpha-1$, 
\begin{align*}
&L \int_{-M}^{-1}\hat\psi (x)|x|^\beta dx
\geq -\sum_{i=1}^m \int_{-M}^{-1} g_i(\Psi(x))|x|^\beta dx\\
=&\ c\int_{-M}^{-1}\hat\psi'(x)|x|^\beta dx+ \sum_{i=1}^{m_0} d_i \int_{-M}^{-1} \lf[\int_{-\yy}^0 J_i(x-y)\psi_i(y){\rm d}y- 
 \psi_i(x) \rr]|x|^\beta{\rm d}x
	\\
	&+\sum_{i=1}^{m_0} d_i u_i^*\int_{-M}^{-1} \int_{0}^\yy |x|^\beta J_i(x-y){\rm d}y{\rm d}x\\
	\geq& -\sum_{i=1}^{m_0} d_i \int_{-M}^{-1}  
 \psi_i(x) |x|^\beta{\rm d}x
	+\sum_{i=1}^{m_0} d_i u_i^*\int_{-M}^{-1} \int_{0}^\yy |x|^\beta J_i(x-y){\rm d}y{\rm d}x
\end{align*}
Therefore, with $\wtd L:=L+\sum_{i=1}^{m_0}d_i$, we have
\begin{align*}
\wtd L \int_{-M}^{-1}\hat\psi (x)|x|^\beta dx
&\geq \sum_{i=1}^{m_0} d_i u_i^*\int_{-M}^{-1} \int_{0}^\yy |x|^\beta J_i(x-y){\rm d}y{\rm d}x\\
&= \sum_{i=1}^{m_0} d_i u_i^*\int_1^M\int_x^\infty x^\beta J_i(y)dydx\\
&=  \sum_{i=1}^{m_0} d_i u_i^*\Big[\int_1^M\int_1^\infty-\int_1^M\int_1^x\Big] x^{\beta} J_i(y)dydx\\
&= \sum_{i=1}^{m_0} \frac{d_i u_i^*}{\beta+1}\Big[\int_1^\infty (M^{\beta+1}-1)J_i(y)dy+\int_1^M (y^{\beta+1}-M^{\beta+1})J_i(y)dy\Big]\\
&\geq \sum_{i=1}^{m_0} \frac{d_i u_i^*}{\beta+1}\Big[ \int_1^M y^{\beta+1}J_i(y)dy -\int_1^\infty J_i(y)dy     \Big]\to\infty  \mbox{ as $M\to\infty$,}
\end{align*}
 since $\beta+1=\alpha$. Therefore \eqref{>*} holds, as we wanted.
\hfill $\Box$

\medskip

To prove Theorem \ref{thm4.3}, we need the following lemma.

\begin{lemma}\label{lem4.7}
	Let  the assumptions in Theorem \ref{thm4.3} be satisfied and $\Psi(x)=(\psi_i(x))=:\mathbf{u^*}-\Phi(x)$. Then  for every small $\epsilon>0$, there exist $\beta=\beta(\epsilon)\in(0, \lambda]$ and $C=C(\epsilon)>0$ such that for all $M>0$ and $i\in\{1,..., m\}$,
	\begin{align}\label{3.14}
	Q^{(i)}=Q^{(i)}_M:= \int_{-M}^{0}e^{-\beta x}\int_{-\yy}^{0}J_i(x-y)\psi_i(y)\rd y\rd x\leq  (1+\epsilon)\int_{-M}^{0}e^{-\beta x}\psi_i(x)\rd x+C.
	\end{align}
\end{lemma}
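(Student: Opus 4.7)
My plan is as follows.  For $i\in\{m_0+1,\dots,m\}$ the lemma is trivial since $J_i\equiv 0$ by the standing convention, so I fix $i\in\{1,\dots,m_0\}$ and write $N_M := \int_{-M}^0 e^{-\beta x}\psi_i(x)\, dx$ for the leading term on the right.  Applying Fubini and the substitution $z = x-y$ to the inner integral rewrites
\[
Q^{(i)}_M = \int_{-\infty}^0 \psi_i(y)\, e^{-\beta y}\, H_M(y)\, dy,\qquad H_M(y) := \int_{-M-y}^{-y} e^{-\beta z} J_i(z)\, dz,
\]
and I split the outer integral at $y = -M$ into pieces $I$ and $II$.

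For $I$ (the part with $y\in[-M,0]$), the pointwise bound $H_M(y)\leq K_\beta := \int_{\mathbb R} e^{-\beta z}J_i(z)\, dz$ yields $I\leq K_\beta\, N_M$.  By evenness of $J_i$, $K_\beta = 1 + 2\int_0^\infty J_i(z)(\cosh(\beta z)-1)\, dz$, and the elementary inequality $\cosh u-1\leq \frac{u^2}{2}\cosh u$ combined with $(\mathbf{J_2})$ gives $K_\beta = 1 + O(\beta^2)$ as $\beta\to 0^+$, so $K_\beta\leq 1+\epsilon/2$ once $\beta = \beta(\epsilon)$ is small enough.

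For $II$ (the part with $y\leq -M$), I use monotonicity of $\psi_i$ to replace $\psi_i(y)$ by $\psi_i(-M)$.  Since $z\geq -M-y\geq 0$ throughout this region, swapping the $(y,z)$ integrals and computing the inner $y$-integrals explicitly yields
\[
II\leq \psi_i(-M)\left[\frac{e^{\beta M}}{\beta}\int_0^M J_i(z)(1-e^{-\beta z})\, dz + \frac{e^{\beta M}-1}{\beta}\int_M^\infty J_i(z)\, dz\right].
\]
The inequality $1-e^{-\beta z}\leq \beta z$ together with $\int_0^\infty zJ_i(z)\, dz<\infty$ (which follows from $(\mathbf{J_2})$) tames the first bracket, while $\int_M^\infty J_i(z)\, dz\leq e^{-\lambda M}\int_0^\infty e^{\lambda z}J_i(z)\, dz$ tames the second, yielding $II\leq K''\psi_i(-M)\, e^{\beta M} + C'/\beta$ for constants $K'',C'$ depending only on $J_i$ (and in particular independent of $M$ and $\epsilon$).

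To close the argument I exploit the other direction of monotonicity: $\psi_i(x)\geq\psi_i(-M)$ on $[-M,0]$, hence $N_M\geq\psi_i(-M)\cdot\frac{e^{\beta M}-1}{\beta}$, which for $M\geq(\ln 2)/\beta$ rearranges to $\psi_i(-M)\, e^{\beta M}\leq 2\beta\, N_M$.  Choosing $\beta = \beta(\epsilon)\in(0,\lambda]$ small enough that both $K_\beta\leq 1+\epsilon/2$ and $2K''\beta\leq\epsilon/2$ hold, the combined estimates produce $Q^{(i)}_M\leq (1+\epsilon)N_M + C'u_i^*/\beta$ for all $M\geq(\ln 2)/\beta$; the remaining smaller values of $M$ are handled by the crude bound $Q^{(i)}_M\leq \int_{-M}^0 e^{-\beta x}u_i^*\, dx\leq u_i^*/\beta$ and folded into $C(\epsilon)$.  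The hard part is precisely the closing step: the tail term $II$ itself grows like $e^{\beta M}$ — the same rate as the main term $N_M$ — so it cannot be bounded by a constant.  The crucial insight is that the monotonicity lower bound on $N_M$ converts the dangerous factor $\psi_i(-M)\, e^{\beta M}$ into $O(\beta)\cdot N_M$, and this can then be absorbed into $\epsilon\, N_M$ by the smallness of $\beta$.
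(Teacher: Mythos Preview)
Your proof is correct but follows a genuinely different route from the paper.  The paper performs the same initial substitution but then decomposes $Q^{(i)}$ into five pieces $B_1^{(i)},B_2^{(i)},A_1^{(i)},A_2^{(i)},A_3^{(i)}$; the main term $B_1^{(i)}+B_2^{(i)}$ is bounded by $P(\beta)\,N_M$ with $P(\beta)=\int_{\R}e^{\beta y}J_i(y)\,dy\to 1$ as $\beta\to 0$, while the monotonicity of $\psi_i$ gives the exact cancellation $A_1^{(i)}+A_3^{(i)}\leq 0$ via the identity $2-e^{-\beta y}-e^{\beta y}\leq 0$, leaving only the bounded tail $A_2^{(i)}\leq C$.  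Your two-piece split at $y=-M$ is more transparent and tracks fewer terms, but produces a tail $II$ containing the potentially growing factor $\psi_i(-M)\,e^{\beta M}$, which you then reabsorb into $N_M$ via the lower bound $N_M\geq\psi_i(-M)(e^{\beta M}-1)/\beta$; this costs you an extra smallness condition on $\beta$ (namely $2K''\beta\leq\epsilon/2$) beyond the $K_\beta\leq 1+\epsilon/2$ analogous to the paper's single condition.  Both approaches use monotonicity of $\psi_i$ in an essential way, but in opposite directions: the paper exploits it to make two boundary pieces cancel, while you exploit it first to cap $II$ and then to recycle the cap back into the main term.
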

\begin{proof} 
	By a change of variables, we deduce
	\begin{align*}
	Q^{(i)}=&\int_{-M}^{0}e^{-\beta x}\int_{-\yy}^{-x}J_i(y)\psi_i(x+y)\rd y\rd x=\int_{0}^{M}\int_{-\yy}^{x}e^{\beta x}J_i(y)\psi_i(y-x)\rd y\rd x\\
	=&\int_{0}^{M}\lf(\int_{-\yy}^{0}+\int_{0}^{x}\rr)e^{\beta x}J_i(y)\psi_i(y-x)\rd y\rd x\\
	=&\int_{-\yy}^{0}\int_{0}^{M}e^{\beta x}J_i(y)\psi_i(y-x)\rd x\rd y+\int_{0}^{M}\int_{y}^{M}e^{\beta x}J_i(y)\psi_i(y-x)\rd x\rd y\\
	=&\int_{-\yy}^{0}e^{\beta y}J_i(y)\int_{-y}^{M-y}e^{\beta x}\psi_i(-x)\rd x\rd y+\int_{0}^{M}e^{\beta y}J_i(y)\int_{0}^{M-y}e^{\beta x}\psi_i(-x)\rd x\rd y\\
	:=&I+II.
	\end{align*}
	
	We have
	\begin{align*}
	I
	=&\int_{-M}^{0}e^{\beta y}J_i(y)\int_{-y}^{M-y}e^{\beta x}\psi_i(-x)\rd x\rd y+\int_{-\yy}^{-M}e^{\beta y}J_i(y)\int_{-y}^{M-y}e^{\beta x}\psi_i(-x)\rd x\rd y\\
	=&\int_{-M}^{0}e^{\beta y}J_i(y)\lf(\int_{-y}^{M}+\int_{M}^{M-y}\rr)e^{\beta x}\psi_i(-x)\rd x\rd y+\int_{-\yy}^{-M}e^{\beta y}J_i(y)\int_{-y}^{M-y}e^{\beta x}\psi_i(-x)\rd x\rd y\\
	=&\int_{-M}^{0}e^{\beta y}J_i(y)\int_{-y}^{M}e^{\beta x}\psi_i(-x)\rd x\rd y+\int_{-M}^{0}e^{\beta y}J_i(y)\int_{M}^{M-y}e^{\beta x}\psi_i(-x)\rd x\rd y\\
	&+\int_{-\yy}^{-M}e^{\beta y}J_i(y)\int_{-y}^{M-y}e^{\beta x}\psi_i(-x)\rd x\rd y\\
	:=&B_1^{(i)}+A_1^{(i)}+A_2^{(i)},
	\end{align*}
	and
	\begin{align*}
	II=&\int_{0}^{M}e^{\beta y}J_i(y)\int_{0}^{M}e^{\beta x}\psi_i(-x)\rd x\rd y-\int_{0}^{M}e^{\beta y}J_i(y)\int_{M-y}^{M}e^{\beta x}\psi_i(-x)\rd x\rd y\\
	:=&B_2^{(i)}+A_3^{(i)}.
	\end{align*}
	Hence,
	\begin{align*}
	Q^{(i)}=&I+II=(B_1^{(i)}+B_2^{(i)})+(A_1^{(i)}+A_2^{(i)}+A_3^{(i)})\\
	\leq&\int_{-M}^{0}e^{\beta y}J_i(y)\int_{0}^{M}e^{\beta x}\psi_i(-x)\rd x\rd y+\int_{0}^{M}e^{\beta y}J_i(y)\int_{0}^{M}e^{\beta x}\psi_i(-x)\rd x\rd y\\
	&+(A_1^{(i)}+A_2^{(i)}+A_3^{(i)})\\
	=&\int_{-M}^{M}e^{\beta y}J_i(y)\rd y\int_{0}^{M}e^{\beta x}\psi_i(-x)\rd x+(A_1^{(i)}+A_2^{(i)}+A_3^{(i)}).
	\end{align*}

	Set
	\begin{align*}
	P(\gamma):=\int_{\R}e^{\gamma y}J_i(y) \rd y=\int_0^\infty [e^{\gamma y}+e^{-\gamma y}]J_i(y)dy.
	\end{align*}
	Clearly  $P(\gamma)$ is increasing and continuous in $\gamma\in [0,\alpha]$, with $P(0)=1$.
	Hence there
	exists small $\beta_*=\beta_*(\epsilon)\in (0, \lambda]$ such that for all $0<\beta\leq \beta_*(\epsilon)$,
	\begin{align*}
	P(\beta)=\int_{\R}e^{\beta y}J_i(y) \rd y\leq 1+\epsilon.
	\end{align*}
	Thus, for such $\beta$,
	\begin{align*}
	Q^{(i)}\leq &(1+\epsilon)\int_{0}^{M}e^{\beta x}\psi_i(-x)\rd x+(A_1^{(i)}+A_2^{(i)}+A_3^{(i)}).
	\end{align*}
	
	It remains to verify that $A_1^{(i)}+A_2^{(i)}+A_3^{(i)}$ has an upper bound which is independent of  $M\in (0,\yy)$.  Using the monotonicity of $\psi_i$, we deduce
	\begin{align*}
	A_1^{(i)}+A_3^{(i)}=&\int_{-M}^{0}e^{\beta y}J_i(y)\int_{M}^{M-y}e^{\beta x}\psi_i(-x)\rd x\rd y-\int_{0}^{M}e^{\beta y}J_i(y)\int_{M-y}^{M}e^{\beta x}\psi_i(-x)\rd x\rd y\\
	\leq&\psi_i(-M)\int_{-M}^{0}e^{\beta y}J_i(y)\int_{M}^{M-y}e^{\beta x}\rd x\rd y-\psi_i(-M)\int_{0}^{M}e^{\beta y}J_i(y)\int_{M-y}^{M}e^{\beta x}\rd x\rd y\\
	=&\frac{\psi_i(-M)}{\beta}\int_{-M}^{0}e^{\beta y}J_i(y)[e^{\beta (M-y)}-e^{\beta M}]\rd y-\frac{\psi_i(-M)}{\beta}\int_{0}^{M}e^{\beta y}J_i(y)[e^{\beta M}-e^{\beta (M-y)}]\rd y\\
	=&\frac{\psi_i(-M)e^{\beta M}}{\beta}\int_{-M}^{0}J_i(y)[1-e^{\beta y}]\rd y-\frac{\psi_i(-M)e^{\beta M}}{\beta}\int_{0}^{M}J_i(y)[e^{\beta y}-1]\rd y\\
	=&\frac{\psi_i(-M)e^{\beta M}}{\beta}\int_{0}^{M} J_i(y)[2-e^{-\beta y}-e^{\beta y}]\rd y\leq 0,
	\end{align*}
	and
	\begin{align*}
	A_2^{(i)}=&\int_{-\yy}^{-M}e^{\beta y}J_i(y)\int_{-y}^{M-y}e^{\beta x}\psi_i(-x)\rd x\rd y\leq \psi_i(0)\int_{-\yy}^{-M}e^{\beta y}J_i(y)\int_{-y}^{M-y}e^{\beta x}\rd x\rd y\\
	=&\frac{u_i^*}{\beta}\int_{-\yy}^{-M}e^{\beta y}J_i(y)[e^{\beta (M-y)}-e^{-\beta y}]\rd y=\frac{u_i^*(e^{\beta M}-1)}{\beta}\int^{\yy}_{M}J_i(y)\rd y \\
	\leq & \frac{u_i^*(e^{\beta M}-1)}{\beta} e^{-\beta M} \int_M^{\yy} e^{\beta y}J_i(y)\rd y\leq \frac{ u_i^*}{\beta}\int_0^{\yy}e^{\beta y}J_i(y)\rd y:=C<\infty,
	\end{align*}
	since $\beta\leq \lambda$.
	 Hence \eqref{3.14} holds. 
\end{proof}

\begin{proof}[\underline{\rm Proof of Theorem \ref{thm4.3}}] With $\wtd \psi=\sum_{i=1}^m\td a_i \psi_i$ given by \eqref{td-psi}, it suffices to show that there exists  $\beta\in (0,\lambda]$  such that
	\begin{align*}
	\wtd \psi(x)=O(e^{\beta x}) \mbox{ for large negative }  x.
	\end{align*}
By Lemma \ref{lem4.7}, there exist $\epsilon>0$ and $\beta\in (0,\lambda]$ small such that \eqref{3.14} holds and $\hat b\geq \sum_{i=1}^m \td a_id_i \epsilon+c \beta$. Multiplying $e^{-\beta x}$  on both sides of the equation satisfied by $\wtd \psi$ and then integrating the resulting equation over the interval $[-M,0]$ with an arbitrary $M>0$,  we obtain
	\bes\label{3.15} \begin{aligned}
	&-\sum_{i=1}^m \int_{-M}^{0}\td a_i  g_i(\Psi(x))e^{-\beta x}\rd x-\int_{-M}^{0}c\wtd \psi'(x)(-x)^{\beta}\rd x\\
	=&\sum_{i=1}^m \td a_i d_i \int_{-M}^{0}\lf[ \int_{-\yy}^0 J_i(x-y)\psi_i(y){\rm d}y-  \psi_i(x) \rr] e^{-\beta x}\rd x\\
	&+\sum_{i=1}^m \td a_i d_i u_i^*\int_{-M}^{0}e^{-\beta x}\int_0^\yy J_i(x-y) {\rm d}y\rd x=:S_1(M)+S_2(M).
	\end{aligned}
	\ees
	
In view of $\mathbf{(J_2)}$ and $\beta\in (0,\lambda]$, we have 
	\begin{align*}
	S_2(M)=&\sum_{i=1}^m \td a_i d_i u_i^*\int_{-M}^{0}e^{-\beta x}\int_{-x}^\yy J_i(y) {\rm d}y\rd x\leq \sum_{i=1}^m \td a_i d_i u_i^*\int_{-\yy}^{0}e^{-\beta x}\int_{-x}^\yy J_i(y) {\rm d}y\rd x\\
	=& \sum_{i=1}^m \td a_i d_i u_i^*\int_0^{\yy}\int_{-y}^0 e^{-\beta x}J_i(y) {\rm d}x\rd y=\sum_{i=1}^m \frac{\td a_i d_i u_i^*}{\beta}\int_0^{\yy} [e^{\beta y}-1]J_i(y) \rd y<\yy.
	\end{align*}
	This together with \eqref{3.14} implies 
	\begin{align}\label{3.16}
	S_1(M)+S_2(M)\leq \sum_{i=1}^m \td a_id_i\epsilon\int_{-M}^{0}e^{-\beta x}\psi_i(x)\rd x+C_1
	\end{align}
	for some $C_1>0$ independent of $M$.

	On the other hand,  by \eqref{4.2} and $\hat b\geq \sum_{i=1}^m \td a_id_i\epsilon+c \beta$  we obtain, for $M> M_0\gg 1$,
	\begin{align*}
	&-\sum_{i=1}^m \int_{-M}^{0}\td a_i  g_i(\Psi(x))e^{-\beta x}\rd x-\int_{-M}^{0}c\wtd \psi'(x)e^{-\beta x}\rd x\\
	\geq & \hat b\int_{-M}^{-M_0}\wtd \psi(x)e^{-\beta x}\rd x-\int_{-M}^{0}c\wtd \psi'(x)e^{-\beta x}\rd x\\
	&-\sum_{i=1}^m \int_{-M_0}^{0}\td a_i  g_i(\Psi(x))e^{-\beta x}\rd x\\
	=&\hat b\int_{-M}^{0}\wtd \psi(x)e^{-\beta x}\rd x-\int_{-M}^{0}c\wtd \psi'(x)e^{-\beta x}\rd x+C_2\\
	\geq&\sum_{i=1}^m \td a_id_i\epsilon\int_{-M}^{0}\wtd \psi(x)e^{-\beta x}\rd x+c\beta\int_{-M}^{0}\wtd \psi(x)e^{-\beta x}\rd x-\int_{-M}^{0}c\wtd \psi'(x)e^{-\beta x}\rd x+C_2\\
	=& \sum_{i=1}^m \td a_id_i\epsilon\int_{-M}^{0}\wtd \psi(x)e^{-\beta x}\rd x- c\int_{-M}^{0}[\wtd \psi(x)e^{-\beta x}]'\rd x+C_2\\
	=&\sum_{i=1}^m \td a_id_i\epsilon\int_{-M}^{0}\wtd \psi(x)e^{-\beta x}\rd x- c\wtd \psi(0)+c\wtd \psi(-M)e^{\beta M}+C_2,
	\end{align*}
	where
	\[
	 C_2:=-\sum_{i=1}^m \int_{-M_0}^{0}\td a_i  g_i(\Psi(x))e^{-\beta x}\rd x-\int_{-M_0}^{0}\wtd \psi(x)e^{-\beta x}\rd x.
	 \]
	 Therefore, by \eqref{3.15} and \eqref{3.16}, 
	\begin{align*}
	c\wtd \psi(-M)e^{\beta M}\leq c\wtd \psi(0)+C_1-C_2 \mbox{ for all } M>M_0,
	\end{align*}
	which implies $\wtd \psi(x)=O(e^{\beta x})$ for $x\ll -1$. The proof is completed.
\end{proof}

\subsection{Bounds for $c_0t-h(t)$, $c_0t+g(t)$ and $U(t,x)$}
Let us first observe that using the reasoning in the proof of Theorem \ref{theorem1.3} (i), it suffices to estimate $h(t)-c_0t$, since that for $g(t)+c_0t$ follows by a simple change of the initial function.

Theorem \ref{theorem1.4} will follow easily from the following two lemmas and their proofs, where more general and stronger conclusions are proved.

\begin{lemma}\label{lemma4.3}  In Theorem \ref{theorem1.3}, if additionally $\mathbf{(J^\alpha)}$ holds for some $\alpha\geq 1$, $F$ is $C^2$ and   
  $\mathbf{u}^*\nabla F(\mathbf{u}^*)  \llp \mathbf{0}$, then there exists $C>0$ such that for $t\geq 0$,
\[
	h(t)-c_0t\geq -C\left[1+\int_0^t(1+x)^{-\alpha}dx+ \int_0^{\frac{c_0}2t} x^2\hat J(x)dx +t \int_{\frac{c_0}2t}^{\yy}x \hat J(x) dx\right],
	\]
	where $c_0>0$ is given in Theorem \ref{prop2.3} and $\hat J(x):=\sum_{i=1}^{m_0} \mu_iJ_i(x)$. 
\end{lemma}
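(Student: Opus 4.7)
The plan is to construct a lower solution $(\underline U,-\underline h,\underline h)$ to \eqref{1.1} in the sense of Lemma~\ref{lemma3.2}(ii), generalising the construction in Lemma~\ref{lemma3.4} by allowing both the shift and the amplitude to depend on $t$. I would set
\[
\underline h(t):=c_0 t-\xi(t)+K,\qquad \underline U(t,x):=(1-\epsilon(t))\big[\Phi^{c_0}(x-\underline h(t))+\Phi^{c_0}(-x-\underline h(t))-\mathbf{u}^*\big],
\]
with $K>0$ large, $\epsilon(t):=\epsilon_0(1+t)^{-\alpha}$ for small $\epsilon_0>0$, and $\xi(t)\geq 0$ nondecreasing with $\xi(0)=0$, to be determined. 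Since spreading happens, a large $T_0>0$ can be chosen so that $(U(T_0,\cdot),g(T_0),h(T_0))$ dominates $(\underline U(0,\cdot),-K,K)$; once the lower-solution conditions are verified on $[0,\infty)$, Lemma~\ref{lemma3.2}(ii) yields $h(t+T_0)\geq\underline h(t)$, so the proof reduces to bounding $\xi(t)$ from above.

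\textbf{Free-boundary inequality.} The substitution $z=x-\underline h(t)$ combined with the defining identity \eqref{2.2a} for $c_0$ reduces the free-boundary condition for $\underline h$ to
\[
\xi'(t)\geq c_0\epsilon(t)+(1-\epsilon(t))[E_1(t)+E_2(t)],
\]
where
\begin{align*}
E_1(t)&=\sum_{i=1}^{m_0}\mu_i\int_{-\yy}^{-2\underline h(t)}\int_0^\yy J_i(z-y)\phi^{c_0}_i(z)\,dy\,dz,\\
E_2(t)&=\sum_{i=1}^{m_0}\mu_i\int_{-2\underline h(t)}^0\int_0^\yy J_i(z-y)[u_i^*-\phi_i^{c_0}(-z-2\underline h(t))]\,dy\,dz.
\end{align*}
Using $\phi_i^{c_0}\leq u_i^*$ and Fubini gives $E_1(t)\leq C\int_{\underline h(t)}^\yy u\hat J(u)\,du$. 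Splitting the $E_2$-integral at $z=-\underline h(t)$, applying Theorem~\ref{theorem1.6}(i) on $z\in(-\underline h(t),0)$ (where $|{-z-2\underline h(t)}|\geq\underline h(t)$, so $u_i^*-\phi_i^{c_0}(-z-2\underline h(t))\leq C\underline h(t)^{-\alpha}$) and the trivial bound $u_i^*-\phi_i^{c_0}\leq u_i^*$ on $(-2\underline h(t),-\underline h(t))$, yields $E_2(t)\leq C\underline h(t)^{-\alpha}+C\int_{\underline h(t)}^\yy u\hat J(u)\,du$. It therefore suffices to take $\xi'(t):=C_\star\big[(1+t)^{-\alpha}+\int_{\underline h(t)}^\yy u\hat J(u)\,du\big]$; a bootstrap argument ensures $\underline h(t)\geq c_0t/2$ throughout, and integration together with Fubini
\[
\int_0^t\!\int_{c_0 s/2}^\yy u\hat J(u)\,du\,ds=\frac{2}{c_0}\int_0^{c_0t/2}u^2\hat J(u)\,du+t\int_{c_0t/2}^\yy u\hat J(u)\,du
\]
produces precisely the three integral terms in the stated bound.

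\textbf{PDE inequality and main obstacle.} Differentiating $\underline U$ and substituting \eqref{2.1a} for $\Phi^{c_0}(\pm x-\underline h(t))$ reduces the PDE inequality to a pointwise statement $\Delta(t,x)\preceq\mathbf{0}$ of the form in Step~2 of Lemma~\ref{lemma3.4}, now with extra $-\epsilon'(t)[\Phi^{c_0}+\Phi^{c_0}-\mathbf{u}^*]$ and $(1-\epsilon(t))\xi'(t)[(\Phi^{c_0})'+(\Phi^{c_0})']$ corrections. The $\xi'(t)$-term is $\preceq\mathbf{0}$ since $(\Phi^{c_0})'\preceq\mathbf{0}$, and the kernel-tail contribution is automatically $\preceq\mathbf{0}$ because $\Phi^{c_0}\preceq\mathbf{u}^*$. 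On the two boundary strips $|x|\in[\underline h(t)-K_0,\underline h(t)]$ the analysis proceeds as in Step~2 of Lemma~\ref{lemma3.4}, using $|(\Phi^{c_0})'|\geq\epsilon_1$ together with $\xi'(t)\geq c_0\epsilon(t)$ to absorb the $|\epsilon'(t)|$ and nonlinear corrections. The hardest step will be the interior region, where $\underline U(t,x)\approx(1-\epsilon(t))\mathbf{u}^*$: writing $V_j=\mathbf{u}^*-\Phi^{c_0}(\pm x-\underline h(t))$, the $C^2$-regularity of $F$ and Taylor expansion about $\mathbf{u}^*$ give
\[
(1-\epsilon(t))[F(\Phi^{c_0}(x-\underline h(t)))+F(\Phi^{c_0}(-x-\underline h(t)))]-F(\underline U)=\epsilon(t)\mathbf{u}^*[\nabla F(\mathbf{u}^*)]^T+O(\epsilon(t)^2+|V_1|^2+|V_2|^2),
\]
which by the hypothesis $\mathbf{u}^*[\nabla F(\mathbf{u}^*)]^T\llp\mathbf{0}$ is a \emph{strictly negative} vector of order $\epsilon(t)$ and must dominate the positive $-\epsilon'(t)\mathbf{u}^*=O(\epsilon(t)/(1+t))\mathbf{u}^*$; this succeeds because $|\epsilon'(t)|/\epsilon(t)=\alpha/(1+t)\to 0$. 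Without the strict sign condition $\mathbf{u}^*[\nabla F(\mathbf{u}^*)]^T\llp\mathbf{0}$ there would be no margin for absorbing the $|\epsilon'(t)|$ correction, which is precisely why this stronger hypothesis appears in Theorem~\ref{theorem1.4}.
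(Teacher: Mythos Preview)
Your overall architecture coincides with the paper's: the same lower solution built from two shifted copies of the semi-wave, a time-decaying amplitude $\epsilon(t)\sim t^{-\alpha}$, and an explicit drift $\xi(t)$ whose derivative is determined by the free-boundary defect. Your free-boundary analysis (splitting into $E_1,E_2$, invoking Theorem~\ref{theorem1.6}(i) for the near part, and the Fubini identity producing the two integral terms) matches the paper's Claims~1 and~4 essentially line for line.

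The gap is in the interior region. Your stated error
\[
(1-\epsilon)[F(\mathbf{u}^*-V_1)+F(\mathbf{u}^*-V_2)]-F(\underline U)
=\epsilon\,\mathbf{u}^*[\nabla F(\mathbf{u}^*)]^T+O\big(\epsilon^2+|V_1|^2+|V_2|^2\big)
\]
is a valid upper bound, but it is too crude to close the argument: on the interior strip one only has $|V_j|\leq \epsilon_0|\mathbf{u}^*|$ with $\epsilon_0$ a \emph{fixed} small constant (determined by $K_0$), so $|V_1|^2+|V_2|^2=O(\epsilon_0^2)$ is a constant in $t$, while the good term $\epsilon(t)\mathbf{u}^*[\nabla F(\mathbf{u}^*)]^T$ tends to~$0$. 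With this bound you cannot conclude that the expression is ``a strictly negative vector of order~$\epsilon(t)$'' for large~$t$.

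What actually happens (and what the paper exploits in its Claim~2) is a cancellation that kills the pure squares. Expanding to second order one finds the quadratic contribution is
\[
\tfrac{\epsilon(1-\epsilon)}{2}\big[V_1 D^2F\,V_1^T+V_2 D^2F\,V_2^T\big]-(1-\epsilon)^2\,V_1 D^2F\,V_2^T+O\big(\epsilon(|V_1|+|V_2|)+\epsilon^2\big),
\]
so the $|V_j|^2$ terms carry an \emph{extra} factor of $\epsilon$, and the only surviving $O(1)$-coefficient quadratic term is the cross term $|V_1|\,|V_2|$. The paper obtains exactly this structure via a mean-value expansion of $G(u,v):=(1-\epsilon)[F(u)+F(v)]-F((1-\epsilon)(u+v-\mathbf{u}^*))$ along the diagonal path $t\mapsto(\mathbf{u}^*+t(u-\mathbf{u}^*),\mathbf{u}^*+t(v-\mathbf{u}^*))$, arriving at a bound of the form $\epsilon\,\mathbf{u}^*[\nabla F(\mathbf{u}^*)]^T+O\big(\epsilon\epsilon_0+|V_1|\,|V_2|\big)$. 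The cross term is then handled by the key observation that $\min\{|x-\underline h(t)|,|{-x}-\underline h(t)|\}\geq \underline h(t)$, so Theorem~\ref{theorem1.6}(i) gives $\min(|V_1|,|V_2|)\leq C\,\underline h(t)^{-\alpha}=O(\epsilon(t))$ and hence $|V_1|\,|V_2|\leq C\epsilon_0\,\epsilon(t)$. With this refinement the whole remainder is $O(\epsilon_0\,\epsilon(t))$, which \emph{is} dominated by $\epsilon(t)\mathbf{u}^*[\nabla F(\mathbf{u}^*)]^T\llp\mathbf{0}$ once $\epsilon_0$ is small. Your sketch needs precisely this sharpening; without it the interior inequality fails for large~$t$.
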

\begin{proof} 
   Let $(c_0,\Phi^{c_0})$  be the unique solution pair of  \eqref{2.1a}-\eqref{2.2a} obtained in Theorem \ref{prop2.3}. To simplify notations we write $\Phi^{c_0}(x)=\Phi(x)=(\phi_i(x))$.  By Theorem \ref{thm4.1}  there is $C>0$ such that 
\begin{align}\label{4.13}
 \sum_{i=1}^{m_0}\int_{0}^\yy J_i(y) |y|^\alpha \rd y\leq C,
\ \ 0<u_i^*-\phi_i(x)\leq \frac{C}{x^\alpha} \ \ \mbox{ for } \ x< 0,\; i\in\{1,..., m\}.
\end{align}
   
	Define   
	\[
	\begin{cases}
	&\underline h(t):=c_0t+\delta(t),  \ \ \ t\geq 0,\\
	&\underline U(t,x):= (1-\epsilon(t))[\Phi(x-\underline h(t))+\Phi(-x-\underline h(t))-\mathbf{u}^*], \ \ \ t\geq 0, x\in [-\underline h(t),\underline h(t)],
	\end{cases}
	\]
	where $\epsilon(t):=(t+\theta)^{-\alpha}$ and
	\begin{align*}
	\delta(t):=&K_1-K_2\int_{0}^{t}\epsilon(\tau)\rd \tau-2\sum_{i=1}^{m_0}\mu_iu_i^*\int_{0}^{t} \int_{-\yy}^{- \frac{c_0}2(\tau+\theta)} \int_{0}^{\yy} J_i(x-y) \rd y\rd x\rd \tau,
	\end{align*}
	with 
	   $\theta$, $K_1$ and $K_2$ large positive constants to be determined.
	 
	For any $M>0$ and $i\in\{1,..., m_0\}$,
	\bes\label{4.4b}
	\begin{aligned}
	&\int_{-\yy}^{-M}\int_{0}^{\yy} J_i(x-y) \rd y\rd x=\int_{M}^{\yy}\int_{x}^{\yy} J_i(y) \rd y\rd x\nonumber\\
	=&\int_{M}^{\yy}\int_{M}^{y} J_i(y) \rd x\rd y=\int_{M}^{\yy}(y-k)J_i(y)\rd y\leq \int_{M}^{\yy}yJ_i(y)\rd y.
	\end{aligned}
	\ees
Hence, due to $\dd\int_0^\infty yJ_i(y)\rd y<\yy$, we have
\begin{align*}
&2\sum_{i=1}^{m_0}\mu_iu_i^*\int_{0}^{t} \int_{-\yy}^{- \frac{c_0}2(\tau+\theta)} \int_{0}^{\yy} J_i(x-y) \rd y\rd x\rd \tau\\
\leq& \ 2\sum_{i=1}^{m_0}\mu_iu_i^*\int_{0}^{t} \int_{-\yy}^{- \frac{c_0}2\theta} \int_{0}^{\yy} J_i(x-y) \rd y\rd x\rd \tau\\
\leq &\lf[2\sum_{i=1}^{m_0}\mu_iu_i^* \int_{\frac{c_0}2\theta}^{\yy} y J_i(y) \rd y \rr]t\leq \frac{c_0}4 t
\end{align*}
provided that $\theta>0$ is large enough, say $\theta\geq \theta_0$.

For any given small $\epsilon_0>0$, due to $\Phi(-\yy)=\mathbf{u}^*$  there is $K_0=K_0(\epsilon_0)>0$ such that 
	\begin{align*}
(1-\epsilon_0)\mathbf{u}^*\preceq \Phi(-K_0),
	\end{align*}
	which implies that
	\begin{align}\label{4.5a}
	 \Phi(x-\underline h(t)), \Phi(-x-\underline h(t))\in[(1-\epsilon_0)\mathbf{u}^*,  \mathbf{u}^*]\ \ \mbox{for }  \ x\in [-\underline h(t)+K_0,\underline h(t)-K_0],
	\end{align}
	where we have assumed $\underline h(0)= K_1>K_0$.

Clearly 
\begin{align*}
&K_2\int_{0}^{t}(\tau+\theta)^{-\alpha}\rd \tau\leq K_2\theta^{-\alpha}t\leq \frac{c_0}4 t
\end{align*}
provided 
$
\theta\geq  (4K_2/c_0)^{1/\alpha}$.
 Therefore  
	\begin{align}\label{4.16}
	\underline h(t)\geq \frac{c_0}2 t+K_1\geq \frac{c_0}2(t+\theta)>K_0 \mbox{ for all $t\geq 0$ provided that }
	\end{align} 
	\begin{align}\label{4.3a}
K_1\geq \frac{c_0}2\theta \mbox{ and } \theta\geq  \max\left\{(4K_2/c_0)^{1/\alpha}, \theta_0, 2K_0/c_0\right\}.
\end{align}

	Define
	\begin{align*}
	\epsilon_1:=\inf_{1\leq i\leq m}\inf_{x\in [-K_0,0]}{|\phi_i'(x)|}>0.
	\end{align*}
	Then 
	\begin{equation}\label{4.14}
	\begin{cases}
	\Phi'(x-\underline h(t))<-\epsilon_1\mathbf{1}  &\mbox{ for } x\in  [\underline h(t)-K_0,\underline h(t)],\\
	\Phi'(-x-\underline h(t))<-\epsilon_1\mathbf{1}  &\mbox{ for } x\in  [-\underline h(t),-\underline h(t)+K_0].
	\end{cases}
	\end{equation}

	{\bf Claim 1:} With   $\underline U=(\underline u_i)$, and suitably chosen $\theta,\ K_1,\ K_2$,  we have
	\begin{align}\label{4.15}
	&\underline h'(t)\leq \sum_{i=1}^m\mu_i \int_{-\underline h(t)}^{\underline h(t)} \int_{\underline h(t)}^{\yy} J_i(x-y) \underline u_i(t,x)\rd y,\ \ \ \ t> 0
	\end{align}
	and 
	\begin{align*}
	&-\underline h'(t)\geq - \sum_{i=1}^m\mu_i \int_{-\underline h(t)}^{\underline h(t)} \int_{-\yy}^{-\underline h(t)} J_i(x-y) \underline u_i(t,x)\rd y,\ \ \ \ t> 0.
	\end{align*}
	
	Due to $\underline U(t,x)=\underline U(t,-x)$ and $\mathbf{J}(x)=\mathbf{J}(-x)$, we just need to verify \eqref{4.15}.
	We calculate  
	\begin{align*}
	&\sum_{i=1}^{m_0}\mu_i \int_{-\underline h(t)}^{\underline h(t)} \int_{\underline h(t)}^{\yy} J_i(x-y) \underline u_i(t,x)\rd y\rd x\\
	=&(1-\epsilon)\sum_{i=1}^{m_0}\mu_i \int_{-2\underline h(t)}^{0} \int_{0}^{\yy} J_i(x-y) \phi_i(x)\rd y\rd x\\
	&+(1-\epsilon)\sum_{i=1}^{m_0}\mu_i \int_{-2\underline h(t)}^{0} \int_{0}^{\yy} J_i(x-y)[\phi_i(-x-2\underline h(t))-u_i^*]\rd y\rd x\\
	=&(1-\epsilon)c_0-(1-\epsilon)\sum_{i=1}^{m_0}\mu_i \int_{-\yy}^{-2\underline h(t)} \int_{0}^{\yy} J_i(x-y) \phi_i(x)\rd y\rd x\\
	&-(1-\epsilon)\sum_{i=1}^{m_0}\mu_i \int_{-2\underline h(t)}^{0} \int_{0}^{\yy} J_i(x-y)[u_i^*-\phi_i(-x-2\underline h(t))]\rd y\rd x.
	\end{align*}
	From \eqref{4.16}, for $t\geq 0$,
	\begin{align*}
	&(1-\epsilon)\sum_{i=1}^{m_0}\mu_i \int_{-\yy}^{-2\underline h(t)} \int_{0}^{\yy} J_i(x-y) \phi_i(x)\rd y\rd x\\ &+(1-\epsilon)\sum_{i=1}^{m_0}\mu_i \int_{-2\underline h(t)}^{-\underline h(t)} \int_{0}^{\yy} J_i(x-y)[u_i^*-\phi_i(-x-2\underline h(t))]\rd y\rd x\\
	\leq &\ 2\sum_{i=1}^{m_0}\mu_i u_i^*\int_{-\yy}^{-\underline h(t)} \int_{0}^{\yy} J_i(x-y) \rd y\rd x\\
	\leq &\ 2\sum_{i=1}^{m_0}\mu_i u_i^*\int_{-\yy}^{-\frac{c_0}2(t+\theta)} \int_{0}^{\yy} J_i(x-y) \rd y\rd x.
	\end{align*}
	And by \eqref{4.13}, we have, for $t>0$,
	\begin{align*}
	&(1-\epsilon)\sum_{i=1}^{m_0}\mu_i \int_{-\underline h(t)}^{0} \int_{0}^{\yy} J_i(x-y)[u_i^*-\phi_i(-x-2\underline h(t))]\rd y\rd x\\
	\leq& \sum_{i=1}^{m_0}\mu_i [u_i^*-\phi_i(-\underline h(t))]\int_{-\underline h(t)}^{0} \int_{0}^{\yy} J_i(x-y)\rd y\rd x\\
	\leq&\sum_{i=1}^{m_0}\mu_i \frac{C}{h(t)^\alpha}\int_{ -\yy}^{0} \int_{0}^{\yy} J_i(x-y)\rd y\rd x\\
	=&\sum_{i=1}^{m_0}\mu_i \frac{C}{h(t)^\alpha}\int_{0}^{\yy} yJ_i(y)\rd y
	\leq\sum_{i=1}^{m_0}\mu_i \frac{C^2}{(c_0/2)^\alpha (t+\theta)^\alpha}\leq \frac{K_2-c_0}{(t+\theta)^{\alpha}}
	\end{align*}
	if 
	\begin{align}\label{4.7a}
	  K_2\geq c_0+ \frac{C^2}{(c_0/2)^\alpha}\sum_{i=1}^m\mu_i.
	\end{align}
	Hence, when $\theta, K_1$ and $K_2$ are chosen such that \eqref{4.3a} and \eqref{4.7a} hold, then
	\begin{align*}
	&\sum_{i=1}^m\mu_i \int_{-\underline h(t)}^{\underline h(t)} \int_{\underline h(t)}^{\yy} J_i(x-y) \underline u_i(t,x)\rd y\rd x\\
	\geq&\ (1-\epsilon)c_0-2\sum_{i=1}^m\mu_i u_i^*\int_{-\yy}^{-\frac{c_0}2(t+\theta)} \int_{0}^{\yy} J_i(x-y) \phi_i(x)\rd y\rd x-\frac{K_2-c_0}{(t+\theta)^{\alpha}}\\
	=&\ c_0-K_2\epsilon(t)-2\sum_{i=1}^m\mu_i u_i^*\int_{-\yy}^{-\frac{c_0}2(t+\theta)} \int_{0}^{\yy} J_i(x-y) \phi_i(x)\rd y\rd x\\
	=&\  h'(t) \ \ \mbox{ for all } t>0,
	\end{align*}
	which finishes the proof of \eqref{4.15}.
	\smallskip
	
	{\bf Claim 2:} With $\theta,\; K_1, \ K_2$ chosen such that  \eqref{4.3a} and \eqref{4.7a} hold, and $K_2$ suitably further enlarged (see \eqref{4.8b} below),
	 $\theta_0\gg 1$ and $0<\epsilon_0\ll 1$, we have, for all $t>0$ and $x\in (-\underline h(t),\underline h(t))$, 
	\begin{align}
	\underline U_t(t,x)\preceq &D\circ \int_{-\underline h(t)}^{\underline h(t)}  \mathbf{J}(x-y)\circ \underline U(t,y)\rd y -D\circ \underline U(t,x)\nonumber+ F(\underline U(t,x)).
	\end{align}

	A simple calculation gives
	\begin{align*}
	\underline U_t= &  -\epsilon'(t)[\Phi(x-\underline h(t))+\Phi(-x-\underline h(t))-\mathbf{u}^*]\\
	&-(1-\epsilon)h'(t)[\Phi'(x-\underline h(t))+\Phi'(-x-\underline h(t))]\\
	=& \alpha(t+\theta)^{-\alpha-1}[\Phi(x-\underline h(t))+\Phi(-x-\underline h(t))-\mathbf{u}^*]\\
	&-(1-\epsilon)[c_0+\delta'(t)][\Phi'(x-\underline h(t))+\Phi'(-x-\underline h(t))],
	\end{align*}
	and using the equation satisfied by $\Phi$ we deduce
	\begin{align*}
&-(1-\epsilon)c_0[\Phi'(x-\underline h(t))+\Phi'(-x-\underline h(t))]\\
=&(1-\epsilon) \bigg[D\circ\int_{-\yy}^{\underline h(t)}\mathbf{J} (x-y)\circ \Phi(y-\underline h(t)) {\rm d}y-D\circ\Phi(x-\underline h(t))\\
	&\ \ \ \ \ \ \ \ \ \  +D\circ\int_{-\underline h(t)}^{\yy}\mathbf{J} (-x-y)\circ \Phi(-y-\underline h(t)) {\rm d}y-D\circ\Phi(-x-\underline h(t))\bigg]\\
	&+(1-\epsilon) \bigg[F(\Phi(x-\underline h(t)))+ F(\Phi(-x-\underline h(t))) \bigg]\\
	=&D\circ \lf[\int_{-\underline h(t)}^{\underline h(t)}  \mathbf{J}(x-y)\circ \underline U(t,y)\rd y -\underline U(t,x) \rr]\\
	&+(1-\epsilon) \bigg[D\circ\int_{-\yy}^{-\underline h(t)}\mathbf{J} (x-y)\circ [\Phi(y-\underline h(t))-\mathbf{u}^*] {\rm d}y\\
	&\ \ \ \ \ \ \ \ \ \ \ \ \ \ +D\circ\int_{\underline h(t)}^{\yy}\mathbf{J} (-x-y)\circ [\Phi(-y-\underline h(t)) {\rm d}y-\mathbf{u}^*]\rd y\bigg]\\
	&+(1-\epsilon) \bigg[F(\Phi(x-\underline h(t)))+ F(\Phi(-x-\underline h(t))) \bigg]\\
	\preceq & D\circ \lf[\int_{-\underline h(t)}^{\underline h(t)}  \mathbf{J}(x-y)\circ \underline U(t,y)\rd y -\underline U(t,x)\rr]\\
	&+(1-\epsilon) \bigg[F(\Phi(x-\underline h(t)))+ F(\Phi(-x-\underline h(t))) \bigg].
	\end{align*}
	Hence
	\begin{align*}
		\underline U_t
	\preceq  D\circ \int_{-\underline h(t)}^{\underline h(t)}  \mathbf{J}(x-y)\circ \underline U(t,y)\rd y -\underline U(t,x)
	+ F(\underline U(t,x))+A_1(t,x)+A_2(t,x),
	\end{align*}
	where
	\begin{align*}
	A_1(t,x):= &\alpha(t+\theta)^{-\alpha-1}[\Phi(x-\underline h(t))+\Phi(-x-\underline h(t))-\mathbf{u}^*],\\
	A_2(t,x):=&- (1-\epsilon) \delta'(t)[\Phi'(x-\underline h(t))+\Phi'(-x-\underline h(t))]\\
	&+(1-\epsilon)[F(\Phi(x-\underline h(t)))+ F(\Phi(-x-\underline h(t)))]- F(\underline U(t,x)).
	\end{align*}
	
	To finish the proof of Claim 2, it remains to check that
	\begin{align*}
	A_1(t,x)+A_2(t,x)\preceq  {\bf 0} \ \mbox{ for}\ \ t>0,\ x\in (-\underline h(t),\underline h(t)).
	\end{align*}
	We next prove this inequality for $x$ in the following  three intervals, separately:  
	\[
	I_1(t):=[\underline h(t)-K_0,\underline h(t)], \ I_2(t):= [-\underline h(t),-\underline h(t)+K_0], \ I_3(t):=[-\underline h(t)+K_0,\underline h(t)-K_0].
	\]
	
	For $x\in I_1(t)$, by \eqref{4.13},
	\begin{align*}
	\mathbf{0}\succ \Phi(-x-\underline h(t))-\mathbf{u}^*\succeq \Phi(K_0-2\underline h(t))-\mathbf{u}^*\succeq \Phi(-\underline h(t))-\mathbf{u}^* \succeq  \frac{-C}{h(t)^\alpha}\mathbf{1}
	\end{align*}
	Then by \eqref{1.3a} and $(\mathbf{f}_2)$,
	\begin{align*}
	&F(\Phi(-x-\underline h(t)))=F(\Phi(-x-\underline h(t)))-F(\mathbf{u}^*)\preceq L\frac{C}{h(t)^\alpha}\mathbf{1}
	\end{align*}
	and
	\begin{align*}
	F(\underline U(t,x))\succeq& (1-\epsilon)F\bigg(\Phi(x-\underline h(t))+\Phi(-x-\underline h(t))-\mathbf{u}^*\bigg)\\
	\succeq&(1-\epsilon)\big[F(\Phi(x-\underline h(t)))-L\frac{C}{h(t)^\alpha}\mathbf{1}\big].
	\end{align*}
	Thus from  the definition of $\delta(t)$, \eqref{4.16} and \eqref{4.14}, we deduce
	\begin{align*}
	A_2(t,x)\preceq& (1-\epsilon)  \bigg[\delta'(t)[\Phi'(x-\underline h(t))+\Phi'(-x-\underline h(t))]+F(\Phi(x-\underline h(t)))\\
	&+ F(\Phi(-x-\underline h(t)))-F\bigg(\Phi(x-\underline h(t))+\Phi(-x-\underline h(t))-\mathbf{u}^*\bigg)\bigg]\\
	\preceq & (1-\epsilon)\lf[-\delta'(t) \epsilon_1 +2L\frac{C}{h(t)^\alpha}  \rr]\mathbf{1}
	\preceq (1-\epsilon)\lf[- K_2(t+\theta)^{-\alpha}  \epsilon_1  +\frac{2LC}{h(t)^\alpha} \rr]\mathbf{1}\\
	\preceq & (1-\epsilon)(t+\theta)^{-\alpha}\big[- K_2  \epsilon_1 +2LC(2/c_0)^\alpha \big]\mathbf{1}.
	\end{align*}
	Moreover,
	\[
	A_1(t,x)\preceq \alpha (t+\theta)^{-\alpha-1}\mathbf{u}^*\leq 2|\mathbf{u}^*|(1-\epsilon)\alpha (t+\theta)^{-\alpha-1}\mathbf{1},
	\]
	where $|\mathbf{u}^*|:=\max_{1\leq i\leq m} u_i^*$ and by enlarging $\theta_0$ we have assumed that  $\epsilon(t)\leq \theta_0^{-\alpha}<1/2$.  Hence
	\begin{align*}
	A_1(t,x)+A_2(t,x)
	\preceq (1-\epsilon)(t+\theta)^{-\alpha} \Big[- K_2  \epsilon_1 +2LC(2/c_0)^\alpha + 2|\mathbf{u}^*|\alpha \theta_0^{-1}\Big]\mathbf{1}\preceq \mathbf{0}
	\end{align*}
	if additionally  
	\begin{align}\label{4.8b}
K_2\geq \epsilon_1^{-1}\Big[ 2LC(2/c_0)^\alpha + 2|\mathbf{u}^*|\alpha \theta_0^{-1}   \Big].
	\end{align}
	This proves the desired inequality for $x\in I_1(t)$.
	
Since $A_1(t,x)+A_2(t,x)$ is even in $x$, the desired inequality is also valid for  $x\in I_2(t)=-I_1(t)$. It remains to prove the desired inequality for $x\in I_3(t)$.
	
	The case $x\in I_3(t)$ requires some preparations.
	Define
	\begin{align*}
	G(u,v)=(g_i(u,v)):=(1-\epsilon)[F(u)+ F(v)]- F((1-\epsilon)(u+v-\mathbf{u}^*)),\ \ u,v\in \R^m.
	\end{align*}
	For $u\in  [\mathbf{0},\mathbf{u}^*]$ and $v\in  [\mathbf{0},\mathbf{u}^*]$, and each $i\in\{1,..., m\}$, we may apply the mean value theorem to the function
	\[
	\xi_i(t):=g_i(\mathbf{u^*}+t(u-\mathbf{u^*}),\mathbf{u^*}+t(v-\mathbf{u^*})
	\]
	to obtain
	\[
	\xi_i(1)=\xi_i(0)+\xi_i'(\zeta_i) \mbox{ for some } \zeta_i\in [0,1].
	\]
	Denote
	\[
	\tilde u=\tilde u^i:=\mathbf{u^*}+\zeta_i(u-\mathbf{u^*}), \ \ \td v=\tilde v^i:=\mathbf{u^*}+\zeta_i(v-\mathbf{u^*}).
	\]
	Then the above identity is equivalent to
	\begin{align*}
	g_i(u,v)=&g_i(\mathbf{u}^*,\mathbf{u}^*)+\nabla_{\! u}\,g_i(\td u,\td v)\cdot (u-\mathbf{u}^*)+\nabla_{\! v}\,g_i(\td u,\td v)\cdot (v-\mathbf{u}^*)\\
	=&-f_i((1-\epsilon)\mathbf{u}^*)+(1-\epsilon)\nabla f_i(\td u)\cdot (u-\mathbf{u}^*)+(1-\epsilon)\nabla f_i(\td v)\cdot  (v-\mathbf{u}^*)\\
	&-(1-\epsilon)\nabla f_i\big((1-\epsilon)(\td u+\td v-\mathbf{u}^*)\big)\cdot (u-\mathbf{u}^*)\\
	&-(1-\epsilon)\nabla f_i\big((1-\epsilon)(\td u+\td v-\mathbf{u}^*)\big)\cdot (v-\mathbf{u}^*).
	\end{align*}
	Let us note that $\td u\in [u,\mathbf{u}^*]$ and $\td v\in [v,\mathbf{u}^*]$. Since $F\in C^2$, there is $C_1$ such that 
	\begin{align*}
	|\partial_{jk} f_i(u)|\leq C_1 \ \mbox{ for } \ \ u\in [{\bf 0},\mathbf{\hat u}],\ i,j,k\in\{1,..., m\}.
	\end{align*}
	A simple calculation gives 
	\begin{align*}
	&(1-\epsilon)\nabla f_i(\td u)(u-\mathbf{u}^*)-(1-\epsilon)\nabla f_i\big((1-\epsilon)(\td u+\td v-\mathbf{u}^*)\big)\cdot(u-\mathbf{u}^*)\\
	= &(1-\epsilon)\bigg[\nabla f_i(\td u)-\nabla f_i\big((1-\epsilon)(\td u+\td v-\mathbf{u}^*)\big)\bigg]\cdot (u-\mathbf{u}^*)\\
	\leq &(1-\epsilon) b_1\sum_{j=1}^m (u_j^*-u_j),
	\end{align*}
	where  
		\begin{align*}
	b_1:=&\ C_1 | \td u-(1-\epsilon)(\td u+\td v-\mathbf{u}^*)|\\
	=&\ C_1 |\epsilon \td u-(1-\epsilon)(\td v-\mathbf{u}^*)|\leq C_1\sum_{j=1}^m[\epsilon \td u_j+(1-\epsilon)({u}_j^*-\td v_j)]\\
	\leq&\ C_2\epsilon+C_1\sum_{j=1}^m({u}_j^*- v_j) \ \mbox{with $C_2:=C_1\sum_{j=1}^{ m} u_j^*$. }
	\end{align*}
		 Similarly,
	\begin{align*}
	&(1-\epsilon)\nabla f_i(\td u)\cdot (v-\mathbf{u}^*)-(1-\epsilon)\nabla f_i\big((1-\epsilon)(\td u+\td v-\mathbf{u}^*)\big)\cdot (v-\mathbf{u}^*)\\
	\leq &\ (1-\epsilon) b_2 \sum_{j=1}^m (u_j^*-v_j),
	\end{align*}
	where 
	\begin{align*}
	b_2:= C_1|\epsilon \td u-(1-\epsilon)(\td u-\mathbf{u}^*)|
	\leq\ C_2\epsilon+C_1\sum_{j=1}^m({u}_j^*- u_j).
	\end{align*}
	Thus 
	\begin{align*}
	g_i(u,v)\leq & -f_i((1-\epsilon)\mathbf{u}^*)+(1-\epsilon) b_1 \sum_{j=1}^m({u}_j^*- v_j)+(1-\epsilon) b_2 \sum_{j=1}^m({u}_j^*- u_j)\\
	\leq &-f_i((1-\epsilon)\mathbf{u}^*)+\lf[C_2\epsilon+C_1\sum_{j=1}^m({u}_j^*- v_j)\rr]\sum_{k=1}^m(u_k^*-u_k)\\
	&+\lf[C_2\epsilon+C_1\sum_{j=1}^m({u}_j^*- u_j)\rr]\sum_{k=1}^m(u_k^*-v_k)\\
	= &-f_i((1-\epsilon)\mathbf{u}^*)+
	 C_2\epsilon\sum_{k=1}^m\Big[(u_k^*-u_k)+(u_k^*-v_k)\Big]\\
	&+C_1\sum_{j, k=1}^m({u}_j^*- v_j)(u_k^*-u_k)+C_1\sum_{j,k=1}^m({u}_j^*- u_j)(u_k^*-v_k)\\
	= &\ \epsilon\nabla f_i(\mathbf{u}^*)\cdot \mathbf{u}^*+o(\epsilon)+
	 C_2\epsilon\sum_{k=1}^m\Big[(u_k^*-u_k)+(u_k^*-v_k)\Big]\\
	&+2C_1\sum_{j,k=1}^m({u}_j^*- v_j)(u_k^*-u_k),
	\end{align*}
	where $o(\epsilon)/\epsilon\to 0$ as $\epsilon\to 0$. Since $\epsilon(t)\leq \theta_0^{-\alpha}$, we see that  
	\[
	o(\epsilon)=o(1)\epsilon \mbox{ with } o(1)\to 0 \mbox{ as } \theta_0\to\infty.
	\]

For our discussions below, it is convenient to introduce the notations
\begin{align*}
P(t,x)=(p_i(t,x)):=\mathbf{u}^*-\Phi(x-\underline h(t)), \ \ Q(t,x)=(q_i(t,x)):=\mathbf{u}^*-\Phi(-x-\underline h(t)). 
\end{align*}
Then 
	by \eqref{4.5a} we have
	\bes\label{4.13b}
		 P(t,x),\ Q(t,x)\in [\mathbf{0}, \epsilon_0\mathbf{u}^*]
	\mbox{ for } x\in I_3(t), t>0.
	\ees
	Moreover, since $\min\{x-\underline h(t), -x-\underline h(t)\}\leq -\underline h(t)$ always holds, 
	  by \eqref{4.13} and \eqref{4.16}, if we denote $C_3:=C(c_0/2)^{-\alpha}$, then
	 \begin{align}\label{4.10a}
	p_j(t,x)q_k(t,x)\leq  \frac{C\epsilon_0}{\underline h(t)^\alpha}\leq C_3\epsilon_0\epsilon(t)\ \mbox{ for }  x\in I_3(t),\ t>0,\ \  j, k\in\{1,..., m\}.
	 \end{align}
Let $A_2^i$ denote the $i$-th component of $A_2$. Now due to $\delta'(t)<0$ and $\Phi'\llp\mathbf{0}$, we have,  by \eqref{4.13b} and \eqref{4.10a},
	 \begin{align*}
	 A_2^i(t,x)\leq&\  g_i(\mathbf{u}^*-P, \mathbf{u}^*-Q)\\
	 \leq &\ \epsilon\nabla f_i(\mathbf{u}^*)\cdot \mathbf{u}^*+o(\epsilon)+
	 C_2\epsilon\sum_{k=1}^m (p_k+ q_k)+2C_1\sum_{j,k=1}^mp_jq_k\\
	\leq  &\ \epsilon\nabla f_i(\mathbf{u}^*)\cdot \mathbf{u}^*+o(\epsilon)+ C_4\epsilon_0\epsilon\\
	=&\ \epsilon\Big[\mathbf{u}^*\cdot \nabla f_i(\mathbf{u}^*)+o(1)+ C_4\epsilon_0\Big]\ \ \ 
	\mbox{ for }  x\in I_3(t), \    t>0,\ i\in\{1,..., m\},
	 \end{align*}
	 with
	 $
C_4:=2m(C_2+C_1C_3).
	 $ 
	Since
	 \[
	A^i_1(t,x)\leq \alpha (t+\theta)^{-\alpha-1}u_i^*\leq \alpha| u_i^*| \theta_0^{-1}\epsilon(t)
	\]
	and 
	\[
	\mathbf{u}^*[\nabla F(\mathbf{u}^*)]^T \llp {\bf 0},
	\]
	 we thus obtain
	 \[
	 A^i_1+A^i_2\leq \epsilon\Big(\mathbf{u}^*\cdot \nabla f_i(\mathbf{u}^*)+\Big[o(1)+
	  C_4\epsilon_0+\alpha u_i^*\theta_0^{-1}\Big]\Big)<{0} \ \ \ 
	\mbox{ for }  x\in I_3(t), \    t>0,\ i\in\{1,..., m\}
	 \]
	 provided that $\theta_0$ is sufficiently large and $\epsilon_0$ is sufficiently small.
The proof of Claim 2 is now complete.
\smallskip
	
	{\bf Claim 3:} There exists $t_0>0$ such that
	\bes\label{Uh}
	\begin{cases}
	&g(t+t_0)\leq -\underline h(t), \ h(t+t_0)\geq \underline h(t) \mbox{ for } t\geq 0,\\
	&U(t+t_0,x)\succeq \underline U(t,x) \mbox{ for } t\geq 0,\ x\in [-\underline h(t),\underline h(t)].
	\end{cases}
\ees
	It is clear that 
	\begin{align*}
	\underline U(t,\pm \underline h(t))=(1-\epsilon(t)) [\Phi(-2\underline h(t))-\mathbf{u}^*]\prec \mathbf{0} \mbox{ for } \ t\geq 0. 
	\end{align*}
	Since spreading happens for $(U,g,h)$,  there exists a large constant $t_0>0$ such that 
	\begin{align*}
	&g(t_0)<-K_1=-\underline h(0)\ {\rm and}\ \underline h(0)=K_1< h(t_0),\\
	&U(t_0,x)\succeq (1-\theta^{-\alpha})\mathbf{u}^*\succeq \underline U(0,x) \ \mbox{ for } \ \ x\in [-\underline h(0),\underline h(0)].
	\end{align*}
	which together with the inequalities proved in Claims 1 and 2 allows us to apply Lemma \ref{lemma3.2} 	to conclude that \eqref{Uh} is valid.
	\smallskip

	{\bf Claim 4:} There exists $C>0$ such that
	 \[
	\delta(t)\geq -C\left[1+\int_0^t(1+x)^{-\alpha}dx+ \int_0^{\frac{c_0}2t} x^2\hat J(x)dx +t \int_{\frac{c_0}2t}^{\yy}x \hat J(x) dx\right].
	\]
	
Clearly 
\begin{align*}
\int_0^t\epsilon(\tau)\rd \tau=\int_{0}^{t}(x+\theta)^{-\alpha}\rd x <\int_{0}^{t}(x+1)^{-\alpha}\rd x. 
\end{align*}
By changing order of integrations we have
\begin{align*}
&\int_{0}^{t} \int_{-\yy}^{- \frac{c_0}2(\tau+\theta)} \int_{0}^{\yy} J_i(x-y) \rd y\rd x\rd \tau\leq\int_{0}^{t} \int_{-\yy}^{- \frac{c_0}2 \tau} \int_{0}^{\yy} J_i(x-y) \rd y\rd x\rd \tau \\
=&\int_{0}^{t}\int_{\frac{c_0}2 \tau}^\infty\Big[y-\frac{c_0}2\tau\Big]  J_i(y) \rd y \rd \tau
\leq\int_{0}^{t}\int_{\frac{c_0}2\tau}^\infty yJ_i(y) \rd y \rd \tau\\
=& \ \frac{c_0}2\int_{0}^{\frac{c_0}2 t} y^2J_i(y)dy+t \int_{\frac{c_0}2t}^{\yy}y J_i(y) dy.
\end{align*}
The desired inequality now follows directly from  the definition of $\delta(t)$.
\end{proof}

\smallskip

Next we prove an upper bound for $h(t)-c_0t$. Let us note that we do not need the condition $\mathbf{(J^\alpha)}$ in the following result.
\begin{lemma}\label{lemma4.4}
 Under the assumptions of Theorem \ref{theorem1.3} (i), if $(\mathbf{J_1})$ holds, and  additionally $F$ is $C^2$ and   
  $\mathbf{u}^*[\nabla F(\mathbf{u}^*)]^T  \llp \mathbf{0}$, then there exits $C>0$ such that
	\begin{align}\label{4.18}
	&h(t)-c_0t\leq  C\ \  \mbox{ for all }\ t>0.
	\end{align}
\end{lemma}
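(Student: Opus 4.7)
The plan is to construct an upper solution to \eqref{1.1} of the form
\[
\bar h(t):=c_0t+K_1+K_2\int_0^t\epsilon(s)\,\rd s,\qquad \bar U(t,x):=(1+\epsilon(t))\,\Phi^{c_0}(x-\bar h(t)),
\]
with $\epsilon(t)=\eta e^{-\mu t}$ for small constants $\eta,\mu>0$, so that $\int_0^{\yy}\epsilon(s)\,\rd s=\eta/\mu<\yy$ and the logarithmic rate $|\epsilon'(t)|/\epsilon(t)=\mu$ can be made arbitrarily small. Once the inequalities of Lemma \ref{lemma3.2}(i) are verified on $[g(t+t_0),\bar h(t)]$ together with an initial domination at some $t_0\gg 1$, we will obtain $h(t+t_0)\leq\bar h(t)$, which gives \eqref{4.18} with $C:=K_1+K_2\eta/\mu+c_0 t_0$. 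In contrast to the lower-solution construction of Lemma \ref{lemma4.3}, no decay rate of $u_i^*-\phi_i^{c_0}(x)$ as $x\to-\yy$ is required: the upper solution ``sits above'' $\mathbf{u}^*$ on the entire interior, so the only integrability the argument uses is $(\mathbf{J_1})$, which is already built into the definition of $c_0$ via \eqref{2.2a}.

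The free-boundary inequality is immediate from \eqref{2.2a}. A shift of variables gives
\[
\sum_{i=1}^{m_0}\mu_i\int_{g(t+t_0)}^{\bar h(t)}\!\!\int_{\bar h(t)}^{\yy}\!\!J_i(x-y)\bar u_i(t,x)\,\rd y\,\rd x\leq(1+\epsilon(t))\sum_{i=1}^{m_0}\mu_i\int_{-\yy}^0\!\!\int_0^{\yy}\!\!J_i(x-y)\phi_i^{c_0}(x)\,\rd y\,\rd x=(1+\epsilon(t))c_0,
\]
so choosing $K_2\geq c_0$ makes $\bar h'(t)=c_0+K_2\epsilon(t)\geq(1+\epsilon(t))c_0$, as required.

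For the PDE inequality, substituting $\bar U$, using the semi-wave equation \eqref{2.1a} satisfied by $\Phi^{c_0}$, and enlarging the integration domain from $[g(t+t_0),\bar h(t)]$ to $(-\yy,\bar h(t)]$, the task reduces to establishing
\[
\mathcal R(t,x):=-(1+\epsilon)K_2\epsilon(t)(\Phi^{c_0})'(x-\bar h(t))+\epsilon'(t)\Phi^{c_0}(x-\bar h(t))+(1+\epsilon)F(\Phi^{c_0})-F((1+\epsilon)\Phi^{c_0})\succeq\mathbf{0}.
\]
Following Claim 2 of Lemma \ref{lemma4.3}, I would split $[g(t+t_0),\bar h(t)]$ into a front layer $[\bar h(t)-K_0,\bar h(t)]$ and its complement. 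In the front layer $|(\Phi^{c_0})'|\geq\epsilon_1>0$, and a $C^2$ Taylor expansion of $F$ near $\mathbf{0}$ together with $\mathbf{(f_2)}$ gives
\[
(1+\epsilon)F(\Phi^{c_0})-F((1+\epsilon)\Phi^{c_0})=\epsilon\big[F(\Phi^{c_0})-\Phi^{c_0}[\nabla F(\Phi^{c_0})]^T\big]+O(\epsilon^2)\succeq-O(\epsilon^2);
\]
combined with $|\epsilon'\Phi^{c_0}|\leq\mu\epsilon(t)|\Phi^{c_0}(-K_0)|$, choosing $K_2$ large relative to $\mu$, $K_0$ and $\|\nabla^2 F\|_\infty$ makes the positive term $K_2\epsilon_1\epsilon(t)$ absorb both remainders. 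In the complementary region, after choosing $K_0$ large enough that $\Phi^{c_0}(x-\bar h(t))$ is $\epsilon_0$-close to $\mathbf{u}^*$, a Taylor expansion about $\mathbf{u}^*$ combined with the hypothesis $\mathbf{u}^*[\nabla F(\mathbf{u}^*)]^T\llp\mathbf{0}$ yields
\[
(1+\epsilon)F(\Phi^{c_0})-F((1+\epsilon)\Phi^{c_0})=-\epsilon\,\mathbf{u}^*[\nabla F(\mathbf{u}^*)]^T+o(\epsilon)\ggs c_\ast\epsilon(t)\mathbf{1}
\]
for some $c_\ast>0$; taking $\mu<c_\ast/\max_i u_i^*$ ensures this dominates $\epsilon'(t)\Phi^{c_0}$.

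Finally, by $\mathbf{(f_4)}$ applied to the ODE $v'=F(v)$ with $v(0)=(\max_x u_{i}(0,x))$, there exists $t_0>0$ such that $U(t_0,x)\preceq(1+\eta/2)\mathbf{u}^*$ on $[g(t_0),h(t_0)]$; taking $K_1$ large enough that $\Phi^{c_0}(-K_1)\succeq(1+\eta/2)(1+\eta)^{-1}\mathbf{u}^*$ and $[-K_1,K_1]\supset[g(t_0),h(t_0)]$ gives $\bar U(0,x)\succeq U(t_0,x)$, and Lemma \ref{lemma3.2}(i) concludes the argument. The main obstacle is verifying $\mathcal R\succeq\mathbf{0}$ in the transition zone where neither $|(\Phi^{c_0})'|$ is bounded below nor $\Phi^{c_0}$ is very close to $\mathbf{u}^*$; as in Lemma \ref{lemma4.3}, this is overcome by a quantitative interpolation of the two estimates above and by crucially using $\mathbf{u}^*[\nabla F(\mathbf{u}^*)]^T\llp\mathbf{0}$ to promote the leading $O(\epsilon)$-correction of $F$ into a genuinely positive quantity that absorbs all sign-indefinite contributions.
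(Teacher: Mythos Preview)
Your proposal is correct and follows essentially the same approach as the paper: build an upper solution of the form $\bar U=(1+\epsilon(t))\Phi^{c_0}(x-\bar h(t))$, get the free-boundary inequality directly from \eqref{2.2a}, and verify the PDE inequality by splitting into a front layer $[\bar h-K_0,\bar h]$ (where $-(\Phi^{c_0})'$ is bounded below) and its complement (where $\Phi^{c_0}$ is $\epsilon_0$-close to $\mathbf{u}^*$ and the hypothesis $\mathbf{u}^*[\nabla F(\mathbf{u}^*)]^T\llp\mathbf{0}$ is used). The only difference is cosmetic: the paper takes $\epsilon(t)=(t+\theta)^{-\beta}$ with $\beta>1$ and $\theta$ large (so $|\epsilon'|/\epsilon\le\beta/\theta$ is small), whereas you take $\epsilon(t)=\eta e^{-\mu t}$ with small $\mu$ (so $|\epsilon'|/\epsilon=\mu$ is small); note also that there is no ``transition zone'' to handle, since once $K_0$ is chosen so that $\Phi^{c_0}(-K_0)\succeq(1-\epsilon_0)\mathbf{u}^*$ the two regions already cover $[g(t+t_0),\bar h(t)]$.
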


\begin{proof} As in the proof of Lemma \ref{lemma4.3}, $(c_0,\Phi^{c_0})$  denotes the unique solution pair of  \eqref{2.1a}-\eqref{2.2a} obtained in Theorem \ref{prop2.3}, and to simplify notations we write $\Phi^{c_0}(x)=\Phi(x)=(\phi_i(x))$. 
	
	  For fixed $\beta>1$, and some large constants $\theta>0$ and $K_1>0$ to be determined, define
	\[\begin{cases}
	&\bar h(t):=c_0+\delta(t), \ \ \ t\geq 0,\\
	&\ol U(t,x):=(1+\epsilon(t)) \Phi(x-\bar h(t)),\ \ \ t\geq 0,\ x\leq \bar h(t),
	\end{cases}\]
	where $\epsilon(t):=(t+\theta)^{-\beta}$ and
	\begin{align*}
	\delta(t):=K_1+\frac{c_0}{1-\beta}[(t+\theta)^{1-\beta}-\theta^{1-\beta}] .
	\end{align*}

	Clearly,  there is a large constant $t_0>0$ such that 
	\begin{align*}
	U(t+t_0,x)\preceq (1+\frac 12\epsilon(0))\mathbf{u}^*\ \mbox{ for }\ \  t\geq 0,\ x\in [g(t), h(t)].
	\end{align*}
	Due to $\Phi(-\yy)=\mathbf{u}^*$, we may choose sufficient large $K_1>0$ such that $\underline h(0)=K_1>2h(t_0)$, $-\underline h(0)=-K_1<2g(t_0)$, and also
	\begin{align}\label{4.17a}
	\ol U(0,x)=(1+\epsilon(0)) \Phi(-K_1/2)\succ (1+\frac 12\epsilon(0))\mathbf{u}^* \succeq  U(t_0,x) \ \mbox{ for } \ x\in [g(t_0),h(t_0)].
	\end{align}
	
	{\bf Claim 1:}	 We have, with  $\ol U=(\bar u_i)$,
	\begin{align*}
	\bar h'(t)\geq  \sum_{i=1}^m\mu_i \int_{g(t+t_0)}^{\bar h(t)} \int_{\bar h(t)}^{+\yy} J_i(x-y) \bar u_i(t,x)\rd y \ \mbox{ for }\ \ \ t> 0.
	\end{align*}

	A direct  calculation shows 
	\begin{align*}
	&\sum_{i=1}^m\mu_i \int_{g(t+t_0)}^{\bar h(t)} \int_{\bar h(t)}^{+\yy} J_i(x-y) \bar u_i(t,x)\rd y\\
	\leq & \sum_{i=1}^m\mu_i \int_{-\yy}^{\bar h(t)} \int_{\bar h(t)}^{+\yy} J_i(x-y) \bar u_i(t,x)\rd y\\
	= &  (1+\epsilon)\sum_{i=1}^m\mu_i \int_{-\yy}^{0} \int_{0}^{+\yy} J_i(x-y) \phi_i(x)\rd y\\
	= & (1+\epsilon)c_0=  \bar h'(t),
	\end{align*}
	as desired.
	\smallskip
	
	{\bf Claim 2:} If $\theta>0$ is sufficiently large, then for $t>0$ and $x\in (g(t+t_0),\underline h(t))$, we have
	\begin{align}\label{4.18b}
	\ol U_t(t,x)\succeq &D\circ \int_{g(t+t_0)}^{\bar h(t)}  \mathbf{J}(x-y)\circ \ol U(t,y)\rd y -D\circ\ol U(t,x)+F(\ol U(t,x)).
	\end{align}
	
	By \eqref{2.1a}, we have
	\begin{align*}
	\ol U_t(t,x)=&-(1+\epsilon)[c_0+\delta'(t)]\Phi'(x-\bar h(t))+\epsilon'(t)\Phi(x-\underline h(t))\\
	=& -(1+\epsilon)c_0\Phi'(x-\bar h(t))-(1+\epsilon)\delta'(t)\Phi'(x-\bar h(t))-\beta(t+\theta)^{-\beta-1}\Phi(x-\underline h(t))\\ 
	\succeq   &D\circ \int_{g(t_0+t)}^{\bar h(t)}  \mathbf{J}(x-y)\circ \ol U(t,y)\rd y -D\circ\ol U(t,x)+F(\ol U(t,x))+A(t,x)
	\end{align*}
	with
	\begin{align*}
	A(t,x):=&(1+\epsilon)F(\Phi(x-\bar h(t)))-F((1+\epsilon)\Phi(x-\bar h(t)))\\
	&-(1+\epsilon)\delta'(t)\Phi'(x-\bar h(t))-\beta(t+\theta)^{-\beta-1}\Phi(x-\underline h(t)).
	\end{align*}
	To prove the claim, we need to show 
	\[A(t,x)\succeq \mathbf{0} \ \mbox{ for $x\in [g(t_0+t),\bar h(t)]$ and $t>0$}.
	\]
	
	Let $\epsilon_0,\ \epsilon_1$ and $K_0$ be given as in the proof of Lemma \ref{lemma4.3}. For $x\in [\bar h(t)-K_0,\bar h(t)]$ and $t>0$, by \eqref{4.14}, we have
	\begin{align*}
	A(t,x)\succeq& -(1+\epsilon)\delta'(t)\Phi'(x-\bar h(t))-\beta(t+\theta)^{-\beta-1}\Phi(x-\underline h(t))\\
	=&-(1+\epsilon)c_0(t+\theta)^{-\beta}\Phi'(x-\bar h(t))-\beta(t+\theta)^{-\beta-1}\Phi(x-\underline h(t))\\
	\succeq&c_0(t+\theta)^{-\beta}\epsilon_1\mathbf{1}-\beta(t+\theta)^{-\beta-1}\mathbf{u}^*\\
	\succeq &(t+\theta)^{-\beta-1}\big[c_0\theta\epsilon_1\mathbf{1}-\beta\mathbf{u}^*\big]
	\succeq  \mathbf{0},
	\end{align*}
	provided $\theta$ is large enough.

	We next estimate $A(t,x)$ for $x\in [g(t+t_0),\underline h(t)-K_0]$. Define
	\begin{align*}
	G(u)=(g_i(u)):=(1+\epsilon)F(u)- F((1+\epsilon)u),\ \ u,v\in \R^m.
	\end{align*}
	Then for $u, v\in  [\mathbf{0},\mathbf{u}^*]$ and $i\in\{1,..., m\}$,
	\begin{align*}
	g_i(u)=&g_i(\mathbf{u}^*)+\nabla g_i(\td u)\cdot (u-\mathbf{u}^*)\\
	=&-f_i((1+\epsilon)\mathbf{u}^*)+(1+\epsilon)\nabla f_i(\td u)\cdot (u-\mathbf{u}^*)-(1+\epsilon)\nabla f_i((1+\epsilon)\td u)\cdot (u-\mathbf{u}^*)\\
	=&-f_i((1+\epsilon)\mathbf{u}^*)+(1+\epsilon)\bigg[\nabla f_i(\td u)-\nabla f_i((1+\epsilon)\td u)\bigg]\cdot (u-\mathbf{u}^*)
	\end{align*}
	for some $\td u=\td u^i\in [u,\mathbf{u}^*]$. Since $F\in C^2$, there exists $C_1>0$ such that 
	\begin{align*}
	|\partial_{jk} f_i(u)|\leq C_1\ \mbox{ for } \ u\in [0,\mathbf{\hat u}],\ i,j,k\in\{1,..., m\}.
	\end{align*}
	Therefore
	\begin{align*}
	g_i(u)\geq& -f_i((1+\epsilon)\mathbf{u}^*)-(1+\epsilon)b_1\sum_{j=1}^m(u_j^*-u_j)
	\end{align*}
	with  
	\begin{align*}
	b_1:= C_1 |\epsilon \td u|\leq C_1\epsilon|\mathbf{u}^*|:= C_2\epsilon.
	\end{align*}
	Thus 
	\begin{align*}
	g_i(u)\geq& -\epsilon\nabla f_i(\mathbf{u}^*)\cdot \mathbf{u}^*+o(\epsilon)-2C_2\epsilon\sum_{j=1}^m(u_j^*-u_j).
	\end{align*}
	By \eqref{4.5a} we have 
	\begin{align}\label{4.19}
	-\epsilon_0\mathbf{u}^*	\preceq \Phi(x-\bar h(t))-\mathbf{u}^*\llp \mathbf{0} \ \mbox{ for }\ \ x\in [g(t_0+t),\underline h(t)-K_0],\ t>0.
	\end{align}
	Using \eqref{4.5a}, $\delta'>0$, $\Phi'\preceq \mathbf{0}$ and $\epsilon=(t+\theta)^{-\beta}\leq \theta^{-\beta}$, we obtain
	\begin{align*}
	A^i(t,x)\geq &(1+\epsilon)f_i(\Phi(x-\bar h(t)))-f_i((1+\epsilon)\Phi(x-\bar h(t)))-\beta(t+\theta)^{-\beta-1}\phi_i(x-\underline h(t))\\
	=&\ g_i(\Phi(x-\bar h(t))-\beta(t+\theta)^{-\beta-1}\phi_i(x-\underline h(t))\\
	\geq&\ \epsilon\bigg[-\mathbf{u}^*\cdot \nabla f_i(\mathbf{u}^*)+o(1)-2\epsilon_0C_2 \sum_{j=1}^m u_j^*  -\beta\theta^{-\beta-1}u_i^*\bigg]\\
	>&\ {0}\ \  \mbox{ for }\ \ x\in [g(t_0+t),\underline h(t)-K_0],\ t>0,\ i\in\{1,..., m\},
	\end{align*}
	provided $\theta$ is large enough and $\epsilon_0>0$ is small enough,  since $\mathbf{u}^*[\nabla F(\mathbf{u}^*)]^T\llp \mathbf{0}$. We have now proved \eqref{4.18b}.

Due to the inequalities proved in Claims 1 and 2,   \eqref{4.17a} and
	\begin{align*}
	\ol U(t,g(t+t_0))>0,\ \  \ol U(t,\bar h(t))=(1+\epsilon) \Phi(\bar h(t)-\bar h(t))= 0 \ \mbox{ for }\ \ t\geq 0,
	\end{align*}
	we are now able to apply Lemma \ref{lemma3.2}  to conclude that
	\begin{align*}
	& h(t+t_0)\leq  \bar h(t), \ &&t\geq 0,\\
	&U(t+t_0,x)\preceq \ol U(t,x),&&t\geq 0,\ x\in [ g(t+t_0),\underline h(t)].
	\end{align*}
	The desired inequality \eqref{4.18} follows directly from $\delta(t)\leq K_1+\frac{c_0}{\beta-1}\theta^{1-\beta}$  and $h(t+t_0)\leq  \bar h(t)$. The proof is complete.
\end{proof}

\begin{proof}[\underline{\rm Proof of Theorem \ref{theorem1.4}}] Since $\alpha\geq 2$, from the proof of Lemmas \ref{lemma4.3} and \ref{lemma4.4}, it is easily seen that 
\[
C_0:=\sup_{t>0} \big[|\bar h(t)-c_0t|+|\underline h(t)-c_0t|\big]<\infty.
\]
Hence for large fixed $\theta>0$ and all large $t$, say $t\geq t_0$,
\[
[g(t), h(t)]\supset [-\underline h(t-t_0), \underline h(t-t_0)]\supset [-c_0t+C, c_0t-C] \mbox{ with } C:=C_0+c_0t_0,
\]
and
\[
U(t,x)\succeq \underline U(t,x)\succeq (1-\epsilon(t)]\big[\Phi^{c_0}(x-c_0t+C)+\Phi^{c_0}(-x-c_0t+C)-\mathbf{u}^*\big]
\]
for $x\in [-c_0t+C, c_0t-C]$, where $\epsilon(t)=(t+\theta)^{-\alpha}$. This inequality for $U(t,x)$ also holds for $x\in [g(t), h(t)]$  if we assume that $\Phi^{c_0}(x)=0$ for $x>0$, since when $x$
lies outside of $[-c_0t+C, c_0t-C]$ the right side is $\prec {\bf 0}$.

Using the reasoning in the proof of Theorem \ref{theorem1.3}, from the proof of Lemma \ref{lemma4.4} we see that the following analogous inequalities hold:
\[
g(t)\geq -\bar h(t-t_0),\; U(t,x)\preceq (1+\epsilon(t))\Phi^{c_0}(-x-\bar h(t-t_0))
\]
for $t>t_0$ and $x\in [g(t), h(t)]$. We thus have
\[
[g(t), h(t)]\subset [-\bar h(t-t_0), \bar h(t-t_0)]\subset [-c_0t-C, c_0t+C],
\]
and
\[
U(t,x)\preceq \underline U(t,x)\preceq (1-\epsilon(t))\min\Big\{\Phi^{c_0}(x-c_0t-C), \Phi^{c_0}(-x-c_0t-C)\Big\}
\]
for $t>t_0$ and $x\in [g(t), h(t)]$. The proof is complete.
\end{proof}

\section{The growth orders of $c_0t-h(t)$ and $c_0t+g(t)$ }\label{section5}

Recall that $(U(t,x), g(t), h(t))$ is the unique positive solution of \eqref{1.1}, and we assume that spreading happens. Under the assumptions of Theorem \ref{theorem1.3}, we have
\[
-\lim_{t\to\yy}\frac{g(t)}{t}=\lim_{t\to \yy}\frac{h(t)}{t}=c_0>0. 
\]

In this section we determine the growth order of $c_0t-h(t)$ and $c_0t+g(t)$ when the kernel functions satisfy, for some
 ${\gamma}\in (2,3]$, $\omega\in ({\gamma}-1,\gamma]$, $C>0$ and all $|x|\geq 1$,
\begin{equation}\label{5.1}
\begin{cases}
\dd J_i(x)\approx  |x|^{-\gamma}& \mbox{ if $ i\in\{1,..., m_0\}$ and $\mu_i\not=0$},\\[2mm]
\dd  J_i(x)\leq C |x|^{-\omega}  & \mbox{ if $ i\in\{1,..., m_0\}$ and $\mu_i=0$}.
\end{cases}
\end{equation}
 Clearly, $(\mathbf{\hat J^\gamma})$ implies \eqref{5.1}.

The main result of this section is the following theorem.
\begin{theorem}\label{prop5.1}
In Theorem \ref{theorem1.3}, if additionally $\mathbf{(J^1)}$,   \eqref{5.1} and \eqref{1.7a} hold, then  for  $t\gg 1$,
	\[\left\{\begin{array}{ll}
	c_0t+g(t),\ c_0t- h(t) \approx \ t^{3-{\gamma}}  &{\rm if }\ {\gamma}\in (2,3],\\
	c_0t+g(t),\ c_0t-h(t) \approx  \ \ln t &{\rm if }\ {\gamma}=3.
	\end{array}\right.\]
\end{theorem}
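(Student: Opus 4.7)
The plan is to prove matching two-sided bounds for $c_0 t - h(t)$; the corresponding estimates for $c_0 t + g(t)$ then follow by applying the same argument to $(U(t,-x),-h(t),-g(t))$, which also solves \eqref{1.1} since each $J_i$ is even, as in the proof of Theorem \ref{theorem1.3}(i). Write $\eta(t):=t^{3-\gamma}$ for $\gamma\in(2,3)$ and $\eta(t):=\ln t$ for $\gamma=3$; in both cases $\eta'(t)\approx t^{2-\gamma}$. Taking $v=\mathbf{u^*}$ in \eqref{1.7a} gives $\mathbf{u}^*[\nabla F(\mathbf{u}^*)]^T\llp\mathbf{0}$, so the hypotheses of Lemmas \ref{lemma4.3} and \ref{lemma4.4} are in force.

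For the upper estimate $c_0 t-h(t)\leq C\eta(t)$, fix any $\alpha\in(1,\gamma-1)$; this is possible because $\gamma-1>1$, and \eqref{5.1} implies $(\mathbf{J^\alpha})$. Lemma \ref{lemma4.3} applies and reduces everything to estimating the three integrals in its conclusion. Using $\hat J(x)\lesssim |x|^{-\gamma}$ for $|x|\geq 1$, one has $\int_0^t(1+x)^{-\alpha}dx=O(1)$,
$\int_0^{c_0 t/2}x^2\hat J(x)\,dx\lesssim 1+\int_1^{c_0 t/2}x^{2-\gamma}dx\lesssim \eta(t)$, and
$t\int_{c_0 t/2}^\infty x\hat J(x)\,dx\lesssim t\cdot t^{2-\gamma}=t^{3-\gamma}\lesssim\eta(t)$. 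Summing gives $h(t)\geq c_0 t-C\eta(t)$.

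For the lower estimate $c_0 t-h(t)\geq C^{-1}\eta(t)$, which is the main step, I would construct an upper solution
\[
\bar h(t):=c_0 t-\kappa\eta(t)+K,\qquad \bar U(t,x):=(1+\varepsilon(t))\,\Phi^{c_0}(x-\bar h(t)),\quad x\in[g(t+t_0),\bar h(t)],
\]
with $\varepsilon(t):=(t+\theta)^{-\beta}$ and positive constants $\kappa,K,\theta,\beta,t_0$ to be chosen. Applying Lemma \ref{lemma3.2}(i) to $(\bar U,g(\cdot+t_0),\bar h)$ gives $h(t+t_0)\leq\bar h(t)$, hence $c_0 t-h(t)\geq \tfrac\kappa 2\eta(t)$ for $t$ large. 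Three items must be checked. (a) The initial dominance $\bar U(0,\cdot)\succeq U(t_0,\cdot)$ is obtained by taking $K$ large, using $\Phi^{c_0}(-\infty)=\mathbf{u^*}$ and the fact that spreading forces $U(t_0,\cdot)\preceq(1+\tfrac12\varepsilon(0))\mathbf{u^*}$ for $t_0$ large. (b) The PDE inequality $\bar U_t\succeq D\circ\mathcal L[\bar U]+F(\bar U)$; substituting the semi-wave identity for $(\Phi^{c_0})'$ and discarding a nonnegative tail term reduces it to
\[
(1+\varepsilon)F(\Phi^{c_0})-F((1+\varepsilon)\Phi^{c_0})\;\succeq\;\beta(t+\theta)^{-\beta-1}\Phi^{c_0}-(1+\varepsilon)\kappa\eta'(t)(\Phi^{c_0})',
\]
which holds on the interior region by \eqref{1.7a} (as in Claim 2 of Lemma \ref{lemma4.4}, yielding a lower bound $\succeq c\varepsilon\mathbf{1}$) and on the boundary region by the positivity of $-\kappa\eta'(t)(\Phi^{c_0})'$ together with $(\Phi^{c_0})'(0^-)\llp\mathbf{0}$ from Theorem \ref{lemma2.12}, provided $\beta$ is large enough that $\varepsilon(t)/\eta'(t)\to 0$. (c) The free-boundary inequality
\[
\bar h'(t)=c_0-\kappa\eta'(t)\;\geq\;\sum_{i=1}^{m_0}\mu_i\int_{g(t+t_0)}^{\bar h(t)}\!\!\int_{\bar h(t)}^\infty J_i(x-y)\bar u_i(t,x)\,dy\,dx.
\]
After changing variables and using \eqref{2.2a}, the right-hand side equals $(1+\varepsilon)c_0-R(t)$ with
\[
R(t)=(1+\varepsilon)\sum_{i=1}^{m_0}\mu_i\int_{-\infty}^{g(t+t_0)-\bar h(t)}\!\!\int_0^\infty J_i(x-y)\phi_i^{c_0}(x)\,dy\,dx,
\]
so (c) reduces to $R(t)\geq\kappa\eta'(t)+\varepsilon(t)c_0$. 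Since $g(t+t_0)-\bar h(t)\leq -2c_0 t(1+o(1))$ (using the symmetric upper bound already proved for $-g$) and $\phi_i^{c_0}(-M)\to u_i^*$, the \emph{lower} bound $J_i(y)\geq C^{-1}|y|^{-\gamma}$ from \eqref{5.1} yields $R(t)\gtrsim\int_{2c_0 t}^\infty yJ_i(y)\,dy\gtrsim t^{2-\gamma}$, which matches $\eta'(t)$, so (c) holds once $\kappa$ is fixed small and $\beta>0$ is chosen so that $\varepsilon(t)$ decays faster than $t^{2-\gamma}$.

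The hard part is the matching of growth rates in (c): both the imposed gap $\kappa\eta'(t)$ and the lost tail mass $R(t)$ must be of the same order $t^{2-\gamma}$, and this is exactly what pins the growth of $\eta$ to $t^{3-\gamma}$ (or $\ln t$ at $\gamma=3$). In particular $\kappa$ cannot be allowed to tend to $0$, which is why the \emph{two-sided} pointwise estimate $J_i(y)\approx|y|^{-\gamma}$ from \eqref{5.1} is essential here, in contrast to Lemma \ref{lemma4.3} where only upper tail bounds were used. With this construction in place, Lemma \ref{lemma3.2}(i) gives the lower bound for $c_0 t-h(t)$, and the symmetric argument applied to $(U(t,-x),-h(t),-g(t))$ completes the proof.
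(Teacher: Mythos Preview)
Your upper-bound argument via Lemma \ref{lemma4.3} is fine and matches the paper. The problem is the lower bound for $c_0t-h(t)$.

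Your upper solution $\bar U=(1+\varepsilon)\Phi^{c_0}(x-\bar h)$ with $\bar h'(t)=c_0-\kappa\eta'(t)<c_0$ fails the differential inequality near the free boundary. Writing $\Phi=\Phi^{c_0}$ and carrying out the substitution you indicate, the requirement becomes
\[
(1+\varepsilon)F(\Phi)-F((1+\varepsilon)\Phi)\ \succeq\ \beta(t+\theta)^{-\beta-1}\Phi\;-\;(1+\varepsilon)\kappa\eta'(t)\Phi'.
\]
Both terms on the right are $\succeq\mathbf 0$, so the term $-(1+\varepsilon)\kappa\eta'(t)\Phi'$ makes the inequality \emph{harder}, not easier as you suggest. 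As $x\to\bar h(t)^-$ the left side vanishes (since $F(v)-v[\nabla F(v)]^T=O(|v|^2)$ near $v=\mathbf 0$, so the left side is $O(\varepsilon|\Phi|^2)$), whereas for each $i\le m_0$ the $i$-th component of the right side tends to $(1+\varepsilon)\kappa\eta'(t)\,|\phi_i'(0^-)|>0$. Hence the inequality is violated in a neighbourhood of the boundary. (Also note that $(\Phi^{c_0})'(0^-)\llp\mathbf 0$ is false when $m_0<m$: $\phi_i'(0^-)=0$ for $i>m_0$.) The point is that in Lemma \ref{lemma4.4} one had $\delta'(t)>0$, so the term $-(1+\varepsilon)\delta'(t)\Phi'$ was favourable near the boundary; here $\delta'(t)=-\kappa\eta'(t)<0$ and the sign flips.

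This is exactly why the paper does not use your simple ansatz. In Lemmas \ref{lemma5.2}--\ref{lemma5.3} the paper adds a boundary-layer corrector $\rho(t,x)=K_4\xi(x-\bar h(t))\epsilon(t)V^*$ (supported in $|x-\bar h(t)|\le 2\tilde\epsilon$, with $V^*$ the positive eigenvector of $\nabla F(\mathbf 0)-\tilde D$) to $\bar U$; this corrector generates a positive contribution of order $\epsilon(t)$ near the boundary that absorbs the bad term. The construction splits according to the sign of the principal eigenvalue $\tilde\lambda_1$ of $\nabla F(\mathbf 0)-\tilde D$: for $\tilde\lambda_1<0$ one adds $\rho$, for $\tilde\lambda_1\ge 0$ one subtracts $\rho$ and simultaneously shifts the profile by $\lambda(t)=K_5\epsilon(t)$ to keep $\bar U\succeq\mathbf 0$. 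This boundary-layer device is the missing idea in your proposal.
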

It is clear that the conclusion of Theorem \ref{th1.5b}  follows directly from Theorem \ref{prop5.1}. Note that if $\omega>2$ in \eqref{5.1}, then $\mathbf{(J^1)}$ automatically holds. 

By $\mathbf{(f_1)}$ and the Perron-Frobenius theorem, we know that the matrix  $\nabla F(0)-\wtd D$ with $\wtd D={\rm diag}(d_1,..., d_m)$ has a principal eigenvalue $\tilde \lambda_1$
with a corresponding eigenvector $V^*=(v_1^*,\cdots,v_m^*)\ggs \mathbf{0}$, namely
\begin{equation}\label{D}
V^*\Big([\nabla F(0)]^T-\wtd D\Big)=\tilde\lambda_1 V^*.
\end{equation}

To prove Theorem \ref{prop5.1}, the difficult part is to find the lower bound for $c_0 t-h(t)$, which will be established according to the following two cases:  (i)  $\td\lambda_1<0$,\ (ii)  $\td\lambda_1\geq 0$.

As before, we will only estimate $c_0t-h(t)$, since the estimate for $c_0t+g(t)$ follows by making the variable change $x\to -x$ in the initial functions.
\subsection{The case $\td\lambda_1<0$}
\begin{lemma}\label{lemma5.2}
Suppose that the assumptions in Theorem \ref{prop5.1} are satisfied. 	If   $\td\lambda_1<0$,  then there 
exists $\sigma=\sigma({\gamma})>0$ such that for all large $t>0$,
	\begin{equation}\label{7.3}
	\begin{cases}
	c_0t-h(t)\geq \sigma\,t^{3-{\gamma}}& {\rm if}\ {\gamma}\in (2,3),\\
	c_0t-h(t)\geq \sigma\ln t &{\rm if}\ {\gamma}=3.
	\end{cases}
	\end{equation}
\end{lemma}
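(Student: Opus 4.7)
The plan is to construct a one-sided upper solution $(\bar U, g, \bar h)$ with $\bar h(t)$ falling strictly behind $c_0 t$ at the target rate, then appeal to Lemma \ref{lemma3.2}(i). After a time shift $t\mapsto t+t_0$ with $t_0$ large, I would take
$$\bar h(t) := c_0 t + K_1 - \xi(t),\qquad \bar U(t,x) := \Phi^{c_0}(x-\bar h(t)) + Y(t) V^*,$$
where $\xi(t)$ has the target growth (for instance $\xi(t)=\tfrac{\sigma}{3-\gamma}[(t+\theta)^{3-\gamma}-\theta^{3-\gamma}]$ for $\gamma\in(2,3)$, or $\xi(t)=\sigma\ln\tfrac{t+\theta}{\theta}$ for $\gamma=3$), $V^*\ggs\mathbf{0}$ is the left Perron eigenvector from \eqref{D} with $\tilde\lambda_1<0$, and $Y(t)>0$ is small (constant or slowly decaying). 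The parameters $\theta,K_1$ are chosen large and $\sigma$ small. Lemma \ref{lemma3.2}(i) with $\tilde g\equiv g$ would then yield $h(t+t_0)\le\bar h(t)$, which implies \eqref{7.3} after reverting the shift; the symmetric bound on $c_0t+g(t)$ follows by reflecting the initial data in $x$.

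The crux will be the PDE inequality $\partial_t\bar U\succeq D\circ \mathcal{L}[\bar U]+F(\bar U)$. Applying the semi-wave equation \eqref{2.1a}, the $\Phi^{c_0}$-contribution to the residual reduces componentwise to
$$\xi'(t)(\phi_i^{c_0})'(x-\bar h(t)) + d_i\int_{-\infty}^{g(t+t_0)-\bar h(t)} J_i(x-\bar h(t)-w)\phi_i^{c_0}(w)\, dw,$$
the first term negative of order $\xi'(t)\approx t^{2-\gamma}$ (respectively $1/t$ if $\gamma=3$), and the second, by \eqref{5.1} and $\phi_i^{c_0}(-\infty)=u_i^*$, positive of only order $t^{1-\gamma}$ (respectively $t^{-2}$). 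The correction $Y(t)V^*$ is tailored precisely to supply the missing term: linearizing $F$ at $\mathbf{0}$ near the front and invoking the identity $V^*\nabla F(\mathbf{0})^T=V^*\tilde D+\tilde\lambda_1 V^*$, its contribution to the residual becomes
$$Y(t) v_i^*\bigl(|\tilde\lambda_1|-d_i\mathcal{I}_i(x)\bigr) + Y'(t) v_i^*,\qquad \mathcal{I}_i(x):=\int_{g(t+t_0)}^{\bar h(t)} J_i(x-y)\,dy\in[0,1],$$
which is strictly positive of constant order (since $\tilde\lambda_1<0$) and hence dominates $\xi'(t)|(\phi_i^{c_0})'|$ for small $\sigma$ and large $t$. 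In the interior bulk one linearizes at $\mathbf{u}^*$ instead, using the decay estimate from Theorem \ref{theorem1.6} together with the stability condition \eqref{1.7a} (which enhances $\mathbf{(f_3)}$).

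The free-boundary inequality is checked analogously: the substitution $z=x-\bar h(t)$ shows the right-hand-side integral deviates from $c_0$ by exactly $\sum_i\mu_i\int_{-\infty}^{g(t+t_0)-\bar h(t)}\int_0^\infty J_i(z-w)\phi_i^{c_0}(z)\,dw\,dz$, which by \eqref{5.1} is of order $t^{2-\gamma}$ (resp.\ $1/t$), matching $\xi'(t)$ in magnitude; small $\sigma$ (with small $Y$ contributing $O(Y)$ bounded by $\mathbf{(J^1)}$) closes the inequality. Initial compatibility $\bar U(0,x)\succeq U(t_0,x)$ on $[g(t_0),\bar h(0)]$ will come from $\Phi^{c_0}(-\infty)=\mathbf{u}^*$, the local convergence $U(t_0,\cdot)\to\mathbf{u}^*$ as $t_0\to\infty$, and $K_1\ge h(t_0)+K_0$ for a suitable constant $K_0$.

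The hardest step will be the uniform-in-$x$ dominance of the eigenvector correction over the speed-deficit in the PDE inequality: if $|\tilde\lambda_1|$ happens to be smaller than $\max_i d_i$, a constant $Y$ correction may not suffice, and one may need to refine $YV^*$ to a spatially weighted profile $Y(t)V^*\chi(x-\bar h(t))$ (with $\chi$ supported where $\mathcal{I}_i$ is smallest) or patch distinct Ans\"atze across the front/bulk interface. Even so, the sign $\tilde\lambda_1<0$ is the essential algebraic input rendering this case tractable; it is precisely what fails in the complementary case $\tilde\lambda_1\ge 0$ treated in the sequel.
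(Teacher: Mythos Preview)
Your overall strategy is right, and you correctly anticipate that the $V^*$ correction must ultimately be localized near the tip (the paper indeed takes $\rho(t,x)=K_4\,\xi(x-\bar h(t))\,\epsilon(t)V^*$ with a cutoff $\xi$ of small support). But your Ansatz has a genuine gap that localization alone does not repair: you are missing the multiplicative factor $(1+\epsilon(t))$ on $\Phi^{c_0}$. Your eigenvector computation for the $Y(t)V^*$ contribution,
\[
Y(t)\,v_i^*\bigl(|\tilde\lambda_1|-d_i\mathcal I_i(x)\bigr)+Y'(t)\,v_i^*,
\]
is only valid where $\Phi^{c_0}\approx\mathbf 0$, because it rests on the identity $\nabla F(\mathbf 0)(V^*)^T=(\tilde D+\tilde\lambda_1 I)(V^*)^T$. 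Away from the tip---already on the fixed strip $[\bar h(t)-K_0,\bar h(t)-\tilde\epsilon]$ where $\Phi^{c_0}$ is bounded away from both $\mathbf 0$ and $\mathbf u^*$ and $|\Phi'|$ is bounded below---the relevant linearization is at $\Phi^{c_0}(x-\bar h(t))$, for which $V^*$ is not an eigenvector, and there is no reason $\bigl(F(\Phi+YV^*)-F(\Phi)\bigr)_i$ has a favorable sign. So the speed-deficit term $\xi'(t)\,(\phi_i^{c_0})'\approx -c\,t^{2-\gamma}$ remains uncompensated there; the tail truncation you cite is only $O(t^{1-\gamma})$.

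Your appeal to \eqref{1.7a} for the bulk does not help with an \emph{additive} correction: \eqref{1.7a} is the infinitesimal statement behind the \emph{multiplicative} estimate $(1+\epsilon)F(v)-F((1+\epsilon)v)\succeq c_1\epsilon\,\mathbf 1$ for $v$ bounded away from $\mathbf 0$. The paper's Ansatz is $\bar U=(1+\epsilon(t))\Phi^{c_0}(x-\bar h(t))+\rho(t,x)$ with $\epsilon(t)=K_1(t+\theta)^{-(\gamma-2)}$; the factor $(1+\epsilon)$ supplies, via \eqref{1.7a}, a positive term of order $\epsilon(t)\approx t^{2-\gamma}$ throughout the region $x\le\bar h(t)-\tilde\epsilon$, while the localized $\rho$ (using $\tilde\lambda_1<0$) handles the tip $x\in[\bar h(t)-\tilde\epsilon,\bar h(t)]$. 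Your diagnosis of the ``hardest step'' as the case $|\tilde\lambda_1|<\max_i d_i$ misidentifies the obstruction: the real issue is not the size of $|\tilde\lambda_1|$ at the tip but the absence of any mechanism in your Ansatz to generate a positive residual of order $t^{2-\gamma}$ away from the tip.
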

\begin{proof}  Let $\beta:={\gamma}-2\in (0,1]$, and $(c_0,\Phi)$  be the solution of  \eqref{2.1a}-\eqref{2.2a}. Define
	\begin{align*}
	\epsilon(t):=K_1(t+\theta)^{-\beta}, \ 	\  \delta(t):=K_2-K_3\int_{0}^{t}\epsilon(\tau)\rd \tau
	\end{align*}
	and
	\[\begin{cases}
	\bar h(t):=c_0 t+\delta(t),  &\ \ \ t\geq 0,\\
	\ol U(t,x):=(1+\epsilon(t)) \Phi(x-\bar h(t))+\rho(t,x), &\ \ \ t\geq 0,\  x\leq \bar h(t),
	\end{cases}
	\]
	where 
	\[
	\rho(t,x):=K_4\xi(x-\ol h(t))\epsilon(t)V^*,\ 
	\]
	with $\xi\in C^2(\R)$ satisfying
	\begin{align}\label{7.4a}
	\ 0\leq \xi(x)\leq 1, \ \ \ \xi(x)=1\ {\rm for}\ |x|<\tilde{\epsilon},\ \xi(x)=0\ {\rm for}\ |x|>2\tilde{\epsilon},
	\end{align}
	and the positive constants $\theta$, $K_1,  K_2, K_3, K_4$, $\tilde{\epsilon}$ are to be  determined.  
	
	We are going to show that, it is possible to choose these constants and some $t_0>0$ such that
	\begin{align}\label{7.28}
	\hspace{-1.5cm}\ol U_t(t,x)\succeq  D\circ \int_{g(t+t_0)}^{\bar h(t)} \ \mathbf{J}(x-y) \circ\ol U (t,y)\rd y -\ol U(t,x)+F(\ol U(t,x))
	\end{align}
	\hspace{10.9cm} for $t>0, \ x\in (g(t+t_0),\overline h(t))$, 
	\begin{align}
	&\bar h'(t)\geq \sum_{i=1}^{m_0}\mu_i \int_{g(t+t_0)}^{\bar h(t)} \int_{\bar h(t)}^{+\yy} J_i(x-y) \bar u_i(t,x)\rd y&& \mbox{ for }   t> 0,\label{h'(t)}\\
	&\ol U(t,g(t+t_0))\succeq {\bf 0},\ \  	\ol U(t,\bar h(t))\succeq {\bf 0} &&\mbox{ for }   t\geq 0,\label{g(t)}\\
	&\ol U(0,x)\succeq    U(t_0,x),\; \ol h(0)\geq h(t_0) &&  \mbox{ for }   x\in [g(t_0),h(t_0)].\label{t=0}
	\end{align}
	If these inequalities are proved, then by the comparison principle, we obtain
	\[
	\ol h(t)\geq h(t+t_0),\; \ol U(t,x)\succeq U(t+t_0,x) \mbox{ for } t>0,\; x\in [g(t+t_0), h(t+t_0)],
	\]
	and the desired inequality for $c_0t-h(t)$ follows easily from the definition of $\ol h(t)$.
	
	Therefore, to complete the proof, it suffices to prove the above inequalities. We divide the arguments below into several steps.

	Firstly, by Theorem \ref{theorem1.3}, there  is $C_1>1$ such that 
	\begin{align}\label{7.26a}
         -g(t), h(t) \leq (c_0+1)t+C_1 \ \mbox{ for } \  t\geq 0.
	\end{align}
		Let us also note that \eqref{g(t)} holds trivially.
	
	{\bf Step 1}. Choose $t_0=t_0(\theta)$ and $K_2=K_2(\theta)$ so that \eqref{t=0} holds.
	
	For later analysis, we need to find $t_0=t_0(\theta)$ and $K_2=K_2(\theta)$ so that \eqref{t=0} holds and at the same time
	they have less than linear growth in $\theta$.
	
	 Let  $W^*\ggs {\bf 0}$ be an eigenvector corresponding to the maximal eigenvalue $\td \lambda$ of  $\nabla F(\mathbf{u}^*)$. 
	 By our assumptions on $F$, we have $\td \lambda<0$.  Hence there exists small $\epsilon_*>0$ such that for any $k\in (0,\epsilon_*]$,  
	\begin{align*}
	&F(\mathbf{u}^*+kW^*)= kW^* \Big([\nabla F(\mathbf{u^*})]^T+o(1){\bf I}_m\Big) \preceq \frac{k}2 \td\lambda W^* \llp {\bf 0},\\
	&F(\mathbf{u}^*-kW^*)=- kW^* \Big([\nabla F(\mathbf{u^*})]^T+o(1){\bf I}_m\Big) \succeq  -\frac{k}2 \td\lambda W^* \ggs {\bf 0}.
	\end{align*}
	It follows that,  for $\td\sigma =\td\lambda/2$,
	\begin{align*}
	\ol W(t)=\mathbf{u}^*+ \epsilon_*e^{\td\sigma t}W^*,\ \ \underline W(t)=\mathbf{u}^*-\epsilon_* e^{\td\sigma t}W^*
	\end{align*}
	are  a pair of upper and lower  solution of the ODE system  $W'=F(W)$  with initial data $W(0)\in [\mathbf{u}^*-\epsilon_*W^*, \mathbf{u}^*+\epsilon_*W^*]$.  

	By $\mathbf{(f_4)}$, the unique solution of the ODE system
	\[W'=F(W),  \ W(0)= (\|u_{10}\|_\infty,\cdots,\|u_{m0}\|_\infty)
	\]  
	satisfies
	 $\lim_{t\to \yy} W(t)=\mathbf{u}^*$. Hence there exists $t_*>0$ such that
	 \[
	 W(t_*)\in [\mathbf{u}^*-\epsilon_*W^*, \mathbf{u}^*+\epsilon_*W^*]. 
	 \]
	 Using the above defined upper solution $\ol W(t)$ we obtain
	 \[
	 W(t+t_*)\preceq  \mathbf{u^*}+\epsilon_* e^{\tilde\sigma t} W^*\preceq (1+\tilde\epsilon_* e^{\tilde\sigma t})\mathbf{u^*} \mbox{ for } t\geq 0,
	 \]
	 where $\tilde\epsilon_*>0$ is chosen such that $\epsilon_*W^*\leq \tilde\epsilon_* \mathbf{u^*}.$
	 By the comparison principle we deduce
	 \[
	 U(t+t_*,x)\preceq W(t+t_*)\preceq  (1+\tilde\epsilon_* e^{\tilde\sigma t})\mathbf{u^*}  \mbox{ for } t\geq 0,\; x\in [g(t+t_*), h(t+t_*)].
	 \]
	 Hence
	 \[
	 U(t_0, x)\preceq (1+\frac{\epsilon(0)}2)\mathbf{u^*} \mbox{ for } x\in [g(t_0), h(t_0)]
	 \]
	 provided that
	 \[
	 t_0=t_0(\theta):=\frac\beta{|\tilde\sigma|}\ln \theta+\frac{\ln (2\tilde\epsilon_*/K_1)}{|\td\sigma|}+t_*.
	 \]
	 
	 By \eqref{5.1}, for any fixed  $\omega_*\in (\beta,\omega-1)$,  we have
	\begin{align*}
	\int_{\R} J(x)|x|^{\omega_*}\rd x<\yy.
	\end{align*}
	Then by Theorem \ref{theorem1.6}, there is $C_2$ such that 
	\begin{align*}
	\mathbf{u^*}-\Phi(x)\leq \frac{C_2}{|x|^{\omega_*}} \mathbf{u^*}\mbox{ for }\ x\leq -1.
	\end{align*}
	Hence, for $K>1$ we have
	\begin{align*}
	&(1+\epsilon(0)) \Phi(-K)- (1+\epsilon(0)/2)\mathbf{u}^*\\
	\succeq &(1+\epsilon(0))\big[1-C_2K^{-\omega_*}\big]\mathbf{u}^*- (1+\epsilon(0)/2)\mathbf{u}^*\\
	=&\big[K_1\theta^{-\beta}/2-C_2K^{-\omega_*}(1+K_1\theta^{-\beta})\big]\mathbf{u}^*\\
	\succeq &{\bf 0}
	\end{align*}
	provided that
	\[
	K^{\omega_*}\geq 2C_2+\frac{2C_2}{K_1}\theta^\beta.
	\]
	Therefore, for all $K_1\in (0, 1]$, $\theta\geq 1$ and $K\geq (4C_2/K_1)^{1/\omega_*}\theta^{\beta/\omega_*}$, we have
	\[
	(1+\epsilon(0)) \Phi(-K)- (1+\epsilon(0)/2)\mathbf{u}^*
	\succeq {\bf 0}.
	\]
	Now define
	\begin{equation}\label{K2}
	K_2(\theta):=2\max\left\{(4C_2/K_1)^{1/\omega_*}\theta^{\beta/\omega_*}, (c_0+1)t_0(\theta)+C_1\right\}.
	\end{equation}
Then for $K_2=K_2(\theta)$ we have
\[
\ol h(0)=K_2>K_2/2 \geq (c_0+1)t_0+C_1\geq h(t_0),
\]
and
	 for $ x\in [g(t_0),h(t_0)]$, 
	\begin{align*}
	\ol U(0,x)=(1+\epsilon(0)) \Phi(x-K_2)\succeq  (1+\epsilon(0)) \Phi(-K_2/2)
	\succeq (1+\epsilon(0)/2)\mathbf{u}^*.	
	\end{align*}
	Thus \eqref{t=0} holds if $t_0$ and $K_2$ are chosen as above, for any $\theta\geq 1$, $K_1\in (0, 1]$.

	{\bf Step 2.} We verify that \eqref{h'(t)} holds if $\theta$, $K_1, K_3$ and $ K_4$ are chosen suitably.
	
	Denote
	\begin{align}\label{7.5}
	C_3:=\sum_{i=1}^{m_0}\mu_i\int_{-\yy}^{0} \int_{0}^{+\yy} J_i(x-y) \rd y\rd x=\sum_{i=1}^{m_0}\mu_i\int_{0}^{+\yy} J_i(y)y \rd y.
	\end{align}
With $\rho=(\rho_i)$,	a direct  calculation shows 
	\begin{align*}
	&\sum_{i=1}^{m_0} \mu_i\int_{g(t+t_0)}^{\bar h(t)} \int_{\bar h(t)}^{+\yy} J_i(x-y)  \bar u_i(t,x)\rd y\rd x\\
	= & \sum_{i=1}^{m_0} \mu_i \int_{-\yy}^{\bar h(t)} \int_{\bar h(t)}^{+\yy} J_i(x-y)  \bar u_i(t,x)\rd y\rd x-\sum_{i=1}^{m_0} \mu_i \int_{-\yy}^{g(t+t_0)} \int_{\bar h(t)}^{+\yy} J_i(x-y)  \bar u_i(t,x)\rd y\rd x\\
	=&\sum_{i=1}^{m_0} \mu_i \int_{-\yy}^{0} \int_{0}^{+\yy} J_i(x-y) [(1+\epsilon) \phi_i(x)+\rho_i(t, x+\ol h(t))]\rd y\rd x\\
	&-\sum_{i=1}^{m_0} \mu_i \int_{-\yy}^{g(t+t_0)-\bar h(t)} \int_{0}^{+\yy} J_i(x-y) [(1+\epsilon) \phi_i(x)+\rho_i(t, x+\ol h(t))]\rd y\rd x\\
	\leq& (1+\epsilon) c_0+ C_3K_4\epsilon |V^*|
	-\sum_{i=1}^{m_0} \mu_i \int_{-\yy}^{g(t+t_0)-\bar h(t)} \int_{0}^{+\yy} J_i(x-y) (1+\epsilon) \phi_i(x)\rd y\rd x\\
	\leq&(1+\epsilon) c_0+ C_3K_4\epsilon |V^*|
	-\sum_{i=1}^{m_0} \mu_i \int_{-\yy}^{g(t+t_0)-\bar h(t)} \int_{0}^{+\yy} J_i(x-y)  \phi_i(x)\rd y\rd x,
	\end{align*}
	where
	\[ |V^*|:=\max_{1\leq i\leq m} v_i^*.
	\]
	By elementary calculus, for any $k>1$, 
	\begin{equation}\label{7.24b}
	\begin{aligned}
	&\int_{-\yy}^{-k}\int_{0}^{\yy} \frac{1}{|x-y|^{2+\beta}} \rd y\rd x=\int_{-\yy}^{-k}\int_{-x}^{\yy} \frac{1}{y^{2+\beta}} \rd y\rd x=\int_{k}^{\yy}\int_{x}^{\yy} \frac{1}{y^{2+\beta}} \rd y\rd x\\
	=&\int_{k}^{\yy}\int_{k}^{y} \frac{1}{y^{2+\beta}} \rd x\rd y=\int_{k}^{\yy}\frac{y-k}{y^{2+\beta}} \rd y=\beta^{-1}(1+\beta)^{-1} k^{-\beta}.
	\end{aligned}
	\end{equation}
	From \eqref{5.1} 
	and \eqref{7.26a}, there exists $C_4>0$ such that
	\begin{equation}\label{7.7}
	\begin{aligned}
	&\sum_{i=1}^{m_0} \mu_i\int_{-\yy}^{g(t+t_0)-\bar h(t)} \int_{0}^{+\yy} J_i(x-y)  \phi_i(x)\rd y\rd x\\
	\geq& C_4\lf[\min_{1\leq i\leq m}\phi_i(g(t+t_0)-\bar h(t)) \rr]\int_{-\yy}^{g(t+t_0)-\bar h(t)} \int_{0}^{+\yy} \frac{1}{|x-y|^{2+\beta}}  \rd y\rd x\\
	\geq& \phi_*{C_4} \int_{-\yy}^{g(t+t_0)-\bar h(t)} \int_{0}^{+\yy} \frac{1}{|x-y|^{2+\beta}}  \rd y\rd x =\frac{\phi_*C_4}{\beta (1+\beta) }(|g(t+t_0)|+\bar h(t))^{-\beta}\\
	\geq &\frac{\phi_*C_4}{\beta (1+\beta) }[(c_0+1)(t+t_0)+C_1+c_0t+K_2]^{-\beta}\\
	=&\frac{\phi_*C_4}{\beta (1+\beta)(2c_0+1)^\beta }\lf[t+\frac{(c_0+1)t_0+C_1+K_2}{(2c_0+1)}\rr]^{-\beta},
	\end{aligned}
	\end{equation}
	where  $\dd\phi_*=\min_{1\leq i\leq m}\phi_i(-1)\leq \min_{1\leq i\leq m}\phi_i(-K_2)\leq \min_{1\leq i\leq m}\phi_i(g(t+t_0)-\bar h(t))$.
	Therefore, for all large $\theta>0$ so that
	\begin{align}\label{7.26}
	\theta>\frac{(c_0+1)t_0+C_3+K_2}{(2c_0+1)},
	\end{align}
	which is possible since $t_0(\theta)$ and $K_2(\theta)$ grow slower than linearly in $\theta$, we have
	\begin{align*}
	&\sum_{i=1}^{m_0} \mu_i\int_{g(t+t_0)}^{\bar h(t)} \int_{\bar h(t)}^{+\yy} J_i(x-y) \bar u_i(t,x)\rd y\rd x\\
	\leq&   (1+\epsilon(t)) c_0+ C_4K_4\epsilon(t) |V^*|-\frac{ \phi_*C_4}{\beta (1+\beta)(2c_0+1)^\beta }\lf(t+\theta\rr)^{-\beta}\\
		=&c_0+\epsilon(t)\lf[c_0+ C_4K_4|V^*|-\frac{\phi_*C_4}{K_1\beta (1+\beta)(2c_0+1)^\beta }\rr]\\
	\leq& c_0-K_3\epsilon(t)=h'(t)
	\end{align*}
	provided that $K_1, K_3$ and $K_4$ are small enough so that
	\begin{align}\label{7.27}
	K_1(c_0+C_4K_4|V^*|+K_3)\leq \frac{\phi_* C_4}{\beta (1+\beta)(2c_0+1)^\beta}.
	\end{align}
	Therefore \eqref{h'(t)} holds if we first fix $K_1, K_3, K_4$ small so that \eqref{7.27} holds, and then choose $\theta$ large such that 
	\eqref{7.26} is satisfied. 
	
	{\bf Step 3.} We show that  \eqref{7.28} holds when $K_3$ and $K_4$ are chosen suitably small and $\theta$ is large.

	From \eqref{2.1a}, we deduce
	\begin{align*}
	\ol U_t(t,x)=&-(1+\epsilon)[c_0+\delta'(t)]\Phi'(x-\bar h(t))+\epsilon'(t)\Phi(x-\underline h(t))+\rho_t(t,x),
	\end{align*}
	and
	\begin{align*}
	& -(1+\epsilon)c_0\Phi'(x-\bar h(t))\\ 
	=  & (1+\epsilon) \lf[D\circ \int_{- \yy}^{\bar h(t)}  \mathbf{J}(x-y)\circ  \Phi(y-\bar h(t))\rd y -D\circ\Phi(x-\bar h(t))+F(\Phi(x-\bar h(t)))\rr]\\ 
	= &D\circ \int_{-\yy}^{\bar h(t)}  \mathbf{J}(x-y)\circ [\ol U(t,y)-\rho(t,y)]\rd y -D\circ  [\ol U(t,x)-\rho(t,x)]+(1+\epsilon)F(\Phi(x-\bar h(t)))\\
	= &D\circ \int_{g(t+t_0)}^{\bar h(t)}  \mathbf{J}(x-y)\circ \ol U(t,y)\rd y -D\circ \ol U(t,x)+F(\ol U(t,x))\\
	&+D\circ\lf[\rho(t,x)-\int_{-\yy}^{\bar h(t)}\mathbf{J}(x-y)\circ\rho(t,y) \rd y\rr]+(1+\epsilon)F(\Phi(x-\bar h(t)))-F(\ol U(t,x)).
	\end{align*}
	Hence
	\begin{align*}
	\ol U_t(t,x)= &D\circ \int_{g(t+t_0)}^{\bar h(t)}  \mathbf{J}(x-y)\circ \ol U(t,y)\rd y -D\circ \ol U(t,x)+F(\ol U(t,x))\\
	& + A(t,x)
	\end{align*}
	 with
	\begin{align*}
	A(t,x):=	&D\circ\lf[\rho(t,x)-\int_{-\yy}^{\bar h(t)}\mathbf{J}(x-y)\circ\rho(t,y) \rd y\rr]+(1+\epsilon)F(\Phi(x-\bar h(t)))-F(\ol U(t,x))\\
	&-(1+\epsilon)\delta'(t)\Phi'(x-\bar h(t))+\epsilon'(t)\Phi(x-\underline h(t))+\rho_t(t,x).
	\end{align*}
	Therefore to complete this step, it suffices to show that we can choose $K_3, K_4$ and $\theta$  such that $A(t,x)\succeq \mathbf{0}$.
	We will do that for $x\in [\bar h(t)-\tilde{\epsilon},\bar h(t)]$ and for $x\in [g(t_0+t), \bar h(t)-\tilde{\epsilon}]$ separately.
	\smallskip
	
	{\bf Claim 1.} If $\tilde{\epsilon}>0$ in \eqref{7.4a} is sufficiently small and $\theta$ is sufficiently large, then
	\begin{equation}\label{7.27b}\begin{aligned}
	&D\circ\lf[\rho(t,x)-\int_{-\yy}^{\bar h(t)}\mathbf{J}(x-y)\circ\rho(t,y) \rd y\rr]+(1+\epsilon)F(\Phi(x-\bar h(t)))-F(\ol U(t,x))\\
	\succeq & \ \frac{|\td\lambda_1|}4  \rho(t,x)\succ {\bf 0}\ \ \  \mbox{ for }\ \ x\in [\bar h(t)-\tilde{\epsilon},\bar h(t)].
	\end{aligned}
	\end{equation}
	 Since $\td\lambda_1<0$ and $D\circ V^*=V^* \wtd D $,  using \eqref{D} we deduce,  for $x\in [\bar h(t)-\tilde{\epsilon},\bar h(t)]$,
	\begin{align*}
	& D\circ\lf[\rho(t,x)-\int_{-\yy}^{\bar h(t)}\mathbf{J}(x-y)\circ\rho(t,y) \rd y\rr]\\
	= &K_4\epsilon(t) \lf[D\circ V^*-D\circ \int_{-\yy}^{0}\mathbf{J}(x-\bar h(t)-y)\circ \xi(y)V^*\rd y\rr]\\
	\succeq  &K_4\epsilon(t) \lf[D\circ V^*-D\circ \int_{-2\tilde{\epsilon}}^{0}\mathbf{J} (x-\bar h(t)-y)\circ V^*\rd y\rr]\\
	=&K_4 \epsilon(t)\lf[V^*\nabla F(0) - \td\lambda_1 V^* -D\circ \int_{\bar h-x-2\tilde{\epsilon}}^{\bar h(t)-x}\mathbf{J}(y)\circ V^*\rd y\rr]\\
	\succeq &K_4\epsilon(t) \lf[V^*\nabla F(0) - \td\lambda_1 V^* -D\circ \int_{-2\tilde{\epsilon}}^{\tilde{\epsilon}}\mathbf{J}(y)\circ V^*\rd y\rr]\\
	\succeq & K_4\epsilon(t)\lf[ V^* \nabla F(0) -\frac{\td \lambda_1 }{2}V^*\rr] =\rho(t,x) \nabla F(0)-\frac{\td \lambda_1 }{2}\rho(t,x),
	\end{align*}
	provided $\tilde{\epsilon}\in (0, \epsilon_1]$ for some small $\epsilon_1>0$. 
	
	On the other hand,   for $x\in [\bar h(t)-\tilde{\epsilon},\bar h(t)]$, by $\mathbf{(f_2)}$ we obtain
	\begin{align*}
	&(1+\epsilon)F(\Phi(x-\bar h(t)))-F(\ol U(t,x))\\
	\succeq &F((1+\epsilon)\Phi(x-\bar h(t)))-F(\ol U(t,x))\\
	=& F(\ol U(t,x)-\rho(t,x))-F(\ol U(t,x)),
	\end{align*}
	and 
	\begin{align*}
	{\bf 0}\preceq \ol U(t,x)\preceq (1+\epsilon) \Phi(\tilde{\epsilon})+K_4\epsilon V^*\preceq 2\Phi(\tilde{\epsilon})+\theta^{-\beta}V^*,
	\end{align*}
	So the components of $\ol U(t,x)$ and $\rho(t,x)$ are small for   small  $\tilde{\epsilon}$ and large  $\theta$. It follows that
	\begin{align*}
	&F(\ol U(t,x)-\rho(t,x))-F(\ol U(t,x))=-\rho(t,x)[\nabla F(\ol U(t,x))+o(1){\bf I}_m]\\
	&=-\rho(t,x)[\nabla F(0)+o(1){\bf I}_m] \succeq  -\rho(t,x) \nabla F(0) +\frac{\tilde\lambda_1}{4}\rho(t,x) 
	\end{align*}
	 for $x\in [\bar h(t)-\tilde{\epsilon},\bar h(t)]$,
	provided that   $\tilde{\epsilon}$ is small and  $\theta$ is large. Hence, \eqref{7.27b} holds. 
	\medskip
	
	Denote 
	\[
	M:=\dd\max_{1\leq i\leq m}\sup_{x\leq 0}|\phi_i'(x)|. 
	\]
	 For $x\in [\bar h-\tilde{\epsilon},\bar h]$, by \eqref{7.27b} we have
	\begin{align*}
	A(t,x)\succeq  &\frac{|\tilde\lambda_1|}4 \rho(t,x) -(1+\epsilon)\delta'(t)\Phi'(x-\bar h(t))+\epsilon'(t)\Phi(x-\underline h(t))+\rho_t(t,x)
	\\
	\succeq &\epsilon(t)\bigg[\frac{|\tilde\lambda_1|}4 K_4V^*-2K_3M\mathbf{1} -\beta(t+\theta)^{-1}\mathbf{u}^*-K_4\beta(t+\theta)^{-1}V^*\bigg]\\
	\succeq  &\epsilon(t)\bigg[\frac{|\tilde\lambda_1|}4 K_4V^*-2K_3M{\bf 1} -\theta^{-1}\beta\Big(\mathbf{u^*}+K_4 V^*\Big)\bigg]\\
	\succeq &\ {\bf 0}
	\end{align*}
	provided that we first fix $K_3$ and $K_4$ so that 
	\eqref{7.27} holds and at the same time
	\begin{equation}\label{7.27-1}
	\frac{|\tilde\lambda_1|}4 K_4V^*-2K_3M{\bf 1}\ggs{\bf 0},
	\end{equation}
	and then choose $\theta$ sufficiently large.

	\medskip
	
	Next, for fixed small $\tilde{\epsilon}>0$, we  estimate $A(t,x)$	for $x\in [g(t+t_0), \bar h(t)-\tilde{\epsilon}]$. 
	
{\bf Claim 2.} For any given  $1\gg\eta>0$, there is $c_1=c_1(\eta)$ such that 
\begin{align}\label{01}
(1+\epsilon)F(v)-F((1+\epsilon)v)\succeq  c_1\epsilon\mathbf{1} \ \mbox{ for } \ v\in  [\eta\mathbf{1},\mathbf{u^*}]\ {\rm and}\ 0<\epsilon\ll 1.
\end{align} 

Indeed, by  \eqref{1.7a} there exists $c_1>0$ depending on $\eta$ such that
\[
F(v)-v[\nabla F(v)]^T\succeq 2c_1 \mathbf{1}\  \mbox{ for } \ v\in [\eta\mathbf{1},\mathbf{u^*}].
\]
Since
\begin{align*}
\lim_{\epsilon\to 0}\frac{ (1+\epsilon)F(v)-F((1+\epsilon)v)}{\epsilon}
=&\lim_{\epsilon\to 0}\frac{ \epsilon F(v)-[F(v+\epsilon v)-F(v)]}{\epsilon}\\
=&F(v)-v [\nabla F(v)]^T\succeq 2c_1\mathbf{1} 
\end{align*}
uniformly for $v\in [\eta\mathbf{1},\mathbf{u^*}]$, there exists $\epsilon_0>0$ small so that
\[
\frac{ (1+\epsilon)F(v)-F((1+\epsilon)v)}{\epsilon}\succeq c_1\mathbf{1}
\]
for $v\in [\eta\mathbf{1},\mathbf{u^*}]$ and $\epsilon\in (0, \epsilon_0]$.
 This proves Claim 2. 

\smallskip

By Claim 2 and the Lipschitz continuity of $F$, there exist positive constants $C_l$ and $C_f$ such that,   
for $v=\Phi(x-\bar h(t))\in [\Phi(-\tilde{\epsilon}),\mathbf{u}^*]$, 
	\begin{align*}
	&(1+\epsilon)F(v)-F((1+\epsilon)v+\rho)\\
	=&(1+\epsilon)F(v)-F((1+\epsilon)v)+F((1+\epsilon)v)-F((1+\epsilon)v+\rho)\\
	\succeq  &C_l\epsilon \mathbf{1}-C_fK_4\epsilon \mathbf{1}
	\end{align*}
	when $\epsilon=\epsilon(t)$ is small.
	
	We also have
	\begin{align*}
	&D\circ\lf[\rho(t,x)-\int_{-\yy}^{\bar h(t)}\mathbf{J}(x-y)\circ\rho(t,x) \rd y\rr]\succeq  -D\circ \int_{-\yy}^{\bar h(t)}\mathbf{J}(x-y)\circ\rho(t,x) \rd y\\
	\succeq & -K_4\epsilon (t) D\circ V^*\succeq  -C_dK_4\epsilon(t)\mathbf{1}
	\end{align*}
	for some $C_d>0$,   and 
	\begin{align*}
	\rho_t(t,x)=&-\xi'\bar h' K_4\epsilon(t)V^*+\xi K_4\epsilon'(t)V^*\\
	\succeq &  -\xi_*K_4\epsilon(t)V^*- K_4\beta(t+\theta)^{-1}\epsilon(t)V^*\\
	\succeq &-(\xi_* +\beta\theta^{-1})K_4\epsilon(t)V^*,
	\end{align*}
	with $\xi_*:=c_0\max_{x\in \R}|\xi'(x)|$. 
	
	Using these we obtain, for $x\in [g(t_0+t), \bar h(t)-\tilde{\epsilon}]$,
	\begin{align*}
	A(t,x)\succeq  	&-C_dK_4\epsilon(t)\mathbf{1}+(1+\epsilon)F(\Phi(x-\bar h(t)))-F(\bar U(t,x))+2M \delta'(t)\mathbf{1} +\epsilon'(t)\mathbf{u}^*+\rho_t(t,x)\\
	\succeq  	&C_l\epsilon(t) \mathbf{1}-(C_f+C_d)K_4\epsilon(t) \mathbf{1}-2MK_3 \epsilon(t) \mathbf{1}-\beta(t+\theta)^{-1}\epsilon(t)\mathbf{u^*}-(\xi_* +\beta\theta^{-1})K_4\epsilon(t)V^*\\
	=&\epsilon(t)\bigg[C_l\mathbf{1}-K_4(C_f+C_d)\mathbf{1}-2MK_3\mathbf{1} -\beta(t+\theta)^{-1}\mathbf{u^*}-(\xi_* +\beta\theta^{-1})K_4V^*\bigg]\\
	\succeq &\epsilon(t)\bigg[C_l\mathbf{1}-K_4(C_f+C_d)\mathbf{1}-2MK_3\mathbf{1}-\xi_*K_4V^*  -\beta\theta^{-1}\big(\mathbf{u^*}+K_4V^*\big)\bigg]\\
	\succeq &{\bf 0}
	\end{align*}
	provided that we first choose $K_3$ and $K_4$ small such that
	\[
	C_l\mathbf{1}-K_4(C_f+C_d)\mathbf{1}-2MK_3\mathbf{1}-\xi_*K_4V^*  \ggs {\bf 0}
	\]
	while keeping both \eqref{7.27} and \eqref{7.27-1} hold, and then choose $\theta>0$ sufficiently large.

	Therefore,  \eqref{7.28} holds when $K_3, K_4$ and $\theta$ are chosen as above. 
	The proof of the lemma is now complete.
\end{proof}

\subsection{The case $\td\lambda_1\geq 0$}
\begin{lemma}\label{lemma5.3}
Suppose that the assumptions in Theorem \ref{prop5.1} are satisfied. 	If $\td\lambda_1\geq 0$, then
\eqref{7.3} still holds.
\end{lemma}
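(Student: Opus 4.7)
The plan is to follow the architecture of the proof of Lemma \ref{lemma5.2} line by line, building an upper solution
\[
\bar U(t,x) = (1+\epsilon(t))\Phi(x-\bar h(t)) + \rho(t,x), \qquad \bar h(t) = c_0t + \delta(t),
\]
with $\epsilon(t)$ and $\delta(t)$ as before, and deducing \eqref{7.3} from $\delta(t)\asymp -t^{3-\gamma}$ (or $-\ln t$). The only place where the condition $\tilde\lambda_1<0$ entered crucially was \textbf{Claim 1} of that proof, where the ansatz $\rho=K_4\epsilon(t)\xi(x-\bar h(t))V^*$ produced the coercive estimate
\[
D\circ[\rho-\int \mathbf J\circ\rho]+(1+\epsilon)F(\Phi)-F(\bar U)\succeq \tfrac{|\tilde\lambda_1|}{4}\rho.
\]
When $\tilde\lambda_1\geq0$, the Perron vector of $\nabla F(0)^T-\tilde D$ no longer provides any dissipation, so the whole $V^*$-based strategy collapses at leading order. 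My plan is therefore to keep Steps 1, 2 and all of Claims 2--4 intact (they depend only on the polynomial tail of the kernels, on \eqref{1.7a}, and on Theorem \ref{theorem1.6}, not on $\tilde\lambda_1$) and to redesign only the perturbation $\rho$.

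To replace the $V^*$-ansatz I would use a boundary-layer profile. Fix a small $L>0$ and consider the truncated eigenvalue problem on $(-L,0)$,
\[
\mu_L\phi(y)=D\circ\!\int_{-L}^0\mathbf J(y-z)\circ\phi(z)\,dz-D\circ\phi(y)+\phi(y)[\nabla F(0)]^T,\quad y\in(-L,0),
\]
with $\phi\equiv\mathbf 0$ outside $(-L,0)$. Because the equilibrium $\mathbf{u^*}$ is stable (and in particular $\mathbf{u}^*[\nabla F(\mathbf{u}^*)]^T\llp\mathbf0$ by \eqref{1.7a}), one can combine this truncated operator with a lifted reference at $\mathbf{u^*}$: work with the conjugate problem for $\Psi:=\mathbf{u^*}-\Phi$ near the boundary, whose linearization is governed by $\nabla F(\mathbf{u^*})^T-\tilde D$ (which \emph{does} admit a positive eigenvector with strictly negative eigenvalue). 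This yields a positive $\phi_L\ggs\mathbf0$ together with a strictly negative $\mu_L$. Setting
\[
\rho(t,x):=K_4\epsilon(t)\phi_L(x-\bar h(t)),
\]
(extended by $\mathbf0$ outside $[\bar h(t)-L,\bar h(t)]$) and using the eigenvalue identity, the exact computation
\[
D\circ[\rho-\int_{-\infty}^{\bar h}\mathbf J\circ\rho]=\rho[\nabla F(0)]^T-\mu_L\,\rho
\]
replaces the $V^*$ identity and restores the coercivity $\succeq \tfrac{|\mu_L|}{2}\rho$ near the boundary, exactly as Claim 1 required.

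The remaining bookkeeping is routine: the contribution of $\rho$ to the mass integral in Step 2 is still of order $K_4\epsilon(t)$, so choosing $K_1,K_3,K_4$ small and $\theta$ large forces the polynomial-decay term from \eqref{5.1} to dominate in the inequality $\bar h'(t)\leq\sum_i\mu_i\int\int J_i\bar u_i$; Claims 3 and 4 go through verbatim. The main technical obstacle is the construction and the boundary regularity of the eigenpair $(\mu_L,\phi_L)$: one needs strict positivity of $\mu_L$ away from $\tilde\lambda_1$, continuity of $\phi_L$ up to $y=0$ so that $\rho(t,\bar h(t))\succeq\mathbf0$ (needed for \eqref{g(t)}), and quantitative control on $\phi_L$ uniform in the relevant parameters. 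This will be the heart of the proof; once established, the comparison argument of Lemma \ref{lemma3.2} applied exactly as in Lemma \ref{lemma5.2} yields $h(t+t_0)\leq\bar h(t)$, and the explicit form of $\delta(t)$ delivers \eqref{7.3}. The estimate on $c_0t+g(t)$ follows by reflecting $x\mapsto-x$ in the initial data, as indicated at the end of the proof of Theorem \ref{theorem1.3}(i).
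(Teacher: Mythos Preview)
Your proposal has a genuine gap in the construction of the replacement perturbation $\rho$.  The truncated eigenvalue problem you write down,
\[
\mu_L\phi(y)=D\circ\!\int_{-L}^0\mathbf J(y-z)\circ\phi(z)\,dz-D\circ\phi(y)+\phi(y)[\nabla F(0)]^T,
\]
does not produce a negative $\mu_L$.  As $L\to 0$ the integral term vanishes and the problem reduces to the matrix eigenvalue equation for $[\nabla F(0)]^T-\tilde D$, whose principal eigenvalue is exactly $\tilde\lambda_1\geq 0$; enlarging $L$ only \emph{increases} the principal eigenvalue (the nonlocal Dirichlet eigenvalue is monotone in the domain for cooperative systems).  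So $\mu_L\geq\tilde\lambda_1\geq 0$ for every $L>0$, and the coercivity $\succeq\tfrac{|\mu_L|}{2}\rho$ you need never materialises.  Your suggested rescue via the ``conjugate problem for $\Psi=\mathbf u^*-\Phi$'' is also misplaced: near the free boundary $x=\bar h(t)$ one has $\Phi\approx\mathbf 0$, hence $\Psi\approx\mathbf u^*$, and the linearisation of the $\Psi$-equation there is governed by $\nabla G(\mathbf u^*)=\nabla F(\mathbf 0)$, not by $\nabla F(\mathbf u^*)$.  The stable equilibrium $\mathbf u^*$ plays no role at the boundary layer.

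The paper's actual device is different and rather neat: it keeps $\rho=K_4\xi(x-\bar h)\epsilon V^*$ but \emph{subtracts} it,
\[
\bar U(t,x)=(1+\epsilon(t))\Phi\big(x-\bar h(t)-\lambda(t)\big)-\rho(t,x),\qquad \lambda(t)=K_5\epsilon(t),
\]
with an extra shift $\lambda(t)$ of the semi-wave to keep $\bar U\succeq\mathbf 0$ at the boundary.  With the sign of $\rho$ flipped, the boundary-layer identity becomes
\[
-D\circ\Big[\rho-\!\int_{-\infty}^{\bar h}\!\mathbf J\circ\rho\Big]+(1+\epsilon)F(\Phi)-F(\bar U)
\succeq D\circ\rho\!\int_{-\tilde\epsilon/2}^{0}\! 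J(y)\,dy+\tilde\lambda_1\rho+o(1)\rho,
\]
and now both terms on the right are nonnegative (the first is the convolution mass deficit, the second uses $\tilde\lambda_1\geq 0$), giving $\succeq\tilde J_0\rho$ with $\tilde J_0>0$ independent of the sign of $\tilde\lambda_1$.  The shift $\lambda(t)$ costs only an $O(\epsilon)$ correction in the $\bar h'$-inequality, absorbed exactly as the $K_4$-term was in Lemma \ref{lemma5.2}.  When $m_0<m$ a further tweak $\tilde V^*=V^*+\tilde\delta D$ is needed so that the diffusion-free components also pick up a strictly positive contribution.
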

\begin{proof} This is a modification of the proof of Lemma \ref{lemma5.2}. We will use similar notations.
 Let $\beta={\gamma}-2\in (0,1]$, and $(c_0,\Phi)$  be the solution of  \eqref{2.1a}-\eqref{2.2a}. For fixed $\tilde{\epsilon}>0$,  let $\xi\in C^2(\R)$ satisfy
	\begin{align*}
	0\leq \xi(x)\leq 1, \ \ \ \xi(x)=1\ {\rm for}\ |x|<\tilde{\epsilon},\ \xi(x)=0\ {\rm for}\ |x|>2\tilde{\epsilon}.
	\end{align*}
	Define
	\[
	\begin{cases}
	\bar h(t):=c_0 t+\delta(t),  &\ \ \ t\geq 0,\\
	\ol U(t,x):=(1+\epsilon(t)) \Phi\big(x-\bar h(t)-\lambda(t)\big)-\rho(t,x), &\ \ \ t\geq 0,\  x\leq \bar h(t),
	\end{cases}
	\]
	where 
	\begin{align*}
	&\epsilon(t):=K_1(t+\theta)^{-\beta}, \ 	\delta(t):=K_2-K_3\int_{0}^{t}\epsilon(\tau)\rd \tau,
	\\
	&\rho(t,x):=K_4\xi(x-\bar h(t))\epsilon(t)V^*,\ \lambda(t):=K_5\epsilon(t),
	\end{align*}
	and the positive constants $\theta$, $\tilde\epsilon$ and $K_1, K_2, K_3, K_4, K_5$ are to be  determined.

	Let \[
	C_{\tilde\epsilon}:=\dd\min_{1\leq i\leq m}\min_{x\in [-2\tilde{\epsilon},0]}|\phi_i'(x)|.
	\]
	 Then for $x\in [\bar h(t)-2\tilde{\epsilon},\bar h(t)]$ and $i\in \{1,..., m\}$,
	\begin{align*}
	\bar u_i(t,x)\geq &\phi_i\big(-\lambda(t)\big)-\rho_i(t,x)\geq C_{\tilde\epsilon} \lambda(t)-K_4\epsilon(t)v_i^*\\
	\geq& \epsilon(t)(C_{\tilde\epsilon} K_5-K_4v_i^*)> 0
	\end{align*}
	if 
	\begin{equation}\label{K4}
	K_4= C_{\td\epsilon} K_5/(2\max_{1\leq i\leq m} v_i^*),
	\end{equation}
	 which combined with $\xi(x)=0$ for $|x|\geq 2\tilde{\epsilon}$ implies
	\begin{align}\label{7.19a}
\ol U(t,x)\succeq  {\bf 0} \mbox{ for } \ t\geq 0,\  x\leq \bar h(t).
	\end{align} 
	Let $t_0=t_0(\theta)$ and $K_2=K_2(\theta)$  be given by Step 1 in the proof of Lemma \ref{lemma5.2}. 	Then $[g(t_0),h(t_0)]\subset (-\yy,K_2/2)$, and due to $\rho(0,x)=0$ for $x\leq h(t_0)<K_2/2<K_2=\bar h(0)$, we have
	\begin{equation}\label{initial}\begin{aligned}
\ol U(0,x)=&(1+\epsilon(0)) \Phi(x-K_2-\lambda)\succeq  (1+\epsilon(0)) \Phi(-K_2/2)\\
	\succeq & (1+\epsilon(0)/2)\mathbf{u}^*\succeq   U(t_0,x) \ \mbox{ for } \ x\in [g(t_0),h(t_0)].
	\end{aligned}
	\end{equation}

	{\bf Step 1.} We verify that by choosing $K_1, K_3$ and $K_5$ suitably small,
	\begin{equation}\label{h'}
	\bar h'(t)\geq\sum_{i=1}^{m_0} \mu_i\int_{g(t+t_0)}^{\bar h(t)} \int_{\bar h(t)}^{+\yy} J_i(x-y) \bar u_i(t,x)\rd y\rd x \ \mbox{ for all } \ t> 0.
	\end{equation}
	
	By direct  calculations we have 
	\begin{align*}
	&\sum_{i=1}^{m_0} \mu_i\int_{g(t+t_0)}^{\bar h(t)} \int_{\bar h(t)}^{+\yy} J_i(x-y)  \bar u_i(t,x)\rd y\rd x\\
	\leq& \sum_{i=1}^{m_0} \mu_i\int_{g(t+t_0)}^{\bar h(t)} \int_{\bar h(t)}^{+\yy} J_i(x-y) (1+\epsilon)\phi_i(x-\bar h(t)-\lambda(t))\rd y\rd x\\
	=&(1+\epsilon)\sum_{i=1}^{m_0} \mu_i \int_{-\yy}^{0} \int_{0}^{+\yy} J_i(x-y)  \phi_i(x-\lambda(t))\rd y\rd x\\
	&-(1+\epsilon)\sum_{i=1}^{m_0} \mu_i \int_{-\yy}^{g(t+t_0)-\bar h(t)} \int_{0}^{+\yy} J_i(x-y) \phi_i(x-\lambda(t))\rd y\rd x\\
	\leq &(1+\epsilon) c_0 +(1+\epsilon)\sum_{i=1}^{m_0} \mu_i\int_{-\yy}^{0} \int_{0}^{+\yy} J_i(x-y)  [\phi_i(x-\lambda)-\phi_i(x)]\rd y\rd x\\
	&-(1+\epsilon) \sum_{i=1}^{m_0} \mu_i\int_{-\yy}^{g(t+t_0)-\bar h(t)} \int_{0}^{+\yy} J_i(x-y)  \phi_i(x)\rd y\rd x\\
	\end{align*}
	Let $M_1:=\dd\max_{1\leq i\leq m}\sup_{x\leq 0}|\phi_i'(x)|$ and $C_3$ be given by \eqref{7.5}. Then
	\begin{align*}
	(1+\epsilon) \sum_{i=1}^{m_0} \mu_i\int_{-\yy}^{0} \int_{0}^{+\yy} J_i(x-y)  [\phi_i(x-\lambda(t))-\phi_i(x)]\rd y\rd x\leq 2 C_3M_1 \lambda(t).
	\end{align*}
	By \eqref{7.7},
	\begin{align*}
	&\sum_{i=1}^{m_0} \mu_i\int_{-\yy}^{g(t+t_0)-\bar h(t)} \int_{0}^{+\yy} J_i(x-y)  \phi_i(x)\rd y\rd x\\
	\geq &\frac{\phi_*C_4}{\beta (1+\beta)(2c_0+1)^\beta }\lf[t+\frac{(c_0+1)t_0+C_1+K_2}{(2c_0+1)}\rr]^{-\beta}.
	\end{align*}
	Therefore, as in the proof of Lemma \ref{lemma5.2}, for sufficiently large $\theta$ so that
	\begin{align}\label{7.15}
	\theta>\frac{(c_0+1)t_0+C_1+K_2}{(2c_0+1)}
	\end{align}
	holds, we have
	\begin{align*}
	&\sum_{i=1}^{m_0} \mu_i\int_{g(t+t_0)}^{\bar h(t)} \int_{\bar h(t)}^{+\yy} J_i(x-y) \bar u_i(t,x)\rd y\rd x\\
	\leq&   (1+\epsilon) c_0+2 C_3M_1\lambda(t)-\frac{\phi_*C_4}{\beta (1+\beta)(2c_0+1)^\beta }\lf(t+\theta\rr)^{-\beta}\\
	=&c_0+\epsilon(t)\lf[c_0+2 C_3M_1 K_5-\frac{\phi_*C_4}{K_1\beta (1+\beta)(2c_0+1)^\beta }\rr]\\
	\leq& c_0-K_3\epsilon(t)=\bar h'(t)
	\end{align*}
	provided that $K_1, K_3$ and $K_5$ are suitably small so that
	\begin{align}\label{7.16}
	K_1(c_0+2 C_3 M_1 K_5+K_3)\leq \frac{\phi_* C_4}{\beta (1+\beta)(2c_0+1)^\beta}.
	\end{align}

	{\bf Step 2.} We show that by choosing $K_3, K_5$ suitably small and $\theta$ sufficiently large, 
	 for $t>0$, $x\in [g(t+t_0),\bar h(t)]$, 
	\begin{align}\label{7.17}
	\ol U_t(t,x)\succeq  &D\circ  \int_{g(t+t_0)}^{\bar h(t)}  {\bf J}(x-y)\circ \ol U(t,y)\rd y -\ol U(t,x)+F(\ol U(t,x)).
	\end{align}

	Using the definition of $\ol U$, we have
	\begin{align*}
	\ol U_t(t,x)=&-(1+\epsilon)(\bar h'+\lambda')\Phi'(x-\bar h-\lambda)+\epsilon'\Phi(x-\bar h-\lambda)-\rho_t\\
	= &-(1+\epsilon)[c_0+\delta'+\lambda']\Phi'(x-\bar h-\lambda)+\epsilon'\Phi(x-\bar h-\lambda)-\rho_t
	\end{align*}
	and from \eqref{2.1a}, we obtain
	\begin{align*}
	& -(1+\epsilon)c_0\Phi'(x-\bar h-\lambda)\\ 
	= & (1+\epsilon) \lf[D\circ  \int_{- \yy}^{\bar h+\lambda}  {\bf J}(x-y)\circ  \Phi(y-\bar h-\lambda)\rd y 
	       -D\circ \Phi(x-\bar h-\lambda)+F(\Phi(x-\bar h-\lambda))\rr]\\ 
	\succeq & (1+\epsilon) \lf[D\circ  \int_{- \yy}^{\bar h}  {\bf J}(x-y)\circ  \Phi(y-\bar h-\lambda)\rd y 
	    -D\circ \Phi(x-\bar h-\lambda)+F(\Phi(x-\bar h-\lambda))\rr]\\ 
	= &D\circ  \int_{-\yy}^{\bar h}  {\bf J}(x-y)\circ [\ol U(t,y)+\rho]\rd y -D\circ   [\ol U(t,x)+\rho]+(1+\epsilon)F(\Phi(x-\bar h-\lambda))\\
	=&D\circ  \int_{-\yy}^{\bar h(t)}  {\bf J}(x-y)\circ \ol U(t,y)\rd y -D\circ  \ol U(t,x)\\
	&-D\circ \lf[\rho(t,x)-\int_{-\yy}^{\bar h(t)}{\bf J}(x-y)\circ\rho(t,y) \rd y\rr] +(1+\epsilon)F(\Phi(x-\bar h-\lambda))\\
	\succeq &D\circ  \int_{g(t+t_0)}^{\bar h(t)}  {\bf J}(x-y)\circ \ol U(t,y)\rd y -D\circ  \ol U(t,x)+F(\ol U(t,x))\\
	&-D\circ \lf[\rho(t,x)-\int_{-\yy}^{\bar h(t)}{\bf J}(x-y)\circ\rho(t,y) \rd y\rr]+(1+\epsilon)F(\Phi(x-\bar h-\lambda))-F(\ol U(t,x)).
	\end{align*}
	Hence
	\begin{align*}
	\ol U_t(t,x)\succeq  &D\circ  \int_{g(t+t_0)}^{\bar h(t)}  {\bf J}(x-y)\circ \ol U(t,y)\rd y -D\circ  \ol U(t,x)+F(\ol U(t,x))\\
	&+B(t,x)
	\end{align*}
with
	\begin{align*}
	B(t,x):=	&-D\circ \lf[\rho(t,x)-\int_{-\yy}^{\bar h}{\bf J}(x-y)\circ\rho(t,y) \rd y\rr]+(1+\epsilon)F(\Phi(x-\bar h-\lambda))-F(\ol U)\\
	&-(1+\epsilon)(\delta'+\lambda')\Phi'(x-\bar h-\lambda)+\epsilon'\Phi(x-\underline h-\lambda)-\rho_t.
	\end{align*}
	To show \eqref{7.17},	it remains to choose suitable $K_3, K_5$ and $\theta$ such that 
	$B(t,x)\succeq  {\bf 0}$ for $t>0$ and $x\in [g(t+t_0),\bar h(t)]$.
	\smallskip
	
	{\bf Claim:}	There exist small $\td\epsilon_0\in (0,\tilde{\epsilon}/2)$ and some $\td J_0>0$ depending on $\td\epsilon$ but independent of $\td\epsilon_0$,
	such that  
	\begin{equation}\label{7.19}\begin{aligned}
	&-D\circ \lf[\rho(t,x)-\int_{-\yy}^{\bar h}{\bf J}(x-y)\circ\rho(t,y) \rd y\rr]+(1+\epsilon)F(\Phi(x-\bar h-\lambda))-F(\ol U(t,x))\\
	&\succeq  \; \td J_0\, \rho(t,x) \ \mbox{ for } \ x\in [\bar h(t)-\tilde\epsilon_0,\bar h(t)].
	\end{aligned}
	\end{equation}

	  Indeed, for $x\in [\bar h(t)-\tilde\epsilon_0,\bar h(t)]$,
	\begin{align*}
	&D\circ \lf[\rho(t,x)-\int_{-\yy}^{\bar h(t)}{\bf J}(x-y)\circ\rho(t,y) \rd y\rr]\\
	=&K_2\epsilon(t) \lf[ D\circ V^*-D\circ \int_{-\yy}^{\bar h(t)}{\bf J}(x-y)\circ \xi(y-\bar h(t))V^*\rd y\rr]\\
	\preceq& K_2\epsilon(t) \lf[ D\circ V^*- D\circ \int_{\bar h(t)-\tilde{\epsilon}}^{\bar h(t)}{\bf J}(x-y)\circ V^*\rd y\rr]\\
	=&K_2\epsilon(t) \lf[ D\circ V^*- D\circ \int_{\bar h(t)-\tilde{\epsilon}-x}^{\bar h(t)-x} {\bf J}(x-y)\circ V^*\rd y\rr]\\
	\preceq&D\circ \rho \lf[1-\int_{-\tilde{\epsilon}+\tilde\epsilon_0}^{0}{J}(y)\rd y\rr]\preceq D\circ  \rho \lf[1-\int_{-\tilde{\epsilon}/2}^{0}{J}(y)\rd y\rr].
	\end{align*}
	On the other hand,   for $x\in [\bar h(t)-\tilde\epsilon_0,\bar h(t)]$,  we have
	\begin{align*}
	&(1+\epsilon)F(\Phi(x-\bar h-\lambda)-F(\ol U)\\
	\succeq & F((1+\epsilon)\Phi(x-\bar h-\lambda))-F(\ol U)\\
	=& F(\ol U+\rho)-F(\ol U)=\rho\Big([\nabla F(\ol U)]^T+o(1){\bf I}_m\Big)\\
	=  & K_4\epsilon(t) V^*\Big([\nabla F({\bf 0})]^T+o(1){\bf I}_m\Big)\\
	=& K_4\epsilon(t)[V^* \tilde D	+\tilde\lambda_1 V^*+o(1) V^*]\\
	=& K_4\epsilon(t)[ D\circ V^*	+\tilde\lambda_1 V^*+o(1) V^*]\\
	= &D\circ \rho +\tilde\lambda_1\rho+o(1)\rho.
	\end{align*}
	since  both $\overline U(t,x)$ and $\rho(t,x)$ are close to ${\bf 0}$  for $x\in [\bar h(t)-\tilde\epsilon_0,\bar h(t)]$ with $\tilde\epsilon_0$   small.  
	
	Hence,  for such $x$ and $\tilde\epsilon_0$, since $\td\lambda_1\geq 0$,
		\begin{align*}
	&-D\circ\lf[\rho(t,x)-\int_{-\yy}^{\bar h(t)}{J}(x-y)\rho(t,y) \rd y\rr]+(1+\epsilon)F(\Phi(x-\bar h(t)))-F(\ol U(t,x))\\
	\succeq & D \circ\rho \lf[-1+\int_{-\tilde{\epsilon}/2}^{0}{J}(y)\rd y\rr]
	+ D\circ \rho +\tilde\lambda_1\rho+o(1)\rho
	\\
	 \succeq & \tilde J_0\,\rho(t,x), \ \ \ \ \  \mbox{ with }\ \  \tilde J_0:=\dd\frac12 \min_{1\leq i\leq m}{d_i} \int_{-\tilde{\epsilon}/2}^{0}{J}(y)\rd y \mbox{ if }m_0=m.
	\end{align*}
	 This proves	\eqref{7.19} when $m_0=m$. 
	
	If $m_0<m$, we need to modify  $V^*$ in the definition of $\rho$ slightly. In this case, for $\tilde\delta>0$ small we define
	\[
	\tilde V^*:=V^*+\td\delta D=(v_i^*+\td\delta d_i).
	\]
	Since $d_i=0$ for $i=m_0+1,..., m$ and $d_i>0$ for $i=1,..., m_0$, by $\mathbf{(f_1)}$ (iv) we see that
	\[
	W=(w_i):=D[\nabla F({\bf 0})]^T
	\]
	satisfies $w_i>0$ for $i=m_0+1,..., m$. Let us write 
	\[
	\mbox{$W=W^1+W^2=(w_i^1)+(w_i^2)$ with }\begin{cases} w_i^1=0 \mbox{ for } i=m_0+1,..., m,\\ 
	w_i^2=0 \mbox{  for } i=1,..., m_0.
	\end{cases}
	\]
	Then 
	\[
	\tilde V^*\Big([\nabla F({\bf 0})]^T-\tilde D\Big)=\tilde\lambda_1 V^*+\td\delta \wtd W^1+\td\delta W^2\  \mbox{ with }
	\wtd W^1:=W^1-D\tilde D.
	\]
	It is important to observe that the vector $\wtd W^1=(\td w_i^1)$ has its last $m-m_0$ components 0, namely
	$\tilde w^1_i=0$ for $i=m_0+1,..., m$.
	
	Replacing $V^*$ by $\td V^*$ in the definition of $\rho$, we see that the analysis above is not affected, except that,
	 for $\td\epsilon_0>0$ small and $x\in [\bar h(t)-\tilde\epsilon_0,\bar h(t)]$,  
	\begin{align*}
	&(1+\epsilon)F(\Phi(x-\bar h-\lambda)-F(\ol U)\\
	\succeq   & K_4\epsilon(t) \td V^*\Big([\nabla F({\bf 0})]^T+o(1){\bf I}_m\Big)\\
	=& K_4\epsilon(t)\Big([\tilde V^* \tilde D	+\tilde\lambda_1 V^*+o(1) V^*]+\td\delta \wtd W^1+\td \delta W^2\Big)\\
	=& K_4\epsilon(t)\Big( D\circ \td V^*	+\tilde\lambda_1 V^*+o(1) V^*+\td\delta \wtd W^1+\td \delta W^2\Big) \\
	\succeq &D\circ \rho +K_4\epsilon(t)\Big( o(1) V^*+\td\delta \wtd W^1+\td \delta W^2\Big).
	\end{align*}
Hence,  for such $x$ and $\tilde\epsilon_0$, we now have
		\begin{align*}
	&-D\circ\lf[\rho(t,x)-\int_{-\yy}^{\bar h(t)}{J}(x-y)\rho(t,y) \rd y\rr]+(1+\epsilon)F(\Phi(x-\bar h(t)))-F(\ol U(t,x))\\
	\succeq & D \circ\rho \lf[-1+\int_{-\tilde{\epsilon}/2}^{0}{J}(y)\rd y\rr]
	+ D\circ \rho +K_4\epsilon(t)\Big( o(1) V^*+\td\delta \wtd W^1+\td \delta W^2\Big)	\\
	\succeq &K_4\epsilon(t)\Big( \min_{1\leq i\leq m_0} v_i^*\int_{-\tilde{\epsilon}/2}^{0}{J}(y)\rd y D+ o(1) V^*+\td\delta \wtd W^1+\td \delta W^2\Big).	
	\end{align*}
We now fix $\td\delta>0$ small enough such that
\[
\td\delta \wtd W^1\preceq \frac 12 \min_{1\leq i\leq m_0} v_i^*\int_{-\tilde{\epsilon}/2}^{0}{J}(y)\rd y D,
\]
and notice that
\[
\widehat W:=\frac 12 \min_{1\leq i\leq m_0} v_i^*\int_{-\tilde{\epsilon}/2}^{0}{J}(y)\rd y D+\td \delta W^2\ggs{\bf 0}.
\]
Therefore there exists $\td J_0>0$ such that
\[
\frac 12 \widehat W\succeq \td J_0 \tilde V^*.
\]
Then
\begin{align*}
	 &K_4\epsilon(t)\Big( \min_{1\leq i\leq m_0} v_i^*\int_{-\tilde{\epsilon}/2}^{0}{J}(y)\rd y D+ o(1) V^*+\td\delta \wtd W^1+\td \delta W^2\Big)
	 \\
\succeq &K_4\epsilon(t)\Big( \widehat W+o(1)V^*\Big)\succeq K_4\epsilon(t)\frac 12 \widehat W
 \succeq K_4\epsilon(t)\td J_0 \tilde V^*	 =\tilde J_0\rho,
	\end{align*}
	provided that $\td\epsilon_0>0$ is chosen sufficiently small. 
	
	Therefore
	 for $\td\epsilon_0>0$ small and $x\in [\bar h(t)-\tilde\epsilon_0,\bar h(t)]$,  
  we finally have
		\begin{align*}
	&-D\circ\lf[\rho(t,x)-\int_{-\yy}^{\bar h(t)}{J}(x-y)\rho(t,y) \rd y\rr]+(1+\epsilon)F(\Phi(x-\bar h(t)))-F(\ol U(t,x))\\
	&\succeq 	\tilde J_0\,\rho(t,x), \ \mbox{ as desired.}
\end{align*}

With $\tilde\delta>0$ chosen as above, we will from now on denote
\[
\hat V^*:=\begin{cases} V^* &\mbox{ if } m_0=m,\\
\td V^* & \mbox{ if } m_0<m,
\end{cases}
\]
but keep the notation for $\rho$ unchanged.
	
	Clearly
		\begin{align*}
	-\rho_t(t,x)=\beta K_4K_1(t+\theta)^{-\beta-1}\hat V^*\succeq  {\bf 0}.
	\end{align*}
	Recalling
	 $M_1:=\dd\max_{1\leq i\leq m}\sup_{x\leq 0}|\phi_i'(x)|$,
we obtain, for $x\in [\bar h(t)-\tilde\epsilon_0,\bar h(t)]$ and small $\tilde\epsilon_0$, 	
\begin{align*}
	B(t,x)\succeq  &\ \td  J_0K_2\epsilon(t)\hat V^*+2(\delta'(t)+\lambda'(t))M_1\mathbf{1}+\epsilon'(t)\mathbf{u}^*\\
	= &\ \td  J_0K_2\epsilon(t)\hat V^*+2\epsilon(t)(-K_3-K_5\beta(t+\theta)^{-1})M_1\mathbf{1}
	-\beta (t+\theta)^{-1}\epsilon(t)\mathbf{u^*}\\
	\succeq  &\ \epsilon(t)\bigg[\td  J_0K_2\hat V^*-2(K_3+K_5\beta\theta^{-1})M_1{\bf 1} -\beta\theta^{-1}\mathbf{u^*}\bigg]\\
	=&\  \epsilon(t)\bigg[\td  J_0K_2\hat V^*-2K_3M_1{\bf 1}-\theta^{-1}\Big(K_5\beta M_1{\bf 1} +\beta\mathbf{u^*}\Big)\bigg]\\
	\succeq&\  {\bf 0}
	\end{align*}
	provided that $K_3$ is chosen small so that \eqref{7.16} holds,
	\begin{equation}\label{K3}
	\td  J_0K_2\hat V^*-2K_3M_1{\bf 1}\ggs {\bf 0},
	\end{equation}
	and $\theta$ is chosen sufficiently large.\footnote{In fact, by the choice of $K_2=K_2(\theta)$ in \eqref{K2},  for fixed $K_3$, \eqref{K3} always holds for large enough $\theta$.}
	\medskip
	
	We next estimate $B(t,x)$	for $x\in [g(t+t_0), \bar h(t)-\tilde\epsilon_0]$. From Claim 2 in the proof of Lemma \ref{lemma5.2},   
	 and the Lipschitz continuity of $F$, there exist positive constants $C_l=C_l(\td\epsilon_0)$ and $C_f$ such that,   
for $v=\Phi(x-\bar h(t-\lambda(t)))\in [\Phi(-\tilde{\epsilon}_0),\mathbf{u}^*]$, 
	\begin{align*}
	&(1+\epsilon)F(v)-F((1+\epsilon)v-\rho)\\
	=&(1+\epsilon)F(v)-F((1+\epsilon)v)+F((1+\epsilon)v)-F((1+\epsilon)v-\rho)\\
	\succeq  &C_l\epsilon \mathbf{1}-C_f\rho\succeq C_l\epsilon \mathbf{1}-C_fK_4\epsilon \hat V^*
	\end{align*}
	when $\epsilon=\epsilon(t)$ is small. Hence
	\begin{align*}
	&(1+\epsilon)F(\Phi(x-\bar h-\lambda))-F(\bar U)\\
	\succeq  &C_l\epsilon\mathbf{1} -C_fK_4\epsilon \hat V^*  \ \mbox{ for } \ \ \ x\in [g(t+t_0), \bar h(t)-\tilde\epsilon_0], \ 0<\td\epsilon_0\ll 1.
	\end{align*}
	Clearly, 
	\begin{align*}
	-D\circ\lf[\rho(t,x)-\int_{-\yy}^{\bar h(t)}\mathbf{J}(x-y)\circ\rho(t,x) \rd y\rr]
	\succeq-K_4\epsilon(t)D\circ \hat V^*,
	\end{align*}
	 and 
	\begin{align*}
	\rho_t(t,x)=-K_4\xi'\bar h'(t)\epsilon(t)\hat V^*+ K_4\xi \epsilon'(t)\hat V^*\preceq \xi_*K_4\epsilon(t)\hat V^*
	\end{align*}
	with $\xi_*:=c_0\max_{x\in \R}|\xi'(x)|$.  
	
	We thus obtain, for $x\in [g(t+t_0), \bar h(t)-\tilde\epsilon_0]$ and $0<\tilde\epsilon_0\ll 1$,
	\begin{align*}
	B(t,x)\succeq  	&-K_4\epsilon(t)D\circ \hat V^*+(1+\epsilon)F(\phi(x-\bar h))-F(\ol U)+2M_1 (\delta'+\lambda')\mathbf{1} +\epsilon'\mathbf{u}^*-\rho_t\\
	\succeq  	&C_l\epsilon (t){\bf 1}-K_4\epsilon(t) (D\circ \hat V^* +C_f\hat V^*+\xi_*\hat V^*)+2M_1 (-K_3\epsilon(t)+K_5\epsilon'(t))\mathbf{1} +\epsilon'(t)\mathbf{u^*}\\
	\succeq &\epsilon(t)\bigg[C_l{\bf 1}-K_4(D\circ \hat V^*+C_f\hat V^*+\xi_*\hat V^*)-2M_1(K_3+K_5\beta(t+\theta)^{-1}){\bf 1} -\beta(t+\theta)^{-1}\mathbf{u^*}\bigg]\\
	\succeq &\epsilon(t)\bigg[C_l{\bf 1}-K_4\Big(D\circ \hat V^*+C_f\hat V^*+\xi_*\hat V^*\Big)-2M_1K_3{\bf 1}
	-\theta^{-1}\beta \Big(2M_1K_5{\bf 1}+\mathbf{u^*}\Big)\bigg]\\
	\succeq & {\bf 0}
		\end{align*}
	if we choose $K_3$ and $K_5$ small so that \eqref{7.16} and \eqref{K3} hold and at the same time, due to \eqref{K4}
	\[
	C_l{\bf 1}-K_4\Big(D\circ \hat V^*+C_f\hat V^*+\xi_*\hat V^*\Big)-2M_1K_3{\bf 1}\ggs {\bf 0},
	\]
	and then choose $\theta$ sufficiently large.
	 	Hence,  \eqref{7.17} is satisfied  if $K_3$ and $K_5$ are chosen small as above, and $\theta$ is sufficiently large.

	From \eqref{7.19a}, we have
	\begin{align*}
	\ol U(t,g(t+t_0))\succeq {\bf 0},\ \  \ol U(t,\bar h(t))\succeq {\bf 0} \ \mbox{ for } \  t\geq 0.
	\end{align*}
	Together with \eqref{initial}, \eqref{h'} and \eqref{7.17}, this enables us to use the comparison principle  to conclude that
	\begin{align*}
	h(t+t_0)\leq  \bar h(t),\ U(t+t_0,x)\preceq \ol U(t,x) \ \mbox{ for }\  t\geq 0,\ x\in [ g(t+t_0),\underline h(t)],
	\end{align*}
	which implies \eqref{7.3}. The proof of the lemma is now complete.
\end{proof}

\subsection{Proof of Theorem \ref{prop5.1}}

	Since $\mathbf{(J^1)}$ holds, by Lemma \ref{lemma4.3} and then by \eqref{5.1}, there exists $C_0>0$ such that
	\begin{align*}
		h(t)-c_0t\geq &-C\left[1+\int_0^t(1+x)^{-1}dx+ \int_0^{\frac{c_0}2t} x^2\hat J(x)dx +t \int_{\frac{c_0}2t}^{\yy}x \hat J(x) dx\right]\\
		\geq&- C\left[1+\int_0^1\hat J(x)dx+\ln (t+1)+ C_0\int_1^{\frac{c_0}2t} x^{2-\gamma}dx +C_0t \int_{\frac{c_0}2t}^{\yy}x^{1- \gamma}dx\right].
	\end{align*}
	Therefore when  ${\gamma}\in (2,3)$ we have, for  $t\geq 1$,
	\begin{align*}
		h(t)-c_0t\geq &- C\left[\tilde C+\ln (t+1)+ \tilde C_1 t^{3-\gamma}\right]\geq -\hat C_1 t^{3-\gamma}
	\end{align*}
	for some $\hat C_1, \td C, \td C_1>0$, and when ${\gamma}=3$, for $t\geq 1$, 
	\begin{align*}
	h(t)-c_0t\geq  & -\hat C_2 \ln t
	\end{align*}
	for  some $\hat C_2>0$.	This combined with  Lemmas \ref{lemma5.2} and \ref{lemma5.3} 
	gives the desired conclusion of Theorem \ref{prop5.1}. The proof is completed.
\hfill $\Box$

\section{The growth orders of infinite spreading speed}\label{section6}

 Let $(U, g, h)$ be the unique positive solution of \eqref{1.1}, and assume that spreading happens.
Under the assumptions of Theorem \ref{theorem1.3}, we have
\[
-\lim_{t\to\yy}\frac{g(t)}{t}=\lim_{t\to\yy} \frac{h(t)}{t}=\yy. 
\]

Suppose $\mathbf{(\hat J^\gamma)}$ holds for some $\gamma\in (1, 2]$, namely,  for $|x|\gg 1$ we have
\begin{equation}\label{J-6.1}
\dd J_i(x)\approx  |x|^{-\gamma} \mbox{ for  $ i\in\{1,..., m_0\}$ and some $\gamma\in (1, 2]$}.
\end{equation}
 Then 
\begin{align*}
 \int_{\R}J_i(x) \rd x<\yy,\ \  \int_{\R}|x|J_i(x) \rd x=\yy \mbox{ for } i\in \{1,..., m_0\}.
\end{align*}
So \textbf{(J1)} is not satisfied.

\medskip

The purpose of this section is to prove Theorem \ref{th1.5a}, which we restate as
\begin{theorem}\label{prop6.1} 	Assume that $\mathbf{(J)}$ and  $\mathbf{(f_1)-(f_4)}$  are satisfied. If spreading happens, and additionally \eqref{J-6.1} holds,  then  for large $t>0$,
	\[\begin{cases}
 -g(t),\ h(t)\approx\  t^{1/({\gamma}-1)} &  {\rm if}\ {\gamma}\in (1,2),\\
 -g(t),\ h(t)\approx\  t\ln t &  {\rm if}\ {\gamma}=2.
\end{cases}\]
\end{theorem}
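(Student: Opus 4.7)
The plan is to establish matching upper and lower bounds on $h(t)$; the bound on $-g(t)$ then follows by the usual reflection $x\mapsto -x$ applied to the initial data. Set $L(t):=h(t)-g(t)$. For the upper bound, I would start from the free boundary equation and use the uniform bound $U\preceq M\mathbf{1}$ (valid by comparison with the ODE $V'=F(V)$, as in Lemma \ref{lemma3.3}) together with Fubini to compute
\[
\int_{g(t)}^{h(t)}\int_{h(t)}^{\infty}J_i(x-y)\rd y\rd x = \int_0^{L(t)} zJ_i(z)\rd z + L(t)\int_{L(t)}^{\infty}J_i(z)\rd z.
\]
Combined with the symmetric estimate for $-g'(t)$, this gives $L'(t)\leq \Psi(L(t))$, and direct computation using \eqref{J-6.1} shows $\Psi(L)\approx L^{2-\gamma}$ for $\gamma\in(1,2)$ and $\Psi(L)\approx \ln L$ for $\gamma=2$. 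Solving this ODE inequality yields $L(t)\leq Ct^{1/(\gamma-1)}$ (respectively $Ct\ln t$), and since $0\leq h(t),\,-g(t)\leq L(t)$, this delivers both upper bounds.

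For the lower bound I would use a time-dependent truncation argument. For each integer $n\geq 1$, let $J_i^n$ be the truncated kernel defined before Lemma \ref{lem3.5}, and let $(c^n,\Phi^n)$ be the semi-wave pair from Lemma \ref{lem3.6}. The key quantitative input is a lower bound
\[
c^n \geq \alpha_1 n^{2-\gamma}\quad (\gamma\in(1,2)), \qquad c^n \geq \alpha_1 \ln n\quad (\gamma=2),
\]
for some $\alpha_1>0$ and all $n$ large. This follows from \eqref{3.13}, the monotonicity of $\Phi^n$, and the identity
\[
\int_{-\infty}^{-a}\int_0^{\infty} J_i^n(x-y)\rd y\rd x = \int_a^\infty (u-a)J_i^n(u)\rd u,
\]
which, by the specific form of $J_i^n$ and \eqref{J-6.1}, is of the claimed order in $n$ provided $\phi_i^n(-a)\geq \eta>0$ uniformly in $n$ for some fixed $a$; the latter is obtained by a compactness/monotonicity argument analogous to that in the proof of Lemma \ref{lem3.8}, using Lemma \ref{lemma2.6a}. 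To convert these $c^n$ bounds into a lower bound on $h(t)$, I would exploit spreading by a diagonal device: for $T$ large, $U(T,\cdot)\succeq (1-\epsilon)\mathbf{u}^*$ on an interval $[-R(T),R(T)]$ with $R(T)\to\infty$; taking this as initial data for the truncated problem \eqref{3.10} with $n=n(T)$ and adapting the subsolution construction of Lemma \ref{lemma3.4} (now built from $\Phi^{n(T)}$ rather than $\Phi^{c_0}$) yields $h(t)\geq (c^{n(T)}-\delta)(t-T)-C_{n(T)}$ for $t\geq T+T'_{n(T)}$. Choosing $n(T)$ so that $c^{n(T)} T\approx T^{1/(\gamma-1)}$ (resp.\ $T\ln T$) and evaluating at $t=2T$ delivers the claimed lower bound.

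The main obstacle is making the diagonal choice $n=n(T)$ admissible: the auxiliary constants $K^n$ and $T'_n$ arising in the subsolution construction of Lemma \ref{lemma3.4} applied to the truncated kernel $J^n_i$ must grow slowly enough in $n$ that $K^{n(T)}\leq R(T)$ and $T'_{n(T)}\leq T/2$. This requires quantitative, uniform-in-$n$ control of how close the semi-wave profile $\Phi^n$ is to $\mathbf{u}^{*,n}$ a fixed distance from its front --- a tail estimate in the spirit of Theorem \ref{theorem1.6}, but with the $n$-dependence explicitly tracked and optimally balanced against the power-law tails in \eqref{J-6.1}. Securing such a uniform estimate, and verifying that the resulting $K^n,\,T'_n$ are at most polynomial in $n$, is the principal new technical content needed for this section.
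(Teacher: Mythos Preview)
Your upper bound argument is essentially the paper's: the paper packages the same integral estimate as an explicit supersolution $\overline U\equiv \bar u\,\mathbf{1}$ with $\bar h(t)=(Kt+\theta)^{1/(\gamma-1)}$ (or $(Kt+\theta)\ln(Kt+\theta)$ when $\gamma=2$), but the computation is identical to your ODE inequality for $L(t)$.

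For the lower bound the paper takes a completely different and much shorter route, and it is worth knowing. It does \emph{not} use the truncated kernels $J_i^n$, the semi-waves $\Phi^n$, or any diagonal choice $n=n(T)$. Instead it builds an explicit algebraic subsolution directly for the original problem: for $\gamma\in(1,2)$ one takes
\[
\underline h(t)=(K_1 t+\theta)^{1/(\gamma-1)},\qquad \underline U(t,x)=K_2\,\frac{\underline h(t)-|x|}{\underline h(t)}\,\Theta,
\]
a tent profile scaled by the principal eigenvector $\Theta$ of $\nabla F(\mathbf 0)$. Two elementary ingredients make this work. First, for the differential inequality one uses that for large $l$ the nonlocal operator almost preserves tent functions, $\int_{-l}^{l}J_i(x-y)(l-|y|)\,dy\geq(1-\epsilon)(l-|x|)$, together with the linearisation $F(\underline U)\succeq\frac{3}{4}\lambda_1\underline U$ for $K_2$ small; this gives a net positive multiple of $\int J\circ\underline U$, and the power-law tail \eqref{J-6.1} then yields the pointwise lower bound $\int_{-\underline h}^{\underline h}J_i(x-y)\underline u_i\,dy\gtrsim \underline h^{\,1-\gamma}$, exactly matching $\underline U_t$. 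Second, for the free boundary inequality one computes directly $\sum_i\mu_i\int_{-\underline h}^{\underline h}\int_{\underline h}^{\infty}J_i(x-y)\underline u_i\,dy\,dx\gtrsim\underline h^{\,2-\gamma}=\underline h'\,$. For $\gamma=2$ the same idea works with a truncated tent $\min\{1,(\underline h-|x|)/(t+\theta)^\beta\}\Theta$ and a corresponding convolution lemma (Lemma~\ref{lemma7.4}).

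Your route via $(c^n,\Phi^n)$ is not wrong in spirit, but the obstacle you flag is real and not minor: to run the diagonal argument you need $K^n$ (governed by how fast $\Phi^n\to\mathbf u^{*,n}$) and $t_0^n$ to grow slowly in $n$, and \emph{also} a lower bound on $R(T)$ that is itself part of the conclusion. One can bootstrap (a fixed $n_0$ already gives $R(T)\gtrsim T$, which suffices since the heuristic $K^n\sim c^n\sim n^{2-\gamma}$ yields $K^{n(T)}\sim T^{(2-\gamma)/(\gamma-1)}\ll T$), but making this rigorous means tracking the $n$-dependence of the semi-wave tail estimate through the proof of Theorem~\ref{theorem1.6} and of the constants in Lemma~\ref{lemma3.4}. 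The paper's tent-function construction avoids all of this: no semi-waves, no uniform-in-$n$ estimates, just two short computations.
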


 We will only prove the estimate for $h(t)$, since that for $g(t)$ follows by the change of variable $x\to-x$. Theorem \ref{prop6.1} will follow directly from the lemmas in Subsections 6.1 and 6.2 below.

\subsection{Upper bound} To prove the upper bound a slightly weaker condition than \eqref{J-6.1} is enough.
 We assume that there exist positive constants $C_1$ and $C_2$ such that
\begin{align}\label{8.1}
\frac{C_1}{|x|^{{\gamma}}+1} \leq \sum_{i=1}^{m_0} \mu_i J_i(x)\leq \frac{C_2}{|x|^{{\gamma}}+1}\  \mbox{ for } \ x\in \R
\ \mbox{ and some } {\gamma}\in (1,2].
\end{align}
 Obviously,  \eqref{8.1} has no restriction for the kernel function $J_{i_0}$ whenever $\mu_{i_0}=0$,
and \eqref{J-6.1} implies \eqref{8.1} for the same $\gamma$.

\begin{lemma}
Assume that $\mathbf{(J)}$ and  $\mathbf{(f_1)-(f_4)}$  hold. If spreading happens, and \eqref{8.1} is satisfied, then there exits $C=C({\gamma})>0$ such that 
	\begin{equation}\label{8.2a}
	\begin{cases}
	h(t)\leq Ct^{1/({\gamma}-1)}&{\rm if}\ {\gamma}\in (1,2),\\
	h(t)\leq Ct\ln t&{\rm if}\ {\gamma}=2.
	\end{cases}
	\end{equation}
\end{lemma}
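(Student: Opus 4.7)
Set $L(t):=h(t)-g(t)$, which tends to $\infty$ as spreading occurs. The strategy is to derive an ODE inequality for $L(t)$ and integrate it. First, by $\mathbf{(f_4)}$ and Lemma \ref{lemma3.1} applied to the spatially constant supersolution $V(t)$ solving the ODE system $V'=F(V)$ with $V(0)=\big(\max_{x}u_{i0}(x)\big)\in[\mathbf{0},\mathbf{\hat u}]$, one obtains $U(t,x)\preceq V(t)$ for all $t\geq 0$, $x\in[g(t),h(t)]$, and $V(t)\to\mathbf{u^*}$ as $t\to\infty$. Consequently, for any prescribed $\epsilon>0$ there exists $t_0=t_0(\epsilon)$ such that $u_i(t,x)\leq (1+\epsilon)u_i^*$ for every $t\geq t_0$, $x\in[g(t),h(t)]$ and $i\in\{1,...,m\}$.

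Substituting this bound into the free boundary relations in \eqref{1.1}, using \eqref{8.1} to dominate $\sum_{i=1}^{m_0}\mu_i u_i^* J_i(x)$ by $C(|x|^{\gamma}+1)^{-1}$, and performing the successive changes of variable $u=y-x$, $s=h(t)-x$ (for $h'$) and $u=x-y$, $s=x-g(t)$ (for $-g'$), the resulting double integrals simplify via Fubini to
\[
L'(t)\leq C\int_0^\infty\frac{\min\{u,L(t)\}}{u^{\gamma}+1}\,du\qquad\mbox{for all }\ t\geq t_0.
\]
Splitting the integral at $u=L(t)$ and using $\int_0^L\frac{u\,du}{u^\gamma+1}\asymp L^{2-\gamma}$ and $L\int_L^\infty\frac{du}{u^\gamma+1}\asymp L^{2-\gamma}$ when $\gamma\in(1,2)$, while both quantities are $\asymp\ln L$ (resp.\ bounded) when $\gamma=2$, I obtain, for $L(t)$ sufficiently large,
\[
L'(t)\leq C_1\,L(t)^{2-\gamma}\ \ \mbox{ if }\gamma\in(1,2),\qquad L'(t)\leq C_1\big(\ln L(t)+1\big)\ \ \mbox{ if }\gamma=2.
\]

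For $\gamma\in(1,2)$ the inequality is equivalent to $\frac{d}{dt}\big[L(t)^{\gamma-1}\big]\leq (\gamma-1)C_1$ for large $t$, and a single integration delivers $h(t)\leq L(t)\leq C\,t^{1/(\gamma-1)}$. The main obstacle is the borderline case $\gamma=2$, where $L'\lesssim \ln L$ admits no clean antiderivative; I plan a bootstrap. The crude bound $L'(t)\leq C_2 L(t)$ (valid since $\int_{\R}J_i<\infty$ for every $i$) first yields $L(t)\leq e^{C_2 t}$, hence $\ln L(t)\leq C_2 t$; reinserting this into the sharp inequality yields $L'(t)\leq C(t+1)$, so $L(t)\leq Ct^2$, improving $\ln L(t)\leq C\ln t$; one final iteration produces $L'(t)\leq C\ln t$ and hence $L(t)\leq Ct\ln t$ upon integration from $t_0$. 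The conclusion $h(t)\leq L(t)$ then yields \eqref{8.2a}.
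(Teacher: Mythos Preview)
Your argument is correct and takes a genuinely different route from the paper. The paper constructs an explicit super-solution pair $(\ol U,-\bar h,\bar h)$ with $\ol U\equiv \bar u\,\mathbf{1}$ a constant vector and $\bar h(t)$ given in closed form as $(Kt+\theta)^{1/(\gamma-1)}$ (resp.\ $(Kt+\theta)\ln(Kt+\theta)$), verifies the requisite differential inequalities using the computation \eqref{8.3}, and then invokes the comparison principle (Lemma~\ref{lemma3.2}) to conclude $h(t)\leq \bar h(t)$. Your approach instead extracts an autonomous ODE inequality $L'\leq G(L)$ for $L(t)=h(t)-g(t)$ directly from the free boundary equations and integrates it, with a three-step bootstrap in the borderline case $\gamma=2$.

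What each buys: the paper's barrier method is uniform with the techniques used elsewhere in the article (Sections~3--5 all proceed by comparison with explicit profiles) and gives the bound for $h(t)$ and $-g(t)$ separately in one stroke. Your ODE-integration method is more elementary in that it sidesteps the construction and verification of a super-solution pair entirely; it also makes transparent exactly why the exponent $1/(\gamma-1)$ appears (as the blow-up rate of $L'\leq C L^{2-\gamma}$). The cost is the iterative argument for $\gamma=2$, where the paper's explicit $\bar h(t)=(Kt+\theta)\ln(Kt+\theta)$ handles the logarithm in a single comparison step. Both proofs ultimately rest on the same two ingredients: the uniform bound $u_i\leq(1+\epsilon)u_i^*$ from $\mathbf{(f_4)}$, and the kernel estimate \eqref{8.1}.
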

\begin{proof}
	Define, for $t\geq 0$,
	\begin{equation*}
	\bar h(t):=\begin{cases}
	(Kt+\theta)^{1/({\gamma}-1)}&{\rm if}\ {\gamma}\in (1,2],\\
	(Kt+\theta)\ln(Kt+\theta)&{\rm if}\ {\gamma}=2,\\
	\end{cases}
	\end{equation*}
	and
	\begin{align*}
	\ol U(t,x):=\bar u {\bf 1},\; \bar u:=\max_{1\leq i\leq m}\lf\{\|u_{i0}\|_\infty, u_i^*\rr\},\ \ \ x\in [-\bar h(t), \bar h(t)],
	\end{align*}
	with positive constants $\theta$ and $K$ to be determined.

	We start by showing
	\begin{align}\label{8.2}
	&\bar h'(t)\geq \sum_{i=1}^{m_0}\mu_i \int_{-\bar h(t)}^{\bar h(t)} \int_{\bar h(t)}^{+\yy} J_i(x-y) \bar u_i(t,x)\rd y\rd x \ \ \mbox{ for }\ \ t> 0,
	\end{align}
	and 
	\begin{align*}
	&-\bar h'(t)\leq - \sum_{i=1}^{m_0}\mu_i \int_{-\bar h(t)}^{\bar h(t)} \int_{-\yy}^{-\bar h(t)} J_i(x-y) \bar u_i(t,x)\rd y\rd x  \ \mbox{ for } \ t> 0.
	\end{align*}
	Since $\ol U(t,x)=\ol U(t,-x)$ and $J_i(x)=J_i(-x)$, it suffices to prove \eqref{8.2}.

	By simple calculations and \eqref{8.1}, for any $k>1$,
	\begin{align*}
	& \sum_{i=1}^{m_0}\mu_i \int_{-k}^{0}\int_{0}^\yy J_i(x-y)\rd y\rd x= \sum_{i=1}^{m_0}\mu_i \int_{0}^{k}\int_{x}^{\yy} J_i(y)\rd y\rd x\\
	=& \sum_{i=1}^{m_0}\mu_i \int_{0}^kJ_i(y)y\rd y+ \sum_{i=1}^{m_0}\mu_i k\int_{k}^\yy J_i(y)\rd y\\
	\leq&\int_{0}^k\frac{C_2y}{y^{{\gamma}}+1}\rd y+k\int_{k}^\yy \frac{C_2}{y^{{\gamma}}+1}\rd y\leq \int_{0}^1C_2\rd y+\int_{1}^k\frac{C_2y}{y^{{\gamma}}}\rd y+k\int_{k}^\yy \frac{C_2}{y^{{\gamma}}}\rd y,
	\end{align*}
	and so
	\begin{equation}\label{8.3}
	\begin{cases}
	\dd \sum_{i=1}^{m_0}\mu_i \int_{-k}^{0}\int_{0}^\yy J_i(x-y)\rd y\rd x\leq C_2+\frac{C_2}{2-{\gamma}}(k^{2-{\gamma}}-1)+\frac{C_2k^{2-{\gamma}}}{{\gamma}-1}&{\rm if}\ {\gamma}\in (1,2),\\[5mm]
	\dd\sum_{i=1}^{m_0}\mu_i \int_{-k}^{0}\int_{0}^\yy J_i(x-y)\rd y\rd x\leq 2C_2+C_2\ln k& {\rm if}\ {\gamma}=2.
	\end{cases}
	\end{equation}
	A direct calculation gives 
	\begin{align*}
	\int_{-\bar h(t)}^{\bar h(t)} \int_{\bar h(t)}^{+\yy} J_i(x-y) \bar u_i(t,x)\rd y\rd x= \bar u \int_{-2\bar h(t)}^{0} \int_{0}^{+\yy} J_i(x-y) \rd y\rd x.
	\end{align*}
	Hence for $1<{\gamma}<2$,  by \eqref{8.3},
	\begin{align*}
	&\sum_{i=1}^{m_0}\mu_i \int_{-\bar h(t)}^{\bar h(t)} \int_{\bar h(t)}^{+\yy} J_i(x-y) \bar u_i(t,x)\rd y\rd x\\
	\leq& \bar u\lf[C_2+2^{2-{\gamma}}\lf(\frac{C_2}{2-{\gamma}}+\frac{C_2}{{\gamma}-1}\rr)(Kt+\theta)^{(2-{\gamma})/({\gamma}-1)}\rr]\\
	\leq& \frac{K}{{\gamma}-1} (Kt+\theta)^{(2-{\gamma})/(1-{\gamma})}=\bar h'(t)
	\end{align*}
	provided  that $K>0$ is large enough. And for   ${\gamma}=2$,
	\begin{align*}
	&\sum_{i=1}^{m_0}\mu_i \int_{-\bar h(t)}^{\bar h(t)} \int_{\bar h(t)}^{+\yy} J_i(x-y) \bar u_i(t,x)\rd y\rd x\\
	\leq&  \bar u\big(2C_2+C_2\ln [2(Kt+\theta)\ln(Kt+\theta)]\big)\\
	\leq& \bar u\big(2C_2+C_2\ln 2(Kt+\theta)+C_2\ln [\ln(Kt+\theta)]\big)\\
	\leq& K\ln (Kt+\theta)+K=\bar h'(t)
	\end{align*}
	if $K\gg 1$.	This finishes the proof of \eqref{8.2}.
	
	Since $\ol U\geq \mathbf{u}^*$ is a constant vector,  we have, 
	for $t>0$, $x\in [-\bar h(t),\bar h(t)]$, 
	\begin{align}
	\ol U_t(t,x)\equiv {\bf 0}\succeq   D\circ \int_{-\bar h(t)}^{\bar h(t)}  \mathbf{J}(x-y) \circ\ol U(t,y)\rd y -D\circ\ol U(t,x)+F(\ol U(t,x)).
	\end{align}
	Moreover, $\bar h(0)\geq h_0$ for large $\theta$, and obviously
	\begin{align*}
	&\ol U(t,\pm \bar h(t))\succeq {\bf 0} \mbox{ for }  t\geq 0,\\
	&\ol U(0, x)\succeq  U(0,x) \mbox{ for }  x\in [-h_0,h_0].
	\end{align*}
	 	Hence we can apply the comparison principle (Lemma \ref{lemma3.2})  to conclude that
	\begin{align*}
	& [g(t+t_0),h(t+t_0)]\subset  [-\bar h(t),\bar h(t)], \ &&t\geq 0,\\
	&U(t+t_0,x)\preceq \ol U(t,x),&&t\geq 0,\ x\in [ g(t+t_0), h(t+t_0)].
	\end{align*}
	Thus \eqref{8.2a} holds. 
\end{proof}

\subsection{Lower bound} The lower bound is more difficult to obtain, and we will consider the cases $\gamma\in (1,2)$ and $\gamma=2$ separately.

\subsubsection{The case  ${\gamma}\in (1,2)$}
We start with a result from \cite{dn}.
 \begin{lemma}{\rm \cite[ (2.11)]{dn}}
	If $\td J$ satisfies {\rm \textbf{(J)}}, then for any $\epsilon>0$, there is $L_\epsilon>0$ such that for all $l>L_\epsilon$ and
	$\psi_l(x):=l-|x|$,
	\begin{align}\label{7.9a}
	\int_{-l}^{l} \td J(x-y)\psi_l(y)\rd y\geq (1-\epsilon) \psi_l(x)  \mbox{ in } [-l, l].
	\end{align}
\end{lemma}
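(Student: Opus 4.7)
My plan is to exploit the evenness of $\tilde J$ together with a truncation argument. First I would choose, for given $\epsilon\in(0,1)$, a radius $R=R(\epsilon)>0$ large enough that $\int_{|z|>R}\tilde J(z)\,\rd z<\epsilon/2$, equivalently $2\int_0^R\tilde J(z)\,\rd z\geq 1-\epsilon/2$. With $L_\epsilon:=4R/\epsilon$ and $l>L_\epsilon$ fixed, since both sides of \eqref{7.9a} are even in $x$ I would assume $x\in[0,l]$ and write $y:=l-x=\psi_l(x)$; it then suffices to show
\[
I(x):=\int_{-l}^l\tilde J(x-y')(l-|y'|)\,\rd y'\geq (1-\epsilon)y,
\]
and I would split the verification into three regimes according to where $x$ sits in $[0,l]$.

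In the interior regime $R\leq x\leq l-R$ the window $[x-R,x+R]$ sits inside $[0,l]$, so $|y'|=y'$ there; restricting the integral to this window and changing variables to $z=x-y'$ turns the integrand into $\tilde J(z)(y+z)$, whose linear-in-$z$ part vanishes by evenness:
\[
I(x)\geq \int_{-R}^R\tilde J(z)(y+z)\,\rd z=y\int_{-R}^R\tilde J(z)\,\rd z\geq (1-\epsilon/2)y.
\]
In the outer boundary regime $l-R<x\leq l$ (so $y<R$), the window $x+R$ exceeds $l$, so I would integrate only over $[x-R,l]$; splitting at $y'=x$ and parametrising each half by $t:=|y'-x|$ yields
\[
I(x)\geq \int_0^R\tilde J(t)(y+t)\,\rd t+\int_0^y\tilde J(t)(y-t)\,\rd t.
\]
On $[0,y]$ the two integrands pair up via $(y+t)+(y-t)=2y$, while on $[y,R]$ the pointwise bound $y+t\geq 2y$ holds, so the sum is at least $2y\int_0^R\tilde J\geq (1-\epsilon/2)y$. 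In the near-origin regime $0\leq x<R$, the value $y\geq l-R$ is almost $l$, and the crude bound $|y'|\leq R+x$ on $[x-R,x+R]\subset[-l,l]$ gives $l-|y'|\geq y-R$; combined with $\int_{-R}^R\tilde J\geq 1-\epsilon/2$ this yields
\[
I(x)\geq (y-R)(1-\epsilon/2)=(1-R/y)(1-\epsilon/2)y\geq (1-\epsilon/2)^2 y\geq (1-\epsilon)y,
\]
where $y\geq l-R\geq 2R/\epsilon$ follows from $l>L_\epsilon$.

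The hard part will be the outer boundary regime, where $\psi_l(x)=y$ is small so no additive $O(R)$ loss can be absorbed and the proof has to extract a clean multiplicative factor close to $1$. What will make this work is the pairing identity $(y+t)+(y-t)=2y$ on $[0,y]$ combined with $y+t\geq 2y$ on $[y,R]$, which together recover exactly the same factor $2\int_0^R\tilde J\geq 1-\epsilon/2$ that the evenness of $\tilde J$ provides in the interior regime.
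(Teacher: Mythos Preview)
Your proof is correct. The paper does not actually supply its own proof of this lemma: it is quoted verbatim from \cite[(2.11)]{dn} and simply invoked where needed. So there is no in-paper argument to compare against for this particular statement.

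That said, the paper does prove the closely related Lemma~\ref{lemma7.4} (for the truncated tent function $\psi(x)=\min\{1,(l_2-|x|)/l_1\}$), and its method is essentially the same as yours: fix $B>0$ with $\int_{-B}^{B}\tilde J>1-\epsilon/2$, then split into cases according to the position of $x$ relative to the corners of $\psi$ and the boundary. Your three regimes (interior, outer boundary, near origin) mirror that structure. The one place where your argument is a bit sharper is the outer-boundary regime: instead of absorbing an $O(R)$ additive error and compensating via a largeness condition on $l_1$ (which is what the paper does in its case~(i) for Lemma~\ref{lemma7.4}), you use the pairing $(y+t)+(y-t)=2y$ on $[0,y]$ together with $y+t\geq 2y$ on $[y,R]$ to recover the full factor $2\int_0^R\tilde J$ with no loss. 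This is a clean device and is exactly what is needed here, since near $x=l$ the value $\psi_l(x)$ is small and cannot absorb additive errors.
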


\begin{lemma}\label{lemma7.3}
Assume that the conditions in Theorem \ref{prop6.1} are satisfied and ${\gamma}\in (1,2)$. Then there exits $C=C({\gamma})>0$ such that 
	\begin{align}\label{7.9}
	h(t)\geq Ct^{1/({\gamma}-1)} \mbox{ for } t\gg 1.
	\end{align}
\end{lemma}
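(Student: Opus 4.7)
The plan is to construct a lower solution $(\underline U,-\underline h,\underline h)$ for the free boundary problem \eqref{1.1} whose right free boundary $\underline h(t)$ grows like $t^{1/(\gamma-1)}$, and then invoke the lower-solution counterpart of Lemma \ref{lemma3.2} (i.e.\ Lemma \ref{lemma3.2}~(ii) with the roles swapped relative to Lemma \ref{lemma3.3}). Setting $\beta:=\gamma-1\in(0,1)$, I would take
\begin{align*}
\underline h(t):=(Kt+\theta)^{1/\beta},\qquad \underline U(t,x):=\sigma\,\Theta\,\frac{\psi_{\underline h(t)}(x)}{\underline h(t)}\ \ \text{for}\ |x|\leq \underline h(t),
\end{align*}
where $\Theta=(\theta_i)\ggs\mathbf{0}$ and $\lambda_1>0$ are the principal eigenvector and eigenvalue of $\nabla F(\mathbf{0})$ from Lemma \ref{lemma2.1a}, so that $\sum_j\partial_jf_i(\mathbf{0})\theta_j=\lambda_1\theta_i$ and $F(\sigma\Theta)\ggs\tfrac{\lambda_1\sigma}{2}\Theta$ for small $\sigma>0$. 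The positive parameters $K$ (small), $\sigma$ (small), $\theta$ (large) will be fixed during the verification.

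The free-boundary inequality is the easy part: by the lower bound in \eqref{8.1} and a change of variable,
\begin{align*}
\sum_{i=1}^{m_0}\mu_i\int_{-\underline h(t)}^{\underline h(t)}\int_{\underline h(t)}^{\infty}J_i(x-y)\underline u_i(t,x)\,dy\,dx\ \gtrsim\ \sigma\,\underline h(t)^{1-\gamma}\sum_{i=1}^{m_0}\mu_i\theta_i,
\end{align*}
whereas $\underline h'(t)=\frac{K}{\gamma-1}\underline h(t)^{2-\gamma}$; dividing by $\underline h(t)$ one sees both sides scale as $\underline h^{1-\gamma}$, so the required inequality holds whenever $K$ is small enough. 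For the pointwise PDE inequality, I would apply \eqref{7.9a} with $l=\underline h(t)\geq L_\epsilon$ to obtain $d_i\mathcal L_i[\underline u_i]\geq-\epsilon d_i\underline u_i$, and combine with a Taylor expansion at $\mathbf{0}$ to obtain $f_i(\underline U)\geq(\lambda_1-C\sigma)\underline u_i$, whence
\begin{align*}
d_i\mathcal L_i[\underline u_i]+f_i(\underline U)\ \geq\ c_0\,\underline u_i,\qquad c_0:=\lambda_1/2>0
\end{align*}
after fixing $\sigma,\epsilon$ small. Since $\partial_t\underline u_i=\sigma\theta_i|x|\underline h'(t)/\underline h(t)^2$, the inequality $\partial_t\underline u_i\leq c_0\underline u_i$ reduces to $|x|\underline h'(t)/\underline h(t)\leq c_0(\underline h(t)-|x|)$, which, since $\underline h'/\underline h\sim\underline h^{-\beta}\to 0$, holds throughout the interior \emph{except} in a thin layer of width $O(\underline h(t)^{2-\gamma})$ adjacent to $|x|=\underline h(t)$.

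Removing that boundary layer is the main technical obstacle. The clean remedy is to use a buffer: replace the simple hat by the lower solution
\begin{align*}
\underline U(t,x):=\sigma\Theta\,\frac{\psi_{\underline h(t)-r}(x)}{\underline h(t)-r}\ \text{ for }\ |x|\leq \underline h(t)-r,\qquad \underline U(t,x):=\mathbf{0}\ \text{ for }\ \underline h(t)-r<|x|\leq \underline h(t),
\end{align*}
with $r>0$ a fixed constant (depending only on $J_i$, $\lambda_1$, $\gamma$, chosen large enough) so that (i) \eqref{7.9a} applied with $l=\underline h(t)-r$ still gives $d_i\mathcal L_i[\underline u_i]\geq-\epsilon d_i\underline u_i$ on $|x|\leq\underline h(t)-r$; (ii) the boundary-layer inequality $|x|\underline h'(t)/\underline h(t)\leq c_0(\underline h(t)-r-|x|)$ now holds up to $|x|=\underline h(t)-r$, because the "gap'' provided by $r$ compensates the vanishing of $\underline h'/\underline h$; (iii) on the annular zero region $\underline h(t)-r<|x|<\underline h(t)$ the PDE inequality reduces to $0\leq d_i\int_{-\underline h(t)}^{\underline h(t)}J_i(x-y)\underline u_i(t,y)\,dy\geq 0$, which is trivial. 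The sharp estimate $\underline h^{2-\gamma}$ for the free-boundary double integral is unaffected since $r$ is a fixed constant while $\underline h(t)\to\infty$.

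With these inequalities in place, I would finish as follows. Since spreading happens for $(U,g,h)$, for $\theta$ large there is $t_0=t_0(\theta)>0$ with $[-\underline h(0),\underline h(0)]\subset(g(t_0),h(t_0))$ and $U(t_0,\cdot)\succeq\sigma\Theta\succeq\underline U(0,\cdot)$ on $[-\underline h(0),\underline h(0)]$ (using the attractor conclusion from $\mathbf{(f_4)}$ to push the solution close to $\mathbf{u^*}$ on any fixed interval). The lower-solution version of Lemma \ref{lemma3.2} then gives $h(t+t_0)\geq\underline h(t)$ for all $t\geq 0$, and hence $h(t)\geq(K(t-t_0)+\theta)^{1/(\gamma-1)}\geq Ct^{1/(\gamma-1)}$ for $t\gg 1$, which is \eqref{7.9}. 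The hardest step is exactly the buffer-zone verification of the interior PDE inequality: it is delicate because both the nonlocal $\mathcal L_i$ and the shape of $\underline U$ degenerate at the edge of support while the kernel is heavy-tailed ($J_i(x)\sim|x|^{-\gamma}$), and one must show these effects are small \emph{simultaneously} to the precision needed to get the same growth exponent $1/(\gamma-1)$ that the upper bound predicts.
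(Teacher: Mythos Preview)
Your overall strategy matches the paper's: hat-shaped subsolution $\underline U(t,x)=\sigma\Theta\,(\underline h(t)-|x|)/\underline h(t)$ with $\underline h(t)=(Kt+\theta)^{1/(\gamma-1)}$, and comparison. The free-boundary inequality is also the same (your displayed exponent is a slip: the double integral scales like $\underline h^{2-\gamma}$, exactly matching $\underline h'(t)=\tfrac{K}{\gamma-1}\underline h^{2-\gamma}$).

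The genuine gap is your boundary-layer fix. Claim (ii) is false: after you insert the buffer and support $\underline U$ on $[-\underline h+r,\underline h-r]$, the hat still vanishes at $|x|=\underline h(t)-r$, while
\[
\partial_t\underline u_i\big|_{|x|=(\underline h-r)^-}=\sigma\theta_i\,\frac{(\underline h-r)\,\underline h'}{(\underline h-r)^2}=\sigma\theta_i\,\frac{\underline h'}{\underline h-r}>0.
\]
So the inequality $|x|\underline h'/\underline h\le c_0(\underline h-r-|x|)$ fails at the new edge for the same reason it failed at $|x|=\underline h$ before; a fixed $r$ cannot compensate anything. The estimate $d_i\mathcal L_i[\underline u_i]+f_i(\underline U)\ge c_0\underline u_i$ you extract from \eqref{7.9a} is simply too weak, because its right-hand side vanishes at the edge of the support.

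The paper's remedy --- which is the missing idea --- is to use the heavy tail \emph{in the PDE verification}, not only in the free-boundary inequality. From $J_i(x)\ge \tilde C_1/(|x|^\gamma+1)$ one proves (Claim~1 in the paper) the \emph{uniform} lower bound
\[
\int_{-\underline h(t)}^{\underline h(t)}J_i(x-y)\,\underline u_i(t,y)\,dy\ \ge\ \hat C_1\sigma\theta_i\,\underline h(t)^{1-\gamma}\quad\text{for all }x\in[-\underline h(t),\underline h(t)],
\]
including at $|x|=\underline h(t)$: even there the fat tail sees the bulk of $\underline u_i$ (of size $\sim\sigma\theta_i$ over a region of width $\sim\underline h$ at distance $\sim\underline h$), giving $\sim\sigma\theta_i\cdot\underline h\cdot\underline h^{-\gamma}$. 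Combined with \eqref{7.9a} and the eigenvector property this yields
\[
d_i\!\int J_i\underline u_i-d_i\underline u_i+f_i(\underline U)\ \ge\ F_*\!\int J_i\underline u_i\ \ge\ F_*\hat C_1\sigma\theta_i\,\underline h^{1-\gamma},
\]
while $\partial_t\underline u_i\le \sigma\theta_i\,\underline h'/\underline h=\tfrac{K}{\gamma-1}\sigma\theta_i\,\underline h^{1-\gamma}$. Both sides are of order $\underline h^{1-\gamma}$, so taking $K$ small closes the inequality on the whole interval; no buffer is needed.
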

\begin{proof}
	Define 
	\begin{align*}
	&\underline h(t):=(K_1t+\theta)^{1/({\gamma}-1)},\ \ t\geq 0,\\
	&	\underline U(t,x):=K_2\frac{\underline h(t)-|x|}{\underline h(t)}{\Theta}, \ \ \ t\geq 0,\ x\in [-\underline h(t), \underline h(t)],
	\end{align*}
	with positive  constants $\theta$ and $K_1, K_2$ to be determined, where the vector $\Theta=(\theta_i)$ is given by Lemma \ref{lemma2.1a}.

	{\bf Step 1.}	 We show that, for large $K_1$,
	\begin{align}\label{7.10}
	&\underline h'(t)\leq  \mu_i \int_{-\underline h(t)}^{\underline h(t)} \int_{\underline h(t)}^{+\yy} J(x-y) \underline u_i(t,x)\rd y\rd x \ \mbox{ for } \  t> 0.
	\end{align}
		
	By simple calculations and \eqref{8.1}, we obtain
	\begin{align*}
	&\sum_{i=1}^{m_0}\mu_i \int_{-\underline h(t)}^{\underline h(t)} \int_{\underline h(t)}^{+\yy} J_i(x-y) \underline u_i(t,x)\rd y\rd x\\
	\geq& \sum_{i=1}^{m_0}\mu_i K_2\theta_i \int_{0}^{\underline h(t)} \int_{\underline h(t)}^{+\yy} J_i(x-y) \frac{\underline h(t)-x}{\underline h(t)}\rd y\rd x\\
	=&\sum_{i=1}^{m_0} \frac{\mu_i  K_2\theta_i}{\underline h(t)} \int_{-\underline h(t)}^{0} \int_{0}^{+\yy} J_i(x-y) (-x)\rd y\rd x\\
	=&\sum_{i=1}^{m_0} \frac{\mu_i  K_2\theta_i}{\underline h(t)} \int_0^{\underline h(t)} \int_{x}^{+\yy} J_i(y) x\rd y\rd x\\
	=&\sum_{i=1}^{m_0} \frac{\mu_i  K_2\theta_i}{\underline h(t)} \lf(\int_{0}^{\underline h(t)}\int_{0}^y+\int_{\underline h(t)}^{\yy}\int_{0}^{\underline h}\rr) J_i(y) x\rd x\rd y\\
	\geq &\sum_{i=1}^{m_0}\mu_i\theta_i\frac{  K_2}{2\underline h(t)}  \int_{0}^{\underline h(t)}J_i(y)y^2\rd y\geq \sum_{i=1}^{m_0}\mu_i\theta_i\frac{  K_2C_1}{2\underline h(t)}  \int_{0}^{\underline h(t)}\frac{y^2}{y^{\gamma}+1}\rd y\\
	\geq& \sum_{i=1}^{m_0}\mu_i\theta_i \frac{  K_2C_1}{4\underline h(t)}  \int_{1}^{\underline h(t)}y^{2-\gamma}\rd y\geq 
	  \sum_{i=1}^{m_0}\mu_i\theta_i \frac{  K_2C_1}{4\underline h(t)}\frac{\underline h(t)^{3-\gamma} }{3-\gamma}\\
	=&\hat C_0(K_1t+\theta)^{(2-{\gamma})/({\gamma}-1)}\geq  \frac{K_1}{{\gamma}-1}(K_1t+\theta)^{(2-{\gamma})/({\gamma}-1)}=\underline h'(t)
	\end{align*}
	provided that $K_1\geq \hat C_0(\gamma-1)$. This  finishes the proof of Step 1.

	{\bf Step 2.} We show that	, by choosing $K_1, K_2$ and $\theta$ properly, for $t>0$, $x\in (-\underline h(t),\underline h(t))$, 
	\begin{align}\label{U_t}
	\underline U_t(t,x)\succeq  &D \int_{-\underline h(t)}^{\underline h(t)}  \mathbf{J}(x-y) \circ\underline U(t,y)\rd y -D\circ\underline U(t,x)+F(\underline U(t,x)).
	\end{align}
	
	From the definition of $\underline U$, for $t>0$, $x\in (-\underline h(t),\underline h(t))$, 
	\begin{align*}
	\underline U_t(t,x)=&K_2\Theta\frac{|x|\underline h'(t)}{\underline h^2(t)}\preceq K_2\Theta \frac{\underline h'(t)}{\underline h(t)}=   \frac{K_1K_2\Theta}{{\gamma}-1}\underline h(t)^{1-\gamma}.
	\end{align*}
	
	{\bf Claim 1}. For $x\in [-\underline h(t),\underline h(t)]$, there exists a positive constant $\hat C_1$ depending only on $\gamma$  such that
	\begin{align}\label{7.12}
	\int_{-\underline h(t)}^{\underline h(t)}  \mathbf{J}(x-y)\circ \underline U(t,y)\rd y\succeq \hat C_1 K_2\Theta  \underline h(t)^{1-\gamma}.
	\end{align}
	By \eqref{J-6.1}, there exists $\td C_1>0$ such that
	\begin{equation}\label{J-8.1}
	J_i(x)\geq \frac{\td C_1}{|x|^\gamma+1} \mbox{ for } x\in\R,\; i=1,..., m_0.
	\end{equation}
Hence
	\begin{align*}
	&\int_{-\underline h}^{\underline h}  \mathbf{J}(x-y)\circ \underline U(t,y)\rd y=\int_{-\underline h-x}^{\underline h-x}  \mathbf{J}(y)\circ \underline U(t,y+x)\rd y\\
	\succeq  &K_2\Theta\int_{-\underline h-x}^{\underline h-x}  \frac{\td C_1}{|y|^{\gamma}+1} \frac{\underline h-|y+x|}{\underline h}\rd y.
	\end{align*}
	Thus, for $x\in [\underline h/4,\underline h]$, 
	\begin{align*}
	&\int_{-\underline h}^{\underline h}  \mathbf{J}(x-y)\circ \underline U(t,y)\rd y\succeq  K_2\Theta\int_{-\underline h/4}^{0}
	  \frac{\tilde C_1}{|y|^{\gamma}+1} \frac{\underline h-|y+x|}{\underline h}\rd y\\
	=&K_2\Theta\int_{-\underline h/4}^{0}  \frac{\tilde C_1}{|y|^{\gamma}+1} \frac{h-(y+x)}{h}\rd y\succeq  K_2\Theta\int_{-\underline h/4}^{0}  \frac{\tilde C_1}{|y|^{\gamma}+1} \frac{-y}{h}\rd y\\
	=&\frac{K_2\Theta}{\underline h}\int_{0}^{\underline h/4}  \frac{\tilde C_1y}{y^{\gamma}+1} \rd y\succeq  \frac{\td C_1K_2\Theta}{2\underline h}\int_{1}^{\underline h/4}  y^{1-\gamma} \rd y\\
	\succeq &\frac{\td C_1K_2\Theta}{2(2-{\gamma})\underline h}(\underline h/4)^{2-{\gamma}}=\hat C_1 K_2\Theta  \underline h^{1-\gamma}.
	\end{align*}
	And for $x\in [0,\underline h/4]$, 
	\begin{align*}
	&\int_{-\underline h}^{\underline h}  \mathbf{J}(x-y)\circ \underline U(t,y)\rd y\succeq  K_2\Theta\int_{0}^{\underline h/4} \frac{\td C_1}{|y|^{\gamma}+1} \frac{\underline h-|y+x|}{\underline h}\rd y\\
	\succeq & K_2\Theta\int_{0}^{\underline h/4} \frac{\td C_1}{y^{\gamma}+1} \frac{y}{\underline h}\rd y\succeq \hat C_1 K_2\Theta  \underline h^{1-\gamma}
	\end{align*}
	by repeating the last a few steps in the previous calculations.
	
	This proves \eqref{7.12} for $x\in [0,\underline h]$.  \eqref{7.12}  also holds for $x\in [-\underline h,0]$ since both $J(x)$ and $\underline U(t,x)$ are even in $x$.

	{\bf Claim 2}. We can choose small $K_2$ and large $\theta$ such that, for $x\in [-\underline h(t),\underline h(t)]$ and $t\geq 0$,
	\begin{align*}
	D \circ\int_{-\underline h}^{\underline h}  \mathbf{J}(x-y)\circ \underline U(t,y)\rd y -D\circ \underline U(t,x)+F(\underline U(t,x))\succeq  F_* \int_{-\underline h}^{\underline h}  \mathbf{J}(x-y)\circ \underline U(t,y)\rd y
	\end{align*}
	for some positive  constant $F_*$. It is clear that  $\underline U\leq K_2\Theta$, and thus for small $K_2>0$ from the definition of $\Theta$ in Lemma \ref{lemma2.1a}, 
	\begin{align*}
	F(\underline U(t,x))=K_2 \frac{\underline h(t)-|x|}{\underline h(t)}\Theta \Big([\nabla F({\bf 0})]^T+o(1){\bf I}_m\Big)
	\succeq K_2\frac{\underline h(t)-|x|}{\underline h(t)}\frac{3}{4}\lambda_1\Theta=\frac{3}{4}\lambda_1 \underline U(t,x),
	\end{align*}
	where $\lambda_1>0$ is given in Lemma \ref{lemma2.1a}. 
	Moreover, by \eqref{7.9a}, there is $L_1>0$ such that  for $ \theta^{1/({\gamma}-1)}\geq L_1$,
	\begin{align*}
	D \circ\int_{-\underline h(t)}^{\underline h(t)}  \mathbf{J}(x-y)\circ \underline U(t,y)\rd y +\frac{\lambda_1}{4} \underline  U(t,x)\succeq  D\circ\underline U(t,x) \ \mbox{ for }\ x\in [-\underline h(t),\underline h(t)].
	\end{align*}
	Therefore Claim 2 is valid with $F_*=\lambda_1/2$.
	
	Combining Claim 1 and Claim 2, we obtain
	\begin{align*}
	&D\circ \int_{-\underline h}^{\underline h}  \mathbf{J}(x-y)\circ \underline U(t,y)\rd y -D\circ\underline U(t,x)+F(\underline U(t,x))\\
	&\succeq  F_*\hat C_1 K_2\Theta  \underline h(t)^{1-\gamma}
	\succeq  \frac{K_1K_2\Theta}{{\gamma}-1}\underline h(t)^{1-\gamma}
	\succeq  \underline U_t(t,x)
	\end{align*}
	provided that
	\begin{align*}
	K_1\leq F_*\hat C_1({\gamma}-1).
	\end{align*}
	This proves \eqref{U_t}.
	
	{\bf Step 3.} We prove \eqref{7.9} by the comparison principle.
	
	It is clear that 
	\begin{align*}
	\underline U(t,\pm \underline h(t))=0 \ \mbox{ for } \ t\geq 0. 
	\end{align*}
	Since spreading happens for $(U,g,h)$, for fixed $\theta$ and small $K_1, K_2$ as chosen above,  there exists a large  $t_0>0$ such that 
	\begin{align*}
	&[-\underline h(0),\underline h(0)]\subset [g(t_0)/2,h(t_0)/2],\\
	&U(t_0,x)\succeq   K_2\Theta \succeq  \underline U(0,x) \ \mbox{ for }\ \ x\in [-\underline h(0),\underline h(0)].
	\end{align*}
	Moreover, since $J(x)$ and $\underline U(t,x)$ are both even in $x$, \eqref{7.10} implies
	\begin{align*}
	&-\underline h'(t)\geq  \mu_i \int_{-\underline h(t)}^{\underline h(t)} \int^{-\underline h(t)}_{-\yy} J(x-y) \underline u_i(t,x)\rd y\rd x \ \mbox{ for } \  t> 0.
	\end{align*}
	These combined with the estimates in  Step 1 and Step 2 allow us to apply Lemma \ref{lemma3.2} 	to conclude that 
	\begin{align*}
	&[-\underline h(t),\underline h(t)]\subset [g(t+t_0),h(t+t_0)], \ &&t\geq 0,\\
	&\underline U(t,x)\preceq U(t+t_0,x),&&t\geq   0,\ x\in [-\underline h(t),\underline h(t)].
	\end{align*}
	Hence \eqref{7.9} holds. 
\end{proof}

\subsubsection{The case  ${\gamma}=2$}
 The following simple result will play an important role in our analysis later.
\begin{lemma}\label{lemma7.4}
	Let $l_1$ and $l_2$ with $0<l_1<l_2$ be two constants, and define
	\begin{align*}
	\psi(x)=\psi(x;l_1,l_2):=\min\lf\{1,\frac{l_2-|x|}{l_1}\rr\},\ \ \ \ x\in \R.
	\end{align*}
	If $\td J$ satisfies {\bf (J)}, then for any $\epsilon>0$, there is $L_\epsilon>0$ such that for all $l_1>L_\epsilon$ and $l_2-l_1>L_\epsilon$,
	\begin{align}\label{7.11a}
	\int_{-l_2}^{l_2} \td J(x-y)\psi(y)\rd y\geq (1-\epsilon) \psi(x) \mbox{ in } [-l_2, l_2].
	\end{align}
\end{lemma}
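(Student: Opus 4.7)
The plan is to prove this inequality by directly estimating the integral after exploiting the piecewise-linear structure of $\psi$. The only ingredients needed are: (a) $\int_{\R}\tilde J(z)\,dz=1$, so that for every $\delta>0$ there is $M_\delta>0$ with $\int_{|z|\le M_\delta}\tilde J(z)\,dz\ge 1-\delta$; (b) the symmetry of $\tilde J$; and (c) the Ces\`aro-type averaging property $\lim_{l\to\infty}\frac{1}{l}\int_0^{l}z\,\tilde J(z)\,dz=0$, which holds for any integrable $\tilde J$ (no moment assumption required), since the nonincreasing tail $T(r):=\int_r^\infty \tilde J(s)\,ds$ vanishes at infinity. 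By evenness of $\tilde J$ and $\psi$ it suffices to treat $x\in[0,l_2]$, and after the substitution $z=x-y$ the inequality becomes
\[
I(x):=\int_{x-l_2}^{x+l_2}\tilde J(z)\,\psi(x-z)\,dz\ge (1-\epsilon)\,\psi(x).
\]

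First I would dispose of the plateau case $0\le x\le l_2-l_1$, in which $\psi(x)=1$. The Lipschitz property $|\psi(x-z)-\psi(x)|\le|z|/l_1$ applied on $|z|\le M_\epsilon$ (a range that lies in $[x-l_2,x+l_2]$ because $l_2-|x|\ge l_1\ge M_\epsilon$) yields
\[
I(x)\ge \Bigl(1-\tfrac{M_\epsilon}{l_1}\Bigr)\int_{|z|\le M_\epsilon}\tilde J(z)\,dz\ge 1-\epsilon,
\]
as soon as $l_1\ge 2M_\epsilon/\epsilon$.

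In the slope region $l_2-l_1<x<l_2$, set $a:=l_2-x\in(0,l_1)$, so that $\psi(x)=a/l_1$. Using the exact piecewise form of $\psi$, one writes
\[
I(x)=\int_{-a}^{l_1-a}\!\tilde J(z)\,\tfrac{a+z}{l_1}\,dz+\int_{l_1-a}^{2l_2-l_1-a}\!\tilde J(z)\,dz+\int_{2l_2-l_1-a}^{2l_2-a}\!\tilde J(z)\,\tfrac{2l_2-a-z}{l_1}\,dz,
\]
where the last two terms are nonnegative. Dividing by $\psi(x)$, the symmetric part $z\in[-a,a]$ of the first integral contributes exactly $\int_{-a}^{a}\tilde J\,dz$ (the $z$-term cancels by symmetry), while the asymmetric part $z\in[a,l_1-a]$ contributes $\int_a^{l_1-a}\tilde J\,dz+\tfrac{1}{a}\int_a^{l_1-a}z\,\tilde J\,dz$. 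When $a$ is comparable to $l_1$ (so $x$ is near the plateau edge), we have $\int_{-a}^{l_1-a}\tilde J\to 1$ by concentration and $\tfrac{1}{l_1}\int_0^{l_1}z\,\tilde J\to 0$ by Ces\`aro, and the $1-\epsilon$ bound follows; when $a\ll l_1$ (so $x$ is near the boundary), the extra term $\tfrac{1}{a}\int_a^{l_1-a}z\,\tilde J\,dz$ together with the plateau contribution $\tfrac{l_1}{a}\int_{l_1-a}^{2l_2-l_1-a}\tilde J\,dz$ pushes the ratio $I(x)/\psi(x)$ above $1-\epsilon$.

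The hard part will be the uniform control of the boundary regime $a\ll l_1$: there a crude Lipschitz estimate alone yields only $I(x)\ge\psi(x)/2$, and one must combine the symmetric, asymmetric, and plateau pieces with care to recover the sharp linear dependence on $\psi(x)=a/l_1$. A subtlety is that $\tilde J$ need not have finite first moment, so the Ces\`aro-type averaging above (rather than an $L^1$-moment bound) is essential. The threshold $L_\epsilon$ is finally chosen so that the concentration, Ces\`aro, and plateau-tail errors are each below $\epsilon/4$, whence the desired inequality holds uniformly for $x\in[-l_2,l_2]$ provided $l_1>L_\epsilon$ and $l_2-l_1>L_\epsilon$.
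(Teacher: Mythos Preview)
Your plateau case is fine, but the slope-region argument is both incomplete and needlessly complicated. You split the first integral into a symmetric part on $[-a,a]$ and an asymmetric tail on $[a,l_1-a]$, then invoke a Ces\`aro property $\frac{1}{l}\int_0^{l}z\tilde J\to 0$, and finally concede that the boundary regime $a\ll l_1$ is ``the hard part'' that you have not actually carried out. That is a genuine gap: the ratio $I(x)/\psi(x)$ is only sketched as ``pushed above $1-\epsilon$'' by several competing pieces, with no quantitative control.

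In fact this part is \emph{not} hard, and no Ces\`aro averaging is needed. The paper's proof is much shorter and exploits a single observation you overlooked: on any interval where $\psi$ is exactly linear, the odd part integrates to zero against the even kernel $\tilde J$. Concretely, fix $B$ with $\int_{-B}^{B}\tilde J\ge 1-\epsilon/2$ and assume $l_1,l_2-l_1>B$. For $x$ in the slope and at distance $\ge B$ from both the kink and the boundary, $\psi(x+y)=\psi(x)-y/l_1$ on $|y|\le B$, so
\[
\int_{-B}^{B}\tilde J(y)\psi(x+y)\,dy=\psi(x)\int_{-B}^{B}\tilde J(y)\,dy\ge(1-\epsilon)\psi(x)
\]
by symmetry---done. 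Near the boundary ($|x-l_2|<B$), one simply extends the integration to $[-B,B]$ (where $\psi(x+y)$ becomes $\le 0$ for $y>l_2-x$) and subtracts the nonpositive excess; the same identity applies. Near the kink, $\psi(x+y)\ge \psi(l_2-l_1+B+y)$ reduces to the linear case plus an $O(B^2\|\tilde J\|_\infty/l_1)$ error, absorbed by taking $l_1$ large. So the ``hard'' boundary regime dissolves once you restrict to a fixed window $[-B,B]$ rather than tracking the full piecewise decomposition, and your Ces\`aro ingredient is never needed.
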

\begin{proof}
	Since $\int_{\mathbb{R}}\td J(x)\rd x=1$, there exits $B>0$ such that
	\begin{align}\label{7.12a}
	\int_{-B}^{B}\td J(x)\rd x>1-\epsilon/2.
	\end{align}
	In the following discussion we always assume that $l_1\gg B$ and $l_2-l_1\gg B$. Clearly, for $x\in [-(l_2-l_1)+B,(l_2-l_1)-B]$, due to
	\begin{align*}
	\psi(x)=1\ \mbox{ in } [-(l_2-l_1), l_2-l_1],
	\end{align*}
	we have 
	\begin{align*}
	&\int_{-l_2}^{l_2} \td J(x-y)\psi(y)\rd y\geq \int_{-(l_2-l_1)}^{l_2-l_1} \td J(x-y)\psi(y)\rd y=\int_{-(l_2-l_1)}^{l_2-l_1} \td J(x-y)\rd y\\
	=&\int_{-(l_2-l_1)-x}^{l_2-l_1-x} \td J(y)\rd y\geq \int_{-B}^{B} \td J(y)\rd y\geq 1-\epsilon/2> (1-\epsilon) \psi(x). 
	\end{align*}
	
	It remain to prove \eqref{7.11a} for $x\in [-l_2,-(l_2-l_1)+B]\cup [(l_2-l_1)-B,l_2]$. By the symmetric property of $\psi(x)$ and $\td J(x)$ with respect to $x$, we just need to verify \eqref{7.11a} for $x\in [(l_2-l_1)-B,l_2]$, which will be carried out according to the following three cases:
	\begin{align*}
	{\rm (i)}\ x\in [l_2-l_1-B, l_2-l_1+B],\ \ {\rm (ii)}\ x\in [ l_2-l_1+B,l_2-B],\ \ {\rm (iii)}\ x\in [l_2-B,l_2].
	\end{align*}
	
	(i) For $x\in [l_2-l_1-B, l_2-l_1+B]$, since $\psi(z)$ is nonincreasing for $z\geq 0$, 
	we have
	\begin{align*}
	&\int_{-l_2}^{l_2} \td J(x-y)\psi(y)\rd y= \int_{-l_2-x}^{l_2-x} \td J(y)\psi(y+x)\rd y\\
	\geq& \int_{-2l_2+l_1+B}^{B} \td J(y)\psi(y+x)\rd y\geq \int_{-B}^{B}  \td J(y)\psi(y+x)\rd y\\
	\geq& \int_{-B}^{B}  \td J(y)\psi(y+l_2-l_1+B)\rd y.
	\end{align*} 
	By the definition of $\psi$, 
	\begin{align*}
	&\psi(y+l_2-l_1+B)=\frac{l_2-(y+l_2-l_1+B)}{l_1}=1-\frac{y+B}{l_1},\ \ \ y\in [-B,B].
	\end{align*}
	Hence,
	\begin{align*}
	&\int_{-B}^{B}  \td J(y)\psi(y+l_2-l_1+B)\rd y=\int_{-B}^{B}  \td J(y)\rd y-\int_{-B}^{B}  \td J(y)\frac{y+B}{l_1}\rd y\\
	\geq& 1-\epsilon/2-\|\td J\|_{L^\yy(\R)}\frac{2B^2}{l_1}\geq 1-\epsilon\geq (1-\epsilon)\psi(x)
	\end{align*}
	provided
	\begin{align*}
	l_1\geq \frac{4\|\td J\|_{L^\yy(\R)}B^2}{\epsilon},
	\end{align*}
	which then gives
	\begin{align*}
	\int_{-l_2}^{l_2} \td J(x-y)\psi(y)\rd y\geq (1-\epsilon)\psi(x)\ \mbox{ for } \ x\in [l_2-l_1-B, l_2-l_1+B]. 
	\end{align*}

	(ii) For $x\in [ l_2-l_1+B,l_2-B]$, 
	\begin{align*}
	&\int_{-l_2}^{l_2} \td J(x-y)\psi(y)\rd y= \int_{-l_2-x}^{l_2-x} \td J(y)\psi(y+x)\rd y\\
	\geq&\int_{-2l_2-B+l_1}^{B} \td J(y)\psi(y+x)\rd y\geq \int_{-B}^{B} \td J(y)\psi(y+x)\rd y.
	\end{align*}
	From the definition of $\psi$, for $x\in [ l_2-l_1+B,l_2-B]$ and $y\in [-B,B]$, 
	\begin{align*}
	\psi(y+x)=\frac{l_2-(y+x)}{l_1}=\frac{l_2-x}{l_1}-\frac{y}{l_1}=\psi(x)-\frac{y}{l_1}.
	\end{align*}
	Thus, by \eqref{7.12a}, 
	\begin{align*}
	&\int_{-l_2}^{l_2} \td J(x-y)\psi(y)\rd y\geq \int_{-B}^{B} \td J(y)\psi(y+x)\rd y\\
	=&\psi(x)\int_{-B}^{B} \td J(y)\rd y-\int_{-B}^{B} \td J(y)\frac{y}{l_1}\rd y=\psi(x)\int_{-B}^{B} \td J(y)\rd y\geq (1-\epsilon) \psi(x). 
	\end{align*}
	
	(iii) For $x\in [l_2-B,l_2]$, 
	\begin{align*}
	&\int_{-l_2}^{l_2} \td J(x-y)\psi(y)\rd y= \int_{-l_2-x}^{l_2-x} \td J(y)\psi(y+x)\rd y\\
	\geq&\int_{-2l_2-B+l_1}^{l_2-x} \td J(y)\psi(y+x)\rd y\geq \int_{-B}^{l_2-x} \td J(y)\psi(y+x)\rd y\\
	=&\int_{-B}^{B} \td J(y)\psi(y+x)\rd y-\int_{l_2-x}^{B} \td J(y)\psi(y+x)\rd y
	\end{align*}
	As in (ii), we see that 
	\begin{align*}
	\int_{-B}^{B} \td J(y)\psi(y+x)\rd y=\psi(x)\int_{-B}^{B} \td J(y)\rd y\geq (1-\epsilon) \psi(x).
	\end{align*}
	By the definition of $\psi$, 
	\begin{align*}
	\psi(y+x)\leq 0\ \mbox{ for } \ x\in [l_2-B,l_2], \ y\in [l_2-x,B],
	\end{align*}
	which indicates
	\begin{align*}
	\int_{-l_2}^{l_2} \td J(x-y)\psi(y)\rd y\geq \int_{-B}^{B} \td J(y)\psi(y+x)\rd y\geq (1-\epsilon) \psi(x).
	\end{align*}
	The proof is now complete.	
\end{proof}

\begin{lemma}\label{lemma7.5}
If the conditions in Theorem \ref{prop6.1} are satisfied and ${\gamma}=2$, then there exits $C>0$ such that 
	\begin{align}\label{7.13a}
	h(t)\geq Ct\ln t \mbox{ for } t\gg1.
	\end{align}
\end{lemma}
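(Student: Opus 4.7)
The plan mirrors the structure of Lemma \ref{lemma7.3} but uses the two-parameter profile $\psi(x;l_1,l_2)$ from Lemma \ref{lemma7.4} in place of the pure triangle, since for $\gamma=2$ we need a ``plateau'' region of height $1$ in order to harvest the logarithm from the tail of $J_i$. Set
\[
\underline h(t):=K_1(t+\theta)\ln(t+\theta),\qquad l_1(t):=(t+\theta)^{\alpha}\quad (\alpha\in(0,1)),
\]
and
\[
\underline U(t,x):=K_2\,\Theta\,\psi\bigl(x;\,l_1(t),\,\underline h(t)\bigr),\qquad |x|\le \underline h(t),
\]
with $\Theta\ggs\mathbf{0}$ and $\lambda_1>0$ from Lemma \ref{lemma2.1a}, and $K_1,K_2,\alpha,\theta$ to be chosen. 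The positive constants $K_1,K_2,\alpha$ will be fixed small, and then $\theta$ is taken large at the end.

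The first task is the boundary inequality. Changing variables $u=\underline h-x$, $w=y-\underline h+u$ as in Step 1 of Lemma \ref{lemma7.3} and using $J_i(w)\approx |w|^{-2}$, the main contribution to $\sum_{i=1}^{m_0}\mu_i\int_{-\underline h}^{\underline h}\int_{\underline h}^{\infty}J_i(x-y)\underline u_i(t,x)\,dy\,dx$ comes from the plateau range $u\in[l_1,\underline h]$, where $\psi(\underline h-u)=1$ and $\int_u^\infty J_i(w)dw\approx \tilde C_1/u$. This yields a lower bound of the form $K_2\widetilde M\ln(\underline h/l_1)\approx K_2\widetilde M(1-\alpha)\ln(t+\theta)$, while $\underline h'(t)=K_1[\ln(t+\theta)+1]$; so the free-boundary inequality $\underline h'(t)\le \sum_i\mu_i(\dots)$ holds provided $K_1\le K_2\widetilde M(1-\alpha)/2$, with $\theta$ large enough to absorb the $O(1)$ remainder terms (including the third-interval contribution $u\in[2\underline h-l_1,2\underline h]$ and the small-$w$ correction).

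The second and hardest task is the differential inequality
$\underline U_t\preceq D\circ\int_{-\underline h}^{\underline h}\mathbf{J}(x-y)\circ\underline U\,dy-D\circ\underline U+F(\underline U)$ on $(-\underline h(t),\underline h(t))$. On the interior plateau $|x|\le\underline h-l_1$, $\partial_t\psi=0$ and the right-hand side is $\succeq (\lambda_1/2)\underline U$ by the usual combination of $F(\underline U)\succeq\tfrac34\lambda_1\underline U$ (from Lemma \ref{lemma2.1a} with $K_2$ small) and Lemma \ref{lemma7.4} (guaranteeing $\int J_i\underline u_i\ge(1-\epsilon)\underline u_i$ once $l_1(0)$ and $\underline h(0)-l_1(0)$ exceed $L_\epsilon$); so the inequality holds trivially. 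On the boundary layer $|x|\in[\underline h-l_1,\underline h]$, $\psi=(\underline h-|x|)/l_1$ and $\partial_t\psi=\underline h'/l_1-\psi\,l_1'/l_1\le \underline h'/l_1$. Here the saving grace is that, for $x=\underline h$,
\[
\int_{-\underline h}^{\underline h}J_i(\underline h-y)\psi(y)\,dy=\int_0^{l_1}J_i(w)\tfrac{w}{l_1}dw+\int_{l_1}^{2\underline h-l_1}J_i(w)\,dw+O(1/\underline h)\;\ge\; c_1\,\frac{\ln l_1}{l_1},
\]
since $\int_1^{l_1}w^{-1}dw=\ln l_1$ and $\int_{l_1}^{\underline h}w^{-2}dw\approx 1/l_1$; moreover this lower bound is uniform for all $x$ in the boundary layer (the computation is monotone in $|x|$). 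Coupling this with $F(\underline U)\succeq\tfrac34\lambda_1 \underline U$ as in Claim 2 of Lemma \ref{lemma7.3}, the right-hand side is bounded below by $c\,K_2\Theta\,(\ln l_1)/l_1$, which beats $\underline U_t\preceq K_2\Theta\,\underline h'/l_1$ as long as $K_1\ln t\le c'\,\alpha\ln t$, i.e., $K_1\le c'\alpha$. For components $i>m_0$ (where $d_i=J_i=0$) one replaces $\psi$ by $\psi^2$ in $\underline u_i$, so that $\partial_t\underline u_i=2K_2\Theta_i\psi\,\partial_t\psi$ vanishes at the boundary; then the ODE inequality $\partial_t\underline u_i\le f_i(\underline U)\succeq c_i K_2\psi$ (using $\partial_j f_i(0)>0$ for $j\le m_0<i$ from $\mathbf{(f_1)}$(iv)) reduces to $l_1\ge C\underline h'$, automatically true for $l_1=(t+\theta)^\alpha$ and $\underline h'=O(\ln t)$ with $\theta$ large.

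Finally, using that spreading happens, pick $t_0$ so that $U(t_0,\cdot)\succeq(1-\delta)\mathbf{u}^*\succeq\underline U(0,\cdot)$ on $[-\underline h(0),\underline h(0)]$, and $(g(t_0),h(t_0))\supset[-\underline h(0),\underline h(0)]$. Lemma \ref{lemma3.2}(ii) (applied with $-\underline h(t)$ and $\underline h(t)$; the symmetry $\mathbf{J}(x)=\mathbf{J}(-x)$, $\underline U(t,x)=\underline U(t,-x)$ handles the left boundary) then yields $h(t+t_0)\ge \underline h(t)$ for all $t\ge 0$, which gives \eqref{7.13a}. The main obstacle in this plan is the boundary-layer estimate in the second paragraph: balancing $\partial_t\psi\sim \underline h'/l_1$ against the nonlocal diffusion contribution $\sim \ln(l_1)/l_1$ forces $\alpha>0$ and $l_1\to\infty$, which in turn shrinks the logarithmic gain $\ln(\underline h/l_1)$ in the boundary inequality; the delicate point is that both constraints can be met simultaneously precisely because $\underline h\sim t\ln t$ grows faster than any power $t^{1-\alpha}$ with $\alpha<1$, so $\ln(\underline h/l_1)$ retains a positive fraction of $\ln t$.
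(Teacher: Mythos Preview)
Your construction is essentially the paper's: the same $\underline h(t)=K_1(t+\theta)\ln(t+\theta)$, the same plateau-with-ramp profile $\underline U=K_2\Theta\,\psi(\cdot\,;\,l_1(t),\underline h(t))$ with $l_1(t)=(t+\theta)^\alpha$ (the paper writes $\beta$ for your $\alpha$), the same free-boundary estimate from the plateau contribution yielding a $(1-\alpha)\ln(t+\theta)$ gain, and the same split for the differential inequality (plateau via Lemma~\ref{lemma7.4}; boundary layer via the lower bound $\int J_i(x-y)\psi(y)\,dy\gtrsim \ln l_1/l_1$, which is exactly the paper's Claim~1).

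Where you diverge is the paragraph on components $i>m_0$. You correctly observe that for such $i$ the inequality reduces to the pure ODE $\partial_t\underline u_i\le f_i(\underline U)$, with no nonlocal term to rescue it near $x=\underline h(t)$; the paper's chain ``Claim~1 $+$ Claim~2'' tacitly uses $J_i\not\equiv 0$ and so covers only $i\le m_0$. Your $\psi^2$ fix is the right idea, but the sketch leaves one gap: once $\underline u_i=K_2\theta_i\psi^2$ for $i>m_0$, the vector $\underline U$ is no longer a scalar multiple of $\Theta$, so the blanket estimate $F(\underline U)\succeq\tfrac34\lambda_1\underline U$ used for the $j\le m_0$ components no longer follows from the eigenvector identity. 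Since $\partial_k f_j\ge 0$ for $k\ne j$, shrinking $\underline u_k$ for $k>m_0$ can only \emph{decrease} $f_j(\underline U)$ for $j\le m_0$, so you must go back and check that the $j\le m_0$ inequality still holds with the modified $\underline U$. It does---the diffusion lower bound $d_j\int J_j\underline u_j\gtrsim K_2\ln l_1/l_1$ from Claim~1 is unaffected and dominates $\partial_t\underline u_j$ for $K_1$ small, while $-d_j\underline u_j+f_j(\underline U)=O(K_2\psi)$ is harmless---but this step needs to be written out rather than left implicit.
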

\begin{proof}
	For fixed $\beta\in (0,1)$, define 
	\[
	\begin{cases}
	\underline h(t):=K_1(t+\theta)\ln (t+\theta), & t\geq 0,\\
	\dd	\underline U(t,x):=K_2\min\lf\{1, \frac{\underline h(t)-|x|}{(t+\theta)^{\beta}}\rr\}\Theta, & t\geq 0,\ x\in [-\underline h(t), \underline h(t)],
	\end{cases}
	\]
	for   constants $\theta\gg 1$ and  $1\gg K_1>0, 1\gg K_2>0$ to be determined, where $\Theta$ is given in Lemma \ref{lemma2.1a}. 
	Obviously, for any $t> 0$, the function $\partial_t \underline U(t,x)$ exists for $x\in [-\underline h(t), \underline h(t)]$  except 
	when $|x|=\underline h(t)-(t+\theta)^{\beta}$. However, the one-sided partial derivates  $\partial_t\underline U(t\pm 0, x)$ always exist.

	{\bf Step 1.}	 We show that by choosing $\theta$ and $K_1, K_2$ suitably,
	\begin{align}\label{7.11}
	&\underline h'(t)\leq  \sum_{i=1}^{m_0} \mu_i \int_{-\underline h(t)}^{\underline h(t)} \int_{\underline h(t)}^{+\yy} J_i(x-y) \underline u_i(t,x)\rd y\rd x&& \mbox{ for }  t> 0,\\
	&-\underline h'(t)\geq -\sum_{i=1}^{m_0}  \mu_i\int_{-\underline h(t)}^{\underline h(t)} \int_{-\yy}^{-\underline h(t)} J_i(x-y) \underline u_i(t,x)\rd y\rd x&& \mbox{ for }   t> 0.\label{7.11-a}
	\end{align}
	Since $\underline U(t,x)=\underline U(t,-x)$ and ${\bf J}(x)={\bf J}(-x)$, we see that \eqref{7.11-a} follows from \eqref{7.11}.
	
	By elementary calculations and \eqref{8.1}, we have
	\begin{align*}
	&\sum_{i=1}^{m_0}\mu_i \int_{-\underline h(t)}^{\underline h(t)} \int_{\underline h(t)}^{+\yy} J_i(x-y) \underline u_i(t,x)\rd y\rd x\\
	\geq& \sum_{i=1}^{m_0}\mu_i \int_{0}^{\underline h(t)-(t+\theta)^\beta} \int_{\underline h(t)}^{+\yy} J_i(x-y) \underline u_i(t,x)\rd y\rd x\\
	=& \sum_{i=1}^{m_0} \mu_i K_2\theta_i \int_{-\underline h(t)}^{-(t+\theta)^\beta}  \int_{0}^{+\yy} J_i(x-y) \rd y\rd x= \sum_{i=1}^{m_0} \mu_i K_2\theta_i \int_{(t+\theta)^\beta}^{\underline h(t)}  \int_{x}^{+\yy} J_i(y) \rd y\rd x\\
	=&\sum_{i=1}^{m_0} \mu_i K_2\theta_i \lf(\int_{(t+\theta)^\beta}^{\underline h(t)}\int_{(t+\theta)^\beta}^y+\int_{\underline h(t)}^{\yy}\int_{(t+\theta)^\beta}^{\underline h}\rr) J_i(y) \rd x\rd y\\
	\geq& \sum_{i=1}^{m_0} \mu_i K_2\theta_i \int_{(t+\theta)^\beta}^{\underline h(t)}\int_{(t+\theta)^\beta}^yJ_i(y) \rd x\rd y\geq \sum_{i=1}^{m_0} \mu_i C_1 K_2\theta_i \int_{(t+\theta)^\beta}^{\underline h(t)}\frac{y-(t+\theta)^\beta}{y^{2}+1} \rd y\\
	\geq &\sum_{i=1}^{m_0} \mu_i C_1 K_2\theta_i \int_{(t+\theta)^\beta}^{\underline h(t)}\frac{y-(t+\theta)^\beta}{2y^2}\rd y\\
	=&\sum_{i=1}^{m_0} \mu_i C_1 K_2\theta_i\frac 12 \lf(\ln \underline h(t)-\beta\ln (t+\theta)+ \frac{(t+\theta)^\beta}{\underline h(t)}-1\rr)\\
	\geq& \sum_{i=1}^{m_0} \mu_i C_1 K_2\theta_i\frac 12 \lf(\ln \underline h(t)-\beta\ln (t+\theta)-1\rr)\\
	=&\sum_{i=1}^{m_0} \mu_i C_1 K_2\theta_i\frac 12 \lf(\ln K_1+ \ln (t+\theta)+\ln (\ln( t+\theta))-\beta\ln( t+\theta)-1\rr)\\
	\geq&\sum_{i=1}^{m_0}\frac{ \mu_i C_1 K_2\theta_i(1-\beta)}{2} [ \ln (t+\theta)+1]\geq K_1\ln (t+\theta)+K_1=\underline h'(t)
	\end{align*}
	if
	\begin{equation}\label{7.14a}
	\begin{cases}
	\ln (\ln\theta)\geq -\ln K_1+2\\[2mm]
	\dd K_1\leq K_2\sum_{i=1}^{m_0}\frac{\mu_i C_1 \theta_i(1-\beta)}{2},
	\end{cases}
	\end{equation}
	which then finishes the proof of Step 1.
	
	{\bf Step 2.} We show that	 by choosing $K_1, K_2$ and $\theta$ suitably,  for $t>0$ and $x\in (-\underline h(t),\underline h(t))$, 
	\begin{align}\label{7.14}
	\underline U_t(t,x)\preceq  &D \circ\int_{-\underline h(t)}^{\underline h(t)}  {\bf J}(x-y) \circ\underline U(t,y)\rd y -D\circ\underline U(t,x)+F(\underline U(t,x)).
	\end{align}
	
	From the definition of $\underline U$, for $t>0$,
	\begin{align*}
	&\underline U_t(t,x)=K_1K_2\frac{(1-\beta)\ln (t+\theta)+1}{(t+\theta)^{\beta}} \Theta+\frac{K_2\beta |x|}{(t+\theta)^{1+\beta}}\Theta,\ \ \ \underline h(t)-(t+\theta)^\beta<|x|\leq \underline h(t),\\
	&\underline U_t(t,x)={\bf 0}, \ |x|< \underline h(t)-(t+\theta)^\beta.
	\end{align*}
	
	{\bf Claim 1}. For $x\in [-\underline h(t),-\underline h(t)+(t+\theta)^\beta]\cup [\underline h(t)-(t+\theta)^\beta,\underline h(t)]$ and large $\theta$,
	\begin{align}\label{7.15a}
	\int_{-\underline h(t)}^{\underline h(t)}  {\bf J}(x-y)\circ \underline U(t,y)\rd y\succeq \frac{\tilde C_1K_2\beta\ln (t+\theta)}{4(t+\theta)^\beta}\Theta,
	\end{align}
	where $\tilde C_1>0$ is given by \eqref{J-8.1}.  
		
	A simple calculation yields, for $x\in [\underline h(t)-(t+\theta)^\beta,\underline h(t)]$,
	\begin{align*}
	&\int_{-\underline h(t)}^{\underline h(t)}  {\bf J}(x-y) \circ\underline U(t,y)\rd y
	\succeq K_2\Theta\circ \int_{\underline h(t)-(t+\theta)^\beta}^{\underline h(t)}  {\bf J}(x-y) \frac{\underline h-y}{(t+\theta)^\beta}\rd y\\
	=&\frac{K_2\Theta}{(t+\theta)^\beta}\circ \int_{\underline h(t)-(t+\theta)^\beta-x}^{\underline h(t)-x}  {\bf J}(y) [\underline h(t)-(y+x)]\rd y.
	\end{align*}
	Hence, for $x\in [\underline h(t)-\frac 34(t+\theta)^\beta,\underline h(t)]$, by simple calculations and \eqref{J-8.1},
	\begin{align*}
	&\int_{-\underline h(t)}^{\underline h(t)}  {\bf J}(x-y)\circ \underline U(t,y)\rd y
	\succeq \frac{K_2\Theta}{(t+\theta)^\beta}\circ \int_{-(t+\theta)^\beta/4}^{0}  {\bf J}(y) (-y)\rd y\\
	=&\frac{K_2\Theta}{(t+\theta)^\beta}\circ\int_0^{(t+\theta)^\beta/4}  {\bf J}(y) y\rd y
	\succeq \frac{\td C_1K_2\Theta}{(t+\theta)^\beta} \int_0^{(t+\theta)^\beta/4} \frac{y}{y^2+1} \rd y\\
	\succeq&\frac{\td C_1K_2\Theta}{2(t+\theta)^\beta}\int_1^{(t+\theta)^\beta/4} y^{-1} \rd y
	=\frac{\td C_1K_2\Theta}{2(t+\theta)^\beta}[\beta\ln (t+\theta)-\ln 4]\\
	\succeq &  \frac{\td C_1K_2\beta\ln (t+\theta)}{4(t+\theta)^\beta}\Theta
	\end{align*}
	if 
	\begin{align}\label{7.17b}
	\frac\beta 2\ln \theta\geq \ln 4 .
	\end{align}
	
	And for $x\in [\underline h(t)-(t+\theta)^\beta,\underline h(t)-\frac 34(t+\theta)^\beta]$, 
	\begin{align*}
	&\int_{-\underline h(t)}^{\underline h(t)}  {\bf J}(x-y)\circ \underline U(t,y)\rd y
	\succeq \frac{K_2\Theta}{(t+\theta)^\beta}\circ \int_{0}^{3(t+\theta)^\beta/4}  {\bf J}(y) [\underline h(t)-(y+x)]\rd y\\
	\succeq & \frac{K_2\Theta}{(t+\theta)^\beta}\circ \int_{0}^{(t+\theta)^\beta/4}  {\bf J}(y) y\rd y \succeq \frac{\td C_1K_2\beta\ln (t+\theta)}{4(t+\theta)^\beta}\Theta.
	\end{align*}
This proves  \eqref{7.15a} for $x\in [\underline h(t)-(t+\theta)^\beta,\underline h(t)]$. 

For $x\in [-\underline h(t),-\underline h(t)+(t+\theta)^\beta]$,  \eqref{7.12} also holds since both ${\bf J}(x)$ and $\underline U(t,x)$ are
even in $x$. Claim 1 is thus proved.

\smallskip

	{\bf Claim 2}. We can choose small $K_2$ and large $\theta$ such that, for $x\in [-\underline h(t),\underline h(t)]$,
	\begin{align}\label{7.20a}
	D\circ\!\! \int_{-\underline h(t)}^{\underline h(t)}\!\!  \mathbf{J}(x-y)\!\circ\! \underline U(t,y)\rd y\! -\!D\!\circ\! \underline U(t,x)\!+\!F(\underline U(t,x))\!\succeq\! F_* \int_{-\underline h(t)}^{\underline h(t)}\!  \mathbf{J}(x-y) \circ\underline U(t,y)\rd y
	\end{align}
	for some $F_*>0$. 
	
	For small $K_2>0$,  from ${\bf 0}\preceq \underline U\preceq K_2\Theta$ and the definition of $\Theta$ in Lemma \ref{lemma2.1a}, we have
	\begin{align*}
	F(\underline U(t,x))=&\underline U(t,x)\Big([\nabla F({\bf 0})]^T+o(1){\bf I}_m\Big)\\
	=&K_2\min\lf\{1, \frac{\underline h(t)-|x|}{(t+\theta)^{\beta}}\rr\}\Theta \Big([\nabla F(0)]^T+o(1){\bf I}_m\Big)
	\\
	\succeq & K_2\min\lf\{1, \frac{\underline h(t)-|x|}{(t+\theta)^{\beta}}\rr\}\frac{3}{4}\lambda_1\Theta =\frac{3}{4}\lambda_1 \underline U(t,x),
	\end{align*}
	where $\lambda_1>0$ is given by Lemma \ref{lemma2.1a}. 
	 
	 For large $\theta$ and $t\geq 0$, we have
	\begin{align}\label{7.21a}
	\underline h(t)-(t+\theta)^{\beta}\geq \theta^\beta(K_1\theta^{1-\beta} \ln \theta-1)\geq \theta^\beta,\ \  (t+\theta)^{\beta}\geq \theta^{\beta}.
	\end{align}
	Hence, by  \eqref{7.11a}, there is large $L_1>0$ such that, for  $\theta^\beta>L_1$ we have
	\begin{align*}
	D \circ\int_{-\underline h(t)}^{\underline h(t)}  \mathbf{J}(x-y) \circ\underline U(t,y)\rd y +\frac{\lambda_1}{4} \underline  U(t,x)\succeq D\circ\underline U(t,x)\ \mbox{ for } \ x\in [-\underline h(t),\underline h(t)].
	\end{align*}
Therefore  \eqref{7.20a} holds with $F_*=\lambda_1/2$.

	Applying \eqref{7.15a} and \eqref{7.20a}, we have, for $x\in [-\underline h(t),-\underline h(t)+(t+\theta)^\beta]\cup [\underline h(t)-(t+\theta)^\beta,\underline h(t)]$,
	\begin{align*}
	&D \int_{-\underline h(t)}^{\underline h(t)}  \mathbf{J}(x-y)\circ \underline U(t,y)\rd y -\underline U(t,x)+F(\underline U(t,x))\\
	\succeq& \frac{F_*\td C_1K_2\beta\ln (t+\theta)}{4(t+\theta)^\beta}\Theta
	\succeq K_1K_2\frac{\ln (t+\theta)+1}{(t+\theta)^{\beta}} 
	\Theta\\
	=&\lf[K_1K_2\frac{(1-\beta)\ln (t+\theta)+1}{(t+\theta)^{\beta}} +\frac{K_2\beta \underline h(t)}{(t+\theta)^{1+\beta}}\right]
	\Theta\\
	\succeq&\lf[\frac{K_1K_2(1-\beta)\ln (t+\theta)+K_1K_2}{(t+\theta)^{\beta}} +\frac{K_2\beta |x|}{(t+\theta)^{1+\beta}}\right]
	\Theta\\
	=&\ \underline U_t(t,x)
	\end{align*}
	if apart from the earlier requirements, we further have
	\begin{align}\label{7.22}
	\ln \theta>2 \mbox{ and } K_1\leq \frac{F_*\td C_1\beta}{2}.
	\end{align}
	For $|x|< \underline h(t)-(t+\theta)^\beta$,
	\begin{align*}
	&D\circ \int_{-\underline h(t)}^{\underline h(t)}  \mathbf{J}(x-y) \circ\underline U(t,y)\rd y -D\circ\underline U(t,x)+F(\underline U(t,x))\\&\succeq F_* \int_{-\underline h(t)}^{\underline h(t)}  \mathbf{J}(x-y) \circ\underline U(t,y)\rd y
	\succeq {\bf 0}=\underline U_t(t,x).
	\end{align*}
	Thus \eqref{7.14} holds. 
	(Let us stress that it is possible to find $K_1$, $K_2$ and large $\theta$ such that  \eqref{7.14a}, \eqref{7.17b}, \eqref{7.21a} and \eqref{7.22} hold simultaneously.) 
	
	{\bf Step 3.} We finally prove \eqref{7.13a}.
	
	Clearly,  $\underline U(t,\pm \underline h(t))=0$ for $t\geq 0$.  Since spreading happens for $(U,g,h)$ and $K_2>0$ is small, there is a large constant $t_0>0$ such that 
	\begin{align*}
	&[-\underline h(0),\underline h(0)]\subset [g(t_0)/2,h(t_0)/2],\\
	&\underline U(0,x)\preceq K_2\Theta\preceq U(t_0,x)\ \mbox{ for } \ x\in [-\underline h(0),\underline h(0)].
	\end{align*}
	By Remark \ref{cp-rmk} and  Lemma \ref{lemma3.2}, we obtain
	\begin{align*}
	&[-\underline h(t),\underline h(t)]\subset [g(t+t_0),h(t+t_0)], \ &&t\geq 0,\\
	& \underline U(t,x)\preceq U(t+t_0,x),&&t\geq 0,\ x\in [-\underline h(t),\underline h(t)].
	\end{align*}
	Thus \eqref{7.13a} holds. 
	This completes the proof of the lemma.	
\end{proof}

\end{document}